%% LaTeX file
%% 
%% Hjalmar Rosengren 
%%
%% Felder's elliptic quantum group
%% and elliptic hypergeometric series on 
%% the root system $A_n$

\documentclass[reqno]{amsart}

\addtolength{\oddsidemargin}{-1cm}
\addtolength{\evensidemargin}{-1cm}
\addtolength{\textwidth}{2cm}
\addtolength{\textheight}{0.5cm}

\usepackage{pstricks,amsmath,amsthm,amssymb,amsfonts}
\hyphenation{hyper-geo-metric poly-nomial poly-nomials pre-factor equi-valently mero-morphic anti-homomorphism bi-ortho-gonal bi-algebra}

\pagestyle{myheadings}
\numberwithin{equation}{section} 
\theoremstyle{plain}
\newtheorem{theo+}           {Theorem}      [section]
\newtheorem{prop+}  [theo+]  {Proposition}
\newtheorem{coro+}  [theo+]  {Corollary}
\newtheorem{lemm+}  [theo+]  {Lemma}
\newtheorem{defi+}  [theo+]  {Definition}

\theoremstyle{definition}
\newtheorem{rema+}  [theo+]  {Remark}
\newtheorem{prob+}  [theo+]  {Problem}
\newtheorem{exam+}  [theo+]  {Example}

\newenvironment{theorem}{\begin{theo+}}{\end{theo+}}
\newenvironment{proposition}{\begin{prop+}}{\end{prop+}}
\newenvironment{corollary}{\begin{coro+}}{\end{coro+}}
\newenvironment{lemma}{\begin{lemm+}}{\end{lemm+}}

\newenvironment{remark}{\begin{rema+}}{\end{rema+}}
\newenvironment{definition}{\begin{defi+}}{\end{defi+}}

\newcommand{\iceR}{\psline{->}(0,0)(.7,0) \psline(1,0)}
\newcommand{\iceL}{\psline{->}(0,0)(-.7,0) \psline(-1,0)}
\newcommand{\iceU}{\psline{->}(0,0)(0,.7) \psline(0,1)}
\newcommand{\iceD}{\psline{->}(0,0)(0,-.7) \psline(0,-1)}

\newcommand{\al}{\alpha}
\newcommand{\be}{\beta}
\newcommand{\ga}{\gamma}

\newcommand{\de}{\delta}
\newcommand{\De}{\Delta}
\newcommand{\ep}{\varepsilon}
\newcommand{\la}{\lambda}

\newcommand{\om}{\omega}

\newcommand{\si}{\sigma}
\newcommand{\tha}{\theta}

\newcommand{\gh}{\mathfrak h}
\newcommand{\gha}{\mathfrak h^\ast}
\newcommand{\mg}{M_{\gh^\ast}}

\newcommand{\D}{D_{\gh}}

\newcommand{\End}{\operatorname{End}}
\newcommand{\id}{\operatorname{id}}

\newcommand{\op}{\operatorname{op}}
\newcommand{\cop}{\operatorname{cop}}
\newcommand{\coop}{\operatorname{coop}}

\newcommand{\ot}{\otimes}
\newcommand{\C}{\mathbb C}
\newcommand{\Cp}{\mathbb C^\times}

\newcommand{\wh}{\widehat}
\newcommand{\wt}{\widetilde}
\newcommand{\E}{ \mathcal E}
\newcommand{\lx}{\langle}
\newcommand{\rx}{\rangle}
\newcommand{\ra}{\rightarrow}

\begin{document}

\baselineskip 18pt
\larger[2]
\title
[Felder's elliptic quantum group]
{Felder's elliptic  quantum group
and elliptic hypergeometric series on the root system $A_n$} 
\author{Hjalmar Rosengren}
\address
{Department of Mathematical Sciences
\\ Chalmers University of Technology and G\"oteborg
 University\\SE-412~96 G\"oteborg, Sweden}
\email{hjalmar@chalmers.se}
\urladdr{http://www.math.chalmers.se/{\textasciitilde}hjalmar}
\subjclass[2000]{Primary: 17B37, 33D67, 33D80; Secondary: 82B23}

\thanks{Research  supported by the Swedish Science Research
Council (Vetenskapsr\aa det)}

\begin{abstract}
We introduce a generalization of elliptic $6j$-symbols, which can be interpreted as matrix elements for  intertwiners between  corepresentations of Felder's elliptic quantum group. For special parameter values, they can be expressed in terms of multivariable elliptic hypergeometric series related to the root system $A_n$. As a consequence, we obtain new biorthogonality relations for such series.
\end{abstract}

\maketitle        

\section{Introduction}

In Baxter's  solution of the eight-vertex model, a decisive step is the introduction of the  \emph{eight-vertex-solid-on-solid} (8VSOS) 
\emph{model}, the two models being related by a vertex-face transformation \cite{b}. The $R$-matrix of the 8VSOS model satisfies a modified version of the quantum Yang--Baxter equation known as the  \emph{quantum dynamical Yang--Baxter} (QDYB) \emph{equation}. The QDYB equation includes as a special case  the hexagon relation for $6j$-symbols, which was first found by Wigner around 1940 \cite{w}.

Two currently active research areas arising from  the 8VSOS model
are \emph{dynamical quantum groups} and \emph{elliptic hypergeometric functions}. For surveys, see \cite{es} and \cite[Chapter 11]{gr} or \cite{ss}, respectively. The aim of the present work is to find a new and unexpected connection between these two areas. 

To explain the notion of a dynamical quantum group, we recall  the FRST (Faddeev--Reshetikhin--Sklyanin--Takhtajan) construction \cite{frt}, which  associates a bialgebra (in many cases a Hopf algebra) to any quantum $R$-matrix. Physically interesting quantities  can then be studied by algebraic methods.  It is not obvious how to generalize the FRST construction to \emph{dynamical} $R$-matrices. A  first step was taken by Felder \cite{f}, who, 
rather than defining a dynamical quantum group as an algebra,
defined a category of its   representations. In the case of 
Baxter's dynamical $R$-matrix, pertaining to
the 8VSOS model, this category was studied in detail by Felder and Varchenko \cite{fv}.  Later, Etingof and Varchenko  \cite{ev} gave a solid algebraic 
foundation for dynamical quantum groups as \emph{$\gh$-Hopf algebroids}.
By \emph{Felder's  quantum group}, we  mean the 
 $\gh$-Hopf algebroid obtained by applying the generalized FRST construction of Etingof and Varchenko to Baxter's dynamical $R$-matrix.

In \cite{d}, Date et al.\ applied a process of fusion to the 8VSOS model, thereby obtaining more general dynamical $R$-matrices known as \emph{elliptic $6j$-symbols}.
The classical $6j$-symbols of quantum mechanics can be expressed as  hypergeometric sums, that is, as $\sum_k a_k$ with $a_{k+1}/a_k$ a rational function of $k$. 
Elliptic $6j$-symbols are given by more general sums, with
  $a_{k+1}/a_k$  an elliptic function of $k$. Frenkel and Turaev \cite{ft} realized that this was
the  first example of a completely new class of special functions, \emph{elliptic hypergeometric functions}.

Another relatively recent  class of special functions are 
\emph{hypergeometric functions on root systems}, which
 first appeared in the context of $6j$-symbols of unitary groups  \cite{ajj,ccb}. Similarly as for elliptic hypergeometric series, examples
appeared in the physics literature long before their nature as 
generalized hypergeometric series was emphasized, in this case by Holman, Biedenharn and Louck \cite{hbl}. Recently, many authors have considered \emph{elliptic} hypergeometric functions on root systems,   see e.g.\ \cite{ds,ds2,kan,ra,ras,rr,rt,rsm,s1,s2,s3,sw}. (To be  precise, we think here mainly of integrals and sums of   type I or Dixon-type, as opposed to type II or Selberg-type.) 
A major recent development is the appearance of  elliptic hypergeometric integrals on root systems in the context of supersymmetric quantum field theories \cite{do,sv}, which has 
led to many new conjectures. In spite of the origin of hypergeometric functions on root systems in the representation theory of unitary groups,  little has been written about connections with Lie or quantum groups, and in the elliptic case nothing seems to be known before the present work.

In view of their common origin in Baxter's 8VSOS model, one would expect  direct relations between Felder's  quantum group and elliptic hypergeometric functions. Such relations were obtained in \cite{knr}, 
and in a somewhat different way in \cite{kn}.
Felder's  quantum group has a one-parameter family of 
$(N+1)$-dimensional irreducible corepresentations $V_N(z)$. In \cite{kn}, Koelink and van Norden considered pairings of the form $\lx M_{st}^M(w),M_{uv}^N(z)\rx$, where
$M_{st}^M(w)$ denotes a matrix element of   $V_M(w)$, and $\lx\cdot,\cdot\rx$ is the \emph{cobraiding} on the quantum group.
These pairings are matrix elements of the natural intertwiner
\begin{equation}\label{vvi}V_N(z)\wh\ot V_M(w) \ra V_M(w)\wh\ot V_N(z)\end{equation}
 ($\wh\ot$ is a modified  tensor product appropriate for dealing with Hopf algebroids). 
It was shown that such pairings can be identified with the
elliptic $6j$-symbols of \cite{d}.

In the present paper, we consider
 rather than \eqref{vvi} the  intertwiner
\begin{multline}\label{gi} \left(V_1(z_1)\wh\ot\dots\wh\ot V_1(z_N)\right)\wh\ot\left(V_1(w_1)\wh\ot\dots\wh\ot V_1(w_M)\right) \\
 \ra \left(V_1(w_1)\wh\ot\dots\wh \ot V_1(w_M)\right)\wh\ot \left(V_1(z_1)\wh\ot\dots\wh\ot V_1(z_N)\right). 
\end{multline}
In a natural  basis of pure tensors, the matrix elements of this intertwiner are   partition functions of the 8VSOS model with  fixed boundary conditions. However, we will consider the same intertwiner in a \emph{different} basis, when the matrix elements can be considered as generalized $6j$-symbols.

The comodule $V_N(z)$  is a quotient of 
$$V_1(z)\ot V_1(qz)\ot\dotsm\ot V_1(q^{N-1}z)$$ 
($q$ is a parameter of the $R$-matrix). Accordingly, when  $w_j= q^{j-1}\om$ and $z_j=q^{j-1}\zeta$, our generalized $6j$-symbols reduce to the elliptic $6j$-symbols of \cite{d}. 
 One of our main results is that, if only $w$ is  specialized to a geometric progression, then the generalized $6j$-symbols can be expressed in terms of elliptic hypergeometric series related to the root system $A_n$, of  the type studied in  \cite{kan,rr,rt,s1,s2}. 
This connection allows us to obtain new biorthogonality relations for such series.
Surprisingly, although degenerate cases  first appeared in the study of  $\mathrm{SU}(n)$, we obtain more general functions using an elliptic deformation of mere $\mathrm{SU}(2)$. 

The original motivation for the present study was not the link to hypergeometric series on root systems, which in fact came as a surprise. Rather, it is part of an ongoing project to develop harmonic analysis on  dynamical quantum groups, with Felder's quantum group as the main example.
In particular, we believe that some of our findings will be useful for constructing  a Haar functional on  Felder's quantum group, and for obtaining a more concrete version of the construction of solutions to the $q$-Knizhnik--Zamolodchikov--Bernard equation due to Varchenko and co-workers \cite{ftv1,ftv2,fv2,mv}.
Finally, in view of the recent appearance of elliptic hypergeometric integrals in quantum field theory mentioned above, one may speculate that the biorthogonal system of Theorem \ref{bp}, or related systems with continuous biorthogonality measures, has a role to play in that context.

The plan of the paper is as follows. In \S \ref{dqs}, we give preliminaries on dynamical quantum groups. This includes some new definitions and results, in particular on unitary cobraidings on $\gh$-Hopf algebroids. In \S \ref{fgs}, we 
recall the definition of Felder's quantum group, and collect some
 elementary though useful results on its cobraiding. In particular, Corollary \ref{rl} is a key result that should have some independent interest.
In \S \ref{ecs}, we introduce the corepresentations and bases
 that we will use. In \S \ref{ews}, we discuss a function that  appears as a building block of our generalized $6j$-symbols. In terms of the 8VSOS model, it is  the domain wall partition function; it can also be identified with  elliptic weight functions of Tarasov and Varchenko \cite{tv}. 
In \S \ref{sss}, we introduce  generalized $6j$-symbols and study their main properties. In  particular, in Theorems \ref{mcmt} and \ref{rat} we give 
explicit expressions for these symbols.  Although these formulas may seem complicated, they are  natural extensions   of Racah's expression for the classical $6j$-symbol as a ${}_4F_3$ hypergeometric sum. In \S \ref{hss}, we consider the specialization of generalized $6j$-symbols that leads to
elliptic hypergeometric series on the root system $A_n$. As an application, in Theorem \ref{bp} we obtain an explicit biorthogonality relation for such series. 
Finally, an Appendix contains  generalities on  unitary  symmetries of cobraided $\gh$-Hopf algebroids.

{\bf Acknowledgements:} I  thank Jonas Hartwig 
for many discussions, and
Vitaly Tarasov for illuminating correspondence.

\section{Dynamical quantum groups}
\label{dqs}

This Section contains preliminaries on $\gh$-Hopf algebroids, most of which can be found in \cite{ev,kn,kr,r}.

\subsection{$\gh$-Hopf algebroids}

Let $\gha$ be a finite-dimensional complex vector space. 
The notation is motivated by examples where 
 $\gha$ is the dual of the Cartan subalgebra of a semisimple Lie algebra. In the case of interest to us, $\gha=\C$.

We denote by $\mg$ the field of meromorphic functions on $\gha$, and by $T_\al$, $\al\in\gha$, the shift operators $(T_\al f)(\la)=f(\la+\al)$ acting on $\mg$. Moreover,  $\D$ denotes the algebra of
 difference operators $\sum_i f_i T_{\beta_i}$, $f_i\in\mg$, $\beta_i\in\gha$, acting on $\mg$.

An \emph{$\gh$-algebra} is a complex associative algebra with $1$, which is bigraded over $\gha$, that is, $A=\bigoplus_{\al,\be\in\gha}A_{\al\be}$, with $A_{\al\be}A_{\ga\de}\subseteq A_{\al+\ga,\be+\de}$. Moreover, there are two algebra embeddings $\mu_l,\ \mu_r:\mg\ra A_{00}$ (the left and right \emph{moment maps}), such that 
$$\mu_l(f)a=a\mu_l(T_\al f),\qquad \mu_r(f)a=a\mu_r(T_\be f),\qquad a\in A_{\al\be},\quad f\in\mg.$$     
A morphism of $\gh$-algebras is an algebra homomorphism which preserves the bigrading and moment maps.

When $A$ and $B$ are $\gh$-algebras,  $A\wt\ot B$  denotes the quotient of
$\bigoplus_{\al,\be,\ga\in\gha}A_{\al\ga}\ot B_{\ga\be}$ 
by the relations
$\mu_r(f)a\ot b=a\ot\mu_l(f)b$.
The multiplication $(a\ot b)(c\ot d)=ac\ot bd$, the bigrading 
$A_{\al\ga}\wt\ot B_{\ga\be}\subseteq(A\wt\ot B)_{\al\be}$
and the moment maps
$$\mu_l(f)(a\ot b)=\mu_l(f)a\ot b,\qquad \mu_r(f)(a\ot b)=a\ot \mu_r(f)b $$
make $A\wt\ot B$ an $\gh$-algebra.

The bigrading $fT_{-\be}\in (\D)_{\be\be}$ and the moment maps 
$\mu_l(f)=\mu_r(f)=fT_0$ equip 
  $\D$ with the structure of an  $\gh$-algebra. 
It provides a unit object for the tensor product $\wt\ot$, namely,
\begin{equation}\label{ci}x\simeq x\ot T_{-\be}\simeq T_{-\al}\ot x,\quad x\in A_{\al\be}, \end{equation}
  define $\gh$-algebra isomorphisms $A\simeq A\wt\ot \D\simeq \D\wt\ot A$.

An \emph{$\gh$-bialgebroid} is an $\gh$-algebra $A$ equipped with two $\gh$-algebra morphisms $\Delta:\,A\ra A\wt\ot A$ (the \emph{coproduct}) and $\ep:\,A\ra \D$ (the \emph{counit}), such that 
$$(\Delta\otimes\id)\circ\De=(\id\otimes\,\Delta)\circ\De $$
and, under the identifications \eqref{ci}, 
\begin{equation}\label{cucf}(\ep\otimes\id)\circ\De=(\id\ot\,\ep)\circ\De=\id.
\end{equation}

An \emph{$\gh$-Hopf algebroid} is an $\gh$-bialgebroid equipped with a $\C$-linear map $S:\,A\ra A$ (the \emph{antipode}), such that
$S(A_{\al\be})\subseteq A_{-\be,-\al}$,
$S(\mu_r(f))=\mu_l(f)$, 
$S(\mu_l(f))=\mu_r(f)$,
$S(ab)=S(b)S(a)$, $S(1)=1$,
\begin{equation}\label{aprop}\De\circ S=\sigma\circ(S\otimes S)\circ \De,\qquad \ep\circ S=S^{D_{\gh}}\circ\ep,\end{equation}
\begin{equation}\label{ics}m\circ(\id\otimes\, S)\circ\De(a)=\mu_l(\ep(a)1),\end{equation}
$$ m\circ(S\otimes \id)\circ\De(a)=\mu_r(T_\al(\ep(a)1)),
\qquad a\in A_{\al\be},$$
where $m$ denotes multiplication, $\sigma(a\otimes b)=b\otimes a$, and where
 $S^{D_{\gh}}$ is  the antiautomorphism of $\D$ defined by 
\begin{equation}\label{sdd}S^{\D}(f)=f,\qquad S^{\D}(T_\al)=T_{-\al}.\end{equation}
These axioms are far from independent, see \cite{kr}.

We will  use Sweedler's notation
$$\sum_{(a)} a'\ot \dotsm\ot a^{(n)}$$
for the iterated coproduct of $a$. Then,
 \eqref{cucf} may be written
\begin{equation}\label{cua}x=\sum_{(x)}\mu_l(\ep(x')1)x''=\sum_{(x)}\mu_r(\ep(x'')1)x'. \end{equation}
Moreover, 
assuming that each $a^{(i)}$ belongs to some bigraded component $A_{\alpha_i\beta_i}$, we write $\om_{i,i+1}(a)=\beta_i=\al_{i+1}$. For instance,
 if $a\in A_{\al\be}$, then
$$(\id\ot\,\Delta)\Delta(a)=(\Delta\ot\id)\Delta(a)=\sum_{(a)}a'\ot a''\ot a^{(3)},$$
where $a'\in A_{\al,\om_{12}(a)}$, $a''\in A_{\om_{12}(a),\om_{23}(a)}$, $a^{(3)}\in A_{\om_{23}(a),\be}$.

\subsection{Corepresentations}

An \emph{$\gh$-space} is  an $\gha$-graded vector space over $\mg$, $V=\bigoplus_{\al\in \gha}V_{\al}$. A morphism of $\gh$-spaces is an $\mg$-linear and grade-preserving map.

If $A$ is an $\gh$-bialgebroid and $V$ an {$\gh$-space}, $A\wt\ot V$ denotes the quotient of
$$\bigoplus_{\al,\be\in\gha}A_{\al\be}\ot V_\be$$ 
by the  relations
$\mu_r(f)a\ot v=a\ot fv$.
The grading $A_{\al\be}\wt\ot V_\be\subseteq(A\wt\ot V)_\al$ and the $\mg$-linear structure $f(a\ot v)=\mu_l(f)a\ot v$ make $A\wt\ot V$ an $\gh$-space.
The identification  
\begin{equation}\label{dvi}f\,T_{-\al}\otimes v\simeq fv, \qquad v\in V_\al,\end{equation}
gives an $\gh$-space isomorphism
$D_{\gh}\widetilde\otimes V\simeq V$.

A \emph{corepresentation} of $A$ on $V$ is an $\gh$-space morphism $\pi:\,V\ra A\wt\ot V$ such that
\begin{equation}\label{crd}(\De\otimes\id)\circ\pi=(\id\otimes\,\pi)\circ\pi,\qquad (\ep\otimes\id)\circ\pi=\id,\end{equation}
using \eqref{dvi} in the second equality.  If $(e_x)_{x\in X}$ is a homogeneous basis for $V$ over $\mg$, $e_x\in V_{\om(x)}$, then one can introduce matrix elements $t_{xy}\in A_{\om(x)\om(y)}$ by
\begin{equation}\label{med}\pi(e_x)=\sum_{y\in X}t_{xy}\ot e_y. \end{equation}
In terms of matrix elements, \eqref{crd} takes the form
\begin{equation}\label{crm}\De(t_{xy})=\sum_{z\in X}t_{xz}\ot t_{zy},\qquad \ep(t_{xy})=\de_{xy}T_{-\om(x)}.\end{equation}

Given two $\gh$-spaces $V$ and $W$, their tensor product $V\wh\ot W$ is defined as  $V\ot W$ modulo the relations
$$T_{-\al}(f)v\ot w=v\ot fw,\qquad v\in V_\al,\quad f\in\mg. $$
It is an $\gh$-space with  $V_\al\wh\ot W_\be\subseteq (V\wh\ot W)_{\al+\be}$ and $f(v\ot w)=fv\ot w$. If $V$ and $W$ are corepresentation spaces, then so is $V\wh\ot W$ under 
$$\pi_{V\wh\ot W}=(m\ot\id)\circ\si_{23}\circ(\pi_V\ot \pi_W), $$ 
where $\si_{23}(a\ot v\ot b\ot w)=a\ot b\ot v\ot w$. 
Equivalently, in terms of matrix elements,
\begin{equation}\label{tpme}\pi_{V\wh\ot W}(e_x^V\ot e_y^W)=\sum_{ab}t_{xa}^V t_{yb}^W\ot e_a^V\ot e_b^W.
 \end{equation}

\subsection{FRST construction}
\label{fcs}

Let $X$ be a finite  index set and $\om:\,X\ra \gha$ an arbitrary  function. Let $R=(R^{bd}_{ac})_{a,b,c,d\in X}$ be a matrix, whose elements $R^{bd}_{ac}=R^{bd}_{ac}(\lambda,z)$  are meromorphic functions of $(\lambda,z)\in\gha\times\Cp$, where
$\Cp=\mathbb C\setminus\{0\}$. We refer to $\lambda$ as the
 \emph{dynamical parameter} and $z$ as the \emph{spectral parameter}. Moreover, we require that 
\begin{equation}\label{hir}R^{bd}_{ac}\neq 0\quad 
\Longrightarrow\quad
\om(a)+\om(c)=\om(b)+\om(d). \end{equation}
To any such $R$ one may associate an $\gh$-bialgebroid $A$. 
As a complex algebra, it is generated by two copies of $\mg$, whose elements we write as $f(\la)$ and $f(\mu)$, respectively, together with generators $(L_{xy}(z))_{x,y\in X, z\in\Cp}$. The defining relations are
$$f(\la)L_{xy}=L_{xy}f(\la+\om(x)),
\qquad
 f(\mu)L_{xy}=L_{xy}f(\mu+\om(y)),$$
 $$f(\la)g(\mu)=g(\mu)f(\la),$$
\begin{multline}\label{rll}
\sum_{xy}\lim_{t\rightarrow z/w}(t-z/w)^N R_{ac}^{xy}(\la,t)L_{xb}(z)L_{yd}(w)\\
=\sum_{xy}\lim_{t\rightarrow z/w}(t-z/w)^N R_{xy}^{bd}(\mu,t)L_{cy}(w)L_{ax}(z),
\end{multline}
for $N\in\mathbb Z$ such that all the limits exist.
If each $R^{bd}_{ac}(\lambda,z)$ is holomorphic in $z$, then 
\eqref{rll} reduces to 
$$\sum_{xy}R_{ac}^{xy}(\la,z/w)L_{xb}(z)L_{yd}(w)=
\sum_{xy}R_{xy}^{bd}(\mu,z/w)L_{cy}(w)L_{ax}(z).$$

The bigrading $f(\la),\ f(\mu)\in A_{00}$, $L_{xy}\in A_{\om(x)\om(y)}$, the moment maps $\mu_l(f)=f(\la)$, $\mu_r(f)=f(\mu)$, the coproduct
$$\De(L_{ab}(z))=\sum_{x\in X}L_{ax}(z)\otimes L_{xb}(z),
\qquad \De(f(\la))=f(\la)\otimes 1,\qquad \De(f(\mu))=1\otimes f(\mu)$$
and the counit
$$\ep(L_{ab}(z))=\de_{ab}\,T_{-\om(a)},
\qquad \ep(f(\la))=\ep(f(\mu))=f$$
make $A$ an $\gh$-bialgebroid.

\subsection{QDYB equation}

The FRST construction is of particular interest when
$R$  is a \emph{dynamical $R$-matrix}, meaning that
\begin{multline}\label{dyx}\sum_{xyz} R^{xy}_{de}(\la-\om(f),z_1/z_2)
\,R^{az}_{xf}(\la,z_1/z_3)\,R^{bc}_{yz}(\la-\om(a),z_2/z_3)\\
=\sum_{xyz}R^{yz}_{ef}(\la,z_2/z_3) \,R^{xc}_{dz}(\la-\om(y),z_1/z_3)\,R^{ab}_{xy}(\la,z_1/z_2).
\end{multline}
A dynamical $R$-matrix satisfying
\begin{equation}\label{ur}\sum_{xy}R^{xy}_{ab}(\la,z_1/z_2)\,R_{yx}^{dc}(\la,z_2/z_1)=\delta_{ac}\delta_{bd}\end{equation}
is called \emph{unitary}.

Let $V$ be the complex vector space with basis  $(v_x)_{x\in X}$. 
We identify $R$ with the map
$(\gha\times\mathbb C^\times)\rightarrow\End_\gh(V\ot V)$ given by
$$R(\la,z)(v_x\ot v_y)=\sum_{ab}R_{ab}^{xy}(\la,z)v_a\ot v_b $$
(the subscript in $\End_\gh$ refers to the condition \eqref{hir}).
Then, \eqref{dyx} and \eqref{ur} can be written in coordinate-free form as
\begin{multline*}R^{12}(\la-h^{(3)},z_1/z_2)R^{13}(\lambda,z_1/z_3)R^{23}(\lambda-h^{(1)},z_2/z_3)\\
=R^{23}(\lambda,z_2/z_3)R^{13}(\lambda-h^{(2)},z_1/z_3)R^{12}(\la,z_1/z_2), 
\end{multline*}
$$R^{12}(\la,z_1/z_2)R^{21}(\lambda,z_2/z_1)=\id, $$
where the notation is explained by the example
$$R^{12}(\la-h^{(3)},z_1/z_2)(v_a\ot v_b\ot v_c)=
\sum_{xy}R_{xy}^{ab}(\la-\om(c),z_1/z_2)(v_x\ot v_y\ot v_c).$$

More generally, one may consider the equations
$$\mathcal R^{12}_{UV}(\la-h^{(3)})\mathcal R^{13}_{UW}(\lambda)\mathcal R^{23}_{VW}(\lambda-h^{(1)})
=\mathcal R^{23}_{VW}(\lambda)\mathcal R^{13}_{UW}(\lambda-h^{(2)})\mathcal R^{12}_{UV}(\la),$$
$$\mathcal R^{12}_{UV}(\la)\mathcal R^{21}_{VU}(\lambda)=\id,$$
for operators $\mathcal R_{UV}:\,\gha\rightarrow \End_{\gh}(U\ot V)$, where $U$, $V$ and $W$ are $\gha$-graded complex vector spaces.
Equivalently, if
$$\mathcal R_{UV}(\la)(u_x\ot v_y)=\sum_{ab}\mathcal R^{xy}_{ab}(\la;U,V)(u_a\ot v_b), $$
one has
\begin{multline}\label{gdyc}\sum_{xyz}\mathcal  R^{xy}_{de}(\la-\om(f);U,V)
\,\mathcal R^{az}_{xf}(\la;U,W)\,\mathcal R^{bc}_{yz}(\la-\om(a);V,W)\\
=\sum_{xyz}\mathcal R^{yz}_{ef}(\la;V,W) \,\mathcal R^{xc}_{dz}(\la-\om(y);U,W)\,\mathcal R^{ab}_{xy}(\la;U,V),
\end{multline}
\begin{equation}\label{gduc}\sum_{xy}\mathcal R^{xy}_{ab}(\la;U,V)\,\mathcal R_{yx}^{dc}(\la;V,U)=\delta_{ac}\delta_{bd}.\end{equation}

The QDYB equation is equivalent to the star-triangle relation for certain  generalized ice models, see \cite{b} for the case $\gha=\mathbb C$ and \cite{jkmo,jmo} for examples involving higher rank Lie algebras. 
 In the case when the spectral parameter is absent, it is  the hexagon relation for $6j$-symbols, in the case  $\gha=\mathbb C$ going back to Wigner \cite{w}.

\subsection{Cobraidings}
\label{cbss}

For the following definition, see \cite[Def.\ 3.16]{r}.

\begin{definition}
A \emph{cobraiding} on an  $\gh$-bialgebroid
 $A$ is a
$\C$-bilinear map $\langle\cdot,\cdot\rangle:\,A\times A\rightarrow D_{\gh}$
such that, for any $a,b,c\in A$ and $f\in\mg$,
$$\langle A_{\al\be},A_{\ga\de}\rangle\subseteq
(D_\gh)_{\al+\ga,\be+\de},$$
\begin{subequations}
\label{mrp}
\begin{align}\label{mrpa}\langle\mu_r(f)a,b\rangle&=\langle a,\mu_l(f)b\rangle
=f\circ\langle a,b\rangle,\\
\label{mrpb}\langle a\mu_l(f),b\rangle&=\langle a,b\mu_r(f)\rangle
=\langle a,b\rangle\circ f,\end{align}
\end{subequations}
\begin{subequations}\label{ppp}
\begin{align}\label{pppa}
\langle ab,c\rangle&=\sum_{(c)}\langle a,c'\rangle
\,T_{\om_{12}(c)}\,\langle b,c''\rangle,\\
\label{pppb}\langle a,bc\rangle&=\sum_{(a)}\langle a'',b\rangle
\,T_{\om_{12}(a)}\,\langle a',c\rangle,
\end{align}
\end{subequations}
\begin{equation}\label{cbe}\langle a,1\rangle=\langle 1,a\rangle=\ep(a),
\end{equation}
\begin{equation}
\label{cba}\sum_{(a)(b)}\mu_l(\lx a',b'\rx 1)\,a''b''
=\sum_{(a)(b)}\mu_r(\lx a'',b''\rx 1)\,b'a'.
\end{equation}
\end{definition}

In particular,
\begin{equation}\label{cbv}\langle A_{\al\be},A_{\ga\de}\rangle\neq 0
\quad\Longrightarrow\quad \al+\ga=\be+\de. \end{equation}

The following definition  is motivated by Proposition \ref{up} below.

\begin{definition}
A cobraiding is called \emph{unitary} if 
\begin{equation}\label{ud}\ep(ab)=\sum_{(a)(b)}\lx b',a'\rx T_{\om_{12}(a)+\om_{12}(b)}\lx a'',b''\rx. \end{equation}
\end{definition}

To verify that  a cobraiding is unitary, the following facts are useful.

\begin{lemma}\label{ugl}
 The equation \eqref{ud} always holds  if  $a$ or $b$ is in $\mu_l(\mg)\mu_r(\mg)$. Moreover,
 if \eqref{ud} holds for the pairs $(a_1,b)$ and $(a_2,b)$, then it holds for $(a_1a_2,b)$, and if
 \eqref{ud} holds for the pairs $(a,b_1)$ and $(a,b_2)$, then it holds for $(a,b_1b_2)$.
\end{lemma}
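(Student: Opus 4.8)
The plan is to prove the two assertions separately, prefacing them with a compatibility identity between counit and coproduct. Applying the algebra morphism $\ep$ to the counit axiom \eqref{cua} and using that $\ep(\mu_l(h))=h\,T_0$, one gets
\[\ep(b)=\sum_{(b)}\ep(b')\,T_{\om_{12}(b)}\,\ep(b''),\]
the intervening shift appearing because $\ep(b')$ lies in $(\D)_{\om_{12}(b)\om_{12}(b)}$, hence has the form $h'T_{-\om_{12}(b)}$, so that $(\ep(b')1)\ep(b'')=\ep(b')T_{\om_{12}(b)}\ep(b'')$. In words, \eqref{ud} holds with the cobraiding everywhere replaced by $\ep$; this is what the two special cases will be reduced to.

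For the first assertion I would take $a=\mu_l(f)\mu_r(g)$, the general element of $\mu_l(\mg)\mu_r(\mg)$. Since $\De(\mu_l(f))=\mu_l(f)\ot 1$ and $\De(\mu_r(g))=1\ot\mu_r(g)$, the coproduct collapses to the single term $\De(a)=\mu_l(f)\ot\mu_r(g)$ with $\om_{12}(a)=0$, so the right-hand side of \eqref{ud} becomes $\sum_{(b)}\lx b',\mu_l(f)\rx\,T_{\om_{12}(b)}\,\lx\mu_r(g),b''\rx$. By \eqref{mrpa} and \eqref{cbe} the two factors are $f\circ\ep(b')$ and $g\circ\ep(b'')$; pulling $f$ out on the left, commuting $g$ through the functions (legitimate since $\mg$ is commutative), and using the identity above, the sum equals $fg\,\ep(b)=\ep(\mu_l(f))\ep(\mu_r(g))\ep(b)=\ep(ab)$, as required. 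The case $b\in\mu_l(\mg)\mu_r(\mg)$ is handled the same way, now via \eqref{mrpb}, which turns the two factors into $\ep(a')\circ f$ and $\ep(a'')\circ g$; the only subtlety is that the shift $T_{\om_{12}(b)}$ (resp.\ $T_{\om_{12}(a)}$) cancels the shift $T_{-\om_{12}(b)}$ hidden in one of the counit factors.

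For the second assertion I would argue by direct expansion. Writing $\De(a_1a_2)=\sum a_1'a_2'\ot a_1''a_2''$ and applying \eqref{pppb} to $\lx b',a_1'a_2'\rx$ and \eqref{pppa} to $\lx a_1''a_2'',b''\rx$, the right-hand side of \eqref{ud} for $(a_1a_2,b)$ turns into a sum over the fourfold coproduct $\sum b^{(1)}\ot b^{(2)}\ot b^{(3)}\ot b^{(4)}$ of $b$, in which $a_1$ is paired with the \emph{inner} legs $b^{(2)},b^{(3)}$ and $a_2$ with the \emph{outer} legs $b^{(1)},b^{(4)}$. The target is $\ep(a_1a_2b)=\ep(a_1)\ep(a_2)\ep(b)$, and the idea is to collapse the fourfold sum using the two hypotheses $\ep(a_ib)=U(a_i,b)$, where $U(a_i,b)$ abbreviates the right-hand side of \eqref{ud} for $(a_i,b)$: capping the outer legs $b^{(1)},b^{(4)}$ by counits reconstitutes $\De(b)$ on the inner legs by \eqref{cua} and lets the $a_1$-part be summed to $U(a_1,b)=\ep(a_1)\ep(b)$, while the capping itself should be produced by the $(a_2,b)$ hypothesis.

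The hard part will be precisely this collapse. Because $a_1$ and $a_2$ are coupled to $b$ through \emph{interleaved} legs, neither hypothesis can be applied before the other: using $(a_2,b)$ to cap the outer legs presupposes that the inner legs already form $\De(b)$, and symmetrically for $(a_1,b)$. I therefore expect the two hypotheses and the counit axiom to have to be combined in one step rather than sequentially, with the real labour being to reassociate the fourfold coproduct by coassociativity and to commute the intervening shifts past the coefficient functions via $T_\al h=(T_\al h)T_\al$, so that the interleaving is undone without altering any grading. Once this shift bookkeeping is controlled the identity should drop out, and the second half of the lemma, multiplicativity in the second argument, is entirely parallel with the roles of \eqref{pppa} and \eqref{pppb} exchanged.
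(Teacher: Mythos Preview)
Your treatment of the first assertion is correct and matches what the paper dismisses as ``straight-forward.''  For the multiplicative assertions, however, your proposal stops precisely where the work begins.  You correctly expand the right-hand side of \eqref{ud} for $(a_1a_2,b)$ into a sum over the fourfold coproduct of $b$, with $a_1$ paired to the inner legs $b^{(2)},b^{(3)}$ and $a_2$ to the outer legs $b^{(1)},b^{(4)}$, and you correctly identify the interleaving as the obstruction.  But you then declare the collapse ``the hard part,'' predict that the hypotheses cannot be applied sequentially, and leave the argument there.

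The paper (working the parallel $(a,b_1b_2)$ case) shows that your prediction is wrong: the hypotheses \emph{can} be applied sequentially once the interleaving is undone.  The key observation is that each pairing $\langle\,\cdot\,,\,\cdot\,\rangle$ lies in $\D$ and has the form $h\,T_{-\gamma}$ with $h\in\mg$ and $\gamma$ determined by the bigrading; since $\mg$ is commutative, two such factors can be swapped at the cost of reshuffling the intervening $T$-shifts.  Concretely (in the paper's notation for the $(a,bc)$ case), one moves $\langle a^{(4)},b''\rangle$ leftward past $\langle c',a^{(2)}\rangle$, obtaining
\[
\sum_{(a)(b)(c)}\langle b',a'\rangle\,T_{\om_{12}(a)+\om_{12}(b)}\,\langle a^{(4)},b''\rangle\,T_{\om_{34}(a)}\,\langle c',a''\rangle\,T_{\om_{23}(a)+\om_{12}(c)}\,\langle a^{(3)},c''\rangle,
\]
in which the $c$-factors are now adjacent and form the right-hand side of \eqref{ud} for the \emph{inner} pair.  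Applying \eqref{ud} there, then absorbing the resulting $\ep(a'')$ via \eqref{mrpa} and collapsing with \eqref{cua}, reduces the fourfold coproduct to a twofold one, at which point the outer pair is exactly $\Delta(a)$ and \eqref{ud} for $(a,b)$ finishes.  So the order is: inner hypothesis, counit axiom, outer hypothesis --- sequential, not simultaneous.  Your ``shift bookkeeping'' intuition was right, but the point is that carrying it out dissolves the very obstruction you thought would remain.
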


\begin{proof}
We will only prove the last assertion, the second one being similar and the first one  straight-forward. Thus,
we must prove that
$$\ep(abc)=\sum_{(a)(b)(c)}\lx b'c',a'\rx T_{\om_{12}(a)+\om_{12}(b)+\om_{12}(c)}
\lx a'',b''c''\rx.
 $$
By \eqref{ppp}, the right-hand side equals
$$\sum_{(a)(b)(c)}\lx b',a'\rx T_{\om_{12}(a)}\lx c',a''\rx T_{\om_{23}(a)+\om_{12}(b)+\om_{12}(c)}\lx a^{(4)},b''\rx T_{\om_{34}(a)}\lx a^{(3)},c''\rx. $$
Since $\lx c',a''\rx\in\mg T_{-\om_{12}(c)-\om_{23}(a)}$ and
$\lx a^{(4)},b''\rx\in\mg T_{-\om_{34}(a)-\om_{12}(b)}$, this may be written 
\begin{multline*}\sum_{(a)(b)(c)}\lx b',a'\rx T_{\om_{12}(a)+\om_{12}(b)}\lx a^{(4)},b''\rx T_{\om_{34}(a)}\lx c',a''\rx T_{\om_{23}(a)+\om_{12}(c)}\lx a^{(3)},c''\rx\\
\begin{split}&=\sum_{(a)(b)}\lx b',a'\rx T_{\om_{12}(a)+\om_{12}(b)}\lx a^{(3)},b''\rx T_{\om_{23}(a)}\ep(a'')\ep(c)\\
&=\sum_{(a)(b)}\lx b',a'\rx T_{\om_{12}(a)+\om_{12}(b)}\lx\mu_l(\ep(a'')1) a^{(3)},b''\rx \ep(c)\\
&=\sum_{(a)(b)}\lx b',a'\rx T_{\om_{12}(a)+\om_{12}(b)}\lx a'',b''\rx \ep(c)=
\ep(abc),
\end{split}\end{multline*}
where we used  first \eqref{ud} for the pair $(a,c)$, then \eqref{mrpa}, then
\eqref{cua}, and finally  \eqref{ud}  for the pair $(a,b)$.
\end{proof}

Cobraided  $\gh$-bialgebroids arise naturally from dynamical $R$-matrices, see
  \cite[Remark 3.4]{kn} and, for the case without spectral parameter, \cite[Cor.\ 3.20]{r}. 

\begin{proposition}\label{cbp}
Let
the dynamical $R$-matrix $R$ and the $\gh$-bialgebroid $A$ be related as in \emph{\S \ref{fcs}}. Assume that
the matrix elements
 $R_{ac}^{bd}(\la,z)$ are meromorphic in $\la$ for each  $z\in\Cp$. 
Then, there exists a cobraiding on $A$ defined by
\begin{equation}\label{cbd}\lx L_{ab}(w),L_{cd}(z)\rx=R_{ac}^{bd}(\la,w/z)T_{-\om(a)-\om(c)}. \end{equation}
\end{proposition}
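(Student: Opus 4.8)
The plan is to verify the defining axioms of a cobraiding (the bigrading condition, \eqref{mrp}, \eqref{ppp}, \eqref{cbe}, \eqref{cba}) for the bilinear form determined by \eqref{cbd}. The first observation is that \eqref{cbd} does not by itself specify a bilinear map on all of $A\times A$; one must check that it extends consistently. The natural strategy is to define $\lx a,b\rx$ for general $a,b$ by repeatedly applying \eqref{mrp} and \eqref{ppp} to reduce to the generators $L_{xy}(z)$ and to the moment map images, and then to check that the relations of $A$ (in particular the $RLL$ relations \eqref{rll}, and the commutation relations between the two copies of $\mg$ and the $L$'s) are respected. Concretely: first I would extend \eqref{cbd} to products $\lx L_{a_1b_1}(w_1)\cdots L_{a_mb_m}(w_m),\,L_{c_1d_1}(z_1)\cdots L_{c_nd_n}(z_n)\rx$ using \eqref{pppa} and \eqref{pppb} together with the coproduct $\De(L_{ab}(z))=\sum_x L_{ax}(z)\ot L_{xb}(z)$, obtaining an expression involving a product of $R$-matrix entries with appropriately shifted dynamical arguments; then extend to elements involving $\mu_l(\mg)$ and $\mu_r(\mg)$ via \eqref{mrp}. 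The content is then that this is well-defined on $A$.

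The key steps in order are as follows. Step 1: record the explicit formula for the pairing of a product of $L$'s against a product of $L$'s — this is a purely combinatorial consequence of \eqref{ppp} and the coproduct, and yields exactly the kind of multiple product of $R$-matrix entries that appears on the two sides of the QDYB equation \eqref{dyx}. Step 2: check well-definedness with respect to the easy relations $f(\la)L_{xy}=L_{xy}f(\la+\om(x))$, $f(\mu)L_{xy}=L_{xy}f(\mu+\om(y))$, $f(\la)g(\mu)=g(\mu)f(\la)$; this is where \eqref{mrp} and the shift-operator bookkeeping (the factor $T_{-\om(a)-\om(c)}$ in \eqref{cbd}, and the grading condition $\lx A_{\al\be},A_{\ga\de}\rx\subseteq(D_\gh)_{\al+\ga,\be+\de}$, which follows from \eqref{hir}) come in. Step 3: check well-definedness with respect to the $RLL$ relations \eqref{rll} — pairing both sides of \eqref{rll} against a generator $L_{cd}(z)$, or more generally against a product, and verifying equality reduces, after using Step 1, precisely to the dynamical Yang–Baxter equation \eqref{dyx} (with the dynamical parameter shifted by the weight of the third tensor slot, which is exactly the $\la-\om(\cdot)$ in \eqref{dyx}). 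Step 4: verify \eqref{cbe}, which is immediate from $\ep(L_{ab}(z))=\de_{ab}T_{-\om(a)}$ together with $R$ being normalized so that pairing with $1$ reproduces the counit — actually \eqref{cbe} holds by construction once one checks $\lx L_{ab}(w),1\rx=\de_{ab}T_{-\om(a)}=\ep(L_{ab}(w))$, and this is forced by how $1$ sits inside $A$. Step 5: verify the compatibility axiom \eqref{cba}; pairing its two sides over generators, this again becomes \eqref{dyx} (or a rearrangement of it), reflecting that \eqref{cba} is the abstract shadow of the $RLL$-type relation with the roles of the two factors exchanged.

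The main obstacle I expect is Step 3 together with well-definedness: one must show that the formula built in Step 1 actually descends to the quotient algebra $A$, i.e.\ that applying the pairing to the left- and right-hand sides of \eqref{rll} gives the same element of $\D$. This is where the full strength of the QDYB equation \eqref{dyx} is used, and where the dynamical shifts $\la\mapsto\la-\om(\cdot)$ must be tracked with care; a clean way to organize it is to pass to the coordinate-free operator form $R^{12}(\la-h^{(3)})R^{13}(\la)R^{23}(\la-h^{(1)})=R^{23}(\la)R^{13}(\la-h^{(2)})R^{12}(\la)$ and match the three $R$-factors produced by \eqref{pppa}/\eqref{pppb} against the three factors of \eqref{dyx}. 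A subtlety worth flagging: because the $R_{ac}^{bd}(\la,z)$ need not be holomorphic in $z$, \eqref{rll} is stated with limits, so one should either first treat the holomorphic case and then argue by a limiting/continuity argument in $z$, or carry the limits through the computation — the hypothesis that the $R$-matrix entries are meromorphic in $\la$ for each fixed $z$ is precisely what makes $\lx\cdot,\cdot\rx$ land in $\D=\mg T_\bullet$. The remaining axioms \eqref{mrp}, \eqref{cbe}, and the bigrading condition are routine bookkeeping once the well-definedness is in hand, and \eqref{cba} reduces, as noted, to \eqref{dyx} once more. One may alternatively cite \cite[Cor.\ 3.20]{r} for the spectral-parameter-free case and \cite[Remark 3.4]{kn} and indicate that the argument carries over verbatim with the spectral parameter carried along, which is likely the route the author takes.
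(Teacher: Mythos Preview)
Your proposal is correct, and your final sentence is exactly right: the paper gives no proof of this proposition at all, simply citing \cite[Remark~3.4]{kn} and \cite[Cor.~3.20]{r} in the preceding text and then stating the result. Your detailed sketch (extending via \eqref{ppp} and \eqref{mrp}, checking well-definedness against the $RLL$ relations via the QDYB equation, and verifying \eqref{cba} the same way) is precisely the argument underlying those references, so there is nothing to compare beyond noting that you have spelled out what the paper leaves implicit.
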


\begin{proposition}\label{up}
If $R$ is unitary in the sense of \eqref{ur}, then the cobraiding described in 
\emph{Proposition \ref{cbp}} is unitary.
\end{proposition}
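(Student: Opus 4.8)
The plan is to verify the unitarity condition \eqref{ud} on generators, and then extend to all of $A$ via Lemma \ref{ugl}. Since $A$ is generated by $\mu_l(\mg)$, $\mu_r(\mg)$, and the $L_{xy}(z)$, and since Lemma \ref{ugl} tells us that \eqref{ud} holds automatically when either argument lies in $\mu_l(\mg)\mu_r(\mg)$ and that the set of pairs satisfying \eqref{ud} is closed under multiplication in each slot, it suffices to check \eqref{ud} for the pair $(a,b)=(L_{ab}(w),L_{cd}(z))$.

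First I would compute both sides explicitly using the coproduct formula from the FRST construction, $\De(L_{ab}(z))=\sum_{x}L_{ax}(z)\ot L_{xb}(z)$, so that $a'=L_{ax}(w)$, $a''=L_{xb}(w)$, $b'=L_{cy}(z)$, $b''=L_{yd}(z)$, with $\om_{12}(a)=\om(x)$ and $\om_{12}(b)=\om(y)$. Plugging into the pairing formula \eqref{cbd}, $\lx b',a'\rx=\lx L_{cy}(z),L_{ax}(w)\rx=R_{ca}^{yx}(\la,z/w)T_{-\om(c)-\om(a)}$ and $\lx a'',b''\rx=\lx L_{xb}(w),L_{yd}(z)\rx=R_{xy}^{bd}(\la,w/z)T_{-\om(x)-\om(y)}$. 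The shift operators must be tracked carefully: the intermediate shift $T_{\om_{12}(a)+\om_{12}(b)}=T_{\om(x)+\om(y)}$ sits between the two pairings, and the first pairing contributes a function of $\la$ that gets shifted when it passes the difference operators to its right. After collecting the shift operators, the right-hand side of \eqref{ud} becomes $\sum_{xy}R_{ca}^{yx}(\la+\om(x)+\om(y),z/w)\,R_{xy}^{bd}(\la,w/z)\cdot T_{-\om(a)-\om(c)-\om(b)-\om(d)+\cdots}$, where I would carefully reconcile the total shift with $\ep(L_{ab}(w)L_{cd}(z))$; by the counit formula and the relations, $\ep(L_{ab}(w)L_{cd}(z))=\de_{ab}\de_{cd}T_{-\om(a)-\om(c)}$ (using that $\ep$ is an algebra map into $\D$ and $T_{-\om(b)}$ acting on the left of $T_{-\om(c)}$).

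The heart of the matter is then showing that $\sum_{xy}R_{ca}^{yx}(\mu',z/w)\,R_{xy}^{bd}(\la,w/z)$ collapses to $\de_{ab}\de_{cd}$ times the appropriate shift, where $\mu'$ denotes $\la$ after the relevant shift. This is precisely where the unitarity hypothesis \eqref{ur} enters: after relabeling indices and matching the dynamical-parameter arguments (the $\om$-shifts conspire so that both $R$'s are evaluated at the \emph{same} value of $\la$, as required by \eqref{ur}), the sum is exactly $\sum_{xy}R_{ab}^{xy}(\la,w/z)R_{yx}^{dc}(\la,z/w)=\de_{ac}\de_{bd}$ up to transposing the reading of \eqref{ur}. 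I would need to be attentive to the fact that \eqref{ur} as written pairs $R(\la,z_1/z_2)$ with $R(\la,z_2/z_1)$ at equal dynamical parameter, so the bookkeeping of which $\la$-shift lands where is the one genuinely delicate point.

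The main obstacle I anticipate is precisely this shift-operator bookkeeping: in $\D$, multiplication is not commutative, so a factor $f(\la)$ appearing to the left of $T_\beta$ is \emph{not} the same as $f(\la+\beta)$ to the right of $T_\beta$, and the definition \eqref{cbd} builds in shifts $T_{-\om(a)-\om(c)}$ that must be commuted past the functions $R_{ca}^{yx}$ in the other pairing. I would handle this by writing each pairing in the normal form (function of $\la$) $\cdot\, T_{(\cdot)}$, commuting all shift operators to the right while shifting the arguments, and only at the very end comparing with the normal form of $\ep(L_{ab}(w)L_{cd}(z))$. Once the shifts are lined up, the collapse to a Kronecker delta is a direct application of \eqref{ur}, and the extension to general $a,b$ is immediate from Lemma \ref{ugl}.
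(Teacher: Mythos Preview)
Your approach is essentially the paper's: reduce to generators via Lemma~\ref{ugl}, expand using the coproduct and \eqref{cbd}, and identify the result with \eqref{ur}. One small correction to your bookkeeping: the first pairing produces $R_{ca}^{yx}(\la,z/w)T_{-\om(a)-\om(c)}$ with the function sitting to the \emph{left} of all shifts, so it stays at $\la$; it is the second factor $R_{xy}^{bd}$ that lies to the right of the net intermediate shift $T_{-\om(a)-\om(c)+\om(x)+\om(y)}$. The crucial observation---which you allude to but do not name---is that this shift is $T_0$ by the $\gh$-invariance condition \eqref{hir}: $R_{ca}^{yx}\neq 0$ forces $\om(c)+\om(a)=\om(y)+\om(x)$. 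With both $R$'s thus evaluated at the same $\la$, the sum is directly \eqref{ur}, giving $\de_{ab}\de_{cd}\,T_{-\om(a)-\om(c)}=\ep(L_{ab}(w)L_{cd}(z))$.
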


\begin{proof}
By Lemma \ref{ugl}, it is enough to verify that  \eqref{ud} holds for
 $a\mapsto L_{ab}(z)$ and $b\mapsto L_{cd}(w)$. This amounts to the identity
\begin{equation*}\begin{split}\de_{ab}\de_{cd}T_{-\om(a)-\om(c)}&=\sum_{xy}\lx L_{cx}(w),L_{ay}(z)\rx T_{\om(x)+\om(y)}\lx L_{yb}(z),L_{xd}(w)\rx\\
&=\sum_{xy} R^{xy}_{ca}(\la,z/w) R_{yx}^{bd}(\la,w/z)T_{-\om(a)-\om(c)},
 \end{split}\end{equation*}
which is indeed equivalent to \eqref{ur}. 
\end{proof}

\begin{proposition}\label{acl}
If $A$ is an $\gh$-Hopf algebroid with  a unitary cobraiding, then
$$\lx  S(x),y\rx=T_\al\lx y,x\rx T_\be,\qquad x\in A_{\al\be}. $$
\end{proposition}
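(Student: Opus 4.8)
The plan is to exploit the defining antipode identity \eqref{ics} together with the unitarity axiom \eqref{ud} for the cobraiding. The key observation is that \eqref{ics} reads $\sum_{(x)} x' S(x'') = \mu_l(\ep(x)1)$, and pairing this against an arbitrary $y\in A$ should let us "peel off" the antipode. Concretely, I would start from the expression $\sum_{(x)}\lx S(x''),y\rx$-type sums and use the product rules \eqref{ppp} to split a pairing of the form $\lx x' S(x''), y\rx$ into a sum over $\lx x', y'\rx \, T_{\cdots}\, \lx S(x''), y''\rx$. Comparing the result with $\lx \mu_l(\ep(x)1), y\rx$, which by \eqref{mrpa} equals $\ep(x)1 \circ \lx 1, y\rx = \ep(x)\circ \ep(y)$ (up to the appropriate shift bookkeeping), I would get a relation expressing $\sum \lx x', y'\rx T_\cdots \lx S(x''), y''\rx$ in closed form.

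The heart of the argument is then to run this in parallel with the unitarity identity \eqref{ud}: applied to the pair $(x,y)$, \eqref{ud} says $\ep(xy) = \sum_{(x)(y)} \lx y',x'\rx T_{\om_{12}(x)+\om_{12}(y)}\lx x'',y''\rx$. The idea is that \eqref{ud} already asserts that a certain convolution inverse of the "bare" pairing $\lx x,y\rx$ is given by the flipped-and-shifted pairing $T\lx y,x\rx T$. Meanwhile, the antipode characterization says the convolution inverse of $\lx x,y\rx$ (in the appropriate sense, using $\De$ on the first slot) is $\lx S(x),y\rx$. So the strategy is: establish that both $\lx S(x),y\rx$ and $T_\al\lx y,x\rx T_\be$ satisfy the same "left inverse" equation with respect to convolution with $\lx\cdot,\cdot\rx$ in the first variable, then invoke uniqueness of such an inverse. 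To make the uniqueness step precise I would show that if $\Phi, \Psi : A\times A \to \D$ both satisfy $\sum_{(x)} \Phi(x',y') T_{\cdots} \lx x'', y''\rx = (\text{same for }\Psi)$ for all $y$, together with a counit-type normalization, then $\Phi=\Psi$ — this follows by the usual Hopf-algebra trick of sandwiching, using \eqref{cua} to reconstruct $x$ from its coproduct.

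The main obstacle will be getting all the shift operators $T_{\om_{ij}(\cdot)}$ in the right places: in the $\gh$-Hopf algebroid setting every pairing lands in $\D$, not in a field, so the "convolution" is not commutative and one must track the grading carefully through \eqref{pppa}, \eqref{pppb}, and the moment-map rules \eqref{mrp}. In particular, verifying that the degree shifts in $T_\al\lx y,x\rx T_\be$ for $x\in A_{\al\be}$ match exactly the shifts produced by the antipode identity (recall $S(A_{\al\be})\subseteq A_{-\be,-\al}$, so $\lx S(x),y\rx$ picks up shifts governed by $-\be$ and $-\al$) is where the bulk of the bookkeeping lies. I expect the cleanest route is to first check the claim on generators $x=L_{ab}(w)$, $y=L_{cd}(z)$ using \eqref{cbd} — there it reduces to a matrix identity relating $S(L)$ to $R$, which can be read off from \eqref{ur} — and then bootstrap to general $x,y$ via a lemma analogous to Lemma \ref{ugl}, showing the desired identity is preserved under products in each slot.
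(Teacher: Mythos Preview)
Your overall strategy---view $\lx S(x),y\rx$ and $T_\al\lx y,x\rx T_\be$ as convolution inverses of $\lx\cdot,\cdot\rx$ and invoke uniqueness---is the natural Hopf-algebraic intuition, and the ingredients you list (\eqref{ics}, \eqref{ud}, \eqref{pppa}) are exactly the right ones. However, there is a genuine gap in the uniqueness step as you state it: two \emph{left} inverses of an element need not coincide unless a right inverse also exists. From $\sum_{(x)} x'S(x'')=\mu_l(\ep(x)1)$ together with \eqref{pppa} you get that $\lx S(\cdot),\cdot\rx$ is a \emph{right} convolution inverse of $\lx\cdot,\cdot\rx$; from \eqref{ud} (with the roles of $a,b$ swapped) you get that the shifted flipped pairing is a \emph{left} inverse. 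To conclude equality you would still need the relevant convolution on bilinear maps $A\times A\to\D$ to be associative with unit, and this is precisely where the $T$-shift bookkeeping you flag becomes delicate: the shift in \eqref{pppa} is $T_{\om_{12}(c)}$, while in \eqref{ud} it is $T_{\om_{12}(a)+\om_{12}(b)}$, and reconciling these requires repeatedly invoking the grading constraint \eqref{cbv}. Your proposal does not carry this out.

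Your fallback route---check on generators $L_{ab}(w)$ and bootstrap by a Lemma~\ref{ugl} analogue---has a more basic problem: Proposition~\ref{acl} is stated for an arbitrary $\gh$-Hopf algebroid with unitary cobraiding, not only those arising from the FRST construction, so there are no generators $L_{ab}$ to check. For Felder's quantum group specifically this would suffice, but not for the proposition as stated.

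The paper's proof is different in structure: rather than arguing via uniqueness of inverses, it computes $\lx S(x),y\rx$ directly. One first inserts the counit expansion \eqref{cua} on both $x$ and $y$, producing a factor $\ep(x'y')$; applying unitarity \eqref{ud} to \emph{that} factor creates a string $\lx y',x'\rx\cdots\lx x'',y''\rx\cdots\lx S(x^{(3)}),y^{(3)}\rx$, which \eqref{pppa} recombines into a term containing $\lx x''S(x^{(3)}),y''\rx$; then \eqref{ics} collapses $x''S(x^{(3)})$ to a moment map. The result is a symmetric expression that one checks also arises from $T_\al\lx y,x\rx T_\be$ via the second form of \eqref{cua}. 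This sidesteps any abstract uniqueness argument and handles all shifts concretely.
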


\begin{proof}
Expanding $x$ and $y$ using the first expression in \eqref{cua}
gives
$$\lx S(x),y\rx=\sum_{(x)(y)}\lx S(x'')\mu_r(\ep(x')1),\mu_l(\ep(y')1)y''\rx. $$
Using, respectively, \eqref{mrp}, \eqref{ud}, \eqref{pppa}, \eqref{ics} and  \eqref{cbe},
this may be written
\begin{multline*}
\sum_{(x)(y)}T_\al\,\ep(x'y')\,T_{\om_{12}(y)}\,\lx S(x''),y''\rx\\
\begin{split}&=T_\al\sum_{(x)(y)}\lx y',x'\rx\, T_{\om_{12}(x)+\om_{12}(y)}\,\lx x'',y''\rx
\,T_{\om_{23}(y)}\,\lx S(x^{(3)}),y^{(3)}\rx\\
&=T_\al\sum_{(x)(y)}\lx y',x'\rx\, T_{\om_{12}(x)+\om_{12}(y)}\,\lx x''S(x^{(3)}),y''\rx\\
&=T_\al\sum_{(x)(y)}\lx y',x'\rx\, T_{\om_{12}(x)+\om_{12}(y)}\,\lx
\mu_l(\ep(x'')1),y''
\rx\\
&=T_\al\sum_{(x)(y)}\lx y',x'\rx\, T_{\om_{12}(x)+\om_{12}(y)}\,
\ep(y''x'')\,T_\be.
\end{split}\end{multline*}
The same expression is similarly obtained by expanding $x$ and $y$ in 
$T_\al\lx y,x\rx T_\be$ using the second expression in 
\eqref{cua}. 
\end{proof}

The following consequence was taken as an axiom in
 \cite{r}. In the unitary case, we find that it holds automatically.

\begin{corollary} In an $\gh$-Hopf algebroid equipped with  a unitary cobraiding,
\begin{equation}\label{sun}\langle a,b\rangle=S^{D_\gh}\left(\langle S(a),S(b)\rangle\right).\end{equation}
\end{corollary}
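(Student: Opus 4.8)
The plan is to derive \eqref{sun} directly from Proposition~\ref{acl}, by applying it twice and carefully tracking the grading shifts that appear as $T_\al$ and $T_\be$ factors. The key observation is that $S^{D_\gh}$ is the antiautomorphism of $\D$ that fixes $\mg$ and sends $T_\al\mapsto T_{-\al}$, so conjugating an element $fT_\ga\in(\D)_{\gamma\gamma}$ by shift operators and then applying $S^{D_\gh}$ produces a predictable result.

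First I would fix $a\in A_{\alpha\beta}$ and $b\in A_{\gamma\delta}$. By \eqref{cbv} we may assume $\alpha+\gamma=\beta+\delta$, so that $\langle a,b\rangle\in(\D)_{\alpha+\gamma,\beta+\delta}$, i.e.\ $\langle a,b\rangle = g\,T_{-\alpha-\gamma}$ for some $g\in\mg$ (using the sign convention $fT_{-\beta}\in(\D)_{\beta\beta}$ from the definition of the $\gh$-algebra structure on $\D$). Now apply Proposition~\ref{acl} with the roles $x\mapsto a$, $y\mapsto b$: since $a\in A_{\alpha\beta}$ this gives
\begin{equation*}
\langle S(a),b\rangle = T_\alpha\,\langle b,a\rangle\,T_\beta.
\end{equation*}
Next, note $S(a)\in A_{-\beta,-\alpha}$, so applying Proposition~\ref{acl} again with $x\mapsto S(a)$, $y\mapsto S(b)$ — and using $S^2=\id$, which holds for $\gh$-Hopf algebroids with a (unitary) cobraiding, or else can be circumvented by instead expanding $\langle S(a),S(b)\rangle$ with $x\mapsto S(a)\in A_{-\beta,-\alpha}$ — yields
\begin{equation*}
\langle S(a),S(b)\rangle = T_{-\beta}\,\langle S(b),S(a)\rangle\,T_{-\alpha}
= T_{-\beta}\,T_\gamma\,\langle a,b\rangle\,T_\delta\,T_{-\alpha},
\end{equation*}
where in the second step I used Proposition~\ref{acl} once more on $\langle S(b),S(a)\rangle$ with $b\in A_{\gamma\delta}$, together with $S(S(b))=b$. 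Thus $\langle S(a),S(b)\rangle = T_{\gamma-\beta}\,\langle a,b\rangle\,T_{\delta-\alpha}$.

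It then remains to apply $S^{D_\gh}$ and simplify. Writing $\langle a,b\rangle = g\,T_{-\alpha-\gamma}$, we get $\langle S(a),S(b)\rangle = T_{\gamma-\beta}\,g\,T_{-\alpha-\gamma}\,T_{\delta-\alpha} = (T_{\gamma-\beta}g)\,T_{\delta-\beta-2\alpha}$; here I use that $T_{\gamma-\beta}$ acting on the left of a function produces the shifted function $T_{\gamma-\beta}g$, and that shift operators add in the exponent. Since $S^{D_\gh}$ is an antiautomorphism fixing $\mg$ and sending $T_\eta\mapsto T_{-\eta}$, we obtain $S^{D_\gh}(\langle S(a),S(b)\rangle) = T_{\beta+2\alpha-\delta}\,(T_{\gamma-\beta}g) = (T_{2\alpha+\gamma-\delta}g)\,T_{2\alpha+\beta-\delta}$. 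Now invoke $\alpha+\gamma=\beta+\delta$, i.e.\ $\gamma-\delta=\beta-\alpha$, so the function-shift exponent becomes $2\alpha+(\beta-\alpha)=\alpha+\beta$ and the operator exponent becomes $2\alpha+\beta-\delta = 2\alpha+\beta-(\alpha+\gamma-\beta)=\alpha+2\beta-\gamma$; using $\gamma-\beta=\alpha-\delta+\cdots$ — at this point one should double-check against the constraint that $S^{D_\gh}(\langle S(a),S(b)\rangle)$ must lie in $(\D)_{\alpha+\gamma,\beta+\delta}$, which pins down the exponents and forces the result to equal $g\,T_{-\alpha-\gamma}=\langle a,b\rangle$.

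The main obstacle I anticipate is purely bookkeeping: getting all the shift-operator exponents and the $T_\eta$-versus-$T_{-\eta}$ sign conventions consistent, especially the interplay between the grading convention $fT_{-\beta}\in(\D)_{\beta\beta}$, the left/right moment-map commutation rules, and the antiautomorphism property of $S^{D_\gh}$. A clean way to avoid sign errors is to verify at each stage that both sides live in the correct bigraded component $(\D)_{\alpha+\gamma,\beta+\delta}$ — this is a strong consistency check and, combined with the two applications of Proposition~\ref{acl} and the relation $\alpha+\gamma=\beta+\delta$ from \eqref{cbv}, essentially forces \eqref{sun}. One subtlety worth flagging: the argument uses $S^2=\id$; if that is not among the stated axioms one should instead expand $\langle S(a),S(b)\rangle$ symmetrically using \eqref{cua} and Proposition~\ref{acl} applied to the first slot only, which still closes the computation since Proposition~\ref{acl} relates $\langle S(x),y\rangle$ to $\langle y,x\rangle$ without any reference to $S$ on the right.
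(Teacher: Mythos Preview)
Your overall strategy --- deduce the corollary by applying Proposition~\ref{acl} twice and then hitting with $S^{\D}$ --- is exactly what the paper has in mind. However, the execution contains a genuine error, and the messy bookkeeping at the end is a symptom of it rather than an unavoidable feature.

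The mistake is in your second application of Proposition~\ref{acl}. Taking $x\mapsto S(a)\in A_{-\be,-\al}$ and $y\mapsto S(b)$ gives
\[
\lx S(S(a)),S(b)\rx=T_{-\be}\lx S(b),S(a)\rx T_{-\al},
\]
so even under $S^2=\id$ the left-hand side is $\lx a,S(b)\rx$, not $\lx S(a),S(b)\rx$. Your claimed identity $\lx S(a),S(b)\rx=T_{-\be}\lx S(b),S(a)\rx T_{-\al}$ is therefore not what Proposition~\ref{acl} says, and the resulting intermediate formula $\lx S(a),S(b)\rx=T_{\ga-\be}\lx a,b\rx T_{\de-\al}$ is wrong. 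This is why your final shift exponents refuse to collapse cleanly.

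The clean fix avoids $S^2=\id$ entirely. Apply Proposition~\ref{acl} first with $(x,y)=(a,S(b))$ to get $\lx S(a),S(b)\rx=T_\al\lx S(b),a\rx T_\be$, and then with $(x,y)=(b,a)$ to get $\lx S(b),a\rx=T_\ga\lx a,b\rx T_\de$. Combining,
\[
\lx S(a),S(b)\rx=T_{\al+\ga}\lx a,b\rx T_{\be+\de}.
\]
Writing $\lx a,b\rx=g\,T_{-\al-\ga}$ with $\al+\ga=\be+\de$, this is $(T_{\al+\ga}g)T_{\al+\ga}$, and applying the antiautomorphism $S^{\D}$ gives $T_{-\al-\ga}(T_{\al+\ga}g)=g\,T_{-\al-\ga}=\lx a,b\rx$, with no leftover exponents to chase. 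Note in particular that the argument never invokes $S^2=\id$, which is not among the axioms.
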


\subsection{Algebraic construction of $R$-matrices}
\label{crs}

By Proposition \ref{cbp},
 a dynamical $R$-matrix gives a cobraided  $\gh$-bialgebroid. Conversely,  cobraided  $\gh$-bialgebroids can be used to obtain 
 solutions to the  general QDYB equation \eqref{gdyc}.  We will consider two closely related examples of such constructions. In both cases,  
our starting point is the identity 
\begin{multline}\label{udy}\sum_{(a)(b)(c)}\lx a'',c'\rx T_{\om_{12}(c)}
\lx b'',c''\rx T_{\om_{12}(a)+\om_{12}(b)}\lx a',b'\rx\\
=\sum_{(a)(b)(c)}\lx a'',b''\rx  T_{\om_{12}(a)+\om_{12}(b)}
\lx b',c'\rx T_{\om_{12}(c)}\lx a',c''\rx,
\end{multline}
which is obtained by applying $\lx\cdot,c\rx$ to both sides of \eqref{cba}.

For the first example,  consider a cobraided $\gh$-bialgebroid obtained from a dynamical $R$-matrix as in Proposition \ref{cbp}. 
Introducing  the notation
\begin{equation}\label{arn}\vec L_{ab}(z)=L_{a_1b_1}(z_1)\dotsm L_{a_nb_n}(z_n), \qquad a,b\in X^n, \quad z\in(\mathbb C^\times)^n,\end{equation}
we  write
\begin{equation}\label{pf}
\lx \vec L_{ab}(w),\vec L_{cd}(z)\rx=\delta_{\om(a)+\om(c),\om(b)+\om(d)}\mathcal Z_{ac}^{bd}(\la;w,z)T_{-\om(a)-\om(c)},
\end{equation}
where $\om(a)=\sum_i\om(a_i)$. Substituting
  $a \mapsto\vec L_{da}(u)$, $b\mapsto \vec L_{eb}(w)$ and $c\mapsto\vec L_{fc}(z)$ in \eqref{udy} yields \eqref{gdyc}
in the form
\begin{multline*}\sum_{xyz}\mathcal  Z^{xy}_{de}(\la-\om(f);u,w)
\,\mathcal Z^{az}_{xf}(\la;u,z)\,\mathcal Z^{bc}_{yz}(\la-\om(a);w,z)\\
=\sum_{xyz}\mathcal Z^{yz}_{ef}(\la;w,z) \,\mathcal Z^{xc}_{dz}(\la-\om(y);u,z)\,\mathcal Z^{ab}_{xy}(\la;u,w).
\end{multline*}
Here, $u$, $w$ and $z$ may  be vectors of different dimension.
Moreover, in the unitary case
 we may let $a\mapsto\vec L_{bd}(z)$ and $b\mapsto\vec L_{ac}(w)$ in \eqref{ud}, which gives \eqref{gduc} in the form
$$\sum_{xy}\mathcal Z^{xy}_{ab}(\la;w,z)\,\mathcal Z_{yx}^{dc}(\la;z,w)=\delta_{ac}\delta_{bd}.$$

For the second example,   consider a general cobraided $\gh$-bialgebroid $A$, not necessarily obtained via the FRST construction. For each corepresentation $U$ of $A$, fix homogeneous basis elements $e_x^U$ and  matrix elements $t_{xy}^U$ as in \eqref{med}. We may then write
$$\lx t_{ab}^U,t_{cd}^V\rx=\delta_{\om(a)+\om(c),\om(b)+\om(d)}\mathcal R_{ac}^{bd}(\la;U,V)\,T_{-\om(a)-\om(c)}.$$
Replacing $a\mapsto t_{da}^U$, $b\mapsto t_{eb}^V$, $c\mapsto t_{fc}^W$ in \eqref{udy} gives the QDYB equation \eqref{gdyc}. Moreover,  assuming that the cobraiding is unitary, substituting $a\mapsto t_{bd}^V$, $b\mapsto t_{ac}^U$ in \eqref{ud}  gives \eqref{gduc}.

The quantity $\mathcal R_{ac}^{bd}(\la;U,V)$ is a
matrix element for the action of $t_{cd}^V$ in the representation dual to $U$, see \cite[\S 3.2]{r}. 
It is also a matrix element for the natural intertwiner
$\Phi:\,V\widehat\ot U\ra U\widehat\ot V$.
To be precise \cite[Prop.\ 3.18]{r}, $\Phi$ is given by
\begin{equation}\label{tpi}\Phi(e_c^V\ot e_a^U)=\sum_{bd} \mathcal R_{ac}^{bd}(\la;U;V)e_b^U\ot e_d^V.  \end{equation}

\subsection{Lattice models}
\label{fs}

The quantity $\mathcal Z^{bd}_{ac}$ introduced in \eqref{pf} 
is the partition function for the  lattice model
with Boltzmann weights $R$ and fixed boundary conditions. 
 Though this fact 
 should be expected, it seems not to have been discussed in the literature 
in the present setting,
cobraidings on $\gh$-bialgebroids having been defined only recently \cite{r}.  
Since we will occasionally refer to the lattice model interpretation, we 
 provide a brief explanation here.

Consider the finite  lattice obtained by intersecting $m$ vertical and $n$ horizontal lines. The  intersection points are called vertices. 
Including edges and faces at the boundary, each vertex is surrounded by four edges and four faces. 

As in \S \ref{fcs},  let
 $X$ be a finite set, $\gha$ a complex vector space and $\om:\,X\rightarrow\gha$ an arbitrary function. We  label all edges with elements of $X$ and all faces with elements of $\gha$, so that around each vertex there is a label configuration as in the left part of Figure \ref{labelfig}.

\setlength{\unitlength}{2pt}
\begin{figure}[h]
$\begin{array}{c@{\hspace{1in}}c}
\begin{picture}(40,40)
\put(13,23){$\alpha$}
\put(23,23){$\beta$}
\put(23,13){$\gamma$}
\put(13,13){$\delta$}
\put(19,33){$b$}
\put(33,18){$d$}
\put(19,5){$a$}
\put(4,18){$c$}
\put(20,10){\line(0,1){20}}
\put(10,20){\line(1,0){20}}
\end{picture}
\begin{picture}(40,40)
\put(18,18){$\alpha$}
\put(18,27){$b$}
\put(27,18){$d$}
\put(18,10){$a$}
\put(10,18){$c$}
\put(15,15){\line(0,1){10}}
\put(15,15){\line(1,0){10}}
\put(15,25){\line(1,0){10}}
\put(25,15){\line(0,1){10}}
\end{picture}
\end{array}$
%\vspace{5mm}
\caption{Local label configurations in vertex and face picture.
Roman characters refer to elements of $X$, Greek characters to elements of $\gha$.
}
\label{labelfig}\end{figure}
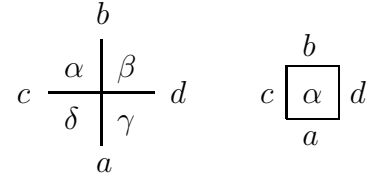

We require that the face labelling is obtained from the edge labelling as follows.  The top left face has label $0$, and as we move east or south, crossing an edge labelled $x$, the face label increases by $\om(x)$.
This is possible if and only if the edge labels around each vertex satisfy
\begin{equation}\label{ir}\om(a)+\om(c)=\om(b)+\om(d). \end{equation}
 If $\om$ is injective, the edge labels are conversely 
determined by the face labels.  A 
 labelling satisfying these rules will be called a \emph{state}.

In our main case of interest,  $\gha=\mathbb C$, $X=\{\pm\}$, $\om(\pm)=\pm 1$.  One may then picture edges labelled $+$ as arrows going up or right and edges labelled 
$-$ as arrows going down or left. The condition \eqref{ir} is  the  ice rule, saying that  each vertex has two incoming and two outgoing edges. In the left part of Figure \ref{fig}, we give an example with $m=3$ and $n=2$.

\begin{figure}[h]
\begin{center}
$\begin{array}{c@{\hspace{1in}}c}
 \begin{pspicture}(0.9,0.9)(2.8,2.1)
\psset{xunit=.7,yunit=.7}
\psset{arrowsize=5pt}
\rput(0.5,0.5){$0$}
\rput(0.4,1.5){$-1$}
\rput(0.5,2.5){$0$}
\rput(1.5,0.5){$1$}
\rput(1.5,1.5){$0$}
\rput(1.5,2.5){$1$}
\rput(2.5,0.5){$0$}
\rput(2.5,1.5){$1$}
\rput(2.5,2.5){$2$}
\rput(3.5,0.5){$1$}
\rput(3.5,1.5){$2$}
\rput(3.5,2.5){$1$}
\rput(0,1){\iceR}
\rput(1,2){\iceL}
\rput(1,0){\iceU}
\rput(1,1){\iceU}
\rput(1,2){\iceU}
\rput(1,1){\iceR}
\rput(2,2){\iceL}
\rput(2,1){\iceD}
\rput(2,1){\iceU}
\rput(2,2){\iceU}
\rput(3,1){\iceL}
\rput(3,2){\iceL}
\rput(3,0){\iceU}
\rput(3,1){\iceU}
\rput(3,3){\iceD}
\rput(4,1){\iceL}
\rput(3,2){\iceR}
\end{pspicture}
&
\begin{pspicture}(0.5,0.5)(2.1,1.4)
\psset{xunit=.7,yunit=.7}
\psset{arrowsize=5pt}
\rput(0.5,0.5){$-1$}
\rput(0.5,1.5){$0$}
\rput(1.5,0.5){$0$}
\rput(1.5,1.5){$1$}
\rput(2.5,0.5){$1$}
\rput(2.5,1.5){$2$}
\rput(0,1){\iceD}
\rput(0,1){\iceU}
\rput(0,0){\iceR}
\rput(0,1){\iceR}
\rput(0,2){\iceR}
\rput(1,1){\iceD}
\rput(1,1){\iceU}
\rput(2,0){\iceL}
\rput(1,1){\iceR}
\rput(1,2){\iceR}
\rput(2,0){\iceU}
\rput(2,1){\iceU}
\rput(2,0){\iceR}
\rput(2,1){\iceR}
\rput(3,2){\iceL}
\rput(3,0){\iceU}
\rput(3,2){\iceD}
\end{pspicture}
\end{array}$
\end{center}
\vspace{7mm}
\caption{Vertex and face picture of a state.}
\label{fig}
\end{figure}
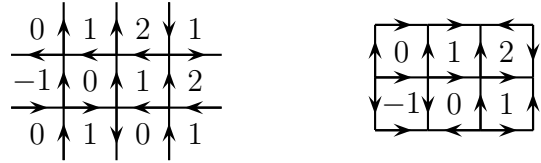

To obtain a statistical model we must  specify a 
 weight function on the  states. To this end,
we fix   parameters $w_1,\dots,w_m,z_1,\dots,z_n\in\mathbb C^\times$ and $\la\in\gha$. 
Consider the vertex at the intersection of the $i$:th 
horizontal line from the top with the  $j$:th
vertical line from the left. If the ambient labelling is as in Figure \ref{labelfig}, then this vertex is assigned weight 
$R^{bd}_{ac}(\la-\al,w_j/z_i). $
The weight of a state is then the product of the weights of all vertices. As an example, the state in Figure 1 has weight
\begin{multline*}R^{+-}_{+-}(\la,w_1/z_1)R^{+-}_{+-}(\la-1,w_2/z_1)R^{-+}_{+-}(\la-2,w_3/z_1)\\
\times R^{++}_{++}(\la+1,w_1/z_2)R^{+-}_{-+}(\la,w_2/z_2)R^{+-}_{+-}(\la-1,w_3/z_2).
\end{multline*}

We have described the  model as a vertex model. 
In the physics literature, an alternative description 
as a face model is  more common.   This corresponds to passing to the dual lattice, interchanging the roles of vertices and faces. In this process, horizontal edges become vertical and vice versa. The face labels  end up at vertices, but we move each one to its neighbouring face in the south-east direction. Labels along the east and south boundary are lost, but these 
do not enter in the partition functions, and can in any case 
 be recovered from the edge labelling. 
In this dual picture, the Boltzmann weights  correspond to interaction round a face rather than a vertex, see  the right part of Figures \ref{labelfig} and \ref{fig} (in Figure \ref{fig}, arrows have been rotated $90^\circ$ clockwise).

We will now consider the model with arbitrary fixed boundary conditions. Given $a,b\in X^m$, $c,d\in X^n$, suppose  the boundary edges are labelled as in Figure~\ref{bfig}. We may then introduce the partition function
\begin{equation}\label{ppf}\mathcal Z^{bd}_{ac}(\la;w;z)=\sum_{\substack{\text{states with}\\ \text{fixed boundary}}}\operatorname{weight}(\text{state}).\end{equation}
We claim that this definition agrees with
\eqref{pf}. To see why, we use 
 \eqref{pppa} on a decomposition
$$\lx\vec L_{ab}(w),\vec L_{cd}(z)\rx=\lx\vec L_{a'b'}(w')\vec L_{a''b''}(w''),\vec L_{cd}(z)\rx $$
(where $w=(w',w'')$ and so on), obtaining the relation
$$\mathcal Z_{ac}^{bd}(\la;w;z)=\sum_{x\in X^n}\mathcal Z_{a'c}^{b'x}(\la;w';z)\mathcal Z_{a''x}^{b''d}(\la-\om(b');w'';z) $$
where $\mathcal Z$ is as in \eqref{pf}.
Similarly, \eqref{pppb} gives
$$\mathcal Z_{ac}^{bd}(\la;w;z)=\sum_{x\in X^m}\mathcal Z_{xc'}^{bd'}(\la;w;z')\mathcal Z_{ac''}^{xd''}(\la-\om(c');w;z''). $$
It is clear that the partition function 
defined in \eqref{ppf}
satisfies the same relations, corresponding to a vertical or horizontal splitting of the lattice. Thus, the equivalence of the two definitions follows by induction on $m$ and $n$.

\begin{figure}[h]
\begin{picture}(80,60)
\put(18,5){$a_1$}
\put(48,5){$a_m$}
\put(18,53){$b_1$}
\put(48,53){$b_m$}
\put(3,38){$c_1$}
\put(3,18){$c_n$}
\put(63,38){$d_1$}
\put(63,18){$d_n$}
\put(33,5){$\dotsm$}
\put(33,45){$\dotsm$}
\put(13,28){$\vdots$}
\put(55,28){$\vdots$}
\put(10,40){\line(1,0){50}}
\put(10,20){\line(1,0){50}}
\put(20,10){\line(0,1){40}}
\put(50,10){\line(0,1){40}}
\end{picture}
%\vspace{5mm}
\caption{Fixed boundary conditions.}
\label{bfig}\end{figure}
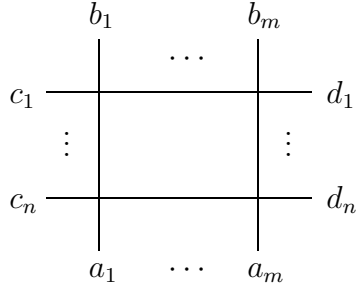

\section{Felder's elliptic quantum group}
\label{fgs}

\subsection{Felder's  quantum group as an $\gh$-Hopf algebroid}
\label{ess}

We will study a particular $\gh$-Hopf algebroid, with $\gha=\mathbb C$, constructed from the  $R$-matrix of the 8VSOS model \cite{b}.
The corresponding  quantum group  was introduced by Felder \cite{f} on the level of its representations, and further studied by Felder and Varchenko \cite{fv}, before it was included in the $\gh$-Hopf algebroid framework of
 Etingof and Varchenko \cite{ev}.

Let $p$ and $q$ be fixed parameters, with $|p|<1$ and $q\neq 0$. 
We  fix a choice of $\log q$ and write
$q^\la=e^{\la\log q}$  for $\la\in\mathbb C$.  
 Compared to the conventions of many authors, we  replace $q^2$ by $q$.

We will use the notation
$$\tha(x)=\prod_{j=0}^\infty(1-p^jx)(1-p^{j+1}/x), $$
$$\tha(x_1,\dots,x_n)=\tha(x_1)\dotsm\tha(x_n), $$
$$(x)_k=\theta(x)\theta(qx)\dotsm\theta(q^{k-1}x), $$
$$(x_1,\dots,x_n)_k=(x_1)_k\dotsm(x_n)_k, $$
for theta functions and elliptic Pochhammer symbols.
We will  freely use elementary identities such as
$$(x)_k=(-1)^kq^{\binom k2}x^k (q^{1-k}/x)_k, $$
see \cite[\S 11.2]{gr}.

Let $\gha=\C$,  $X=\{\pm \}$, and  $\omega(\pm )=\pm 1$. 
We will often identify $\pm 1=\pm$.
Then, 
$$R(\la,z)=\left(\begin{matrix} R_{++}^{++} & 0 & 0 & 0\\
0& R_{+-}^{+-} & R^{-+}_{+-} & 0\\
0 & R^{+-}_{-+} & R_{-+}^{-+} & 0\\
0 & 0 & 0 & R^{--}_{--} 
\end{matrix}\right)= \left(\begin{matrix} 1 & 0 & 0 & 0\\
0& a(\la,z) & b(\la,z) & 0\\
0 & c(\la,z) & d(\la,z) & 0\\
0 & 0 & 0 & 1 
\end{matrix}\right), $$
where
$$a(\la,z)=\frac{\tha(z,q^{\la+2})}{\tha(qz,q^{\la+1})},\qquad 
b(\la,z)=\frac{\tha(q,q^{-\la-1}z)}{\tha(qz,q^{-\la-1})},
 $$
$$c(\la,z)=\frac{\tha(q,q^{\la+1}z)}{\tha(qz,q^{\la+1})},\qquad 
d(\la,z)=\frac{\tha(z,q^{-\la})}{\tha(qz,q^{-\la-1})},
 $$
satisfies the QDYB equation \eqref{dyx} and the unitarity relation \eqref{ur}.

Applying the FRST construction of \S \ref{fcs},
 one obtains an $\gh$-bialgebroid $A$.
The generators will be denoted
$$\alpha(z)=L_{++}(z),\quad \be(z)=L_{+-}(z),\quad \ga(z)=L_{-+}(z),\quad \de(z)=L_{--}(z). $$
In analogy with \eqref{arn}, we use notation such as
 $$\vec \alpha(z)=\alpha(z_1)\dotsm \alpha(z_n),\qquad z\in (\mathbb C^\times)^n.$$
We refer to \cite{knr} for an explicit list of relations,
 noting  only that
\begin{equation}\label{lc}L_{ab}(z)L_{ab}(w)=L_{ab}(w)L_{ab}(z),\qquad a,b\in\{\pm\}, \end{equation}
and that
\begin{subequations}\label{gar}
\begin{align}\label{gara}
\ga(w)\al(z)&=a(\la,z/w)\al(z)\ga(w)+b(\la,z/w)\ga(z)\al(w),\\
\label{garb}\al(w)\ga(z)&=c(\la,z/w)\al(z)\ga(w)+d(\la,z/w)\ga(z)\al(w).
\end{align}
\end{subequations}

The $\gh$-bialgebroid $A$ can be extended to an $\gh$-Hopf algebroid, which we  denote $\E$.
To define it,  let
$$\det(z)=\frac{F(\mu)}{F(\la)}\big(\al(z)\de(qz)-\ga(z)\be(qz)\big), $$
where
\begin{equation}\label{f}F(\la)=q^{-\frac\la 2}\tha(q^{\la+1}). \end{equation}
Then, $\E$ is obtained from $A$ by
adjoining the inverses $\det^{-1}(z)$, which are required to be central elements with $\De(\det^{-1}(z))= \det^{-1}(z)\ot\det^{-1}(z)$, $\ep(\det^{-1}(z))=1$, 
and defining the antipode by $S({\det}^{-1}(z))=\det(z)$,
\begin{align*}
 S(\al(z))&= \frac{F(\mu)}{F(\la)}\,{\det}^{-1}(q^{-1}z)\de(q^{-1}z),\\
 S(\be(z))&=- \frac{F(\mu)}{F(\la)}\,{\det}^{-1}(q^{-1}z)\be(q^{-1}z),\\
 S(\ga(z))&=-\frac{F(\mu)}{F(\la)}\, {\det}^{-1}(q^{-1}z)\ga(q^{-1}z),\\
 S(\de(z))&=\frac{F(\mu)}{F(\la)}\,{\det}^{-1}(q^{-1}z)\al(q^{-1}z).\end{align*}

\subsection{Singular cobraiding}

Since $R(\la,z)$ is singular at $z=q^{-1}$, 
 Proposition \ref{up} cannot be applied, so $A$ does not strictly speaking have a unitary cobraiding. An apparent solution to this problem is to
use instead of \eqref{cbd} the definition
$$\lx L_{ab}(w),L_{cd}(z)\rx_{\operatorname{reg}}=\tha(qw/z)R_{ac}^{bd}(\la,w/z)T_{-\om(a)-\om(c)}.$$
 This gives a \emph{bona fide} cobraiding  $\lx\cdot,\cdot\rx_{\operatorname{reg}}$ on $A$, which satisfies a modified
unitarity axiom, see \cite{h}. However, trying to extend this cobraiding 
to the  $\gh$-Hopf algebroid $\E$, singularities reappear. 
We prefer to stick to the definition \eqref{cbd}, which leads to a  
\emph{singular} cobraiding, defined on a subspace of 
 $A\times A $. 
 Explicitly, the singular cobraiding is defined on the generators by
\begin{multline}\label{cb}
\left(\begin{matrix}\lx\al(w),\al(z)\rx &\lx\al(w),\be(z)\rx&\lx\al(w),\ga(z)\rx&\lx\al(w),\de(z)\rx\\
\lx\be(w),\al(z)\rx &\lx\be(w),\be(z)\rx&\lx\be(w),\ga(z)\rx&\lx\be(w),\de(z)\rx\\
\lx\ga(w),\al(z)\rx &\lx\ga(w),\be(z)\rx&\lx\ga(w),\ga(z)\rx&\lx\ga(w),\de(z)\rx\\
\lx\de(w),\al(z)\rx &\lx\de(w),\be(z)\rx&\lx\de(w),\ga(z)\rx&\lx\de(w),\de(z)\rx\end{matrix}\right)\\
=\left(\begin{matrix}T_{-2}&0&0&a(\la,w/z)T_0\\0&0&b(\la,w/z)T_0&0\\0&c(\la,w/z)T_0&0&0\\d(\la,w/z)T_0&0&0&T_2\end{matrix}\right).
\end{multline}
The cobraiding axioms then give a meaning to  expressions of the form
$$\lx f(\la,\mu)L_{a_1b_1}(w_1)\dotsm L_{a_mb_m}(w_m),
g(\la,\mu) L_{c_1d_1}(z_1)\dotsm L_{c_nd_n}(z_n)\rx,
 $$
where $w_i/z_j\notin p^\mathbb Z q^{-1}$ for all $i,\,j$.
As long as we restrict to the subspace spanned by such expressions, the properties of unitary cobraidings discussed in \S \ref{cbss} remain  valid. 

In \cite{kn}, the singular cobraiding was extended to $\E$, by defining
$$\lx L_{ab}(w),{\det}^{-1}(z)\rx=\delta_{ab}q^{-\frac 12}\frac{\theta(w/z)}{\theta(w/qz)}T_{-a}, $$
$$\lx {\det}^{-1}(w), L_{ab}(z)\rx=\delta_{ab}\,q^{-\frac 12}\frac{\theta(qw/z)}{\theta(w/z)}T_{-a}, $$
$$\lx{\det}^{-1}(w),{\det}^{-1}(z)\rx=q\frac{\theta(w/qz)}{\theta(qw/z)}. $$
It is easy to see that, for generic spectral
parameters, the cobraiding and unitarity axioms remain valid. 
The following result is then easily proved by induction.

\begin{lemma}\label{dcgl}
For generic $(x,w,y,z)\in(\Cp)^k\times (\Cp)^l\times (\Cp)^m\times (\Cp)^n$,
\begin{multline*}\lx\overrightarrow{\det}^{-1}(x)\vec L_{ab}(w),\overrightarrow{\det}^{-1}(y)\vec L_{cd}(z)\rx=q^{km-\frac12(kn+lm)}\prod_{1\leq i\leq k,\,1\leq j\leq m}\frac{\theta(x_i/qy_j)}{\theta(qx_i/y_j)}\\
\times
\prod_{1\leq i\leq k,\,1\leq j\leq n}\frac{\theta(qx_i/z_j)}{\theta(x_i/z_j)}
\prod_{1\leq i\leq l,\,1\leq j\leq m}\frac{\theta(w_i/y_j)}{\theta(w_i/qy_j)}\lx\vec L_{ab}(x),\vec L_{cd}(z)\rx. \end{multline*}
\end{lemma}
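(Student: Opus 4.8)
\textbf{Proof plan for Lemma \ref{dcgl}.}

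The statement is multiplicative in the four families of generators, so the natural route is to peel off one generator at a time using the product rules \eqref{pppa} and \eqref{pppb} for the cobraiding, together with the explicit pairings of $\det^{-1}$ against the $L_{ab}$ and against itself. First I would isolate the effect of moving a single $\det^{-1}(x_1)$ out of the left argument. Applying \eqref{pppa} to the factorization $\overrightarrow{\det}^{-1}(x)\vec L_{ab}(w)=\det^{-1}(x_1)\cdot\bigl(\overrightarrow{\det}^{-1}(x')\vec L_{ab}(w)\bigr)$ and using that $\det^{-1}(x_1)$ is grouplike, one gets
$$\lx\det^{-1}(x_1)\cdot u,\ v\rx=\sum_{(v)}\lx\det^{-1}(x_1),v'\rx T_{\om_{12}(v)}\lx u,v''\rx,$$
and since $\det^{-1}$ is central and grouplike the coproduct of $v$ splits cleanly; the pairing $\lx\det^{-1}(x_1),\cdot\rx$ against a string of $\det^{-1}$'s and $L_{cd}$'s is diagonal (by \eqref{pppb} and the given formulas it only sees the ``weight'' $\delta_{cd}$ of each generator, contributing the scalar factors $q^{-1/2}\theta(qx_1/z_j)/\theta(x_1/z_j)$ for each $L_{cd}(z_j)$ and $q\,\theta(x_1/qy_j)/\theta(qx_1/y_j)$ for each $\det^{-1}(y_j)$, and a shift operator). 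The shift operators $T_{\pm a_i}$ produced along the way must be tracked carefully, but because the final answer is $\lx\vec L_{ab}(x),\vec L_{cd}(z)\rx$ times a \emph{scalar}, all these shifts have to recombine into the shift already contained in that pairing; the bookkeeping is exactly the kind carried out in the proof of Lemma \ref{ugl}, using that $\lx A_{\al\be},A_{\ga\de}\rx\in(\D)_{\al+\ga,\be+\de}$ and the moment-map rules \eqref{mrp}.

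Concretely I would run the induction in two stages. Stage one: induct on $k$, the number of $\det^{-1}$'s on the left, reducing to the case $k=0$; each step strips off one $\det^{-1}(x_i)$ and produces the factor
$$q^{m-\frac12 n}\prod_{j=1}^{m}\frac{\theta(x_i/qy_j)}{\theta(qx_i/y_j)}\prod_{j=1}^{n}\frac{\theta(qx_i/z_j)}{\theta(x_i/z_j)}$$
(here $q^{m}$ from the $m$ copies of $\det^{-1}(y_j)$ on the right and $q^{-n/2}$ from the $n$ copies of $L_{cd}(z_j)$). Taking the product over $i=1,\dots,k$ gives the prefactor $q^{km-\frac12 kn}$ and the first two theta-products in the claimed formula. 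Stage two: with $k=0$, induct on $l$ by peeling $\det^{-1}(y_j)$ off the \emph{right} argument using \eqref{pppb}; by the same diagonality each step contributes $q^{-1/2}\theta(w_i/y_j)/\theta(w_i/qy_j)$ for every $L_{ab}(w_i)$ on the left, yielding after the product over $j$ the factor $q^{-lm/2}$ and the last theta-product, and reducing to $\lx\vec L_{ab}(w),\vec L_{cd}(z)\rx$, which is the right-hand side. Adding the exponents $km-\frac12 kn$ (stage one) and $-\frac12 lm$ (stage two) gives $km-\frac12(kn+lm)$, matching the statement. The genericity hypothesis $x_i/y_j,\ w_i/y_j,\ x_i/z_j\notin p^{\Z}q^{\pm1}$, together with $w_i/z_j\notin p^{\Z}q^{-1}$ inherited from the underlying singular cobraiding, is what makes every pairing used here well-defined.

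The main obstacle is purely organizational rather than conceptual: making sure the shift operators $T_\al$ that appear at each peeling step are moved past the meromorphic coefficients in the correct order (via $\mu_l(f)a=a\mu_l(T_\al f)$ and \eqref{mrpb}) so that they collapse exactly onto the $T_{-\om(a)-\om(c)}$ already present in $\lx\vec L_{ab}(x),\vec L_{cd}(z)\rx$, leaving genuine scalars in front. A clean way to avoid repetitive computation is to first prove the three ``one-generator'' identities --- $\lx\det^{-1}(x)\cdot u,\ v\rx$, $\lx u,\ \det^{-1}(y)\cdot v\rx$ expanded over the grouplike/central structure --- as a preliminary, and then invoke them inductively exactly as in Lemma \ref{ugl}, where an analogous ``if it holds for the factors, it holds for the product'' argument is already spelled out; this reduces the present lemma to checking the base cases $k=l=0$ (trivial) and the single-generator step (a direct application of \eqref{pppa}, \eqref{pppb}, \eqref{cbe} and the explicit pairing formulas).
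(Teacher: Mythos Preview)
Your approach is correct and is precisely the induction the paper has in mind (the paper's entire proof is the single sentence ``easily proved by induction''). One small slip: in stage two you write ``induct on $l$'' but you are peeling off $\det^{-1}(y_j)$, so the induction variable is $m$; the exponent $-\tfrac12 lm$ you compute is nonetheless correct.
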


The existence of the antipode is related to the following  symmetry, which  in physical terms is the crossing symmetry for the partition function.
Here and below, we write
$$|x|=x_1+\dots+x_n,\qquad x\in\{\pm\}^n= \{\pm 1\}^n. $$

\begin{lemma}\label{pfs}
For $(w,z)\in(\mathbb C^\times)^m\times(\mathbb C^\times)^n$ generic, $a,b\in\{\pm\}^m$ and $c,d\in\{\pm\}^n$,
\begin{multline*}\lx\vec L_{ab}(w),\vec L_{cd}(z)\rx
=(-1)^{\frac 12(|c|-|d|)}q^{-\frac {1}2\,mn}\prod_{1\leq i\leq n,\,1\leq j\leq m}
\frac{\tha(z_i/w_j)}{\tha(z_i/qw_j)}\\
\times\prod_{j=1}^n\frac{F(\la-c_1-\dots-c_j)}
{F(\la-d_1-\dots-d_j-|b|)}\, T_{-|c|}\lx\vec  L_{-d^{\op},-c^{\op}}(q^{-1}z),\vec L_{ab}(w)\rx T_{-|d|},
 \end{multline*}
with $F$ as in \eqref{f} and where we write $(x_1,\dots,x_n)^{\op}=(x_n,\dots,x_1)$.
\end{lemma}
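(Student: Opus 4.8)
The plan is to obtain this crossing symmetry from the way the cobraiding interacts with the antipode. The two ingredients are Proposition~\ref{acl}, which trades $S$ applied to the left slot for a flip of the two arguments, and the explicit antipode formulas on $\E$ together with Lemma~\ref{dcgl}.

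First I would apply Proposition~\ref{acl} with $x=\vec L_{cd}(z)$. Since $L_{c_id_i}(z_i)\in A_{\om(c_i),\om(d_i)}$, the product $\vec L_{cd}(z)$ lies in $A_{|c|,|d|}$, so the proposition gives
\[
\lx\vec L_{ab}(w),\vec L_{cd}(z)\rx=T_{-|c|}\,\lx S(\vec L_{cd}(z)),\vec L_{ab}(w)\rx\,T_{-|d|},
\]
and everything reduces to evaluating the right-hand pairing. The genericity hypothesis on $(w,z)$ is used precisely here: it guarantees that the singular cobraiding, extended to $\E$ as in~\S\ref{fgs}, is defined on all elements that occur, in particular on $S(\vec L_{cd}(z))$ with its $\det^{-1}$ factors.

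Next I would expand $S(\vec L_{cd}(z))$. The four antipode formulas combine into the single rule $S(L_{ab}(z))=ab\,\frac{F(\mu)}{F(\la)}\,\overrightarrow{\det}^{-1}(q^{-1}z)\,L_{-b,-a}(q^{-1}z)$, where $ab\in\{\pm1\}$. Using that $S$ is an anti-homomorphism, that $\det^{-1}$ is central, and the moment-map commutation relations of an $\gh$-algebra to gather the $F(\mu)/F(\la)$ factors to one side — which replaces $F$ by shifts of $F$ along partial sums of the $c_i$ and of the $d_i$ — one obtains
\[
S(\vec L_{cd}(z))=(-1)^{\frac12(|c|-|d|)}\,G(\la,\mu)\,\overrightarrow{\det}^{-1}(q^{-1}z)\,\vec L_{-d^{\op},-c^{\op}}(q^{-1}z),
\]
where $G\in\mu_l(\mg)\mu_r(\mg)$ is an explicit product of ratios of $F$'s; the sign comes from the elementary identity $\prod_i c_id_i=(-1)^{\frac12(|c|-|d|)}$, both sides being the parity of $\#\{i:c_i\ne d_i\}$.

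Substituting into the pairing, I would pull the moment-map factor $G(\la,\mu)$ out of the first slot of $\lx\cdot,\cdot\rx$ using \eqref{mrpa}--\eqref{mrpb}, so that it becomes multiplication operators and shifts; together with the outer $T_{-|c|}$ and $T_{-|d|}$ these recombine into the claimed $F$-product $\prod_{j=1}^n F(\la-c_1-\dots-c_j)/F(\la-d_1-\dots-d_j-|b|)$ and the operators $T_{-|c|}$, $T_{-|d|}$. The surviving pairing $\lx\overrightarrow{\det}^{-1}(q^{-1}z)\,\vec L_{-d^{\op},-c^{\op}}(q^{-1}z),\vec L_{ab}(w)\rx$ has no $\det^{-1}$ in its second argument, so Lemma~\ref{dcgl} applies with its second-slot $\det^{-1}$-parameters absent: two of the three theta-products there become empty products, and one reads off precisely $q^{-\frac12 mn}\prod_{i,j}\theta(z_i/w_j)/\theta(z_i/qw_j)$ times $\lx\vec L_{-d^{\op},-c^{\op}}(q^{-1}z),\vec L_{ab}(w)\rx$. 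Collecting all prefactors yields the lemma. I expect the only genuine difficulty to be the bookkeeping in the last two steps — tracking the shifts produced when the moment maps are commuted past the $L$-generators and checking that they reassemble into the exact partial-sum pattern of the $F$-ratios and the stated power of $q$ — while the structural part is a direct combination of Proposition~\ref{acl} and Lemma~\ref{dcgl}.
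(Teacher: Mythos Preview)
Your proposal is correct and follows exactly the paper's route: apply Proposition~\ref{acl} with $x=\vec L_{cd}(z)$, insert the explicit formula~\eqref{sv} for $S(\vec L_{cd}(z))$, pull the $F(\mu)/F(\la)$ factors out of the pairing via~\eqref{mrp}, and strip the $\overrightarrow{\det}^{-1}$ with Lemma~\ref{dcgl}. The only point worth noting in your bookkeeping is that the shift $-|b|$ in the denominator $F$-factors arises because moving $g(\la)$ leftwards past $T_{-|c|}\lx\cdot,\cdot\rx$ picks up a total shift by $-|a|-|c|$, which equals $-|b|-|d|$ precisely under the constraint $|a|+|c|=|b|+|d|$ imposed by~\eqref{cbv}; since both sides of the identity vanish otherwise, this suffices.
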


\begin{proof}
Let $x=\vec L_{cd}(z)$ and $y=\vec L_{ab}(w)$ in Proposition \ref{acl}. It is straight-forward to check that 
\begin{multline}\label{sv}S(\vec L_{ab}(z))=(-1)^{\frac12(|a|-|b|)}\prod_{j=1}^n\frac{F(\mu+a_{j+1}+\dots+a_n)}{F(\la+b_{j+1}+\dots+b_n)}\\
\times\overrightarrow{{\det}}^{-1}(q^{-1}z)\vec L_{-b^{\op},-a^{\op}}(q^{-1}z).
\end{multline}
Using also  Lemma \ref{dcgl} one arrives at the desired result.
\end{proof}

Finally, we mention the following useful algebra symmetries. 

\begin{proposition}\label{ip}
There exists an algebra antiautomorphism $\phi$ of $\E$ defined on the generators by $\phi(f(\la))=f(-\la-2)$, $\phi(f(\mu))=f(-\mu-2)$,
$\phi(L_{ab}(z))=L_{ab}(z^{-1})$, $\phi(\det^{-1}(z))=\det^{-1}(q^{-1}z^{-1})$. It satisfies $\phi\circ S=S^{-1}\circ\phi$,
\begin{equation}\label{dph}(\phi\ot\phi)\circ\De=\De\circ\phi,\qquad \phi^{\D}\circ\ep\circ\phi=\ep, \end{equation}
\begin{equation}\label{phun}\lx x,y\rx=\phi^{\D}(\lx\phi(y),\phi(x)\rx), \end{equation}
where $\phi^{\D}$ is the algebra antiautomorphism of $\D$ defined by $\phi^{\D}(f(\la))=f(-\la-2)$, $\phi^{\D}(T_\al)=T_\al$. Moreover, there exists an algebra automorphism $\psi$ of   $\E$ defined by  $\psi(f(\la))=f(-\la-2)$, $\psi(f(\mu))=f(-\mu-2)$,
$\psi(L_{ab}(z))=L_{-a,-b}(z)$, $\psi(\det^{-1}(z))=\det^{-1}(z)$. It satisfies
$S\circ\psi=\psi\circ S$,
$$(\psi\ot\psi)\circ\De=\De\circ\psi,\qquad \psi^{\D}\circ\ep\circ\psi=\ep, $$
\begin{equation}\label{psu}\lx x,y\rx=\psi^{\D}(\lx\psi(x),\psi(y)\rx), \end{equation}
where $\psi^{\D}$ is the algebra automorphism of $\D$ defined by $\psi^{\D}(f(\la))=f(-\la-2)$, $\psi^{\D}(T_\al)=T_{-\al}$.
\end{proposition}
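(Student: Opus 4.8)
The plan is to verify each asserted property of $\phi$ and $\psi$ by checking it on the algebra generators, using that the defining relations of $\E$ are preserved (so that the maps are well defined) and then propagating the identities multiplicatively. First I would confirm that $\phi$ is a well-defined algebra antiautomorphism: one substitutes $\phi(f(\la))=f(-\la-2)$, $\phi(f(\mu))=f(-\mu-2)$, $\phi(L_{ab}(z))=L_{ab}(z^{-1})$ into the FRST relations of \S\ref{fcs} (in the explicit form of \cite{knr}) and the relations defining $\det^{-1}(z)$, checking that the images satisfy the opposite relations. The key input is an elementary symmetry of the $R$-matrix: from the explicit formulas for $a,b,c,d$ one reads off an identity relating $R_{ac}^{bd}(-\la-2,z^{-1})$ to $R_{ac}^{bd}(\la,z)$ (the shift $\la\mapsto-\la-2$ is exactly what converts $q^{\la+2}$ to $q^{-\la}$ etc.), and under $z\mapsto z^{-1}$ the relation \eqref{rll} turns into itself with the order of the $L$'s reversed. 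The same symmetry, compatible with the $F$-factors in $\det$, handles $\det^{-1}$. For $\psi$ one proceeds identically, now using the symmetry $a\leftrightarrow d$, $b\leftrightarrow c$ of the $R$-matrix under $\la\mapsto-\la-2$ together with the relabelling $\pm\mapsto\mp$ of the edge indices.

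Next I would check the coalgebraic identities. Both $(\phi\ot\phi)\circ\De=\De\circ\phi$ and $\phi^{\D}\circ\ep\circ\phi=\ep$ (and their $\psi$-analogues) reduce, by multiplicativity of $\De$ and $\ep$ and the algebra-(anti)morphism property of $\phi$, $\psi$, to verification on generators, where they follow directly from the explicit coproduct $\De(L_{ab}(z))=\sum_x L_{ax}(z)\ot L_{xb}(z)$ and counit $\ep(L_{ab}(z))=\de_{ab}T_{-\om(a)}$; note $\phi^{\D}$ fixes the $T_\al$ while $\psi^{\D}$ sends $T_\al\mapsto T_{-\al}$, which is precisely what the relabelling $\om(a)\mapsto-\om(a)$ in $\psi$ requires. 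For $\phi\circ S=S^{-1}\circ\phi$ and $S\circ\psi=\psi\circ S$, it suffices to check the identities on the generators $\al,\be,\ga,\de,\det^{-1}$ using the explicit antipode formulas from \S\ref{ess}; here one must track the prefactors $F(\mu)/F(\la)$ and the shifts $z\mapsto q^{-1}z$, and the identity $F(-\la-2)=\pm q^{\text{(something)}}F(\la)$ that follows from \eqref{f} makes these match. (Strictly, for $\phi\circ S=S^{-1}\circ\phi$ one may instead verify $\phi\circ S\circ\phi\circ S=\id$ on generators, which is often cleaner.)

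The unitarity-type identities \eqref{phun} and \eqref{psu} are the substantive part. By the multiplicativity structure of the cobraiding — specifically \eqref{pppa} and \eqref{pppb}, which express $\lx ab,c\rx$ and $\lx a,bc\rx$ in terms of pairings of the factors — and an argument strictly parallel to Lemma \ref{ugl}, it is enough to verify \eqref{phun} and \eqref{psu} when $x$ and $y$ are each either a single generator $L_{ab}(z)$ or $\det^{-1}(z)$, or lie in $\mu_l(\mg)\mu_r(\mg)$; the last case is immediate from \eqref{mrp} and the first two displays in \S\ref{fcs} together with \eqref{cbe}. For $x=L_{ab}(w)$, $y=L_{cd}(z)$, identity \eqref{phun} becomes, via \eqref{cb}, the statement that $R_{ac}^{bd}(-\la-2,(w/z)^{-1})=R_{ac}^{bd}(\la,w/z)$ after the $T$-operator bookkeeping, which is the same $R$-matrix symmetry used for well-definedness of $\phi$; for the $\det^{-1}$ pairings one uses the explicit formulas of \cite{kn} and the evenness of $\theta(x)\theta(1/x)$-type combinations under $x\mapsto x^{-1}$. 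Identity \eqref{psu} is handled by the $a\leftrightarrow d$, $b\leftrightarrow c$, $\pm\mapsto\mp$ symmetry of $R$.

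I expect the main obstacle to be the careful propagation of the difference-operator prefactors: the cobraiding takes values in $\D$, and in the inductive step from generators to arbitrary monomials one must commute $T$-operators past meromorphic functions correctly, exactly as in the proof of Lemma \ref{ugl} and Proposition \ref{acl}. Getting the shifts $\om_{12}$, the signs, and the arguments $\la$ versus $-\la-2$ to align in \eqref{phun} and \eqref{psu} — rather than any single step — is where the real bookkeeping lies; everything else is a direct check against the explicit formulas of \S\ref{ess} and the cited references.
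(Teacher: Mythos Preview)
Your approach is correct and essentially matches the paper's. The paper also reduces everything to checks on generators via the explicit $R$-matrix symmetries, and likewise identifies the reduction of \eqref{phun} and \eqref{psu} to generators as the nontrivial step. The only difference is packaging: where you propose to propagate the cobraiding identities by a direct argument parallel to Lemma~\ref{ugl}, the paper instead develops a small duality framework in the Appendix (the maps $i^s:A^s\to(A^s)'$ and the notion of $(\chi,s)$-unitarity) in which \eqref{phun} and \eqref{psu} become equalities of compositions of algebra homomorphisms, so that naturality in both arguments is automatic. Your hands-on route works just as well and is closer to the spirit of Lemma~\ref{ugl}; the paper's abstraction buys a uniform treatment of all four cases $s\in\{\emptyset,\op,\cop,\coop\}$ at once. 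One small correction: the $R$-matrix identity you actually need for \eqref{phun} on generators is $R_{ac}^{bd}(\la,u)=R_{ca}^{db}(-\la-2-\om(a)-\om(c),u)$, i.e.\ with the index swap and the extra shift coming from commuting the $T$-operator through $\phi^{\D}$, not quite the form you wrote; but you flagged this bookkeeping yourself.
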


 Proposition \ref{ip} can be proved in a straight-forward manner. The most tedious part is to verify that, in order to prove \eqref{phun} and \eqref{psu}, it is enough to check them for a set of generators. In the Appendix, this is done in a systematic way. 

Note that
$\phi\circ\phi=\psi\circ\psi=\id$ and $\psi\circ\phi=\phi\circ\psi$.
If we write $x^\ast=\psi(\phi(x))=\phi(\psi(x))$, then  
\begin{equation}\label{ic}\lx x,y\rx=S^{\D}(\lx y^\ast,x^\ast\rx), 
\end{equation}
where $S^{\D}=\psi^{\D}\circ\phi^{\D}$ is as in \eqref{sdd}. 
 The  $\mathbb C$-linear involution $\ast$ is a slight  modification of the $\mathbb C$-antilinear involution used in \cite{knr}.

\subsection{Elementary properties of the cobraiding}\label{scs}

We conclude \S \ref{ess} with some useful identities involving the singular cobraiding on  Felder's quantum group $\mathcal E$.

\begin{lemma}\label{adl}
For generic $(w,z)\in (\mathbb C^\times)^m\times(\mathbb C^\times)^n $,
\begin{subequations}
\begin{align}
\label{aca}\lx \vec \alpha(w),\vec \alpha(z)\rx&=T_{-n-m},\\
\label{acd}\lx \vec \alpha(w),\vec \delta(z)\rx
&=\frac{(q^{\la+2+n-m})_m}{(q^{\la+2-m})_m}\prod_{1\leq i\leq m,\,1\leq j\leq n}\frac{\theta(w_i/z_j)}{\theta(qw_i/z_j)}\, T_{n-m}.
\end{align}
\end{subequations}
\end{lemma}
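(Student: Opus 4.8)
The plan is to prove both identities by induction on the total number of spectral parameters, using the product rules \eqref{ppp} for the cobraiding to peel off one generator at a time. The base case $m=n=0$ is trivial since $\lx 1,1\rx = \ep(1) = T_0$, and the case where one of $m,n$ is zero follows immediately from \eqref{cbe} together with $\ep(\vec\alpha(z)) = T_{-n}$, $\ep(\vec\delta(z)) = T_n$ (read off from \eqref{crm} or the FRST counit). For the inductive step I would use \eqref{pppa} to split off the last factor $\alpha(w_m)$ from $\vec\alpha(w)$, writing
$$\lx \vec\alpha(w'),\alpha(w_m)\ ,\ \vec\delta(z)\rx = \sum_{(\vec\delta(z))}\lx\vec\alpha(w'),\cdot'\rx\,T_{\om_{12}}\,\lx\alpha(w_m),\cdot''\rx,$$
where $w = (w',w_m)$. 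The coproduct of $\vec\delta(z) = \delta(z_1)\dotsm\delta(z_n)$ expands via $\De(\delta(z_j)) = \be(z_j)\ot\ga(z_j) + \de(z_j)\ot\de(z_j)$ (this is the only place the off-diagonal entries enter). Now the crucial simplification: by the second row of \eqref{cb}, $\lx\alpha(w_m),x\rx = 0$ unless $x$ is a product of $\alpha$'s and $\de$'s — pairing $\alpha$ with $\be$ or $\ga$ vanishes. Since each tensor leg in the expanded coproduct is a product of $\be$'s and $\de$'s in the first slot and $\ga$'s and $\de$'s in the second, the condition that the \emph{second} leg pair nontrivially with $\alpha(w_m)$ forces that leg to be the pure $\de(z_1)\dotsm\de(z_n)$ term; correspondingly the first leg is forced to be pure $\be(z_1)\dotsm\be(z_n)$. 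But then in $\lx\vec\alpha(w'),\vec\be(z)\rx$ we would be pairing $\alpha$ with $\be$, which again vanishes by \eqref{cb} — unless $n=0$. Wait: this shows the recursion must instead split along $z$, not $w$.

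So the correct move is to split off $\delta(z_n)$ using \eqref{pppb}: with $z = (z',z_n)$,
$$\lx\vec\alpha(w),\vec\delta(z')\,\delta(z_n)\rx = \sum_{(\vec\alpha(w))}\lx\cdot'',\vec\delta(z')\rx\,T_{\om_{12}}\,\lx\cdot',\delta(z_n)\rx.$$
Here $\De^{(m)}(\vec\alpha(w))$ expands into sums of products of $\alpha$'s and $\be$'s (first legs) tensored against products of $\ga$'s and $\alpha$'s, etc., but again the constraint from \eqref{cb} that the last factor pair nontrivially with $\delta(z_n)$ kills all terms except the all-$\alpha$ term, because pairing $\be$ or $\ga$ with $\delta$ vanishes. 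This collapses the sum to a single term $\lx\vec\alpha(w),\vec\delta(z')\rx\,T_{\om_{12}}\,\lx\vec\alpha(w),\delta(z_n)\rx$ after using $\De(\alpha(w_i)) = \alpha(w_i)\ot\alpha(w_i) + \be(w_i)\ot\ga(w_i)$ and tracking the grading shift via $\om_{12} = -2m$ (since $\vec\alpha(w)\in A_{-m,-m}$... more precisely $\alpha(w_i)\in A_{1,1}$ so $\vec\alpha(w)\in A_{m,m}$, giving the shift $T_m$ in appropriate places — I will compute the exact shift carefully). For \eqref{acd} one then needs the single-$\delta$ case $\lx\vec\alpha(w),\delta(z)\rx$, which is itself handled by inducting on $m$ via \eqref{pppa}, peeling $\alpha(w_m)$ and using $\lx\alpha(w_m),\delta(z)\rx = a(\la,w_m/z)T_0$ together with $\lx\alpha(w_m),\alpha(z)\rx = T_{-2}$. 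The explicit form of $a(\la,z) = \tha(z,q^{\la+2})/\tha(qz,q^{\la+1})$ then telescopes: the $\tha(w_i/z)/\tha(qw_i/z)$ factors accumulate directly, and the $\tha(q^{\la+2})/\tha(q^{\la+1})$ ratios, once the shift operators $T_{-\om}$ are commuted past (recall $\mu_l(f)L_{xy} = L_{xy}\mu_l(T_{\om(x)}f)$, equivalently $T_\al\circ f(\la) = f(\la+\al)\circ T_\al$ inside $\D$), assemble into the ratio $(q^{\la+2+n-m})_m/(q^{\la+2-m})_m$ — here I expect to use the Pochhammer identity $(x)_k/(q^k x)_k$-type manipulations, or rather just $(q^a)_m = \prod_{j=0}^{m-1}\tha(q^{a+j})$ applied to collect the telescoping product.

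The identity \eqref{aca} is the easier one: peeling $\alpha(w_m)$ via \eqref{pppa} as above, only the all-$\alpha$ term of the coproduct of $\vec\alpha(z)$ survives the pairing (everything with a $\ga$ in the second leg dies against $\alpha(w_m)$ by \eqref{cb}), and $\lx\alpha(w_m),\alpha(z_j)\rx = T_{-2}$ contributes no functional prefactor, so one gets $\lx\vec\alpha(w),\vec\alpha(z)\rx = \lx\vec\alpha(w'),\vec\alpha(z)\rx\circ T_{-2\cdot\text{(shift)}}$ reducing cleanly to $T_{-n-m}$ after the bookkeeping. I expect the main obstacle to be purely notational: correctly tracking the composition of shift operators $T_\al$ in $\D$, since these do not commute with multiplication by functions of $\la$, so the order in which the $a(\la,w_i/z_j)$ factors are emitted and the $T$'s are moved through matters for getting the arguments $q^{\la+2+n-m}$ versus $q^{\la+2-m}$ exactly right. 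This is where a careful indexing of which generator is peeled in which order, and consistent use of \eqref{mrp} and the commutation $f(\la)L_{xy} = L_{xy}f(\la+\om(x))$, will be essential; the genericity hypothesis on $(w,z)$ is exactly what guarantees all the intermediate cobraiding pairings and the ratios of theta functions are well-defined (no $w_i/z_j \in p^\Z q^{-1}$).
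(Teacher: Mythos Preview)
Your argument for \eqref{aca} is essentially the same as the paper's: induct via \eqref{pppa} (peeling off a factor of $\alpha$) and use the grading constraint \eqref{cbv} to see that only the all-$\alpha$ piece of the coproduct survives, then iterate \eqref{pppb} in the $m=1$ case. One quibble: when you say ``pairing $\alpha(w_m)$ with $\beta$ or $\gamma$ vanishes by \eqref{cb}'', remember the second leg of the coproduct is a \emph{product} of generators, not a single generator; the clean justification is the bigrading condition \eqref{cbv}, exactly as the paper does.

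For \eqref{acd} your route genuinely differs from the paper's. You carry out a direct double induction, first reducing to $n=1$ via \eqref{pppb} and then peeling off $\alpha$'s via \eqref{pppa}, telescoping the accumulated factors of $a(\lambda,\cdot)$ into the Pochhammer ratio. This works and is entirely self-contained. The paper instead invokes the crossing symmetry (Lemma~\ref{pfs}, which rests on the antipode and the unitarity of the cobraiding via Proposition~\ref{acl}) to convert $\langle\vec\alpha(w),\vec\delta(z)\rangle$ into $\langle\vec\alpha(q^{-1}z),\vec\alpha(w)\rangle$, whence \eqref{acd} follows from \eqref{aca} in one line. Your approach is more elementary --- no antipode, no unitarity --- but requires explicit bookkeeping with the shift operators; the paper's is slicker once the machinery of \S\ref{cbss} is available.

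A small correction to your false start: you wrote $\Delta(\delta(z_j)) = \beta(z_j)\otimes\gamma(z_j) + \delta(z_j)\otimes\delta(z_j)$, but in fact $\Delta(\delta) = \gamma\otimes\beta + \delta\otimes\delta$. With the correct coproduct, your original idea of splitting off $\alpha(w_m)$ via \eqref{pppa} \emph{does} work: the grading forces both legs to be $\vec\delta(z)$, giving the recursion $\langle\vec\alpha(w),\vec\delta(z)\rangle = \langle\vec\alpha(w'),\vec\delta(z)\rangle\,T_{-n}\,\langle\alpha(w_m),\vec\delta(z)\rangle$, which is equally serviceable. So there was no need to switch to splitting along $z$; either order works.
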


\begin{proof}
To prove \eqref{aca}, we use \eqref{pppa} to write
$$\lx \vec \alpha(w),\vec \al(z)\rx=\sum_{b\in\{\pm\}^n} \lx\al(w_1),\vec L_{+b}(z)\rx T_{|b| } \lx \vec \alpha(w_2,\dots,w_m),\vec L_{b+}(z)\rx.$$
By \eqref{cbv}, only the term with $b_1=\dots=b_n=+$ is non-zero, so
$$\lx \vec \alpha(w),\vec \al(z)\rx= \lx\al(w_1),\vec \al(z)\rx T_{n} \lx \vec \alpha(w_2,\dots,w_m),
\vec \alpha(z)\rx.$$
By induction on $m$, this reduces the proof of \eqref{aca} to the case  $m=1$.
In that special case, we similarly use 
 \eqref{pppb} to write
$$\lx \al(w),\vec \al(z)\rx
=\lx\alpha(w),\al(z_1)\rx T_{1}\dotsm T_{1}\lx\alpha(w),\al(z_n)\rx
=T_{-n-1}
.$$
The  identity \eqref{acd} now  follows using Lemma \ref{pfs}.
\end{proof}

\begin{lemma}\label{pol}
For  $a\in\{\pm\}^n$,
 $x\in\E$ and generic $(w,z)\in (\mathbb C^\times)^m\times(\mathbb C^\times)^n $,
\begin{subequations}
\begin{align}
\label{poa}\lx \vec \alpha(w) x,\vec L_{-a}(z)\rx
&=\prod_{1\leq i\leq m,\,1\leq j\leq n}\frac{\theta(w_i/z_j)}{\theta(qw_i/z_j)}\frac{(q^{\la+2+n-m})_m}{(q^{\la+2-m})_m}\, T_{-m} \lx x,\vec L_{-a}(z)\rx, \\
\label{pob}\lx \vec L_{+a}(z),x\vec \alpha(w)\rx&=\lx \vec L_{+a}(z),x\rx T_{-m},\\
\label{pod}\lx \vec L_{+a}(z),x\vec \delta(w)\rx&=\prod_{1\leq i\leq m,\,1\leq j\leq n}\frac{\theta(z_j/w_i)}{\theta(qz_j/w_i)}
\lx \vec L_{+a}(z),x\rx\frac{(q^{\la+2+m})_n}{(q^{\la+2})_n}\,T_{m}.
\end{align}
\end{subequations}
\end{lemma}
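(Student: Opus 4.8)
\textbf{Plan for Lemma \ref{pol}.} The three identities are all consequences of expanding one of the two arguments of the cobraiding via \eqref{pppa} or \eqref{pppb}, using the coproduct formula $\De(L_{ab}(z))=\sum_x L_{ax}(z)\ot L_{xb}(z)$, and then invoking the grading restriction \eqref{cbv} to kill all but one term. The key input will be Lemma \ref{adl}, which handles the ``pure $\al$ versus pure $\al$ or $\de$'' pairings, together with the moment-map rules \eqref{mrp}. I would do \eqref{pob} first as a warm-up: expand $\lx \vec L_{+a}(z),x\vec\al(w)\rx$ by \eqref{pppb}, so that the right factor $\vec\al(w)$ pairs against iterated coproduct components $\vec L_{+a}(z)^{(1)}$ of the form $\vec L_{+c}(z)$ (since $\De(\alpha(z))=\sum_x L_{+x}(z)\ot L_{x+}(z)$ and $\vec\al$ only has the $+$-row). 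By \eqref{cbv} the grading forces $c=+\cdots+$, i.e. only $\lx\vec\al(z'),\vec\al(w)\rx=T_{-m-n'}$ survives from \eqref{aca}, and the $T$-shifts telescope to give exactly the factor $T_{-m}$ with no $\la$-dependent prefactor; collecting the remaining components reassembles $\lx\vec L_{+a}(z),x\rx$ by \eqref{pppb} run backwards.

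For \eqref{poa}, I would instead expand the \emph{left} argument $\vec\al(w)x$ using \eqref{pppa}: the factor $\vec\al(w)$ pairs against iterated coproduct components of the second argument $\vec L_{-a}(z)$, which (since $\De(L_{c,-}(z))=\sum_x L_{cx}(z)\ot L_{x,-}(z)$, and the $\vec\al(w)$ has only the $(+,+)$ entry nonzero in the cobraiding \eqref{cb} against $\al,\de$) must be of the form $\vec\al(z)$ or $\vec\de(z)$; the grading constraint \eqref{cbv} together with the structure of $\vec L_{-a}(z)$ singles out the $\vec\al$-vs-$\vec\de$ pairing, so \eqref{acd} supplies the theta-quotient and the ${(q^{\la+2+n-m})_m}/{(q^{\la+2-m})_m}$ prefactor, while $T_{-m}$ comes from the shift in \eqref{acd} combined with the $T_{\om_{12}}$ insertion in \eqref{pppa}. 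Then \eqref{pod} can be obtained from \eqref{pob} by applying Lemma \ref{pfs} (crossing symmetry) to convert $\vec\delta(w)$ into $\vec\al(q w)$-type data on the other side, or more directly by the same expansion argument as \eqref{poa} with the roles of $\al$ and $\de$ interchanged, picking up the reciprocal theta-quotient $\theta(z_j/w_i)/\theta(qz_j/w_i)$ and the prefactor ${(q^{\la+2+m})_n}/{(q^{\la+2})_n}$ with a positive shift $T_{m}$; one must be careful that here it is the length-$n$ Pochhammer symbols that appear because the $\de$'s are being inserted on the $z$-side.

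\textbf{Main obstacle.} The genuinely fiddly part is bookkeeping of the shift operators $T_\alpha$ and the $\mu_l,\mu_r$ rules: in \eqref{pppa}--\eqref{pppb} the cobraiding values are difference operators, not scalars, so one must repeatedly push factors of the form $(x)_k$ and theta-quotients past $T_\alpha$'s using \eqref{mrp} and track how $\om_{12}$ of the various coproduct components adds up. Getting the exact exponent in $(q^{\la+2+n-m})_m$ versus $(q^{\la+2-m})_m$ (and, for \eqref{pod}, $(q^{\la+2+m})_n$ versus $(q^{\la+2})_n$) right depends on correctly conjugating the prefactor from Lemma \ref{adl} through the residual $T_{-m}$ or $T_m$. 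I expect the cleanest route is induction on $m$ (the number of $\al$'s or $\de$'s), reducing via one application of \eqref{pppa} or \eqref{pppb} to the $m=1$ case and then to Lemma \ref{adl}, exactly as in the proof of \eqref{aca}; the inductive step is where one verifies that the Pochhammer prefactors multiply correctly, using the telescoping identity $(q^{\la+2+n-m})_m = \theta(q^{\la+2+n-m})\,(q^{\la+3+n-m})_{m-1}$ after the appropriate shift.
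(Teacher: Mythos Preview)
Your approach is correct and coincides with the paper's: apply \eqref{pppa} or \eqref{pppb} once, use the grading constraint \eqref{cbv} to collapse the sum over coproduct components to a single term, and then invoke Lemma~\ref{adl}. The paper carries this out for \eqref{poa} in two lines (writing $\lx\vec\alpha(w)x,\vec L_{-a}(z)\rx=\sum_b\lx\vec\alpha(w),\vec L_{-b}(z)\rx T_{|b|}\lx x,\vec L_{ba}(z)\rx=\lx\vec\alpha(w),\vec\delta(z)\rx T_{-n}\lx x,\vec L_{-a}(z)\rx$) and dismisses the rest with ``proved similarly''. Two minor remarks: the induction on $m$ you propose in the ``Main obstacle'' is unnecessary, since Lemma~\ref{adl} already handles arbitrary $m$ in one shot; and for \eqref{pod}, going via Lemma~\ref{pfs} would work but is a detour---the direct expansion via \eqref{pppb} reduces immediately to $\lx\vec\alpha(z),\vec\delta(w)\rx$, which is \eqref{acd} with the roles of $(w,m)$ and $(z,n)$ interchanged, and the $T_n$ you pick up in the process is exactly what shifts the Pochhammer arguments from $q^{\la+2+m-n},q^{\la+2-n}$ to $q^{\la+2+m},q^{\la+2}$.
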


\begin{proof}
Similarly as in the proof of  \eqref{aca}, 
\begin{equation*}\begin{split}\lx\vec \alpha(w)  x,\vec L_{-a}(z)\rx&=\sum_{b\in\{\pm\}^n} \lx \vec \alpha(w) ,\vec L_{-b}(z)\rx T_{|b|} \lx x,\vec L_{ba}(z)\rx\\
&= \lx\vec \alpha(w),\vec \delta(z)\rx T_{-n} \lx x,\vec L_{-a}(z)\rx.
\end{split}\end{equation*} 
Thus, \eqref{poa} follows  from \eqref{acd}. The other statements are proved similarly.
\end{proof}

\begin{proposition}\label{lll}
For $a,b,c,d\in\{\pm\}^n$ and generic $z\in(\mathbb C^\times)^n$,
$$\lx \vec L_{ab}(z),\vec L_{cd}(z)\rx
=\delta_{ad}\delta_{bc}\,T_{-|a|-|c|}.$$
\end{proposition}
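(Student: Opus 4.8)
The plan is an induction on $n$. The one genuinely new ingredient is that the $R$-matrix at coinciding spectral parameters collapses to the permutation matrix: since $\tha(1)=0$ one has $a(\la,1)=d(\la,1)=0$ while $b(\la,1)=c(\la,1)=1$, so evaluating \eqref{cb} (equivalently \eqref{cbd}) at $w=z$ gives precisely the case $n=1$,
$$\lx L_{ab}(z),L_{cd}(z)\rx=R^{bd}_{ac}(\la,1)\,T_{-\om(a)-\om(c)}=\de_{ad}\de_{bc}\,T_{-\om(a)-\om(c)}.$$
(The empty case $n=0$, where both sides are $\ep(1)=T_0$, would serve just as well as a base.)

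For the inductive step I would split off the first factor of the left argument, $\vec L_{ab}(z)=L_{a_1b_1}(z_1)\vec L_{a'b'}(z')$ with $a'=(a_2,\dots,a_n)$, $z'=(z_2,\dots,z_n)$, and likewise $\vec L_{cd}(z)$, apply \eqref{pppa} to the product $L_{a_1b_1}(z_1)\vec L_{a'b'}(z')$ in the left slot, and then expand the resulting factor $\lx L_{a_1b_1}(z_1),\vec L_{ce}(z)\rx$ by one further application of \eqref{pppb}. The pairing of the two generators $L_{\cdot b_1}(z_1)$ and $L_{c_1\cdot}(z_1)$ at the common spectral parameter $z_1$ is then governed by the permutation matrix, forcing $b_1=c_1$ and pinning down the intermediate label; the factor $\lx\vec L_{a'g'}(z'),\vec L_{e'd'}(z')\rx$ is collapsed by the induction hypothesis, forcing $a'=d'$ and $g'=e'$. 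What survives is a single sum of ``column $\times$ row'' type,
$$\sum_{e_1,e'}\lx L_{a_1e_1}(z_1),\vec L_{c'e'}(z')\rx\,T_\bullet\,\lx\vec L_{e'b'}(z'),L_{e_1d_1}(z_1)\rx\,T_\bullet,$$
which records the extra line $z_1$ threaded, once as a column and once as a row, through the columns $z'$.

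This remaining sum is disposed of by an auxiliary ``zipper'' identity, proved by a separate induction on the length $k$ of $y=z'$: for $P,C\in\{\pm\}^k$, $A,D\in\{\pm\}$ and generic $(y,t)$,
$$\sum_{f\in\{\pm\}^k,\ f_0\in\{\pm\}}\lx L_{Af_0}(t),\vec L_{Cf}(y)\rx\,T_\bullet\,\lx\vec L_{fP}(y),L_{f_0D}(t)\rx\,T_\bullet=\de_{PC}\,\de_{AD}\,T_\bullet.$$
For $k=0$ this is immediate from \eqref{cbe}. For the step one splits off $L_{\cdot}(y_1)$ in both $\vec L_{Cf}(y)$ and $\vec L_{fP}(y)$, applies \eqref{pppa} and \eqref{pppb}, and then the two matrices $R(\la,y_1/t)$ and $R(\la,t/y_1)$ thereby produced are contracted to a Kronecker delta by the unitarity relation \eqref{ur}, leaving the zipper identity in length $k-1$. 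Feeding this back into the step above supplies the remaining identifications $b'=c'$, $a_1=d_1$; combined with $b_1=c_1$, $a'=d'$ this gives $\de_{ad}\de_{bc}$, and tracking the accumulated shifts gives the prefactor $T_{-|a|-|c|}$, completing the induction.

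The part requiring care — and what I expect to be the real work — is the bookkeeping of the shift operators $T_\al$ of $\D$ interleaved with the scalar factors. In particular, each time \eqref{ur} is invoked one must confirm that the two $R$-matrices being contracted carry the \emph{same} dynamical argument $\la$; this comes out right because applying \eqref{cb} at equal spectral parameter absorbs exactly the shift needed to realign them — the algebraic shadow of the fact that, in the associated lattice model, the dynamical arguments of the two off-diagonal vertices of a $2\times 2$ block agree once the diagonal (permutation) vertices have been resolved.
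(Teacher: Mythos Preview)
Your argument is correct and is essentially the paper's own proof: induct on $n$, split off one variable, use the base case $R(\la,1)=\text{permutation}$ together with the induction hypothesis to collapse all but one sum, and finish that sum by unitarity. The only difference is organizational: your ``zipper'' identity
\[
\sum_{f_0,f}\lx L_{Af_0}(t),\vec L_{Cf}(y)\rx\,T_{f_0+|f|}\,\lx\vec L_{fP}(y),L_{f_0D}(t)\rx=\ep\big(\vec L_{CP}(y)\,L_{AD}(t)\big)
\]
is literally the unitarity axiom \eqref{ud} with $a=\vec L_{CP}(y)$ and $b=L_{AD}(t)$, already established in Proposition~\ref{up} (via Lemma~\ref{ugl}); the paper simply quotes \eqref{ud} at this point rather than re-proving it by your secondary induction on $k$ using \eqref{ur}. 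Recognizing this lets you drop the auxiliary induction entirely and removes the need for the dynamical-shift bookkeeping you flag as the delicate part.
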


\begin{proof}
We proceed by induction over $n$. The case $n=1$  follows  from 
\eqref{cb}.
Writing $\hat a=(a_1,\dots,a_{n-1})$ for $a\in\mathbb C^n$, 
\eqref{ppp} gives 
\begin{multline*}
\lx \vec L_{ab}(z),\vec L_{cd}(z)\rx
=\sum_{x\in\{\pm \}^n}
\lx \vec L_{\hat a\hat b}(\hat z),\vec L_{cx}(z)\rx
 T_{|x|}
\lx  L_{a_nb_n}(z_n),\vec L_{xd}(z)\rx\\
=\sum_{x,y\in\{\pm \}^n}
\lx\vec  L_{\hat y\hat b}(\hat z),\vec  L_{\hat c\hat x}(\hat z)\rx T_{|\hat y|}
\lx\vec  L_{\hat a\hat y}(\hat z), L_{c_nx_n}(z_n)\rx\\
\times T_{|x|} \lx  L_{y_nb_n}(z_n),\vec L_{\hat x\hat d}(\hat z)\rx
T_{y_n}\lx  L_{a_ny_n}(z_n), L_{x_nd_n}(z_n)\rx.
\end{multline*}
By the induction hypothesis, all terms vanish except those with $y= x$, and the expression simplifies to 
\begin{multline*}
\delta_{\hat b\hat c} \delta_{a_nd_n} T_{-|\hat c|}
\sum_{x\in\{\pm \}^n}\lx\vec  L_{\hat a\hat x}(\hat z), L_{c_nx_n}(z_n)\rx T_{|x|} \lx  L_{x_nb_n}(z_n),\vec L_{\hat x\hat d}(\hat z)\rx
T_{-a_n}\\
=\delta_{\hat b\hat c}\delta_{a_nd_n}
 T_{-|\hat c|}
\sum_{(u)(v)}
\lx v', u'\rx
T_{\om_{12}(u)+\om_{12}(v)} \lx u'',v''\rx
T_{-a_n},
\end{multline*}
where $u=L_{c_nb_n}(z_n)$, $v=\vec L_{\hat a\hat d}(\hat z)$.
Applying the unitarity axiom \eqref{ud} completes the proof.
\end{proof}

The following corollary will be extremely useful.

\begin{corollary}\label{rl}
For $a,c\in\{\pm\}^m$,
 $b,d\in\{\pm\}^n$, and $(w,z)\in(\mathbb C^\times)^m\times(\mathbb C^\times)^n$
generic,
\begin{equation}\label{li}\lx \vec L_{+a}(w)\vec L_{+b}(z),\vec L_{d+}(z) \vec L_{c+}(w)\rx
=\lx  \vec L_{ca}(w),\vec L_{db}(z)\rx T_{-m-n}.\end{equation}
\end{corollary}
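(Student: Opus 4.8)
The plan is to expand both sides using the coproduct/pairing axioms and reduce everything to an application of Proposition~\ref{lll}. Concretely, I would first use \eqref{pppa} to split the left-hand pairing along the first tensor factor, writing
$$\lx \vec L_{+a}(w)\vec L_{+b}(z),\vec L_{d+}(z)\vec L_{c+}(w)\rx
=\sum_{x}\lx \vec L_{+a}(w),\vec L_{dx}(z)\vec L_{cx'}(w)\rx T_{\star}\lx \vec L_{+b}(z),\vec L_{x+}(z)\vec L_{x'+}(w)\rx,$$
where the internal sum runs over the intermediate index produced by $\De$ applied to the second argument, and $T_\star$ is the appropriate shift operator dictated by the $\om_{12}$ bookkeeping. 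Applying \eqref{pppb} to each of the two remaining pairings splits the second arguments as well. The key point is that the matrix elements of $\vec L_{+\bullet}$ and $\vec L_{\bullet +}$ have fixed ``$+$'' indices, so the constraint \eqref{cbv} together with the $\om(\pm)=\pm1$ grading will force most intermediate indices to be extremal. I expect this to collapse the multiple sum to a single surviving term.

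Next I would exploit the results of \S\ref{scs}, in particular Lemma~\ref{pol}: pairings of the shape $\lx \vec L_{+a}(z),x\vec\al(w)\rx$ and $\lx\vec\al(w)x,\vec L_{-a}(z)\rx$ behave very simply, and there are analogous identities (provable exactly as in Lemma~\ref{pol}, i.e.\ via \eqref{ppp} and \eqref{cbv}) for inserting $\vec\al$ or $\vec\de$ factors on either side. Since $\vec L_{c+}(w)$ and $\vec L_{d+}(z)$ end in blocks of $\al$'s (reading the ``$+$'' column), and $\vec L_{+a}(w)$, $\vec L_{+b}(z)$ begin with blocks of $\al$'s, peeling these off should convert the left-hand side of \eqref{li} into $\lx \vec L_{ca}(w),\vec L_{db}(z)\rx$ times the correct shift, where the reindexing $+a\mapsto ca$ on the $w$-variables and $+b\mapsto db$ on the $z$-variables is produced precisely by the coproduct expansions above. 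Throughout, one must track that $w_i/z_j\notin p^{\mathbb Z}q^{-1}$ so that the singular cobraiding is legitimately defined on all the expressions that appear; this is guaranteed by the genericity hypothesis.

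An alternative, possibly cleaner route: apply \eqref{pppa} to $\lx \vec L_{+a}(w)\vec L_{+b}(z),\,\cdot\,\rx$ so that the left argument is cut between the $w$-block and the $z$-block, then apply \eqref{pppb} to cut the right argument $\vec L_{d+}(z)\vec L_{c+}(w)$ between its $z$-block and $w$-block. Because the cross-terms pair a $w$-block with a $z$-block of $L$'s of the form $\lx\vec L_{+a}(w),\vec L_{?+}(w)\rx$ versus $\lx\vec L_{+b}(z),\vec L_{d?}(z)\rx$, the same-spectral-parameter Proposition~\ref{lll} applies directly to two of the four resulting factors and forces the intermediate summation indices; the remaining two factors reassemble into $\lx\vec L_{ca}(w),\vec L_{db}(z)\rx$. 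This is essentially the pattern already used in the proof of Proposition~\ref{lll} itself.

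The main obstacle will be purely bookkeeping: getting the shift operators $T_{\bullet}$ right. Each application of \eqref{pppa} or \eqref{pppb} inserts a $T_{\om_{12}}$ in the middle, and these shifts must be commuted past the $\mg$-valued pairing factors using \eqref{mrp} and the grading $\lx A_{\al\be},A_{\ga\de}\rx\subseteq(\D)_{\al+\ga,\be+\de}$; the $\de_{ad}\de_{bc}$-type Kronecker deltas from Proposition~\ref{lll} then pin down which shifts survive. Verifying that everything telescopes to the single overall factor $T_{-m-n}$ on the right-hand side of \eqref{li}, with \emph{no} leftover elliptic prefactor, is the delicate step — but it is forced, since the identity \eqref{li} is an exact equality of difference operators and the structure of the $+$-row/$+$-column blocks leaves no room for a nontrivial scalar.
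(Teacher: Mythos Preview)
Your ``alternative, possibly cleaner route'' is exactly the paper's proof: expand via \eqref{pppa} and \eqref{pppb} into four factors, apply Proposition~\ref{lll} to the two same-spectral-parameter pairings $\lx\vec L_{+f}(w),\vec L_{ce}(w)\rx$ and $\lx\vec L_{hb}(z),\vec L_{g+}(z)\rx$ to pin the intermediate indices, and what remains is $\lx\vec L_{ca}(w),\vec L_{db}(z)\rx$ together with $\lx\vec\al(z),\vec\al(w)\rx=T_{-m-n}$ (via \eqref{aca}); the shifts telescope to $T_{-m-n}$ as you anticipated.

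One correction on your first route: $\vec L_{c+}(w)=L_{c_1+}(w_1)\cdots L_{c_m+}(w_m)$ does \emph{not} in general ``end in a block of $\al$'s''---each factor is $\al(w_i)$ or $\ga(w_i)$ according to the sign of $c_i$, interspersed in the order of the $w_i$, and since $\al$ and $\ga$ do not commute one cannot collect the $\al$'s into a block to which Lemma~\ref{pol} would apply. So that route would not work as stated; stick with the second one.
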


\begin{proof}
Straight-forward expansion using \eqref{ppp} gives
\begin{multline*}\lx \vec L_{+a}(w)\vec L_{+b}(z),\vec L_{d+}(z) \vec L_{c+}(w)\rx
\\
= \sum_{e,f\in\{\pm\}^m,\,g,h\in\{\pm\}^n}\lx \vec L_{fa}(w),\vec L_{dg}(z)\rx
T_{|f|}\lx\vec  L_{+f}(w),\vec L_{ce}(w)\rx\\
\times T_{|e|+|g|}\lx\vec L_{hb}(z),\vec L_{g+}(z)\rx T_{|h|}\lx\vec L_{+h}(z),\vec L_{e+}(w)\rx.
\end{multline*}
By Proposition \ref{lll}, the only non-vanishing term in the sum is
 \begin{multline*} \lx\vec L_{ca}(w),\vec L_{db}(z)\rx
T_{|c|}\lx\vec  L_{+c}(w),\vec L_{c+}(w)\rx T_{m+n}\lx\vec L_{+b}(z),\vec L_{b+}(z)\rx T_{n}\lx\vec L_{++}(z),\vec L_{++}(w)\rx\\
=\lx  \vec L_{ca}(w),\vec L_{db}(z)\rx T_{-m-n},\end{multline*}
where we also used \eqref{aca}.
\end{proof}

In terms of the 8VSOS model, 
the right-hand side of \eqref{li} gives
 the partition function 
on a rectangular lattice, with arbitrary fixed boundary conditions, while the left-hand side gives the partition function on a square lattice, with the east and south boundary fixed as domain walls (see \S \ref{ews}).

\section{Embedded corepresentations}\label{ecs}

Fixing a non-negative integer $N$, 
 we identify subsets 
 $S\subseteq [N]=\{1,\dots,N\}$ with elements of $\{\pm\}^N$ through
$$S_i=\begin{cases}+,& i\in S,\\
-, & i\notin S.
\end{cases} $$
For $S\subseteq [N]$ and $z\in(\mathbb C^\times)^N $, we introduce the elements
$$e_S(z)=\vec\gamma(z_{S^c})\vec\alpha(z_S), \qquad
E_S(z)=\vec L_{S+}(z). $$
Here, we use notation such as $\vec\alpha(z_S)=\prod_{i\in S}\al(z_i)$, which is well-defined in view of \eqref{lc}.

\begin{lemma}\label{vl}
For generic $z\in(\mathbb C^\times)^N$,  
\begin{equation}\label{vn}\operatorname{span}_{f\in\mg,\,S\subseteq [N]}
\{\mu_l(f)e_S(z)\}
=\operatorname{span}_{f\in\mg,\,S\subseteq [N]}
\{\mu_l(f)E_S(z)\}.
\end{equation}
\end{lemma}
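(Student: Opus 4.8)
The plan is to show that the two spanning sets coincide by establishing a triangular change of basis between $\{e_S(z)\}$ and $\{E_S(z)\}$ over $\mg$, with invertible diagonal entries. Both families are indexed by subsets $S\subseteq[N]$, so it suffices to order the subsets (say by cardinality, or by reverse inclusion) and prove that each $e_S(z)$ is an $\mg$-linear combination of the $E_T(z)$ with $T$ ranging over subsets that are "no larger" than $S$ in the chosen order, with the coefficient of $E_S(z)$ itself a nonzero element of $\mg$; and conversely. Since the transition matrix is then triangular with nonzero diagonal, it is invertible over $\mg$, and the two spans agree.

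First I would unwind the definitions: $e_S(z)=\vec\gamma(z_{S^c})\vec\alpha(z_S)$ while $E_S(z)=\vec L_{S+}(z)=\prod_i L_{S_i,+}(z_i)$, so $E_S(z)$ is a product in which the $i$-th factor is $\al(z_i)=L_{++}(z_i)$ if $i\in S$ and $\ga(z_i)=L_{-+}(z_i)$ if $i\notin S$. Thus $E_S(z)$ is the product $\prod_i L_{S_i,+}(z_i)$ taken \emph{in the natural order} $i=1,\dots,N$, whereas $e_S(z)$ is the product with all the $\gamma$'s moved to the left of all the $\alpha$'s. The key computational input is the commutation relations \eqref{gar} between $\ga(w)$ and $\al(z)$: moving a $\ga$ past an $\al$ produces the original ordering back (the $a$- or $c$-term) plus a term in which the spectral parameters $z,w$ have been swapped between the two factors (the $b$- or $d$-term). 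Since the coefficients $a,b,c,d$ are explicit elliptic functions lying in $\mg$ (as functions of $\la$, after identifying $\al,\ga$ grading shifts), repeatedly applying these relations rewrites $e_S(z)$ as an $\mg$-linear combination of products $\prod_i L_{S_i,+}(z_{\sigma(i)})$ where $\sigma$ is a permutation preserving the multiset of "which positions carry $\ga$ versus $\al$", i.e.\ preserving $|S|$; equivalently, a combination of $E_T(z')$ where $|T|=|S|$ and $z'$ is a permutation of $z$. One then needs that each such $E_T(z')$ lies in the $\mg$-span of $\{E_U(z):U\subseteq[N]\}$ — this is where one uses that the $L_{ab}$'s with $ab$ fixed commute, \eqref{lc}, together with the face/partition-function structure, or more directly just observes the recursion is within the fixed-cardinality block and the leading term (identity permutation) has coefficient $1$.

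The cleanest way to organize this is probably by induction on $N$, splitting off the last tensor factor. Both $e_S(z)$ and $E_S(z)$ end (or begin) with a single factor $\al(z_N)$ or $\ga(z_N)$ depending on whether $N\in S$, and one can use the coproduct/multiplicativity structure together with \eqref{gara}--\eqref{garb} to peel this off, reducing to the statement for $N-1$. The induction hypothesis then handles the first $N-1$ factors, and one checks the single commutation step does not leave the combined span. Alternatively, and perhaps more in the spirit of the paper, one can argue that both spans equal the weight-space $V_N(z)$ described abstractly, but since $V_N(z)$ has not yet been introduced at this point in the text, the direct triangular-change-of-basis argument is the appropriate route.

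The main obstacle I expect is bookkeeping: verifying that the repeated use of \eqref{gar} to reorder $\vec\gamma(z_{S^c})\vec\alpha(z_S)$ into $\vec L_{S+}(z)$-type products genuinely stays inside $\operatorname{span}_{\mg}\{E_U(z)\}$ and never produces a term outside it, and that the net coefficient of $E_S(z)$ is a \emph{nonzero} (hence invertible in the field $\mg$) meromorphic function — for generic $z$ one must check no accidental cancellation or vanishing of the leading coefficient occurs, which is where genericity of $z$ (avoiding zeros of the relevant theta-function denominators) enters. Once triangularity with nonzero diagonal over the field $\mg$ is in hand, invertibility of the change of basis, and hence equality of the two spans \eqref{vn}, is immediate.
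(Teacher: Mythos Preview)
Your approach uses the same tool as the paper---the commutation relations \eqref{gar}---but organizes the argument more elaborately than necessary. The paper simply shows both inclusions directly: iterating \eqref{gara} rewrites $e_S$ as a $\mu_l(\mg)$-combination of the $E_T$, and iterating the \emph{inverted} relation
\[
\al(z)\ga(w)=\frac{1}{a(\la,z/w)}\,\ga(w)\al(z)-\frac{b(\la,z/w)}{a(\la,z/w)}\,\ga(z)\al(w)
\]
rewrites each $E_S$ as a combination of the $e_T$. Genericity of $z$ enters only to ensure that the coefficients $a,b,a^{-1}$ are defined (i.e.\ $z_i/z_j,\,qz_i/z_j\notin p^{\mathbb Z}$). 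No triangular order, leading term, or nonzero-diagonal verification is needed. Your triangularity picture is correct and is in fact the content of the paper's subsequent Remark~\ref{tr} (with $S\leq T$ the coordinatewise order on same-cardinality subsets), which refines the lemma but is not used to prove it. One clarification: your description of the intermediate terms as $\prod_i L_{S_i,+}(z_{\sigma(i)})$ for permutations $\sigma$ of the spectral parameters is misleading. Each application of \eqref{gara} to adjacent factors $\ga(z_j)\al(z_i)$ yields an $a$-term $\al(z_i)\ga(z_j)$ (order of the two factors swapped) plus a $b$-term $\ga(z_i)\al(z_j)$ (which index carries $\ga$ versus $\al$ swapped); in both the underlying inversion count drops, so once fully sorted every term is exactly $E_T(z)$ for some $T$ with $|T|=|S|$, and no permuted $z'$ ever appears. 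The step you flagged as needing justification is therefore vacuous.
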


\begin{proof} 
Iterating the commutation relations \eqref{gar} will expand $e_S$ 
as a sum of the $E_T$, with coefficients in $\mu_l(\mg)$. Though \eqref{gar} is not applicable when $qz/w\in p^\mathbb Z$, that obstruction does not arise  for generic $z$. 
Conversely, iterating \eqref{gara} in the form
$$\al(z)\ga(w)=\frac 1{a(\la,z/w)}\,\ga(w)\al(z)-\frac{b(\la,z/w)}{a(\la,z/w)}\,\ga(z)\al(w) $$
will  expand  $E_S$ 
as a sum of the $e_T$, as long as $z$ is generic. 
\end{proof}

\begin{remark}\label{tr}
If $S=\{s_1<\dots<s_m\}$ and  $T=\{t_1<\dots<t_m\}$, write $S\leq T$ if $s_i\leq t_i$ for all $i$.
 It is then easy to check that, for generic $z$, $E_S\in\operatorname{span}_{f\in\mg,\,T\geq S}
\{\mu_l(f)e_T(z)\}$,  $e_S\in\operatorname{span}_{f\in\mg,\,T\geq S}
\{\mu_l(f)E_T(z)\}$. 
\end{remark}

We  denote by $V(z)$ the space \eqref{vn}, viewed as an $\gh$-space with scalar multiplication $fv=\mu_l(f)v$ and grading corresponding to the left grading in $\E$, that is, $e_S(z),\, E_S(z)\in V_{2|S|-N}(z)$. Since
\begin{equation}\label{bme}\Delta(E_S(z))=\sum_{T\subseteq [N]} \vec L_{ST}(z)\ot E_T(z), \end{equation}
$\Delta|_{V(z)}$ is a corepresentation of  $\E$.

Next, we introduce the dual elements
$$f_S(z)=\vec\alpha(z_S)\vec\beta(z_{S^c}),\qquad F_S(z)=\vec L_{+S}(z). $$
Similarly as in Lemma \ref{vl}, one can show that, for generic $z$,
$$\operatorname{span}_{f\in\mg,\,S\subseteq [N]}
\{\mu_r(f)f_S(z)\}
=\operatorname{span}_{f\in\mg,\,S\subseteq [N]}
\{\mu_r(f)F_S(z)\}.$$
This space will be denoted $W(z)$.

\begin{proposition} \label{dbl}
For generic $z\in(\mathbb C^\times)^N$, 
$$\lx F_T(z),E_S(z)\rx=\delta_{ST}T_{-2m},$$
\begin{equation}\label{feo}\lx f_T(z),e_S(z)\rx=\delta_{ST}A_{S,z} T_{-2m},
 \end{equation}
where $m=|S|$ and where
\begin{equation}\label{ph}A_{S,z}(\la)=
\frac{(q^{\la+2+N-2m})_{m}}{(q^{\la+2-m})_{m}}
\prod_{i\in S,j\in S^c}\frac{\tha(z_i/z_j)}{\tha(qz_i/z_j)}.\end{equation}
\end{proposition}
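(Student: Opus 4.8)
The plan is to prove the two biorthogonality statements in parallel, using the bases $E_S, F_S$ as a bridge. For the first identity, $\lx F_T(z), E_S(z)\rx = \delta_{ST}\, T_{-2m}$, observe that $F_T(z) = \vec L_{+T}(z)$ and $E_S(z) = \vec L_{S+}(z)$, so this is precisely the statement of Corollary~\ref{rl} with $n=0$ (no $z$-factors) after matching indices --- or more directly, it is an instance of Proposition~\ref{lll}. Indeed, $\lx\vec L_{+T}(z),\vec L_{S+}(z)\rx = \delta_{+S}\,\delta_{T+}\cdots$; I would instead apply Proposition~\ref{lll} directly: with $a = (+,\dots,+)$, $b = T$, $c = S$, $d = (+,\dots,+)$ it gives $\lx\vec L_{aT}(z),\vec L_{S\,d}(z)\rx$-type expressions, and the $\delta_{ad}\delta_{bc}$ in that proposition forces $S = T$ and produces the shift $T_{-|a|-|c|} = T_{-2m}$ where $m = |S| = |T|$. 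One subtlety: the entries of $E_S$ and $F_T$ are products $L_{S_iT_i}(z_i)$ at \emph{coinciding} spectral parameters, which is exactly the hypothesis of Proposition~\ref{lll}, so no genericity issue arises for that factor.

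For the second identity I would first establish the change-of-basis matrix. By Lemma~\ref{vl} and Remark~\ref{tr}, for generic $z$ one can write
\begin{equation*}
e_S(z) = \sum_{T\geq S}\mu_l(c_{ST})\,E_T(z),\qquad
f_S(z) = \sum_{T\geq S}\mu_r(\tilde c_{ST})\,F_T(z),
\end{equation*}
with the transition matrices unitriangular with respect to the order $\leq$ on $m$-subsets. The idea is then to compute $\lx f_T(z),e_S(z)\rx$ by expanding both arguments in the $F$'s and $E$'s, using bilinearity together with the moment-map rules \eqref{mrpa}--\eqref{mrpb} to move the functions $c,\tilde c$ outside, and invoking the first identity $\lx F_{T'},E_{S'}\rx = \delta_{S'T'}T_{-2m}$ to collapse the double sum to a single sum over $U$ with $U\geq S$ and $U\geq T$. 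Triangularity then shows $\lx f_T(z),e_S(z)\rx = 0$ unless $S = T$ (since, e.g., taking $S$ with $|S\cap\{1,\dots,k\}|$ maximal among surviving terms forces equality), and when $S = T$ only the diagonal term $U = S$ survives with coefficient $1\cdot 1$ from unitriangularity --- giving $\lx f_S(z),e_S(z)\rx = A_{S,z}T_{-2m}$ for \emph{some} function $A_{S,z}$, but this argument alone does not identify $A_{S,z}$.

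To pin down $A_{S,z}$ explicitly, I would avoid computing the transition matrices and instead use the commutation-relation structure directly. Write $e_S(z) = \vec\gamma(z_{S^c})\vec\alpha(z_S)$ and $f_S(z) = \vec\alpha(z_S)\vec\beta(z_{S^c})$, and expand the pairing using \eqref{pppa} and \eqref{pppb} together with the coproduct formulas for $\alpha,\beta,\gamma$. The key simplifications are Lemma~\ref{pol}: the identities \eqref{poa}--\eqref{pod} describe exactly how a string $\vec\alpha(w)$ on the left, or $\vec\delta$/$\vec\alpha$ on the right, pass through the pairing producing the ratio $\prod\theta(\cdot)/\theta(q\cdot)$ and the elliptic Pochhammer ratios $(q^{\lambda+\cdots})_m/(q^{\lambda+\cdots})_m$ that appear in \eqref{ph}. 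More precisely, I expect that peeling off the $\vec\alpha(z_S)$ factors via \eqref{poa} and \eqref{pob} reduces the problem to a pairing of $\vec\gamma(z_{S^c})$ against $\vec\beta(z_{S^c})$ at coinciding parameters, which by \eqref{cb} and an induction as in Proposition~\ref{lll} yields $\prod_{i\in S^c}(\text{diagonal})$ and forces $S = T$; assembling the accumulated prefactors --- the theta-ratio $\prod_{i\in S, j\in S^c}\theta(z_i/z_j)/\theta(qz_i/z_j)$ from the $\alpha$-through-$\gamma$ crossings and the Pochhammer ratio from the dynamical shifts --- gives precisely $A_{S,z}$ in \eqref{ph}. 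The main obstacle will be the careful bookkeeping of the dynamical shift operators $T_\bullet$ and the moment-map arguments: one must track how each $T_\alpha$ reorders the $\theta$-ratios (using that $\lx L_{ab},L_{cd}\rx \in \mg\,T_{-\omega(a)-\omega(c)}$) so that the partial products telescope correctly into the closed form $(q^{\lambda+2+N-2m})_m/(q^{\lambda+2-m})_m$, rather than proving only its existence.
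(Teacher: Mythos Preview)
Your second approach---peeling off the $\vec\alpha$'s via \eqref{poa} and \eqref{pob}---is exactly the paper's argument, but two points in the proposal need correction.

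First, the triangularity detour does not prove the $\delta_{ST}$. After expanding and collapsing you obtain
\[
\lx f_T,e_S\rx=\sum_{U\geq S,\ U\geq T}\tilde c_{TU}(\la)\,c_{SU}(\la)\,T_{-2m},
\]
and for $S\neq T$ with $|S|=|T|$ there are many common upper bounds $U$ (for instance $U=[N-m+1,N]$), so the sum has no reason to vanish. Unitriangularity only controls the diagonal coefficient; it gives no information about off-diagonal cancellation. Drop this paragraph.

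Second, in the direct approach you locate the $\delta_{ST}$ in the wrong place. After applying \eqref{pob} to strip $\vec\alpha(z_S)$ on the right and \eqref{poa} to strip $\vec\alpha(z_T)$ on the left, the prefactor produced by \eqref{poa} is
\[
\prod_{i\in T,\ j\in S^c}\frac{\tha(z_i/z_j)}{\tha(qz_i/z_j)}\cdot\frac{(q^{\la+2+N-|S|-|T|})_{|T|}}{(q^{\la+2-|T|})_{|T|}}.
\]
If $T\not\subseteq S$ there is some $i\in T\cap S^c$, giving a factor $\tha(1)=0$; combined with $|S|=|T|$ (forced by \eqref{cbv}) this yields $T=S$. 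That is where the $\delta_{ST}$ comes from. The residual pairing $\lx\vec\beta(z_{T^c}),\vec\gamma(z_{S^c})\rx$ does \emph{not} enforce $S=T$: for $T^c\neq S^c$ it is a domain wall partition function (Proposition~\ref{bcgp}) and is generically nonzero. Once $T=S$, the residual pairing is $\lx\vec\beta(z_{S^c}),\vec\gamma(z_{S^c})\rx=1$ by Proposition~\ref{lll}, and the prefactor above is precisely $A_{S,z}(\la)$, so the bookkeeping you worry about is already done: the formula is literally the output of \eqref{poa} with $m\mapsto|S|$ and $n\mapsto N-|S|$.
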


\begin{proof}
The first identity is a special case of Proposition \ref{lll}.

As for \eqref{feo}, we first note that
\eqref{cbv} implies that $\lx f_T(z),e_S(z)\rx$ vanishes unless $|S|=|  T|$. Using first \eqref{pob} and then \eqref{poa}, we may 
 pull out all factors involving
 $\alpha$. This leads to an expression containing the factor 
$\prod_{i\in T,j\in S^c}\theta(z_i/z_j)$,
which vanishes unless $T\subseteq S$. Thus, we may assume $T=S$, in which case
we obtain
$$\lx f_S(z),e_S(z)\rx=A_{S,z}
T_{-m}\lx\vec\be(z_{S^c}),\vec\ga(z_{S^c}) \rx T_{-m},
 $$
where,
by  Proposition~\ref{lll}, $\lx\vec\be(z_{S^c}),\vec\ga(z_{S^c}) \rx=1$.
\end{proof}

\begin{corollary}\label{vec}
For generic $z\in(\mathbb C^\times)^N$, $(e_S(z))_{S\subseteq [N]}$ and   $(E_S(z))_{S\subseteq [N]}$ are bases for the space  $V(z)$ over $\mg$. In particular,
$\dim_{\mg} V(z)=2^N$. Moreover, any $x\in V(z)$ can be written 
$$x=\sum_{S\subseteq [N]}\mu_l(\lx F_S,x\rx 1)  E_S(z)
=\sum_{S\subseteq [N]}\mu_l(A_{S,z}^{-1}\lx f_S,x\rx 1) e_S(z).
  $$
Similarly, any $x\in W(z)$ can be written
$$x=\sum_{S\subseteq [N]}\mu_r(\lx x,E_S\rx 1)  F_S(z)
=\sum_{S\subseteq [N]}\mu_r(A_{S,z}^{-1}\lx x,e_S\rx 1) f_S(z).$$
\end{corollary}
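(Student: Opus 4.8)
The plan is to derive Corollary \ref{vec} as a routine consequence of Proposition \ref{dbl}, Lemma \ref{vl}, and the reconstruction formula \eqref{cua}. First I would argue that $(E_S(z))_{S\subseteq[N]}$ spans $V(z)$: by definition $V(z)$ is spanned over $\mg$ by the $\mu_l(f)E_S(z)$, and since $fv=\mu_l(f)v$ in the $\gh$-space structure this is precisely $\operatorname{span}_{\mg}\{E_S(z)\}$. To see the $E_S(z)$ are linearly independent over $\mg$, suppose $\sum_S\mu_l(f_S)E_S(z)=0$; applying $\lx F_T(z),\cdot\rx$ and using the first identity of Proposition \ref{dbl} together with \eqref{mrpa} (which lets $\mu_l(f_S)$ pass through the pairing as $f_S\circ\lx\cdot,\cdot\rx$) isolates $f_T\circ T_{-2|T|}=0$, hence $f_T=0$ for every $T$. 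Thus $(E_S(z))_S$ is a basis and $\dim_{\mg}V(z)=2^N$. That $(e_S(z))_S$ is also a basis follows from Lemma \ref{vl} (equal spans) plus a cardinality count, or directly by the same argument using \eqref{feo} and the fact that $A_{S,z}$ is a nonzero element of $\mg$ for generic $z$.

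The expansion formulas are then obtained from \eqref{cua}. Given $x\in V(z)$, write $x=\sum_{S}\mu_l(g_S)E_S(z)$ for unique $g_S\in\mg$; I want to show $g_S=\lx F_S,x\rx1$. The cleanest route is to apply $\lx F_S,\cdot\rx$ to this expansion and use Proposition \ref{dbl}: we get $\lx F_S,x\rx=\sum_T g_T\circ\lx F_S,E_T\rx=g_S\circ T_{-2|S|}$, i.e.\ $\lx F_S,x\rx T_{2|S|}=g_S$ as an element of $\D$. Since $x\in V(z)\subseteq\E$ lies in a single left-graded component for each summand, one should track gradings carefully: $E_S(z)\in\E_{2|S|-N,*}$ and $F_S(z)\in\E_{*,2|S|-N}$, so $\lx F_S(z),x\rx\in(\D)_{*,*}$ has the right shift to make $\mu_l(\lx F_S,x\rx1)$ well-defined. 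Evaluating the difference operator $\lx F_S,x\rx$ at a constant function $1$ and forming $\mu_l$ of the result then recovers $g_S\cdot\mathbf 1$-type scalars exactly as in \eqref{cua}. The identity $x=\sum_S\mu_l(A_{S,z}^{-1}\lx f_S,x\rx1)e_S(z)$ is proved identically, now using \eqref{feo}: applying $\lx f_S,\cdot\rx$ to $x=\sum_T\mu_l(h_T)e_T(z)$ gives $\lx f_S,x\rx=h_S\circ A_{S,z}T_{-2|S|}$, so $h_S=A_{S,z}^{-1}\lx f_S,x\rx T_{2|S|}$, and again one passes to the scalar via evaluation at $1$.

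The statements for $W(z)$ are the mirror images, using the right moment map $\mu_r$ and the right-grading conventions: $(F_S(z))_S$ and $(f_S(z))_S$ are bases of $W(z)$ by the analogue of Lemma \ref{vl} quoted in the text, and for $x\in W(z)$ one applies $\lx\cdot,E_S\rx$ or $\lx\cdot,e_S\rx$ and invokes Proposition \ref{dbl} / \eqref{feo} in the second slot, using \eqref{mrpb} to pull $\mu_r(f)$ out of the pairing on the right. The bookkeeping of moment maps and gradings is symmetric to the $V(z)$ case.

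The main obstacle is purely notational rather than conceptual: one must be careful about the fact that the pairing takes values in $\D$ (not in $\mg$), so that statements like ``$g_S=\lx F_S,x\rx1$'' need the precise interpretation — $\lx F_S,x\rx$ is a difference operator, $\lx F_S,x\rx1$ is the meromorphic function obtained by applying it to the constant $1$, and the shift part must cancel correctly against the grading shift built into $\mu_l$ via the $\gh$-algebra axiom $\mu_l(f)a=a\mu_l(T_\al f)$. Making this cancellation explicit, and checking that the ``evaluate at $1$'' step is compatible with the reconstruction \eqref{cua} exactly as in \eqref{crm}, is the only place where care is required; everything else is a direct application of the orthogonality relations in Proposition \ref{dbl}.
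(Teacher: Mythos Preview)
Your proof is correct and follows essentially the same approach as the paper: write $x=\sum_S C_S(\lambda)E_S(z)$ using the definition of $V(z)$, then apply $\lx F_T,\cdot\rx$ and invoke Proposition~\ref{dbl} to identify $C_S=\lx F_S,x\rx 1$, whence both the expansion formula and the basis property follow. The reference to \eqref{cua} is unnecessary---the argument you actually give is the direct application of the orthogonality relations, which is exactly what the paper does (in one line).
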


\begin{proof} Any $x\in V(z)$ can be written $x=\sum_S C_S(\la)E_S(z)$, for some $C_S\in\mg$.   Proposition \ref{dbl} then gives $C_S=\lx F_s,x\rx 1$. In particular, the expansion is unique, so $(E_S(z))_{S\subseteq [N]}$ form a basis. Similar arguments apply for the other cases. 
\end{proof}

Since $E_S(z)$ form a basis of  $V(z)$,  \eqref{bme} exhibits $\vec L_{ST}(z)$ as a matrix element of that corepresentation. 
By \eqref{tpme}, the fact that the  matrix elements factor means that
\begin{equation}\label{tpd}V(z)\simeq V(z_1)\wh\ot\cdots\wh\ot V(z_N)
\end{equation}
as corepresentations.

We conclude with two results that will be needed later.

\begin{lemma}\label{pool}
For  $(w,z)\in (\mathbb C^\times)^n\times(\mathbb C^\times)^N$
generic,
$u\in W(z)$, $v\in V(z)$ and $a,b,c,d\in\{\pm\}^n$,
$$\lx\vec L_{ab}(w)u,\vec L_{cd}(w)v\rx=\delta_{ad}\delta_{bc}T_{-|b|}\lx u,v\rx T_{-|a|}, $$
$$\lx u\vec L_{ab}(w),v \vec L_{cd}(w)\rx=\delta_{ad}\delta_{bc}\lx u,v\rx T_{-|a|-|b|}. $$
\end{lemma}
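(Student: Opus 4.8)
The plan is to prove both identities of Lemma \ref{pool} by reducing them to the orthogonality relation of Proposition \ref{lll}, exactly in the spirit of the proof of Corollary \ref{rl}. I will focus on the first identity, since the second is analogous (or can be deduced from the first by the antiautomorphism $\phi$ of Proposition \ref{ip}, which sends $\vec L_{ab}(w)$ to a product of $\vec L_{ab}(w^{-1})$ and reverses multiplication, turning a left-multiplication statement into a right-multiplication one after relabelling).

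First I would write $u\in W(z)$ and $v\in V(z)$ in the bases provided by Corollary \ref{vec}: say $u=\sum_{S}\mu_r(c_S 1)F_S(z)$ and $v=\sum_{T}\mu_l(d_T 1)E_T(z)$ with $c_S,d_T\in\mg$ (the precise coefficients will not matter, since both sides of the claimed identity are $\C$-bilinear and compatible with the moment maps via \eqref{mrp}). Thus it suffices to prove the identity for $u=F_S(z)=\vec L_{+S}(z)$ and $v=E_T(z)=\vec L_{T+}(z)$. At that point the left-hand side becomes
$$\lx\vec L_{ab}(w)\vec L_{+S}(z),\,\vec L_{cd}(w)\vec L_{T+}(z)\rx,$$
which I would expand using \eqref{pppa} and \eqref{pppb} into a sum over intermediate indices, and then collapse using Proposition \ref{lll} applied to the $w$-factors and to the $z$-factors separately, together with \eqref{aca}. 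The $w$-part of the pairing will force the intermediate indices so that only the term with a factor $\lx\vec L_{ab}(w),\vec L_{cd}(w)\rx$-type structure survives, producing $\delta_{ad}\delta_{bc}$; meanwhile the $z$-part reduces to $\lx\vec L_{+S}(z),\vec L_{T+}(z)\rx$-type data, which gives back $\lx u,v\rx$ for the original $u,v$ after reassembling. The shift operators $T_{-|b|}$ and $T_{-|a|}$ then emerge from bookkeeping the gradings $\om_{12}$ in the product rules, just as in Corollary \ref{rl}, where one must keep careful track of the fact that entries like $\lx\vec L_{ab}(w),\vec L_{cd}(w)\rx$ live in $\mg\,T_{-|a|-|c|}$ and can be moved past scalar factors at the cost of applying $T$.

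For the second identity $\lx u\vec L_{ab}(w),v\vec L_{cd}(w)\rx=\delta_{ad}\delta_{bc}\lx u,v\rx T_{-|a|-|b|}$, I would again reduce to $u=F_S(z)$, $v=E_T(z)$ and expand via \eqref{pppa}, \eqref{pppb}; this is even closer to the computation already carried out in the proof of Corollary \ref{rl} (the case there is essentially $u=\vec L_{+a}(w')$, $v=\vec L_{+b}(z')$ with the $w$-factors on the right), so I expect it to follow by the same sequence of steps: expand, apply Proposition \ref{lll} to kill all but one term, then use \eqref{aca} and the unitarity axiom \eqref{ud} to finish. Alternatively, one could observe that $v\vec L_{cd}(w)$ and $u\vec L_{ab}(w)$ are matrix elements of tensor-product corepresentations via \eqref{tpme} and invoke a general orthogonality, but the direct expansion is cleaner.

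The main obstacle will be the shift-operator bookkeeping rather than any conceptual difficulty: one has to be scrupulous about the order in which the $T_{\om_{12}(\cdot)}$ factors appear in \eqref{pppa}–\eqref{pppb}, about commuting them past meromorphic coefficients (picking up shifts), and about the fact that the pairing lands in $\D$ rather than in $\mg$, so that ``$\delta_{ad}\delta_{bc}$'' is really shorthand for a Kronecker delta times an explicit shift. Matching the claimed $T_{-|b|}\lx u,v\rx T_{-|a|}$ versus $\lx u,v\rx T_{-|a|-|b|}$ — note the asymmetry between the two identities — is precisely where this care pays off, and verifying it is the only genuinely non-routine point; everything else is a direct consequence of Propositions \ref{lll} and \eqref{aca} and the product axioms \eqref{ppp}.
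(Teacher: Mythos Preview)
Your approach works in principle but is considerably more laborious than the paper's. The key observation you miss is that $\vec L_{ab}(w)\vec L_{+S}(z)$ is \emph{already} of the form $\vec L_{AB}(\tilde z)$ for the concatenated spectral vector $\tilde z=(w,z)$ and indices $A=(a,+)$, $B=(b,S)$, and likewise $\vec L_{cd}(w)\vec L_{T+}(z)=\vec L_{(c,T),(d,+)}(\tilde z)$. Hence Proposition~\ref{lll} applies in one stroke, giving $\delta_{(a,+),(d,+)}\delta_{(b,S),(c,T)}T_{-|(a,+)|-|(c,T)|}=\delta_{ad}\delta_{bc}\delta_{ST}T_{-|a|-|b|-N-2|T|}$, which is exactly $\delta_{ad}\delta_{bc}T_{-|b|}\lx F_S(z),E_T(z)\rx T_{-|a|}$. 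The scalar factors $f(\mu)$, $g(\la)$ are then absorbed trivially via \eqref{mrp}. That is the paper's entire proof.

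By contrast, your plan to expand via \eqref{pppa}--\eqref{pppb} into four factors and collapse the $w$-with-$w$ and $z$-with-$z$ pieces using Proposition~\ref{lll} leaves you with a residual sum over cross pairings of the shape $\sum_{e,Y}\lx\vec L_{ae}(w),\vec L_{TY}(z)\rx\,T_{\cdots}\,\lx\vec L_{YS}(z),\vec L_{ed}(w)\rx$. These are \emph{not} handled by Proposition~\ref{lll} alone (the spectral vectors differ); you need the unitarity axiom \eqref{ud} to recognise this sum as $\ep(\vec L_{TS}(z)\vec L_{ad}(w))$. You correctly cite \eqref{ud} for the second identity but omit it for the first, so as written your sketch of the first identity has a small gap. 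In effect you are reproving Proposition~\ref{lll} for the concatenated vector rather than simply invoking it.
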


\begin{proof}
We may choose  $u=f(\mu)\vec L_{+x}(z)$ and $v=g(\la)\vec L_{y+}(z)$. The result then follows from Proposition \ref{lll}.
\end{proof}

\begin{lemma}\label{dac}
For generic $z\in (\mathbb C^\times)^N$,
\begin{align}\label{da}\Delta(\vec \alpha(z))&=
\sum_{S\subseteq[N]} \frac 1{A_{S,z}(\rho)}\,
\vec\al(z_S)\vec\be(z_{S^{c}})\otimes \vec\ga(z_{S^c})\vec\al(z_S)
\\
\label{dg}\Delta(\vec \gamma(z))&=
\sum_{S\subseteq[N]} \frac 1{A_{S,z}(\rho)}\,\vec\ga(z_S)\vec\de(z_{S^c})\otimes \vec\ga(z_{S^c})\vec\al(z_S),
\end{align}
where $A_{s,z}$ is as in \eqref{ph} and
we write $f(\rho)=f(\mu)\ot 1=1\ot f(\la)$.
\end{lemma}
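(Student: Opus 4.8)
The plan is to prove \eqref{da} and \eqref{dg} by combining the coproduct formula for the generators with the change of basis provided by Corollary \ref{vec}. First I would recall that, from the FRST coproduct, $\Delta(\vec\al(z)) = \Delta(\vec L_{++}(z)) = \sum_{a\in\{\pm\}^N}\vec L_{+a}(z)\ot\vec L_{a+}(z) = \sum_{S\subseteq[N]} F_S(z)\ot E_S(z)$, with $F_S(z) = \vec L_{+S}(z)$ and $E_S(z) = \vec L_{S+}(z)$ as in \S\ref{ecs}. Similarly $\Delta(\vec\ga(z)) = \Delta(\vec L_{-+}(z)) = \sum_{S\subseteq[N]} \vec L_{-S}(z)\ot E_S(z)$; here $\vec L_{-S}(z)$ is the matrix element $\vec L_{ab}(z)$ with $a = (-,\dots,-)$, $b = S$, which lies in $W(z)$ (it is of the form $\vec L_{+b}$ only after applying $\psi$, but it is certainly in the right grading and can be expanded in the $f_T$-basis). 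So the task reduces to re-expanding $F_S(z)$, resp.\ $\vec L_{-S}(z)$, in the basis $(f_T(z))_{T\subseteq[N]}$ of $W(z)$, and $E_S(z)$ in the basis $(e_T(z))_{T\subseteq[N]}$ of $V(z)$.

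The key computation is the second expansion in Corollary \ref{vec}: any $x\in V(z)$ satisfies $x = \sum_T \mu_l(A_{T,z}^{-1}\lx f_T,x\rx 1)\, e_T(z)$, and dually any $y\in W(z)$ satisfies $y = \sum_T \mu_r(A_{T,z}^{-1}\lx y,e_T\rx 1)\, f_T(z)$. Applying the first to $x = E_S(z)$ and using $\lx f_T(z),E_S(z)\rx$: by \eqref{cbv} this vanishes unless $|T|=|S|$, and a pull-out of the $\al$-factors exactly as in the proof of Proposition \ref{dbl} (using \eqref{poa}) forces $T\supseteq S$ via the factor $\prod_{i\in T^c,\,j\in S}\tha(\dots)$, hence $T=S$; one is left with $\lx f_S(z),E_S(z)\rx = \lx \vec\be(z_{S^c}),\vec L_{S^c,-}(z)\rx T_{\text{shift}}$ which by Proposition \ref{lll} (after reindexing) is a pure shift operator. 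Combined with $\lx F_S(z),E_S(z)\rx = T_{-2m}$ from Proposition \ref{dbl}, one gets $E_S(z) = \mu_l(A_{S,z}^{-1}\cdot c_S(\la))\, e_S(z)$ for an explicit $c_S$; tracking the shift operators and the moment-map conventions $f(\rho) = f(\mu)\ot 1 = 1\ot f(\la)$ produces the factor $1/A_{S,z}(\rho)$ in the second tensor slot. The first tensor slot, the expansion of $F_S(z)$ resp.\ $\vec L_{-S}(z)$ in the $f_T$-basis, is handled the same way using the pairings $\lx F_S(z),e_T(z)\rx$ resp.\ $\lx \vec L_{-S}(z),e_T(z)\rx$, and collapses to the single term $T=S$ giving $F_S(z) = f_S(z)\cdot(\text{shift})$, $\vec L_{-S}(z) = \vec\ga(z_S)\vec\de(z_{S^c})\cdot(\text{shift})$; the latter is immediate since $\vec L_{-S}(z) = \vec\ga(z_S)\vec\de(z_{S^c})$ in fact holds as an algebra identity (it is the product of the generators themselves in the appropriate order, up to commutativity \eqref{lc}), so for \eqref{dg} no cobraiding argument is even needed on that side.

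Putting the two slots together: $\Delta(\vec\al(z)) = \sum_S F_S(z)\ot E_S(z) = \sum_S f_S(z)\ot \mu_l(A_{S,z}^{-1}\cdot(\ast))e_S(z)$, and after moving the scalar across the balanced tensor product $\wt\ot$ (using $\mu_r(f)a\ot b = a\ot\mu_l(f)b$) and matching with the convention $f(\rho) = f(\mu)\ot 1 = 1\ot f(\la)$, the scalars consolidate into the single prefactor $1/A_{S,z}(\rho)$, yielding \eqref{da}; \eqref{dg} follows identically with $\vec\al(z_S)\vec\be(z_{S^c})$ replaced by $\vec\ga(z_S)\vec\de(z_{S^c})$. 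The main obstacle I anticipate is bookkeeping rather than conceptual: correctly tracking all the shift operators $T_\bullet$ generated by the pairings and by the moment-map relations, and verifying that they recombine so that the net effect is exactly the substitution $\la\mapsto$ (argument of $A$) with the stated $\rho = \mu\ot 1 = 1\ot\la$ placement — in particular checking that the spurious theta-quotient factors from \eqref{poa}, \eqref{acd} that appear when pulling out $\al$'s cancel between the $F_S$-side and the $E_S$-side, leaving precisely $A_{S,z}$ as in \eqref{ph} and no extra spectral-parameter factors.
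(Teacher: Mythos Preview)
Your starting point $\Delta(\vec\alpha(z))=\sum_{S}F_S(z)\otimes E_S(z)$ is correct, but the proof collapses at the next step: the pairings $\lx f_T,E_S\rx$ are \emph{not} diagonal in $S,T$. Lemma~\ref{pol}\eqref{poa} applies only when the second argument has the form $\vec L_{-a}(z)$, which $E_S=\vec L_{S+}(z)$ does not, and \eqref{pob} requires the second argument to end in $\vec\alpha(w)$, which again fails for general $S$. Concretely, for $N=2$ the commutation relation \eqref{gara} gives $E_{\{1\}}=\alpha(z_1)\gamma(z_2)=a(\la,z_1/z_2)^{-1}\,e_{\{1\}}-a(\la,z_1/z_2)^{-1}b(\la,z_1/z_2)\,e_{\{2\}}$, so $\lx f_{\{2\}},E_{\{1\}}\rx\neq 0$. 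By Remark~\ref{tr} the transition matrices between the $E$- and $e$-bases (and between the $F$- and $f$-bases) are merely triangular, not diagonal. Similarly, the claimed identity $\vec L_{-S}(z)=\vec\gamma(z_S)\vec\delta(z_{S^c})$ is false: \eqref{lc} only gives $\gamma\gamma$- and $\delta\delta$-commutativity, not $\gamma\delta$-commutativity, so the interleaved product $\vec L_{-S}(z)$ cannot in general be reordered to put all $\gamma$'s first.

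The paper's proof sidesteps this entirely. One expands only the right tensor leg, $E_S=\sum_T\mu_l(A_{T,z}^{-1}\lx f_T,E_S\rx 1)\,e_T$, moves the scalar across $\wt\ot$ to $\mu_r$, and then observes that for each fixed $T$ the sum over $S$ is exactly the expansion formula $f_T=\sum_S\mu_r(\lx f_T,E_S\rx 1)F_S$ from Corollary~\ref{vec}. The off-diagonal transition coefficients never need to be computed: they cancel because one change of basis is undone by its inverse. This yields \eqref{da} in one line. For \eqref{dg}, rather than starting over with $\vec L_{-S}$, one applies to \eqref{da} the algebra map $\eta$ determined by $\eta(\alpha)=\gamma$, $\eta(\beta)=\delta$, $\eta(f(\mu))=f(\mu)$, which satisfies $(\eta\otimes\id)\circ\Delta=\Delta\circ\eta$.
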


\begin{proof}
By \eqref{bme} and  Corollary \ref{vec},
\begin{equation*}\begin{split}\Delta(\vec \alpha(z))&=\sum_{S\subseteq[N]}F_S(z)\otimes E_S(z)=\sum_{S,T\subseteq[N]}F_S(z)\otimes\mu_l(A_{T,z}^{-1}\lx f_T,E_S\rx 1)e_T(z)\\ 
&=\sum_{S,T\subseteq[N]}\mu_r(A_{T,z}^{-1}\lx f_T,E_S\rx 1)F_S(z)\otimes e_T(z)=\sum_{T\subseteq[N]}\mu_r(A_{T,z}^{-1})f_T(z)\otimes e_T(z),
\end{split}\end{equation*}
which is \eqref{da}.

It is clear from the defining relations that 
$\eta(f(\mu))=f(\mu)$, $\eta(\al(z))=\ga(z)$, $\eta(\be(z))=\de(z)$
extends to  an algebra isomorphism (though not an $\gh$-algebra isomorphism)
between subalgebras of $\E$. It is easy to check that $(\eta\ot\id)\circ\De=\De\circ\eta$. Applying $\eta\otimes\id$ to \eqref{da} then gives \eqref{dg}.
\end{proof}

\section{Elliptic weight functions}
\label{ews}

In contrast to the pairings between $\vec\alpha$ and  $\vec\delta$
considered in Lemma \ref{adl}, the pairing
$\lx\vec\beta(w),\vec\gamma(z)\rx $
is not given by an elementary product. By the discussion in \S \ref{fs}, it can be identified with the partition function of the 8VSOS model  with domain wall boundary conditions, see Figure \ref{dwf}.
It is also a special case of the elliptic weight functions introduced in \cite{tv}, see further \cite{ftv1,ftv2,fv2,mv}.

\begin{figure}[h]
\begin{center}
%\begin{array}{c@{\hspace{1in}}c@{\hspace{1in}}c}
\begin{pspicture}(0.9,0.9)(3.5,3.5)
\psset{xunit=.7,yunit=.7}
\psset{arrowsize=5pt}
\rput(0.5,2.2){$\vdots$}
\rput(4.5,2.2){$\vdots$}
\rput(3,0.5){$\dotsm$}
\rput(3,4.5){$\dotsm$}
\rput(1,1){\iceL}
\rput(1,3){\iceL}
\rput(1,4){\iceL}
\rput(1,5){\iceD}
\rput(2,5){\iceD}
\rput(4,5){\iceD}
\rput(1,0){\iceU}
\rput(2,0){\iceU}
\rput(4,0){\iceU}
\rput(4,1){\iceR}
\rput(4,3){\iceR}
\rput(4,4){\iceR}
\psline{-}(1,1)(1,4)
\psline{-}(1,4)(4,4)
\psline{-}(4,4)(4,1)
\psline{-}(4,1)(1,1)
\end{pspicture}
\end{center}
\vspace{7mm}
\caption{Domain wall boundary conditions.}
\label{dwf}
\end{figure}
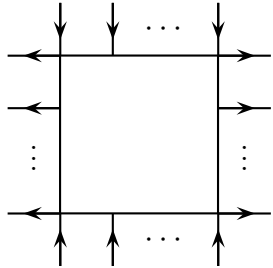

The following identity is essentially obtained in  \cite{tv}, although 
neither the cobraiding nor the relation to the domain wall partition function are  discussed there. Below, we give a simple proof using properties of the cobraiding. For the same identity in the context of the 8VSOS model, see  
\cite{pr,rs}. 
In \cite{rs}, we also obtained an alternative expression, analogous to
 the Izergin--Korepin determinant for the six-vertex model. 

\begin{proposition}\label{bcgp} For generic $(w,z)\in(\mathbb C^\times)^n\times(\mathbb C^\times)^n$, 
$$\lx \vec\be(w),\vec\ga(z)\rx=\frac{\theta(q)^n}{(q^{-\la-n})_n}
\,\Phi(w;z;q^{-\la}),$$
where
\begin{equation}\label{pd}\Phi(w;z;a)\\=
\sum_{\sigma\in S_n}\prod_{1\leq i<j\leq n}
\frac{\theta(qz_{\sigma(i)}/z_{\sigma(j)})\theta(w_i/z_{\sigma(j)})}{\theta(z_{\sigma(i)}/z_{\sigma(j)})\theta(qw_i/z_{\sigma(j)})}\prod_{j=1}^n
\frac{\theta(aq^{-j}w_j/z_{\sigma(j)})}{\theta(qw_j/z_{\sigma(j)})}.
 \end{equation}
\end{proposition}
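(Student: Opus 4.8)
The plan is to prove the identity by induction on $n$, using the coproduct relations for the cobraiding together with the explicit $2\times 2$ block structure of the $R$-matrix. First I would set up the base case $n=1$: here $\lx\be(w),\ga(z)\rx = b(\la,w/z)T_0$ by the defining table \eqref{cb}, while $\Phi(w;z;a) = \theta(aw/z)/\theta(qw/z)$, so the claimed identity reduces to the elementary check that $b(\la,w/z) = \frac{\theta(q)}{(q^{-\la-1})_1}\cdot\frac{\theta(q^{-\la}w/z)}{\theta(qw/z)}$, i.e.\ $\frac{\theta(q,q^{-\la-1}w/z)}{\theta(qw/z,q^{-\la-1})} = \frac{\theta(q)\,\theta(q^{-\la}w/z)}{\theta(q^{-\la-1})\theta(qw/z)}$, which is immediate since $(q^{-\la-1})_1 = \theta(q^{-\la-1})$ and the theta factors match.

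For the inductive step I would split off the last variable. Using \eqref{pppa} to expand $\lx\vec\be(w),\vec\ga(z)\rx = \lx\vec\be(\hat w)\be(w_n),\vec\ga(z)\rx$ (with $\hat w = (w_1,\dots,w_{n-1})$), the coproduct $\Delta(\vec\ga(z))$ is needed; but in fact it is cleaner to split the \emph{second} argument using \eqref{pppb} or to peel off $\ga(z_n)$ via $\Delta(\ga(z))=\sum_x L_{-x}(z)\ot L_{x-}(z)$. Either way, the sum over intermediate indices collapses drastically: because $\vec\be$ raises the grading and $\vec\ga$ lowers it, only a single intermediate configuration contributes at the top, and the off-diagonal $R$-matrix entries $b(\la,\cdot)$, $c(\la,\cdot)$ (together with the diagonal $a,d$) produce exactly the hopping structure that, after reindexing, matches the sum over $\sigma\in S_n$ in \eqref{pd}. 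Concretely, I expect the recursion to take the shape
\begin{equation*}
\lx\vec\be(w),\vec\ga(z)\rx = \sum_{k=1}^n (\text{elliptic coefficient in }\la, w_n, z_k)\cdot\lx\vec\be(\hat w),\vec\ga(z\setminus z_k)\rx\big|_{\text{shifted }\la},
\end{equation*}
where the coefficient comes from a telescoping product of $a(\la,\cdot)$ factors times one $b(\la,\cdot)$ (or $c$) factor. Matching this against the analogous recursion for $\Phi(w;z;a)$ in its last row — expand \eqref{pd} according to the value of $\sigma^{-1}(n)$ or $\sigma(n)$ — and against the prefactor $\theta(q)^n/(q^{-\la-n})_n$, which picks up exactly one extra $\theta(q)$ and one extra Pochhammer factor per induction step (with the $\la$-shift absorbed by the $q^{\la+2}$-type shifts in the moment-map relations \eqref{mrp}), completes the argument.

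The main obstacle will be the bookkeeping of the dynamical shifts. The cobraiding intertwines with the moment maps via \eqref{mrpa}--\eqref{mrpb}, so the $T$-operators appearing when one applies \eqref{pppa}/\eqref{pppb} shift the argument $\la$ inside the $a(\la,\cdot)$, $b(\la,\cdot)$, $c(\la,\cdot)$ factors in a way that must be tracked carefully; these shifts are precisely what makes $\Phi$ depend on $a=q^{-\la}$ rather than being a bare product over $S_n$, and getting the shift in $\theta(aq^{-j}w_j/z_{\sigma(j)})$ to come out with the right power $q^{-j}$ is the delicate point. I would therefore organize the induction so that the variable peeled off corresponds to the index $j=n$ in \eqref{pd}, so that the accumulated shift $q^{-n}$ appears naturally; the remaining $\theta(q)$ and Pochhammer normalization then follow by comparing \eqref{acd} or \eqref{ph}-type computations. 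A secondary, purely computational step is the theta-function identity needed to see that the single-term recursion coefficient equals the corresponding slice of $\Phi$; this should reduce to an addition formula for theta functions of the Weierstrass/Riemann type, of the same flavour as the three-term relation used implicitly in \cite{tv}, and I would cite \cite[\S 11.2]{gr} rather than prove it from scratch.
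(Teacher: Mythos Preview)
Your inductive strategy is sound and is essentially what the paper does, but you are missing the key organizing tool that makes the recursion clean. When you split off one $w$- or $z$-variable via the raw coproduct $\Delta(\vec\ga(z))=\sum_x\vec L_{-,x}(z)\ot\vec L_{x,+}(z)$, the surviving $n$ terms have the form $\lx\vec\be(\hat w),\ga(z_1)\cdots\de(z_k)\cdots\ga(z_n)\rx$ (or the analogous expression with a stray $\al$), not $\lx\vec\be(\hat w),\vec\ga(z\setminus z_k)\rx$. Reducing these mixed pairings to the inductive hypothesis is exactly the step you gloss over, and it is the reason you anticipate needing a three-term theta identity at the end.

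The paper avoids this by using Lemma~\ref{dac}, specifically the coproduct formula
\[
\Delta(\vec\ga(z))=\sum_{T\subseteq[n]}\frac{1}{A_{T,z}(\rho)}\,\vec\ga(z_T)\vec\de(z_{T^c})\ot\vec\ga(z_{T^c})\vec\al(z_T),
\]
which already has the $\de$- and $\al$-factors pushed to the ends. Then Lemma~\ref{pol} (\eqref{pob} and \eqref{pod}) strips them off as elementary multipliers, leaving precisely $\lx\vec\be(w_S),\vec\ga(z_T)\rx$ times $\lx\vec\be(w_{S^c}),\vec\ga(z_{T^c})\rx$ with a controlled $\la$-shift. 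Iterating this with all $|S_j|=1$ gives the sum over $S_n$ directly, with no theta identity beyond the definitions of $a,b,c,d$. So the normalization $\theta(q)^n/(q^{-\la-n})_n$ and the shift $q^{-j}$ in $\Phi$ fall out automatically; the ``delicate point'' you flag disappears once \eqref{dg} is in hand.

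In short: your outline is not wrong, but replace the raw coproduct by \eqref{dg} and invoke \eqref{pob}--\eqref{pod}; then no addition formula is needed and the bookkeeping of dynamical shifts becomes mechanical.
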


Note that $\Phi(w;z;a)$ has poles only at $qw_i/z_j\in p^{\mathbb Z}$; the 
singularities at $z_i/z_j\in p^{\mathbb Z}$ cancel in the symmetrization.

\begin{proof}
We  write
$\lx \vec\be(w),\vec\ga(z)\rx=\lx \vec\be(w_S)\vec\be(w_{S^c}),\vec\ga(z)\rx $
for  $S\subseteq [n]$. Applying first \eqref{ppp}
and \eqref{dg},  then \eqref{pob} and \eqref{pod}, we obtain
\begin{multline*}\lx \vec\be(w),\vec\ga(z)\rx=\sum_{T\subseteq [n],\,|T|=m}
\lx \vec\be(w_S),\vec\ga(z_T)\vec\de(z_{T^c})\rx T_{2m-n}A_{T,z}^{-1}
\lx \vec\be(w_{S^c}),\vec\ga(z_{T^c})\vec\al(z_{T^c})\rx\\
=\sum_{T\subseteq [n],\,|T|=m}\,\prod_{i\in S,j\in T^c}\frac{\tha(w_i/z_j)}{\tha(qw_i/z_j)}\prod_{i\in T,j\in T^c}\frac{\tha(qz_i/z_j)}{\tha(z_i/z_j)}\\
\times\lx \vec\be(w_S),\vec\ga(z_T)\rx T_{m}
\lx \vec\be(w_{S^c}),\vec\ga(z_{T^c})\rx T_{-m},
 \end{multline*}
where $m=|S|$.

This is amenable to iteration. For $[n]=S_1\sqcup\dots\sqcup S_N$ (disjoint union), 
\begin{multline}\label{cf}\lx \vec\be(w),\vec\ga(z)\rx
=\sum_{\substack{[n]=T_1\sqcup\dots\sqcup T_N\\|T_i|=|S_i|,\, 1\leq i\leq N}}
\prod_{1\leq k<l\leq N}\left(\prod_{i\in S_k,j\in T_l}\frac{\tha(w_i/z_j)}{\tha(qw_i/z_j)}\prod_{i\in T_k,j\in T_l}\frac{\tha(qz_i/z_j)}{\tha(z_i/z_j)}\right)\\
\times\prod_{j=1}^n\lx\be(w_{S_j}),\ga(z_{T_j})\rx(\la+\textstyle\sum_{k=1}^{j-1}|S_k|).
 \end{multline}
Consider the  case $N=n$,  $S_j=\{j\}$. Then, $T_j=\{\sigma(j)\}$ for some $\sigma\in S_n$, so that
$\lx\be(w_{S_j}),\ga(z_{T_j})\rx=b(\la,w_j/z_{\sigma(j)}).$
 This yields the desired identity.
\end{proof}

Since it may have some independent interest, we rewrite
 \eqref{cf} in terms of $\Phi$.

\begin{corollary}
For any decomposition $[n]=S_1\sqcup\dots\sqcup S_N$, 
\begin{multline*}\Phi(w;z;a)
=\sum_{\substack{[n]=T_1\sqcup\dots\sqcup T_N\\|T_i|=|S_i|,\, 1\leq i\leq N}}
\prod_{1\leq k<l\leq N}\left(\prod_{i\in S_k,j\in T_l}\frac{\tha(w_i/z_j)}{\tha(q w_i/z_j)}\prod_{i\in T_k,j\in T_l}\frac{\tha(qz_i/z_j)}{\tha(z_i/z_j)}\right)\\
\times\prod_{j=1}^n\Phi(w_{S_j};z_{T_j};q^{-\sum_{k=1}^{j-1}|S_k|}a).
 \end{multline*}
\end{corollary}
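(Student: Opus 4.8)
The plan is to deduce the corollary directly from formula \eqref{cf} in the proof of Proposition \ref{bcgp}, together with the case $N=n$, $S_j=\{j\}$ of that same proof (which gives Proposition \ref{bcgp} itself). The point is that \eqref{cf} already expresses $\lx\vec\be(w),\vec\ga(z)\rx$ as a sum over decompositions $[n]=T_1\sqcup\dots\sqcup T_N$ compatible with a given decomposition $[n]=S_1\sqcup\dots\sqcup S_N$; one only needs to rewrite every pairing $\lx\vec\be(w_{S_j}),\vec\ga(z_{T_j})\rx$ appearing there using Proposition \ref{bcgp} applied to the smaller index sets $S_j$ and $T_j$, and then compare with Proposition \ref{bcgp} applied to the full sets $[n]$, $[n]$.

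Concretely, first I would apply Proposition \ref{bcgp} to the pair of vectors $(w_{S_j},z_{T_j})\in(\mathbb C^\times)^{|S_j|}\times(\mathbb C^\times)^{|T_j|}$ at the shifted dynamical parameter $\la+\sum_{k=1}^{j-1}|S_k|$, obtaining
$$\lx\be(w_{S_j}),\ga(z_{T_j})\rx\big(\la+\textstyle\sum_{k=1}^{j-1}|S_k|\big)=\frac{\theta(q)^{|S_j|}}{(q^{-\la-\sum_{k=1}^{j-1}|S_k|-|S_j|})_{|S_j|}}\,\Phi\big(w_{S_j};z_{T_j};q^{-\la-\sum_{k=1}^{j-1}|S_k|}\big).$$
Substituting this into \eqref{cf}, the product over $j$ of the prefactors $\theta(q)^{|S_j|}/(q^{-\la-\sum_{k=1}^{j-1}|S_k|-|S_j|})_{|S_j|}$ telescopes: since $|S_1|+\dots+|S_n|=n$ and $(x)_{k+l}=(x)_k(q^k x)_l$, the denominators multiply to $(q^{-\la-n})_n$ and the numerators to $\theta(q)^n$. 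Hence \eqref{cf} becomes
$$\lx\vec\be(w),\vec\ga(z)\rx=\frac{\theta(q)^n}{(q^{-\la-n})_n}\sum_{\substack{[n]=T_1\sqcup\dots\sqcup T_N\\|T_i|=|S_i|}}\Big(\prod_{1\le k<l\le N}\cdots\Big)\prod_{j=1}^n\Phi\big(w_{S_j};z_{T_j};q^{-\sum_{k=1}^{j-1}|S_k|}q^{-\la}\big).$$
Now Proposition \ref{bcgp} for the full sets gives $\lx\vec\be(w),\vec\ga(z)\rx=\frac{\theta(q)^n}{(q^{-\la-n})_n}\Phi(w;z;q^{-\la})$; cancelling the common prefactor and writing $a=q^{-\la}$ (which ranges over a Zariski-dense set as $\la$ varies, so the resulting identity of meromorphic functions in $a$ holds identically) yields exactly the claimed formula.

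The only genuinely delicate point is the bookkeeping: one must check that the shifts of the dynamical parameter in \eqref{cf} match the shifts $q^{-\sum_{k=1}^{j-1}|S_k|}a$ appearing in $\Phi$, and that the cross-product factors $\prod_{1\le k<l\le N}$ in \eqref{cf} are reproduced verbatim — they are, since they are untouched by the rewriting of the pairings. I expect the telescoping of the Pochhammer prefactors, and the verification that $a=q^{-\la}$ may be treated as a free variable, to be the parts most worth stating carefully; everything else is direct substitution.
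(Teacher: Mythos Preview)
Your proposal is correct and follows exactly the paper's approach: the paper simply states that the corollary is \eqref{cf} ``rewritten in terms of $\Phi$'', and your argument---applying Proposition \ref{bcgp} to each factor $\lx\vec\be(w_{S_j}),\vec\ga(z_{T_j})\rx$, telescoping the Pochhammer prefactors, and then matching against Proposition \ref{bcgp} for the full vectors---is precisely what that rewriting amounts to. Your telescoping check and the observation that $a=q^{-\la}$ takes all values in $\mathbb C^\times$ are the only details worth spelling out, and you have them right.
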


Choosing $x=\vec\be(w)$ and $y=\vec\ga(z)$ in  \eqref{psu} gives
$$\lx\vec\be(w),\vec\ga(z)\rx(\la)=\lx\vec\ga(w),\vec\be(z)\rx(-\la-2). $$
This proves the following fact.

\begin{corollary}\label{gbc}
For generic $w,z\in(\mathbb C^\times)^n$,
$$\lx \vec\ga(w),\vec\be(z)\rx=\frac{\theta(q)^n}{(q^{\la+2-n})_n}
\Phi(w;z;q^{\la+2}).$$
\end{corollary}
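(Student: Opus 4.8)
The plan is to deduce Corollary \ref{gbc} from Proposition \ref{bcgp} by exploiting the algebra automorphism $\psi$ and the identity \eqref{psu}. First I would set $x=\vec\be(w)$ and $y=\vec\ga(z)$ in \eqref{psu}, which reads $\lx x,y\rx=\psi^{\D}(\lx\psi(x),\psi(y)\rx)$. Since $\psi(\be(z))=\de(z)$ is not what we want, the correct move is instead to compute $\lx\vec\ga(w),\vec\be(z)\rx$ by applying $\psi$: we have $\psi(\ga(w))=L_{+-}(w)=\be(w)$ and $\psi(\be(z))=L_{-+}(z)=\ga(z)$, so $\psi$ interchanges $\vec\be$ and $\vec\ga$. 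Thus $\lx\vec\ga(w),\vec\be(z)\rx=\psi^{\D}(\lx\vec\be(w),\vec\ga(z)\rx)$, and since $\psi^{\D}$ acts on $\mg$ by $f(\la)\mapsto f(-\la-2)$ (and fixes the difference-operator part appropriately, up to the sign change $T_\al\mapsto T_{-\al}$ which does not affect a scalar multiple of $T_0$ here), applying $\psi^{\D}$ to the formula in Proposition \ref{bcgp} amounts to the substitution $\la\mapsto-\la-2$.

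Carrying this out concretely: Proposition \ref{bcgp} gives
\begin{equation*}
\lx\vec\be(w),\vec\ga(z)\rx=\frac{\theta(q)^n}{(q^{-\la-n})_n}\,\Phi(w;z;q^{-\la}).
\end{equation*}
Replacing $\la$ by $-\la-2$ turns $q^{-\la}$ into $q^{\la+2}$ and turns $(q^{-\la-n})_n$ into $(q^{\la+2-n})_n$. Hence
\begin{equation*}
\lx\vec\ga(w),\vec\be(z)\rx=\frac{\theta(q)^n}{(q^{\la+2-n})_n}\,\Phi(w;z;q^{\la+2}),
\end{equation*}
which is exactly the claimed identity. The text already indicates (in the line preceding Corollary \ref{gbc}) that one should choose $x=\vec\be(w)$ and $y=\vec\ga(z)$ in \eqref{psu}; I would simply note that, since $\psi$ is an algebra automorphism with $\psi(\be(w))=\ga(w)$ and $\psi(\ga(z))=\be(z)$, equation \eqref{psu} directly yields $\lx\vec\be(w),\vec\ga(z)\rx(\la)=\lx\vec\ga(w),\vec\be(z)\rx(-\la-2)$, and then invert this relation.

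The main thing to be careful about is the bookkeeping of the $\psi^{\D}$ action on $\D$: one must check that the difference-operator component of $\lx\vec\be(w),\vec\ga(z)\rx$ is a pure function times $T_0$ (which follows from the grading rule \eqref{cbv}, since $\vec\be(w)\in A_{n,-n}$ is matched against $\vec\ga(z)\in A_{-n,n}$, giving total shift zero), so that $\psi^{\D}$ acts on it purely by the substitution $\la\mapsto -\la-2$ in the coefficient function, the sign flip $T_0\mapsto T_0$ being trivial. Granting that, the proof is a one-line substitution and there is essentially no obstacle; the only real content was already supplied by Proposition \ref{bcgp} and the existence of $\psi$ from Proposition \ref{ip}.
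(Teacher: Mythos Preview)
Your approach is correct and matches the paper's: apply \eqref{psu} with $\psi$ interchanging $\vec\be$ and $\vec\ga$, then substitute $\la\mapsto-\la-2$ in Proposition~\ref{bcgp}. Note one slip: you first write $\psi(\be(z))=\de(z)$, which is wrong (it is $\ga(z)$, as you then correctly use), so the detour to a ``different'' choice of $x,y$ was unnecessary---either choice works since $\psi$ swaps $\be\leftrightarrow\ga$.
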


The function $\Phi$ has some  symmetries, which can be explained in terms of symmetries of the algebra $\E$.

\begin{corollary}\label{ewc}
The function $\Phi$ satisfies
\begin{equation*}\begin{split}\Phi(w;z;a)&=\Phi(z^{-1};w^{-1};a)\\
&=q^{-n}a^n\prod_{i,j=1}^n\frac{\tha(w_i/z_j)}{\tha(qw_i/z_j)}
\Phi(w^{-1};qz^{-1};q^{n+2}a^{-1})\\
&=q^{-n}a^n\prod_{i,j=1}^n\frac{\tha(w_i/z_j)}{\tha(qw_i/z_j)}
\Phi(z;qw;q^{n+2}a^{-1}),
\end{split}\end{equation*}
where we use the notation $z^{-1}=(z_1^{-1},\dots,z_n^{-1})$.
\end{corollary}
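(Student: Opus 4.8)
The plan is to derive each of the three claimed symmetries of $\Phi$ from a corresponding symmetry of the cobraiding on $\mathcal E$, exactly as Corollary \ref{gbc} was deduced from \eqref{psu}. The inputs are the identities \eqref{phun}, \eqref{psu}, and the combination $\ast=\psi\circ\phi$ encoded in \eqref{ic}, together with the explicit action of $\phi$ and $\psi$ on the generators given in Proposition \ref{ip}, and the two evaluation formulas Proposition \ref{bcgp} and Corollary \ref{gbc} that identify $\lx\vec\be(w),\vec\ga(z)\rx$ and $\lx\vec\ga(w),\vec\be(z)\rx$ with $\Phi$.

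First I would establish $\Phi(w;z;a)=\Phi(z^{-1};w^{-1};a)$. Apply \eqref{phun} with $x=\vec\be(w)$ and $y=\vec\ga(z)$. Since $\phi$ is an antiautomorphism with $\phi(\be(z))=\be(z^{-1})$, $\phi(\ga(z))=\ga(z^{-1})$, reversing the order of the factors gives $\phi(\vec\ga(z))=\vec\ga(z^{-1\,\op})$ and similarly for $\vec\be(w)$; because the $L_{ab}$ with fixed $a,b$ commute among themselves by \eqref{lc}, the order reversal is harmless and $\phi(\vec\be(w))=\vec\be(w^{-1})$, $\phi(\vec\ga(z))=\vec\ga(z^{-1})$. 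Thus \eqref{phun} yields $\lx\vec\be(w),\vec\ga(z)\rx=\phi^{\D}\bigl(\lx\vec\ga(z^{-1}),\vec\be(w^{-1})\rx\bigr)$. Now evaluate the left side with Proposition \ref{bcgp} and the right side with Corollary \ref{gbc} (with $w,z$ there replaced by $z^{-1},w^{-1}$), and apply $\phi^{\D}$, which sends $\la\mapsto-\la-2$ and fixes the shift operators; after substituting $q^{-\la}=a$ the prefactors $\theta(q)^n/(q^{-\la-n})_n$ and $\phi^{\D}$ applied to $\theta(q)^n/(q^{\la+2-n})_n$ match, and one reads off $\Phi(w;z;q^{-\la})=\Phi(z^{-1};w^{-1};q^{-\la})$, i.e.\ the first identity.

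For the second identity I would use \eqref{ic}, i.e.\ $\lx x,y\rx=S^{\D}(\lx y^\ast,x^\ast\rx)$ with $\ast=\psi\circ\phi$. Here $\ast$ acts on generators by $\be(z)^\ast=\psi(\be(z^{-1}))=\de(z^{-1})$ and $\ga(z)^\ast=\ga(z^{-1})$ — no, more carefully one computes $\be^\ast=\de$ up to spectral inversion, so taking $x=\vec\be(w)$, $y=\vec\ga(z)$ turns the right-hand side into a pairing of the form $\lx\vec\ga(\cdot),\vec\be(\cdot)\rx$ between transformed $\de$'s; the point is that $\psi$ interchanges $\be\leftrightarrow\de$ and shifts $\la$. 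One then re-expresses the resulting pairing using Proposition \ref{bcgp}/Corollary \ref{gbc} together with the crossing symmetry Lemma \ref{pfs} — which is precisely what converts a $\vec\de$-pairing into a $\vec\ga$-pairing at inverted, $q$-shifted spectral parameters and produces the product $\prod_{i,j}\theta(w_i/z_j)/\theta(qw_i/z_j)$ and the power $q^{-n}a^n$. Applying $S^{\D}$ (which fixes $f(\la)$ and sends $T_\al\mapsto T_{-\al}$) and tracking the argument $a=q^{-\la}\mapsto q^{n+2}a^{-1}$ gives $\Phi(w;z;a)=q^{-n}a^n\prod_{i,j}\frac{\theta(w_i/z_j)}{\theta(qw_i/z_j)}\Phi(w^{-1};qz^{-1};q^{n+2}a^{-1})$. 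Finally, the third identity follows by composing the first with the second: replace $(w,z)$ in the second identity by $(z^{-1},w^{-1})$ and use $\Phi(z^{-1};w^{-1};a)=\Phi(w;z;a)$ to rewrite the left side, while on the right $\Phi((z^{-1})^{-1};q(w^{-1})^{-1};\cdot)=\Phi(z;qw;\cdot)$; the prefactor is symmetric under $(w,z)\mapsto(z^{-1},w^{-1})$, so one lands on $\Phi(w;z;a)=q^{-n}a^n\prod_{i,j}\frac{\theta(w_i/z_j)}{\theta(qw_i/z_j)}\Phi(z;qw;q^{n+2}a^{-1})$.

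The main obstacle is bookkeeping rather than conceptual: one must carefully track how $\phi$, $\psi$, and the antipode act on the \emph{ordered} products $\vec\be(w)$, $\vec\ga(z)$ (including the order-reversal from antiautomorphisms, harmless here by \eqref{lc}), how the dynamical shift $\la\mapsto-\la-2$ interacts with the elliptic Pochhammer prefactors $\theta(q)^n/(q^{\pm\la+\cdots})_n$, and how the crossing factor in Lemma \ref{pfs} — with its $F$-quotients and its sign $(-1)^{\frac12(|c|-|d|)}$, here evaluated on all-$+$ or all-$-$ strings — collapses to the clean product $q^{-n}a^n\prod_{i,j}\theta(w_i/z_j)/\theta(qw_i/z_j)$. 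Once these substitutions are made consistently, each of the three equalities drops out by matching prefactors after the appropriate specialization $a=q^{-\la}$.
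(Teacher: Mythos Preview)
Your argument for the first equality is correct, and in fact slightly different from the paper's: you use $\phi$-unitarity \eqref{phun} together with Corollary \ref{gbc}, whereas the paper applies the $\ast$-involution \eqref{ic} directly to $\lx\vec\be(w),\vec\ga(z)\rx$ and evaluates both sides with Proposition \ref{bcgp}. Both routes work and the bookkeeping is equally easy. Your derivation of the last equality by composing the first two is also fine (and is what the paper does).

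The gap is in your treatment of the second equality. You claim that $\psi$ interchanges $\be\leftrightarrow\de$; it does not. By definition $\psi(L_{ab}(z))=L_{-a,-b}(z)$, so $\psi$ swaps $\al\leftrightarrow\de$ and $\be\leftrightarrow\ga$. Consequently $\be(z)^\ast=\psi(\be(z^{-1}))=\ga(z^{-1})$ and $\ga(z)^\ast=\be(z^{-1})$, and applying \eqref{ic} to $x=\vec\be(w)$, $y=\vec\ga(z)$ yields
\[
\lx\vec\be(w),\vec\ga(z)\rx=S^{\D}\bigl(\lx\vec\be(z^{-1}),\vec\ga(w^{-1})\rx\bigr),
\]
which (since the pairing lies in $(\D)_{00}$, so $S^{\D}$ acts trivially) is exactly the \emph{first} identity again, not a new relation. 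No $\vec\de$ ever appears, so there is nothing for Lemma \ref{pfs} to convert.

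The correct move for the second equality is to use Lemma \ref{pfs} \emph{by itself}: take $\vec L_{ab}(w)=\vec\be(w)$ and $\vec L_{cd}(z)=\vec\ga(z)$ there (so $a=+^n$, $b=-^n$, $c=-^n$, $d=+^n$), which produces on the right-hand side the pairing $\lx\vec\ga(q^{-1}z),\vec\be(w)\rx$ together with the $F$-quotient and the product $\prod_{i,j}\theta(z_i/w_j)/\theta(z_i/qw_j)$. Evaluating the left side by Proposition \ref{bcgp} and the right side by Corollary \ref{gbc}, and simplifying the $F$-quotient (which collapses to a single theta ratio times a power of $q$), gives precisely the equality between the first and third members. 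The paper also remarks that this can alternatively be read off directly from the sum \eqref{pd} via the reindexing $\sigma(i)\mapsto\sigma(n+1-i)$, $w_i\mapsto w_{n+1-i}$, which avoids all algebra symmetries.
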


\begin{proof}
Choosing  $x=\vec\be(w)$ and $y=\vec\ga(z)$ in \eqref{ic} gives
$$\lx\vec\be(w),\vec\ga(z)\rx(\la)=\lx\vec\be(z^{-1}),\vec\ga(w^{-1})\rx(\la).$$
 This shows the equality between the first and second member.
The equality of the  first and third member is a special case of
 Lemma \ref{pfs}. Alternatively, it can be obtained from  \eqref{pd}, replacing $\sigma(i)$ by $\sigma(n+1-i)$ and $w_i$ by $w_{n+1-i}$.  The last equality follows by combining the other two.
\end{proof}

If we specialize $w$ or $z$ 
to a geometric progression, $\Phi$ factors.

\begin{lemma}\label{pgl}
One has
\begin{align*}\Phi(w;z;a)\Big|_{z_j=q^{j-1}\zeta }
&=\frac{(q)_n}{\theta(q)^n}\prod_{j=1}^n\frac{\tha(q^{-n}aw_j/\zeta)}{\tha(q w_j/\zeta)},\\
\Phi(w;z;a)\Big|_{w_j=q^{j-1}\omega }
&=\frac{(q)_n}{\theta(q)^n}\prod_{j=1}^n\frac{\tha(q^{-1}a\om/z_j)}{\tha(q^{n}\omega/z_j)}.
\end{align*}
\end{lemma}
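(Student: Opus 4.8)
The plan is to compute directly from the symmetrization \eqref{pd}.

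\textbf{First identity.} I would substitute $z_j=q^{j-1}\zeta$, so that $z_{\sigma(i)}/z_{\sigma(j)}=q^{\sigma(i)-\sigma(j)}$ and the factor of the $\sigma$-term in \eqref{pd} that involves only the $z$-variables becomes
$$c(\sigma):=\prod_{1\le i<j\le n}\frac{\theta(q^{\sigma(i)-\sigma(j)+1})}{\theta(q^{\sigma(i)-\sigma(j)})},$$
a product of theta functions at integer powers of $q$, with no dependence on $\zeta$. For generic parameters $\theta(q^k)\neq 0$ whenever $0\neq|k|<n$, while $\theta(q^0)=\theta(1)=0$; since the denominators here are $\theta(q^{\sigma(i)-\sigma(j)})$ with $\sigma(i)\neq\sigma(j)$, none of them vanishes, so $c(\sigma)=0$ unless $\sigma(j)\neq\sigma(i)+1$ for all $i<j$. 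Reading this as $\sigma^{-1}(v)>\sigma^{-1}(v+1)$ for $v=1,\dots,n-1$ forces $\sigma(j)=n+1-j$, so exactly one term of \eqref{pd} survives.

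For that term $z_{\sigma(j)}=z_{n+1-j}=q^{n-j}\zeta$ and $\sigma(i)-\sigma(j)=j-i$, so $c(\sigma)$ telescopes:
$$c(\sigma)=\prod_{k=1}^{n-1}\left(\frac{\theta(q^{k+1})}{\theta(q^k)}\right)^{n-k}=\frac{\theta(q^2)\theta(q^3)\cdots\theta(q^n)}{\theta(q)^{n-1}}=\frac{(q)_n}{\theta(q)^n},$$
using $(q)_n=\theta(q)\theta(q^2)\cdots\theta(q^n)$. In the remaining factors one has $\prod_{j=1}^n\theta(aq^{-j}w_j/z_{\sigma(j)})=\prod_{j=1}^n\theta(q^{-n}aw_j/\zeta)$ verbatim, while $\prod_{1\le i<j\le n}\theta(w_i/z_{\sigma(j)})/\theta(qw_i/z_{\sigma(j)})$ combined with $\prod_{j=1}^n\theta(qw_j/z_{\sigma(j)})^{-1}$ telescopes, for each index $i$ separately, to $\prod_{j=1}^n\theta(qw_j/\zeta)^{-1}$. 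Multiplying the three pieces gives the first formula.

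\textbf{Second identity.} Here the specialization $w_j=q^{j-1}\omega$ makes no coefficient a pure power of $q$, so instead I would use the last equality of Corollary \ref{ewc},
$$\Phi(w;z;a)=q^{-n}a^n\prod_{i,j=1}^n\frac{\theta(w_i/z_j)}{\theta(qw_i/z_j)}\,\Phi(z;qw;q^{n+2}a^{-1}).$$
When $w_j=q^{j-1}\omega$, the second argument of $\Phi(z;qw;q^{n+2}a^{-1})$ is the geometric progression $q^{j-1}(q\omega)$, so the (already proved) first formula evaluates it as $\frac{(q)_n}{\theta(q)^n}\prod_j\theta(qa^{-1}z_j/\omega)/\theta(z_j/\omega)$. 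Substituting and simplifying — using $\prod_{i=1}^n\theta(q^{i-1}\omega/z_j)=(\omega/z_j)_n$ to telescope $\prod_{i,j}\theta(w_i/z_j)/\theta(qw_i/z_j)$ to $\prod_j\theta(\omega/z_j)/\theta(q^n\omega/z_j)$, and the reflection $\theta(x)=-x\,\theta(1/x)$ to rewrite $\theta(qa^{-1}z_j/\omega)/\theta(z_j/\omega)=qa^{-1}\theta(q^{-1}a\omega/z_j)/\theta(\omega/z_j)$ — the surviving powers of $q$ and $a$ combine to $q^{-n}a^n(qa^{-1})^n=1$, and one is left with $\frac{(q)_n}{\theta(q)^n}\prod_j\theta(q^{-1}a\omega/z_j)/\theta(q^n\omega/z_j)$, as claimed.

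The one delicate point is the collapse of the symmetrized sum in the first step, which rests on $q^k\notin p^{\mathbb Z}$ for $0\neq|k|<n$; this is a generic condition, and since both sides of the two asserted identities are meromorphic in all the parameters, the identities hold unconditionally. Everything after that step is routine manipulation of telescoping theta-products, which I do not expect to cause any trouble.
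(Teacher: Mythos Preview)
Your proof is correct and follows essentially the same route as the paper. The paper first invokes the symmetry of $\Phi$ in the $z$-variables to substitute $z_j=q^{n-j}\zeta$ instead, so that the surviving term is $\sigma=\id$ rather than $\sigma(j)=n+1-j$; this is your computation up to a relabeling. For the second identity the paper likewise appeals to Corollary~\ref{ewc}, exactly as you do.
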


\begin{proof} By symmetry, to prove the first identity we may  put 
$z_j=q^{n-j}\zeta $ in  \eqref{pd}. Then, the sum reduces to the single term with $\sigma=\id$. The second identity  follows using Corollary \ref{ewc}.
\end{proof}

The function $\Phi$ appears
 in the following commutation relations.

\begin{lemma}\label{gag} 
Let $S\subseteq [N]$ with $|S|=m$. Then, for generic $z\in (\mathbb C^\times)^N$,
\begin{equation}\label{agc}\vec\al(z_S)\vec\ga(z_{S^c})=\sum_{T\subseteq [N],\,|T|=m}C_{S,T,z}(\la)\,\vec\ga(z_{T^c})\vec\al(z_T), \end{equation}
where
$$C_{S,T,z}(\la)=\frac{\theta(q)^n(q^{\la+2+n-m})_{m-n}}{(q^{\la+2+N-2m})_m}
\prod_{i\in T,j\in T^c}\frac{\theta(qz_i/z_j)}{\theta(z_i/z_j)}\,
\Phi(z_{T\setminus S};z_{S\setminus T};q^{\la+2+n-m}),$$
with $n=|S\setminus T|=|T\setminus S|$.
In the same notation,
\begin{equation}\label{bac}\vec\be(z_{S^c})\vec\al(z_S)=\sum_{T\subseteq [N],\,|T|=m}D_{S,T,z}(\mu)\,\vec\al(z_T)\vec\be(z_{T^c}), \end{equation}
where
\begin{multline*}D_{S,T,z}(\mu)=\frac{\theta(q)^n(q^{\mu+2-m})_{m}}{(q^{-\mu+m-N})_n(q^{\mu+2+N-2m})_m}\prod_{i\in T,j\in T^c}\frac{\theta(qz_i/z_j)}{\theta(z_i/z_j)}\\
\times\Phi(z_{T\setminus S};z_{S\setminus T};q^{-\mu+m+n-N}).\end{multline*}
\end{lemma}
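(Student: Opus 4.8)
The plan is to derive both commutation relations by the same mechanism that produced Proposition \ref{bcgp}: use the cobraiding to ``read off'' the expansion coefficients, exploiting that the relevant vectors live in the corepresentation spaces $V(z)$, $W(z)$ and that the pairings $\lx F_T,E_S\rx$ and $\lx f_T,e_S\rx$ were computed explicitly in Proposition \ref{dbl}. Concretely, for \eqref{agc} I would first observe that the left-hand side $\vec\al(z_S)\vec\ga(z_{S^c})$ is, up to reordering, an element of the subalgebra generated by the $L_{-a}(z_i)$-type generators, and that Remark \ref{tr} together with Lemma \ref{vl} guarantees it lies in the span of the $e_T(z)=\vec\ga(z_{T^c})\vec\al(z_T)$ with $|T|=|S|=m$ (the grading forces $|T|=m$, and the triangularity of Remark \ref{tr} pins down the range). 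By Corollary \ref{vec}, the coefficient of $e_T(z)$ is $A_{T,z}^{-1}\lx f_T,\vec\al(z_S)\vec\ga(z_{S^c})\rx\cdot 1$. So the whole content of \eqref{agc} is the evaluation of that pairing.

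To evaluate $\lx f_T(z),\vec\al(z_S)\vec\ga(z_{S^c})\rx$ with $f_T(z)=\vec\al(z_T)\vec\be(z_{T^c})$, I would peel off the $\vec\al$ factors using Lemma \ref{pol}: apply \eqref{pob} to remove $\vec\al(z_T)$ from the left factor and \eqref{poa} to remove $\vec\al(z_S)$ from the right factor. By \eqref{cbv} and the vanishing of $\theta$-prefactors (exactly as in the proof of Proposition \ref{dbl}), the $\alpha$-removal forces $T\cap S^c$ and $S\cap T^c$ to behave as in that argument, but here $S$ and $T$ need not be equal — what survives is a pairing of the shape $\lx\vec\be(z_{T^c}),\vec\ga(z_{S^c})\rx$ restricted to indices, which by Proposition \ref{lll} collapses on the common indices $S^c\cap T^c$ and leaves precisely $\lx\vec\be(z_{T\setminus S}),\vec\ga(z_{S\setminus T})\rx$, a domain-wall pairing on $n=|S\setminus T|=|T\setminus S|$ variables. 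Invoking Proposition \ref{bcgp} turns this into $\Phi(z_{T\setminus S};z_{S\setminus T};q^{-\la'})$ for the appropriate shifted argument $\la'$, and tracking the various $\theta$-quotients from \eqref{pob}, \eqref{poa} and the $A$-factors from \eqref{ph} and Corollary \ref{vec}, together with the $(q^{-\la-n})_n$ normalization in Proposition \ref{bcgp}, assembles the stated $C_{S,T,z}$. The second relation \eqref{bac} is obtained the same way but working in $W(z)$ rather than $V(z)$: expand $\vec\be(z_{S^c})\vec\al(z_S)=f_S(z)$-type element in the basis $f_T(z)=\vec\al(z_T)\vec\be(z_{T^c})$, read off coefficients via the $W$-side formula of Corollary \ref{vec} (so a pairing $\lx f_S,e_?\rx$ or rather $\lx \vec\be(z_{S^c})\vec\al(z_S),E_T\rx$-type), and reduce using \eqref{poa}--\eqref{pod} and Corollary \ref{gbc} (the $\mu$-dependent companion of Proposition \ref{bcgp}), which is why $D_{S,T,z}$ is expressed in $\mu$ with argument $q^{-\mu+m+n-N}$ and carries the extra $(q^{-\mu+m-N})_n$ factor coming from Corollary \ref{gbc}'s normalization versus the one-sided $\alpha$-pull-through.

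The main obstacle I anticipate is purely bookkeeping: correctly propagating the shift operators $T_{\pm}$ through each application of Lemma \ref{pol}, so that the dynamical argument of $\Phi$ and the arguments of all elliptic Pochhammer symbols come out exactly as written. In particular one must be careful that the pairing $\lx f_T,e_S\rx$ is evaluated after the $\alpha$-factors have been stripped, at a shifted value of $\la$ (the pulled-through $T_{-m}$, $T_m$ from \eqref{pob}, \eqref{pod}, plus the $T_{-2m}$ in the definition of the dual pairing), and that the normalization $A_{T,z}^{-1}$ from Corollary \ref{vec} is inserted at the \emph{un}shifted $\la$. A secondary subtlety is confirming that the sum in \eqref{agc} really is over all $T$ with $|T|=m$ and not a proper subset — this follows because $\Phi$ is generically nonzero, so every such $T$ contributes, consistently with the two-sided triangularity in Remark \ref{tr}. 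Once the shift arithmetic is done once carefully, \eqref{bac} follows by the same template with Proposition \ref{bcgp} replaced by Corollary \ref{gbc} and the roles of the left/right moment maps (hence $\la$ versus $\mu$) interchanged; alternatively one can deduce \eqref{bac} from \eqref{agc} by applying the antiautomorphism $\phi$ of Proposition \ref{ip}, using \eqref{phun} to transport the cobraiding, which may in fact be the cleanest route.
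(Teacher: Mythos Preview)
Your overall strategy is correct and matches the paper: expand $\vec\al(z_S)\vec\ga(z_{S^c})$ in the basis $e_T$ using Corollary~\ref{vec}, so that the coefficient is $A_{T,z}^{-1}\lx f_T(z),\vec\al(z_S)\vec\ga(z_{S^c})\rx 1$, and then evaluate that pairing. The gap is in the evaluation step.

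Lemma~\ref{pol} cannot be used as you describe. Identity~\eqref{poa} removes $\vec\al(w)$ from the left of the \emph{first} argument only when the \emph{second} argument is of type $\vec L_{-a}$ (all first indices $-$); but $\vec\al(z_S)\vec\ga(z_{S^c})$ has first indices $+$ on the $\al$-part. Identity~\eqref{pob} removes $\vec\al(w)$ from the right of the \emph{second} argument, which is not where $\vec\al(z_S)$ sits. So neither peels what you want. After stripping the common $\vec\al(z_{S\cap T})$ and $\vec\be/\vec\ga(z_{S^c\cap T^c})$ via Lemma~\ref{pool} (this part of your plan is essentially right, though you phrase it as ``Proposition~\ref{lll} collapses''), one is left with
\[
\lx\vec\al(z_{T\setminus S})\vec\be(z_{S\setminus T}),\,\vec\al(z_{S\setminus T})\vec\ga(z_{T\setminus S})\rx,
\]
where the $\al$-factors on the two sides carry \emph{different} spectral parameters, so no further cancellation by Lemma~\ref{pool} or Lemma~\ref{pol} is possible. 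The missing tool is Corollary~\ref{rl}, which converts exactly this $2n\times 2n$ square pairing into the $n\times n$ pairing $\lx\vec\ga(z_{T\setminus S}),\vec\be(z_{S\setminus T})\rx$; one then applies Corollary~\ref{gbc}. Note that this lands on $\lx\vec\ga,\vec\be\rx$, not on $\lx\vec\be(z_{T\setminus S}),\vec\ga(z_{S\setminus T})\rx$ as you wrote; had you reached the latter and invoked Proposition~\ref{bcgp}, the dynamical argument of $\Phi$ would come out as $q^{-\la+m-n}$ rather than the required $q^{\la+2+n-m}$, so the formula would be wrong.

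For \eqref{bac}, your idea of transporting \eqref{agc} by an algebra symmetry is sound, but $\phi$ alone does not turn $\al,\ga$ into $\be,\al$. The paper uses $S\circ\psi$ (antihomomorphism composed with the $\pm$-swap), which does the job; alternatively repeat the argument with the $W(z)$-expansion of Corollary~\ref{vec}.
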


\begin{proof}
Choosing $x=\vec\al(z_S)\vec\ga(z_{S^c})$ in Corollary \ref{vec} gives
\begin{equation}\label{age}\vec\al(z_S)\vec\ga(z_{S^c})=\sum_{T\subseteq N}
\mu_l\left(A_{T,z}^{-1}\lx\vec\al(z_T)\vec\be(z_{T^c}),\vec\al(z_S)\vec\ga(z_{S^c})\rx 1\right)\vec\ga(z_{T^c})\vec\al(z_T).
 \end{equation}
By Lemma \ref{pool}, Corollary \ref{rl} and  Corollary \ref{gbc}, 
\begin{multline*}\lx\vec\al(z_T)\vec\be(z_{T^c}),\vec\al(z_S)\vec\ga(z_{S^c})\rx\\
\begin{split}&=\lx\vec\al(z_{S\cap T})\vec\al(z_{T\setminus S})\vec\be(z_{S\setminus T})\vec\be(z_{S^c\cap T^c}),\vec\al(z_{S\cap T})\vec\al(z_{S\setminus T})\vec\ga(z_{T\setminus S})\vec\ga(z_{S^c\cap T^c})
\rx
\\
&=T_{-|S\cap T|} \lx\vec\al(z_{T\setminus S})\vec\be(z_{S\setminus T}),\vec\al(z_{S\setminus T})\vec\ga(z_{T\setminus S})\rx T_{-|S\cap T|}\\
&=T_{-|S\cap T|} \lx\vec\ga(z_{T\setminus S}),\vec\be(z_{S\setminus T})\rx T_{-|S\cup T|}\\
&=\delta_{|S|,|T|}\frac{\theta(q)^n}{(q^{\la+2-m})_n}\,
\Phi(z_{T\setminus S};z_{S\setminus T};q^{\la+2+n-m})\,T_{-2m}.
\end{split}\end{multline*}
Plugging this into \eqref{age} yields \eqref{agc}.
The identity  \eqref{bac} can  be proved similarly, or be derived from \eqref{agc} by applying $S\circ\psi$, with $\psi$ as in Proposition \ref{ip}.
\end{proof}

\section{Generalized elliptic $6j$-symbols}
\label{sss}

\subsection{Definition and main properties}\label{dmps}

In \S \ref{ecs} we observed that  $E_S(z)$ form a basis
for the corepresentation $V(z)$, and that $\vec L_{ST}(z)$ are the corresponding matrix elements.
As we have seen in \S \ref{crs}--\ref{fs}, the cobraidings
 $\langle\vec  L_{ST}(w),\vec L_{UV}(z)\rangle$ give a
dynamical $R$-matrix, which can be identified with the partition function for the 8VSOS model with fixed boundary conditions.

We are interested in the dynamical $R$-matrix  corresponding to the alternative basis $e_S(z)$. For generic $z\in(\Cp)^N$, define matrix elements $M_{ST}(z)$ by
\begin{equation}\label{md}\Delta(e_S(z))=\sum_{T\subseteq [N]}M_{ST}(z)\ot e_T(z). \end{equation}
For generic $(w,z)\in(\Cp)^M\times(\Cp)^N$,
we then write
\begin{equation}\label{mrd}\lx M_{ST}(w),M_{UV}(z)\rx
=\mathcal R_{SU}^{TV}(\la;w;z)\, T_{M+N-2|S|-2|U|}.
\end{equation}
 We will refer to $\mathcal R_{SU}^{TV}$  as
a \emph{generalized  $6j$-symbol}. Note that it vanishes unless
 $|S|+|U|= |T|+|V|$. 
By \eqref{tpi} and \eqref{tpd},  $\mathcal R_{SU}^{TV}$ is a matrix element of the natural intertwiner between the corepresentations \eqref{gi}. 

Although it is initially defined for generic values of $(w,z)$, 
 $\mathcal R_{SU}^{TV}$ extends to non-generic values by analytic continuation, and we are particularly interested in such degenerations. For instance, 
when $w_j=q^{j-1}\om$ and $z_j=q^{j-1}\zeta$, 
it reduces to  the  elliptic $6j$-symbols of Date et al.\ \cite{d}.
To understand this, let 
 $V_N(\zeta)$ denote the right-hand side of 
  \eqref{vn} for $z_i=q^{i-1}\zeta$. Using that
$\ga(z)\al(qz)=\al(z)\ga(qz)$,
one finds that  $\dim V_N(\zeta)=N+1$. A basis for  $V_N(\zeta)$ is $(v_s^N(\zeta))_{s=0}^N$, where $v_s^N(\zeta)=E_{[N-s+1,N]}(z)=e_{[N-s+1,N]}(z)$.
One can then introduce  matrix elements  $M_{st}^N(\zeta)$ by
$$\Delta(v_s^N(\zeta))=\sum_{t=0}^N M_{st}^N(\zeta)\otimes v_t^N(\zeta). $$
In \cite{kn},  the pairing $\lx M_{st}^M(\omega),M_{uv}^N(\zeta)\rx$ was expressed as  an elliptic hypergeometric function, which can be identified with an elliptic $6j$-symbol.
We will find analogous  formulas for the  
more general pairing
$\lx M_{ST}(w),M_{UV}(z)\rx$.
Our approach is  different from that of
 \cite{kn}; in  particular, we do not need any explicit expression for the matrix elements. Instead, we  make a more efficient use of  formal properties of the cobraiding.

As was explained in \S \ref{crs}, the symbol
 $\mathcal R_{SU}^{TV}$ satisfies versions of the  QDYB equation  and the
unitarity relation. It seems worth stating these fundamental properties explicitly. 

\begin{proposition}\label{yop}
For $u\in(\Cp)^L$, $w\in(\Cp)^M$, $z\in(\Cp)^N$,  $Q,R\subseteq [L]$, $S,T\subseteq[M]$
and $U,V\subseteq[N]$ with $|Q|+|S|+|U|=|R|+|T|+|V|$,
\begin{multline*}\sum_{\substack{X\subseteq [L],\,Y\subseteq[M],\,Z\subseteq [N]\\|X|+|Y|=|R|+|T|\\|Y|+|Z|=|S|+|U|}}\mathcal  R^{XY}_{RT}(\la+N-2|V|;u,w)
\,\mathcal R^{QZ}_{XV}(\la;u,z)\,\mathcal R^{SU}_{YZ}(\la+L-2|Q|;w,z)\\
=\sum_{\substack{X\subseteq [L],\,Y\subseteq[M],\,Z\subseteq [N]\\|X|+|Y|=|Q|+|S|\\|Y|+|Z|=|T|+|V|}}\mathcal R^{YZ}_{TV}(\la;w,z) \,\mathcal R^{XU}_{RZ}(\la+M-2|Y|;u,z)\,\mathcal R^{QS}_{XY}(\la;u,w).
\end{multline*}
Moreover, for  $w\in(\Cp)^M$, $z\in(\Cp)^N$, $S,T\subseteq[M]$
and $U,V\subseteq[N]$ with $|S|+|U|=|T|+|V|$,
\begin{equation}\label{gbo}\sum_{\substack{X\subseteq[M],\,Y\subseteq[N]\\|X|+|Y|=|S|+|U|}}\mathcal R^{XY}_{SU}(\la;w,z)\,\mathcal R_{YX}^{VT}(\la;z,w)=\delta_{ST}\delta_{UV}.\end{equation}
\end{proposition}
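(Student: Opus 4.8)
The plan is to derive both the QDYB equation and the unitarity relation for the generalized $6j$-symbols $\mathcal R_{SU}^{TV}$ by specializing the abstract constructions of \S\ref{crs}. The key point is that, by \eqref{md}, the quantities $M_{ST}(z)$ are precisely the matrix elements of the corepresentation $V(z)$ with respect to the basis $(e_S(z))_{S\subseteq[N]}$, and $(e_S(z))$ is a genuine homogeneous basis over $\mg$ by Corollary \ref{vec}, with $e_S(z)\in V_{2|S|-N}(z)$. Thus $M_{ST}(z)\in\E_{2|S|-N,\,2|T|-N}$, and the pairing $\lx M_{ST}(w),M_{UV}(z)\rx$ lies in $(\D)_{\al\be}$ with $\al=(2|S|-M)+(2|U|-N)$ etc.; this is exactly what makes the shift factor in \eqref{mrd} consistent. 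Hence $\mathcal R_{SU}^{TV}$ is of the form displayed in the second example of \S\ref{crs}, with $U,V,W$ there taken to be the corepresentation spaces $V(u),V(w),V(z)$, and the index sets being the power sets of $[L],[M],[N]$.

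First I would invoke the second construction in \S\ref{crs} verbatim: the QDYB equation \eqref{gdyc} is obtained from the master identity \eqref{udy} by the substitutions $a\mapsto t^U_{da}$, $b\mapsto t^V_{eb}$, $c\mapsto t^W_{fc}$, and the unitarity relation \eqref{gduc} is obtained from the unitarity axiom \eqref{ud} by $a\mapsto t^V_{bd}$, $b\mapsto t^U_{ac}$. With $t^{V(u)}_{QR}=M_{QR}(u)$ and so on, this immediately yields the two asserted identities; one only has to translate the generic $\om$-weights $\om(x)$ appearing in \eqref{gdyc}–\eqref{gduc} into the concrete values $2|S|-M$, etc., which accounts for the shifts $\la+N-2|V|$, $\la+L-2|Q|$, $\la+M-2|Y|$ in the statement and for the constraints $|X|+|Y|=|R|+|T|$ and $|Y|+|Z|=|S|+|U|$ on the summation ranges (these come from \eqref{hir}/\eqref{cbv}, i.e.\ the pairing vanishes unless the gradings match). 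The only genericity caveat is that the singular cobraiding on $\E$ is defined on the relevant subspace only when all ratios $u_i/w_j$, $u_i/z_j$, $w_i/z_j$ avoid $p^{\Z}q^{-1}$; on that locus all cobraiding axioms \eqref{ppp}, \eqref{cba}, \eqref{ud} hold as noted after \eqref{cb}, so the derivation in \S\ref{crs} applies without change.

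The one genuinely routine but necessary bookkeeping step — and the place I expect the main (very mild) obstacle to lie — is matching indices: one must check that the $\om$-grading of $M_{ST}(z)$ really is $2|S|-N$ in the left slot and $2|T|-N$ in the right slot, so that under \eqref{mrd} the operator $\mathcal R_{SU}^{TV}$ carries the correct dependence on the shifted dynamical variable. Concretely, in \eqref{gdyc} the factor $\mathcal R^{xy}_{de}(\la-\om(f);U,V)$ becomes $\mathcal R^{YZ}_{DE}(\la-(2|F|-N);V(w),V(z))$; rewriting $\la-(2|F|-N)=\la+N-2|F|$ and renaming the index sets to match the theorem's labelling produces exactly the stated equation. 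The same substitution in \eqref{gduc} gives \eqref{gbo}. No further computation is needed beyond this translation, so the proof is short.

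\begin{proof}
By \eqref{md} and Corollary \ref{vec}, $M_{ST}(z)$ are the matrix elements of the corepresentation $V(z)$ in the homogeneous $\mg$-basis $(e_S(z))_{S\subseteq[N]}$, with $e_S(z)\in V_{2|S|-N}(z)$ and hence $M_{ST}(z)\in\E_{2|S|-N,\,2|T|-N}$. Thus the quantity $\mathcal R_{SU}^{TV}$ defined by \eqref{mrd} is an instance of the operator $\mathcal R^{xy}_{ab}$ considered in the second example of \S \ref{crs}, with corepresentation spaces $V(u),V(w),V(z)$ and index sets the power sets of $[L],[M],[N]$, and with $\om$-weights $\om(S)=2|S|-\#(\text{base set})$.

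The singular cobraiding on $\E$ satisfies all the axioms \eqref{ppp}, \eqref{cba} and the unitarity axiom \eqref{ud} on the subspace spanned by products $\vec L_{\cdot\cdot}(u)\vec L_{\cdot\cdot}(w)\vec L_{\cdot\cdot}(z)$ with $u_i/w_j,\,u_i/z_j,\,w_i/z_j\notin p^{\Z}q^{-1}$, which is the case for generic $(u,w,z)$. On this locus the derivation in \S \ref{crs} applies verbatim. Substituting $a\mapsto M_{RQ}(u)$, $b\mapsto M_{TS}(w)$, $c\mapsto M_{VU}(z)$ into the identity \eqref{udy} gives the general QDYB equation \eqref{gdyc}; translating the weights via $\om(F)=2|F|-N$, so that $\la-\om(F)=\la+N-2|F|$ in the first factor, $\la-\om(Q)=\la+L-2|Q|$ in the third, and $\la-\om(Y)=\la+M-2|Y|$ on the right-hand side, and using \eqref{cbv} to restrict the summations to $X,Y,Z$ with $|X|+|Y|=|R|+|T|$ and $|Y|+|Z|=|S|+|U|$ (resp.\ $|X|+|Y|=|Q|+|S|$, $|Y|+|Z|=|T|+|V|$), yields the first displayed identity. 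Likewise, substituting $a\mapsto M_{UV}(z)$, $b\mapsto M_{ST}(w)$ into \eqref{ud} gives \eqref{gduc}, which with the same weight translation is precisely \eqref{gbo}.
\end{proof}
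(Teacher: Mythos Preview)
Your proposal is correct and follows exactly the approach the paper intends: the paper does not give a separate proof but simply says ``As was explained in \S\ref{crs}'', and your argument is precisely that explanation spelled out, with the substitutions $a\mapsto M_{RQ}(u)$, $b\mapsto M_{TS}(w)$, $c\mapsto M_{VU}(z)$ in \eqref{udy} and $a\mapsto M_{UV}(z)$, $b\mapsto M_{ST}(w)$ in \eqref{ud}, together with the translation of weights $\om(S)=2|S|-N$ and the use of \eqref{cbv} to restrict the summations. Your remark about the singular cobraiding being valid on the generic locus is also appropriate and matches the paper's discussion after \eqref{cb}.
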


\subsection{An explicit formula and further properties}

Our first main result is the following expression for generalized $6j$-symbols.

\begin{theorem}\label{mcmt}
One has
\begin{multline}\label{mti}\mathcal R_{SU}^{TV}(\la;w;z)
=\frac{(q^{\la+2+M+N-2L})_{|S|}}{(q^{\la+2+M-2|T|})_{|T|}(q^{\la+2+M+N-2L})_{|V|}}\\
\begin{split}&\times\sum_{\substack{X\subseteq S^c\cap T^c, Y\subseteq U\cap V\\ |Y|-|X|=L-M}}\theta(q)^{|U|+|V|-2|Y|}\frac{(q^{\la+2+N-|U|-|Y|})_{|Y|}}{(q^{-\la+2|T|-M})_{|V|-|Y|}}\prod_{i\in T,j\in T^c\setminus X}\frac{\tha(qw_i/w_j)}{\tha(w_i/w_j)}\\
&\times
\prod_{i\in S^c\setminus X,j\in X}\frac{\tha(qw_i/w_j)}{\tha(w_i/w_j)}
\prod_{i\in V\setminus Y,j\in V^c}\frac{\tha(qz_i/z_j)}{\tha(z_i/z_j)}
\prod_{i\in Y,j\in U\setminus Y}\frac{\tha(qz_i/z_j)}{\tha(z_i/z_j)}\\
&\times\prod_{i\in Y,j\in X}\frac{\tha(qz_i/w_j)}{\tha(z_i/w_j)}
\prod_{i\in X^c,j\in Y^c}\frac{\tha(w_i/z_j)}{\tha(qw_i/z_j)}\\
& \times 
{\prod_{i\in S^c\setminus X,j\in U\setminus Y}\frac{\tha(qw_i/z_j)}{\tha(w_i/z_j)}\prod_{i\in T^c\setminus X,j\in V\setminus Y}\frac{\tha(qw_i/z_j)}{\tha(w_i/z_j)}}\\
&\times \Phi(w_{S^c\setminus X};z_{U\setminus Y};q^{\la+2+N-|U|-|Y|})\,
\Phi(w_{T^c\setminus X};z_{V\setminus Y};q^{-\la+|T|-|X|}),
\end{split}\end{multline}
where  $w\in(\Cp)^M$, $z\in(\Cp)^N$ and
\begin{equation}\label{l}L=|S|+|U|=|T|+|V|.\end{equation} 
\end{theorem}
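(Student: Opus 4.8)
The plan is to evaluate $\lx M_{ST}(w),M_{UV}(z)\rx$ directly, the two main steps being (i) a manageable description of the matrix elements $M_{ST}(w)$ and $M_{UV}(z)$ as elements of $\E$, and (ii) the evaluation of the resulting pairing via the explicit cobraiding \eqref{cb} and the structural lemmas of \S\ref{fgs}--\S\ref{ews}.

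For step (i): since $e_S(w)=\vec\gamma(w_{S^c})\vec\alpha(w_S)$ and $\Delta$ is an algebra homomorphism, one has $\Delta(e_S(w))=\Delta(\vec\gamma(w_{S^c}))\,\Delta(\vec\alpha(w_S))$. Expanding both factors with Lemma \ref{dac} and multiplying out in $\E\wt\ot\E$, the second tensor leg becomes a product of the shape $\vec\gamma(\cdot)\,\vec\alpha(w_B)\,\vec\gamma(\cdot)\,\vec\alpha(w_A)$ with $A\subseteq S$ and $B\subseteq S^c$. Commuting the inner $\vec\alpha$ past the following $\vec\gamma$ by the relation \eqref{agc} of Lemma \ref{gag} — and using \eqref{lc}, which says that the $\alpha$'s commute among themselves and the $\gamma$'s commute among themselves — brings the second leg into the normal form $e_T(w)$; reading off the coefficient of $\ot\,e_T(w)$ then exhibits $M_{ST}(w)$ as an explicit finite sum, indexed by the subsets $A$, $B$ and the summation set of \eqref{agc}, of scalar multiples of products $\vec\gamma(w_B)\vec\delta(w_{S^c\setminus B})\vec\alpha(w_A)\vec\beta(w_{S\setminus A})$, the scalars being built from the functions $A_{\bullet,\bullet}$ of \eqref{ph} and the structure constants $C_{\bullet,\bullet,\bullet}$ (hence $\Phi$) of Lemma \ref{gag}. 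The same expansion applies to $M_{UV}(z)$.

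For step (ii): substitute both expansions into $\lx M_{ST}(w),M_{UV}(z)\rx$ and use bilinearity. Each term is a pairing of two products of generators in disjoint sets of spectral parameters; by the table \eqref{cb}, $\alpha$ and $\delta$ pair only among themselves, while $\beta$ pairs only with $\gamma$ and conversely. Expand each pairing with \eqref{ppp} and extract the $\alpha/\delta$-contributions using Lemma \ref{adl} and Lemma \ref{pol}; these are elementary products, and the conditions under which they fail to vanish are precisely what forces the surviving index of the $w$-side to be a set $X\subseteq S^c\cap T^c$, that of the $z$-side a set $Y\subseteq U\cap V$, and $|Y|-|X|=L-M$, as in the statement. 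What is left is a combination of $\beta$--$\gamma$ cross-pairings $\lx\vec\beta(w_\bullet),\vec\gamma(z_\bullet)\rx$ and $\lx\vec\gamma(w_\bullet),\vec\beta(z_\bullet)\rx$, evaluated by Proposition \ref{bcgp} and Corollary \ref{gbc} at values of the dynamical parameter shifted by the $\alpha$'s and $\delta$'s that have been pulled through; these produce the two $\Phi$-factors in \eqref{mti}. The auxiliary summations over $A$ and $A'$ left over from the expansions of $M_{ST}$ and $M_{UV}$ collapse the residual sums of products of $\Phi$'s — merging them with the same-parameter $\Phi$'s coming from \eqref{agc} — via the convolution identity for $\Phi$ stated just after Proposition \ref{bcgp}, and the remaining prefactors reassemble into the ratio of elliptic Pochhammer symbols displayed in \eqref{mti}.

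The conceptual content is entirely contained in the lemmas already established; the real difficulty, and the only genuine obstacle, is the combinatorial and $q$-series bookkeeping: one must track the shift operators $T_\bullet$ through the moment-map relations of $\E\wt\ot\E$, keep the genealogy of the many index sets consistent across the nested summations, and verify that the shifted arguments and prefactors line up exactly so that the $\Phi$-convolution collapses the $A$- and $A'$-sums. Using Corollary \ref{rl} to package any domain-wall pairings that arise, and the algebra symmetries of Proposition \ref{ip} to pass between the two types of $\beta$--$\gamma$ pairing, keeps the computation finite and organized, but it is necessarily lengthy.
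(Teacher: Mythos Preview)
Your strategy is plausible in principle but differs substantially from the paper's, and the step you label as ``bookkeeping'' hides what would be a genuinely hard reduction.

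The paper's proof does \emph{not} compute $M_{ST}(w)$ or $M_{UV}(z)$ explicitly. Instead, it establishes Corollary~\ref{mmc}, which says
\[
\mathcal R_{SU}^{TV}(\la;w;z)=\frac{1}{A_{T,w}(\la)\,A_{V,z}(\la+M-2|T|)}\,\bigl\langle f_T(w)f_V(z),\,e_U(z)e_S(w)\bigr\rangle 1,
\]
a single pairing of products already in normal form. This follows from Proposition~\ref{mil}, which in turn is a direct consequence of Corollary~\ref{rl} (the identity you mention only as an auxiliary device for tidying up). Once this is in hand, one applies the commutation relations \eqref{agc} and \eqref{bac} to the middle blocks $\vec\be(w_{T^c})\vec\al(z_V)$ and $\vec\al(z_U)\vec\ga(w_{S^c})$; each non-vanishing term then matches the diagonal pairing \eqref{feo} for the enlarged parameter set $(w,z)$, and one reads off \eqref{mti} directly. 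There is one sum (over the pair $(X,Y)$) and exactly two $\Phi$'s, one from each commutation.

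Your route, by contrast, produces an expression for $M_{ST}(w)$ that already contains a sum with a $\Phi$-factor in pure $w$-variables (from \eqref{agc}), and likewise for $M_{UV}(z)$ in pure $z$-variables; pairing these introduces further $\Phi$'s in mixed $w,z$-variables. You then need to collapse this collection down to the two mixed $\Phi$'s of \eqref{mti}. The convolution identity after Proposition~\ref{bcgp} decomposes a single $\Phi(w;z;a)$ into smaller ones with the \emph{same} partition of variable types; it does not obviously merge a pure-$w$ weight function with a mixed one. That collapse is the real crux of your argument, and you have not said how it goes. The paper in fact remarks in \S\ref{dmps} that its method is chosen precisely to avoid any explicit formula for the matrix elements.
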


Before proving
Theorem \ref{mcmt}, we discuss some interesting consequences.
 First of all,  using also  Corollary \ref{ewc}, 
it is straight-forward to deduce the following symmetries.

\begin{corollary}\label{cbsl} In the notation above,
\begin{multline*}\mathcal R_{SU}^{TV}(\la;w;z)=\mathcal R_{U^cS^c}^{V^cT^c}(\la+M+N-2L;z^{-1};w^{-1})\\
=\frac{G_{U,z}(\la+N-2|U|)G_{S,w}(\la+M+N-2L)}{G_{T,w}(\la+M-2|T|)G_{V,z}(\la+M+N-2L)}\,
\mathcal R_{V^cT^c}^{U^cS^c}(-\la-2;z^{-1};w^{-1})\\
=\frac{G_{U,z}(\la+N-2|U|)G_{S,w}(\la+M+N-2L)}{G_{T,w}(\la+M-2|T|)G_{V,z}(\la+M+N-2L)}\,
\mathcal R_{TV}^{SU}(-\la-2+2L-M-N;w;z),
\end{multline*}
where 
\begin{equation}\label{gsd}\begin{split}G_{S,z}(\la)&=(-1)^{|S|}q^{-\binom{|S|}2} q^{-\frac 12 N\la}(q^{\la+1+|S|-N})_{|S|}(q^{\la+2+2|S|-N})_{N-|S|}\\
&\quad\times\prod_{i\in S,j\in S^c}\frac{\tha(z_i/z_j)}{\tha(qz_i/z_j)}. \end{split}\end{equation} 
\end{corollary}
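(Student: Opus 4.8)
The plan is to derive each of the three symmetries from an identity for the pairing $\lx M_{ST}(w),M_{UV}(z)\rx$ that in turn comes from a corresponding symmetry of the algebra $\E$ and its cobraiding, exactly in the spirit of Corollary \ref{ewc} and Lemma \ref{pfs}. The first symmetry should come from the antiautomorphism $\phi$ (equivalently from \eqref{phun}), which sends $L_{ab}(z)$ to $L_{ab}(z^{-1})$; the second from the combined involution $\ast$ together with \eqref{ic}, which also introduces a reversal $\la\mapsto-\la-2$ and the antipode on $\D$; and the third by composing the first two. The key bookkeeping is to track how $\phi$ and $\psi$ act on the vectors $e_S(z)$ and hence on the matrix elements $M_{ST}(z)$.

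First I would compute the action of $\phi$ and of $\ast$ on $e_S(z)=\vec\ga(z_{S^c})\vec\al(z_S)$. Since $\phi$ is an antiautomorphism with $\phi(L_{ab}(z))=L_{ab}(z^{-1})$ and $(\phi\ot\phi)\circ\De=\De\circ\phi$, applying $\phi$ to \eqref{md} converts $M_{ST}(w)$ into a matrix element of a corepresentation built from the reversed-order product $\vec\al(z_S^{\op})\vec\ga(z_{S^c}^{\op})$ evaluated at inverse spectral parameters; using \eqref{lc} the order within each block is irrelevant, so this is again one of our standard vectors, now attached to $N+M-2L$-type shifts — this is what produces $V(z^{-1})$, $W(w^{-1})$ and the index complementation $S\mapsto U^c$, etc. For the map $\psi$ (with $\psi(L_{ab}(z))=L_{-a,-b}(z)$ and $\psi(f(\la))=f(-\la-2)$), one checks $\psi(e_S(z))=\vec\al(z_{S^c})\vec\de(z_S)$, which by Corollary \ref{vec} and Lemma \ref{dac} expands in the basis $e_{T}(z)$ with a scalar prefactor; isolating that prefactor is precisely where the functions $G_{S,z}$ of \eqref{gsd} enter. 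I would compute $\psi(e_S(z))=G_{S,z}(\cdot)^{\pm1}e_{S^c}(z)$ up to the explicit elliptic product, which is a finite hypergeometric-free computation once one uses $\ga(z)\al(qz)=\al(z)\ga(qz)$ is \emph{not} needed — only the unitarity pairing \eqref{feo} and Lemma \ref{dac} — and the identity $(x)_k=(-1)^kq^{\binom k2}x^k(q^{1-k}/x)_k$ to bring the Pochhammer symbols into the stated form.

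With those two computations in hand, the three displayed equalities follow by substituting $x=M_{ST}(w)$, $y=M_{UV}(z)$ into \eqref{phun}, \eqref{psu}, and \eqref{ic} respectively, then reading off the $T$-shift on both sides of \eqref{mrd} to pin down the arguments of $\la$; the shift of $\la$ by $2L-M-N$ in the last line is forced by the discrepancy between $T_{M+N-2|S|-2|U|}$ and the shift appearing after $S^{\D}$. The mention of "using also Corollary \ref{ewc}" is for the step where, after applying $\phi$ or $\ast$, the resulting $\Phi$-factors in \eqref{mti} need to be rewritten via the $\Phi(w;z;a)\leftrightarrow\Phi(z^{-1};w^{-1};a)$ and inversion formulas so that the right-hand side is manifestly again of the form \eqref{mti}; alternatively one can bypass \eqref{mti} entirely and argue purely at the level of pairings, in which case Corollary \ref{ewc} is not logically needed but makes the consequence transparent. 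The main obstacle will be the precise determination of the prefactor $G_{S,z}$ and of the $\la$-arguments: getting the right combination of reversal $\la\mapsto-\la-2$, the block shifts $M-2|T|$, $N-2|U|$, $M+N-2L$, and the sign $(-1)^{|S|}q^{-\binom{|S|}2}$ correct requires carefully composing the moment-map intertwining relations $\mu_l(f)a=a\mu_l(T_\al f)$ through every step; everything else is a routine though lengthy rearrangement of theta-function products.
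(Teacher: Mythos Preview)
Your overall strategy --- derive each symmetry from an algebra symmetry acting on the matrix elements $M_{ST}$ --- is the same as the paper's ``more instructive'' proof, but the attribution of which symmetry produces which equality is wrong, and the role of the antipode is missing entirely.

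The first equality (with no prefactor) does \emph{not} come from $\phi$ alone. Applying $\phi$ to $e_S(z)=\vec\ga(z_{S^c})\vec\al(z_S)$ gives $\vec\al(z_S^{-1})\vec\ga(z_{S^c}^{-1})$: the $\al$-block is now to the left of the $\ga$-block, and this is \emph{not} a scalar multiple of any $e_T(z^{-1})$ (by Lemma~\ref{gag} it expands as a genuine sum). So $\phi(M_{ST}(w))$ is not simply some $M_{S'T'}(w^{-1})$, and \eqref{phun} does not directly yield the first line. The paper instead uses the involution $\ast=\psi\circ\phi$: then $e_S(z)^\ast=\vec\de(z_S^{-1})\vec\be(z_{S^c}^{-1})$, which by Lemma~\ref{dac} equals the single matrix element $M_{S^c,\emptyset}(z^{-1})$, whence $M_{ST}(z)^\ast=M_{S^cT^c}(z^{-1})$ and \eqref{ic} gives the first equality cleanly.

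The $G$-factors do not come from $\psi$. Your computation $\psi(e_S(z))=\vec\al(z_{S^c})\vec\de(z_S)$ is incorrect (since $\psi(\ga)=\be$, $\psi(\al)=\de$, one gets $\vec\be(z_{S^c})\vec\de(z_S)$), and in any case this element is not proportional to $e_{S^c}(z)$. In the paper the functions $G_{S,z}$ arise from the \emph{antipode}: one has $(\phi\circ S)(M_{ST}(z))=\frac{G_{S,z}(-\mu-2+N-2|S|)}{G_{T,z}(-\la-2+N-2|T|)}\overrightarrow{\det}^{-1}(z^{-1})M_{T^cS^c}(qz^{-1})$ (this is \eqref{sm}, proved via Lemma~\ref{sel}), and plugging this into $\lx M_{ST}(w),M_{UV}(z)\rx=\psi^{\D}\big(\lx(\phi\circ S)(M_{UV}(z)),(\phi\circ S)(M_{ST}(w))\rx\big)$ together with Lemma~\ref{dcgl} produces the equality between the first and third members. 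The remaining equality then follows by composition. Note also that the paper's \emph{primary} proof is the direct route you mention only in passing: verify all three identities by substituting into the explicit formula \eqref{mti} and using the $\Phi$-symmetries of Corollary~\ref{ewc}.
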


 One can give a more instructive proof of Corollary \ref{cbsl} using the
 algebra symmetries of Proposition \ref{ip}.
For instance, one has
$$e_S(z)^\ast=\vec\de(z_S^{-1})\vec\be(z_{S^c}^{-1})=M_{S^c,\emptyset}(z^{-1}), $$
where the second equality follows from Lemma \ref{dac}.  Applying
$\De$ to this equality, using \eqref{crm} and
$(\ast\ot\ast)\circ\De=\De\circ\ast $,
gives
$M_{ST}(z)^\ast=M_{S^cT^c}(z^{-1})$.
Choosing $x=M_{ST}(w)$ and  $y=M_{UV}(z)$ in \eqref{ic} then 
yields the first equality in Proposition~\ref{cbsl}. 
Similarly, it follows from \eqref{sm} below that
$$(\phi\circ S)(M_{ST}(z))=\frac{G_{S,z}(-\mu-2+N-2|S|)}{G_{T,z}(-\la-2+N-2|T|)}\,\overrightarrow{{\det}}^{-1}(z^{-1})M_{T^c S^c}(qz^{-1}).$$
(Here and in several places below, the letter $S$ is used both for the antipode and for a set; we hope this will not confuse the reader.)
Then, the identity
$$\lx M_{ST}(w),M_{UV}(z)\rx=\psi^{\D}\big(\lx (\phi\circ S)(M_{UV}(z))  , (\phi\circ S)( M_{ST}(w)) \rx\big) $$
 leads to the equality of the first and third member (for the computation, one needs Lemma  \eqref{dcgl}).
The remaining symmetry follows by combining the other two.

In special situations, the expression \eqref{mti} simplifies.

\begin{corollary}
If any one of the four conditions $|V|<|S\setminus T|$,  $ |U|<|T\setminus S|$,
$ |S^c|<|U\setminus V|$ or $ |T^c|<|V\setminus U|$ holds, then $\mathcal R_{SU}^{TV}$ vanishes identically. If either $|V|=|S\setminus T|$,  $ |U|=|T\setminus S|$,
$ |S^c|=|U\setminus V|$ or $ |T^c|=|V\setminus U|$, then $\mathcal R_{SU}^{TV}$ is given by an elementary factor times a product of two elliptic weight functions. Finally, if either $S^c=\emptyset$, $T^c=\emptyset$, $U=\emptyset$ or $V=\emptyset$, then $\mathcal R_{SU}^{TV}$ is given by an elementary factor times a single elliptic weight function.
\end{corollary}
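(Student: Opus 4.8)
The plan is to read off the four vanishing and four degeneration statements directly from the double sum in Theorem \ref{mcmt}. The key observation is that the summation in \eqref{mti} runs over pairs $(X,Y)$ with $X\subseteq S^c\cap T^c$, $Y\subseteq U\cap V$, and the constraint $|Y|-|X|=L-M$. Since $|S^c|=M-|S|$, $|T^c|=M-|T|$, $|U|$ and $|V|$ are the obvious bounds on $|X|$ and $|Y|$, and $L=|S|+|U|=|T|+|V|$, each of the four inequalities $|V|<|S\setminus T|$, $|U|<|T\setminus S|$, $|S^c|<|U\setminus V|$, $|T^c|<|V\setminus U|$ should force the index set of the sum to be empty, whence $\mathcal R_{SU}^{TV}\equiv 0$. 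First I would do the bookkeeping: write $|S\setminus T|=|S|-|S\cap T|$ etc., and combine with $|X|\le\min(|S^c|,|T^c|)$, $|Y|\le\min(|U|,|V|)$ and $|Y|=|X|+L-M$ to show that, e.g., $|V|<|S\setminus T|$ makes $|Y|=|X|+|U|-|V|+|T|-|S|$ either negative or exceeding $\min(|U|,|V|)$ for every admissible $|X|$. The four cases are related by the symmetries of Corollary \ref{cbsl} (the maps $(S,U,T,V)\mapsto(U^c,S^c,V^c,T^c)$ and $(S,U,T,V)\mapsto(T,V,S,U)$ permute the four conditions), so it suffices to treat one of them carefully and invoke the symmetry for the rest.

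For the equality cases, when one of the four inequalities becomes an equality the summation index set collapses to a single pair $(X,Y)$ — forced, say, to $X=S^c\cap T^c$ minus nothing, $Y$ determined — so the double sum \eqref{mti} reduces to one term, which is an explicit product of theta quotients times the two factors $\Phi(w_{S^c\setminus X};z_{U\setminus Y};\cdot)\Phi(w_{T^c\setminus X};z_{V\setminus Y};\cdot)$, i.e.\ an elementary factor times a product of two elliptic weight functions. Here I would use Lemma \ref{pgl}, or rather just note that with the index set a singleton no further evaluation is needed; the assertion is simply that the surviving summand has the stated shape. Again the four sub-cases are permuted by the symmetries of Corollary \ref{cbsl}, so one explicit verification suffices.

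Finally, the case $S^c=\emptyset$ (i.e.\ $S=[M]$) forces $X=\emptyset$, hence $|Y|=L-M=|S|+|U|-M=|U|$, so $Y=U=U\cap V$, forcing also $U\subseteq V$; the $\Phi(w_{S^c\setminus X};z_{U\setminus Y};\cdot)=\Phi(\emptyset;\emptyset;\cdot)=1$ factor trivializes, leaving a single elliptic weight function $\Phi(w_{T^c};z_{V\setminus U};\cdot)$ times an elementary prefactor. The cases $T^c=\emptyset$, $U=\emptyset$, $V=\emptyset$ are handled symmetrically, either directly or via Corollary \ref{cbsl}. I expect the only real obstacle to be the first part: carrying out the inequality bookkeeping cleanly enough to see that the index set is genuinely empty in all four vanishing cases, since the constraint $|Y|-|X|=L-M$ couples the two summation variables and one must check emptiness over the whole admissible range of $|X|$, not just at its endpoints. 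Once that combinatorial lemma is in hand, the degeneration statements follow by simply exhibiting the unique surviving summand.
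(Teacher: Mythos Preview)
Your approach is correct and matches the paper's: both read off the vanishing and degeneration directly from the constraints $X\subseteq S^c\cap T^c$, $Y\subseteq U\cap V$, $|Y|-|X|=L-M$ in Theorem~\ref{mcmt}, so that in the equality cases $(X,Y)$ is forced to be $(\emptyset,U\cap V)$ or $(S^c\cap T^c,\emptyset)$ and one of the two $\Phi$'s becomes $\Phi(\emptyset;\emptyset;\cdot)=1$ in the final four cases. Two minor points: you need the sharp bound $|X|\le|S^c\cap T^c|$ (which you correctly stated first), not the weaker $|X|\le\min(|S^c|,|T^c|)$ you wrote later, since the latter only yields $|U|\ge|T|-|S|$ rather than $|U|\ge|T\setminus S|$; and the paper reduces four cases to two via the balancing $|S|+|U|=|T|+|V|$ (e.g.\ $|V|<|S\setminus T|\Leftrightarrow|U|<|T\setminus S|$) rather than invoking Corollary~\ref{cbsl}, which also dispels your worry about the ``whole admissible range'' since the linear constraint gives the single chain $|S^c\cap T^c|\ge|X|=|Y|+|S^c|-|U|\ge|S^c|-|U|$.
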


\begin{proof} 
With $X$ and $Y$  as in \eqref{mti}, 
 $$|S^c\cap T^c|\geq |X|=|Y|+|S^c|-|U|\geq |S^c|-|U|,$$
$$|U\cap V|\geq |Y|=|U|-|S^c|+|X|\geq |U|-|S^c|.  $$
Thus, if $ |U|<|T\setminus S|$ or
$ |S^c|<|U\setminus V|$, the sum is empty.
 Since  $|S|+|U|=|T|+|V|$, 
the remaining part of the first statement follows. By the same argument, 
if any one of the eight equalities stated hold, the sum has only one term.
\end{proof}

As an example, to be used later,
\begin{equation}\label{rese}\begin{split}\mathcal R_{SU}^{T\emptyset}(\la;w;z)&={\theta(q)^{|U|}}\prod_{i\in T\setminus S,j\in U}\frac {\tha(qw_i/z_j)}{\tha(w_i/z_j)}\prod_{i\in T\setminus S,j\in T^c}
\frac{\tha(qw_i/w_j)}{\tha(w_i/w_j)}\prod_{i\in T,j\in [N]}
\frac{\tha(w_i/z_j)}{\tha(qw_i/z_j)}\\
&\quad\times\frac{(q^{\la+2+M+N-2|T|})_{|S|}}{(q^{\la+2+M-2|T|})_{|T|}}\,\Phi(w_{T\setminus S};z_U;q^{\la+2+N-|U|})
\end{split}\end{equation}
if $S\subseteq T$ with $|T\setminus S|=|U|$, and vanishes else.

Finally, the following fact is needed in \S \ref{bos}.

\begin{corollary}\label{qvs} Suppose there exists $(i,j)\in T\times T^c $ with $w_j=qw_i$. Then, either $(i,j)\in S\times S^c$ or $\mathcal R_{SU}^{TV}(\la;w;z)$ vanishes identically. Similarly, if $(i,j)\in V\times V^c $ and $z_j=qz_i$, then either $(i,j)\in U\times U^c$ or $\mathcal R_{SU}^{TV}(\la;w;z)$ vanishes.
\end{corollary}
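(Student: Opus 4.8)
The plan is to deduce this from the explicit formula in Theorem \ref{mcmt}, exploiting the fact that the elliptic weight function $\Phi$ of \eqref{pd} has no zeros (only poles at $qw_i/z_j\in p^{\mathbb Z}$), so any vanishing must come from the prefactors, more precisely from the $\theta$-products that depend on the $w$'s alone. First I would concentrate on the first assertion; the second then follows by the $z\leftrightarrow w$, $S\leftrightarrow U$, $T\leftrightarrow V$ interchange built into the symmetry of the setup (or directly by the analogous argument applied to the $z$-dependent products). So assume $(i,j)\in T\times T^c$ with $w_j=qw_i$, and suppose $(i,j)\notin S\times S^c$; I must show every term in \eqref{mti} vanishes.

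The key observation is the factor $\prod_{i'\in T,\,j'\in T^c\setminus X}\frac{\theta(qw_{i'}/w_{j'})}{\theta(w_{i'}/w_{j'})}$ appearing in \eqref{mti}. If $j\notin X$, then the pair $(i',j')=(i,j)$ contributes $\theta(qw_i/w_j)=\theta(1)=0$, killing that term. Hence only terms with $j\in X$ survive. But $X\subseteq S^c\cap T^c$, so $j\in X$ forces $j\in S^c$. Now split into the two ways $(i,j)$ can fail to lie in $S\times S^c$: since we have just seen $j\in S^c$ in every surviving term, the only remaining possibility is $i\notin S$, i.e.\ $i\in S^c$. So I may assume $i\in S^c$, $j\in S^c$, and in each surviving term $j\in X$. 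Now look at the second $w$-product, $\prod_{i'\in S^c\setminus X,\,j'\in X}\frac{\theta(qw_{i'}/w_{j'})}{\theta(w_{i'}/w_{j'})}$. If $i\notin X$, then $i\in S^c\setminus X$ and $j\in X$, so the pair $(i',j')=(i,j)$ gives $\theta(qw_i/w_j)=0$, killing the term. Thus every surviving term also has $i\in X$. But then $X$ contains both $i$ and $j$, and I claim this forces the term to vanish via one of the $\Phi$-factors' prefactors or the cross products — more carefully, with $i,j\in X$ there is a further $\theta$-product to invoke.

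The cleanest way to close the argument is to note that once $i,j\in X$ in a surviving term, the factors $\Phi(w_{S^c\setminus X};\cdot;\cdot)$ and $\Phi(w_{T^c\setminus X};\cdot;\cdot)$ no longer involve $w_i$ or $w_j$, and similarly the products $\prod_{i'\in S^c\setminus X,j'\in X}$ and $\prod_{i'\in T,j'\in T^c\setminus X}$ that I already used; so I should instead track the total $w_i/w_j$-dependence of a surviving term and show it still forces a zero, or — more efficiently — argue by a dimension/degeneration count: when $w_j=qw_i$ the corepresentation $V(w)$ degenerates (as in the passage to $V_N(\zeta)$ using $\gamma(z)\alpha(qz)=\alpha(z)\gamma(qz)$), so the basis vector $e_S(w)$ with $(i,j)\notin S\times S^c$ is not in the image of the quotient map, whence $\Delta(e_S(w))$ — and therefore $M_{ST}(w)$ and the pairing \eqref{mrd} — must be consistent with this collapse, forcing $\mathcal R_{SU}^{TV}$ to vanish. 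I expect the main obstacle to be making this degeneration argument rigorous enough to replace the brute-force $\theta$-product bookkeeping: one has to check that the specialization $w_j=qw_i$ is a regular point for $\mathcal R_{SU}^{TV}$ (true by the analytic-continuation remark after \eqref{mrd}) and that the relevant linear dependence among the $e_S(w)$ is exactly the one that kills $e_S(w)$ unless $(i,j)\in S\times S^c$. If that route proves delicate, the fallback is the direct computation: carry both $i$ and $j$ through all the $\theta$-products and $\Phi$-arguments in \eqref{mti} and verify that a surviving term must have $i,j\in X\cap Y^c$ or similar, at which point the product $\prod_{i'\in X^c,j'\in Y^c}\frac{\theta(w_{i'}/z_{j'})}{\theta(qw_{i'}/z_{j'})}$ or $\prod_{i'\in Y,j'\in X}\frac{\theta(qz_{i'}/w_{j'})}{\theta(z_{i'}/w_{j'})}$ contributes a zero through the relation $w_j=qw_i$ combined with the constraint on $Y$; this is routine but tedious, and I would present only the chain of implications $j\notin X\Rightarrow 0$, $i\notin X\Rightarrow 0$, and the final step, leaving the verification of the last zero to the reader.
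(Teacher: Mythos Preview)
Your approach is the same as the paper's, but you miss the one-line observation that closes it. After you have reduced to surviving terms with $j\in X$ and then (assuming $i\in S^c$) to surviving terms with $i\in X$, recall that in \eqref{mti} the summation is over $X\subseteq S^c\cap T^c$. Since $i\in T$ by hypothesis, $i\in X$ is impossible. So the case $i\in S^c$ produces \emph{no} surviving terms at all, and the proof is complete: non-vanishing forces $i\in S$ and $j\in S^c$. There is no need for a further $\theta$-product, a degeneration argument, or any tracking of $w_i/w_j$ through the $\Phi$-factors.

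Concretely, your chain of implications should read: $j\notin X\Rightarrow 0$ (first $w$-product), hence $j\in X\subseteq S^c$; then, given $i\in S^c$, $i\notin X\Rightarrow 0$ (second $w$-product); but $i\in X$ contradicts $i\in T$ and $X\subseteq T^c$, so every term vanishes. This is exactly the paper's argument. The second statement is handled analogously using the $z$-products $\prod_{i'\in V\setminus Y,\,j'\in V^c}\theta(qz_{i'}/z_{j'})$ and $\prod_{i'\in Y,\,j'\in U\setminus Y}\theta(qz_{i'}/z_{j'})$, together with $Y\subseteq U\cap V$ (so $j\in V^c$ forces $j\notin Y$, and then $i\in U$ with $i\notin Y$ gives the zero; $i\in Y$ would contradict $i\in V^c$ — but careful: here the roles are dual, with $Y\subseteq V$ forcing $j\notin Y$ since $j\in V^c$, and the second product handling $i\in U\setminus Y$).
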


\begin{proof}
Consider the expression \eqref{mti}. In the first situation, if $j\in T^c\setminus X$ then the factor $\prod_{i\in T,j\in T^c\setminus X }\tha(qw_i/w_j)$ vanishes. Thus, we may assume $j\in X$; in particular, $j\in S^c$. Then, if $i\in S^c\setminus X$ the factor  $\prod_{i\in S^c\setminus X,j\in X}\tha(qw_i/w_j)$ vanishes.
Thus, non-vanishing terms   exist only when $i\in S$ ($i\in X$ would contradict $i\in T$) and $j\in S^c$.
The proof of   the second statement is similar. 
\end{proof}

\subsection{Proof of Theorem {\ref{mcmt}}}
\label{mps}

The key to the proof of Theorem \ref{mcmt} is the following result.

\begin{proposition}\label{mil}
Let $(w,z)\in(\mathbb C^\times)^M\times (\mathbb C^\times)^N$ be generic, and let $a\in W(w)$, $b\in W(z)$, $c\in V(z)$ and $d\in V(w)$. Then, 
\begin{equation}\label{mi}\lx ab,cd\rx=\sum_{(c)(d)}\lx d',c'\rx T_{\om_{12}(c)+\om_{12}(d)}
\lx a,d''\rx T_M\lx b,c''\rx T_{-M}.
 \end{equation}
\end{proposition}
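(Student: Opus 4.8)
The plan is to reduce \eqref{mi} to a check on pure matrix elements. Both sides are $\C$-multilinear in the quadruple $(a,b,c,d)$, and $W(w)$, $W(z)$, $V(z)$, $V(w)$ are spanned over $\C$ by the elements $\mu_r(f)F_U(w)$, $\mu_r(f)F_R(z)$, $\mu_l(f)E_Q(z)$, $\mu_l(f)E_P(w)$ respectively. Using the moment-map rules \eqref{mrp}, the prefactors $\mu_r(f)$ and $\mu_l(f)$ can be carried through both sides in a compatible way, so it is enough to treat
$$a=F_U(w)=\vec L_{+U}(w),\quad b=F_R(z)=\vec L_{+R}(z),\quad c=E_Q(z)=\vec L_{Q+}(z),\quad d=E_P(w)=\vec L_{P+}(w),$$
for subsets $U,P\subseteq[M]$ and $R,Q\subseteq[N]$.

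For the left-hand side I would observe that $\lx ab,cd\rx=\lx\vec L_{+U}(w)\vec L_{+R}(z),\vec L_{Q+}(z)\vec L_{P+}(w)\rx$ is precisely the pairing evaluated by Corollary \ref{rl}, which gives $\lx ab,cd\rx=\lx\vec L_{PU}(w),\vec L_{QR}(z)\rx\,T_{-M-N}$. For the right-hand side I would expand the coproducts by \eqref{bme}, so that $d'=\vec L_{PP'}(w)$, $d''=E_{P'}(w)$ over $P'\subseteq[M]$, and $c'=\vec L_{QQ'}(z)$, $c''=E_{Q'}(z)$ over $Q'\subseteq[N]$, with $\om_{12}(d)=2|P'|-M$ and $\om_{12}(c)=2|Q'|-N$ in the corresponding term. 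The two pairings over coinciding spectral parameters, $\lx a,d''\rx=\lx F_U(w),E_{P'}(w)\rx$ and $\lx b,c''\rx=\lx F_R(z),E_{Q'}(z)\rx$, are evaluated by Proposition \ref{dbl} to be $\delta_{UP'}T_{-2|U|}$ and $\delta_{RQ'}T_{-2|R|}$. Hence only the term with $P'=U$, $Q'=R$ survives, and the right-hand side reduces to
$$\lx\vec L_{PU}(w),\vec L_{QR}(z)\rx\,T_{(2|R|-N)+(2|U|-M)}\,T_{-2|U|}\,T_M\,T_{-2|R|}\,T_{-M},$$
which I expect to collapse to $\lx\vec L_{PU}(w),\vec L_{QR}(z)\rx\,T_{-M-N}$, matching the left-hand side.

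The conceptual work is thus entirely contained in the two citations: Corollary \ref{rl} evaluates the left-hand side of \eqref{mi}, while Proposition \ref{dbl} collapses the Sweedler sum on the right. The step I expect to be the main nuisance is the bookkeeping of shift operators in the last display: one must use the $\gha$-grading conventions (so $\vec L_{ST}(w)\in A_{2|S|-M,\,2|T|-M}$, and likewise with $z$ and $N$) together with \eqref{cbv} to identify each $\om_{12}$ and each exponent, and then verify that the powers $T_M$ and $T_{-M}$ written into \eqref{mi} are exactly the ones that render the identity homogeneous. The moment-map reduction in the first step is routine but also needs a little care, since the $\mu_r(f)$ and $\mu_l(f)$ sit inside the products $ab$ and $cd$ and must be moved to the ends using \eqref{mrp}.
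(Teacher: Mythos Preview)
Your proposal is correct and follows essentially the same route as the paper: reduce to basis vectors (the paper writes $a=f(\mu)F_T(w)$ etc.\ and cancels the scalars), evaluate the left side via Corollary~\ref{rl}, and collapse the Sweedler sum on the right using the diagonal pairing of Proposition~\ref{dbl} (the paper cites the more general Proposition~\ref{lll}, of which this is a special case). Your shift-operator bookkeeping is also correct, yielding $T_{-M-N}$ on both sides.
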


\begin{proof}
We may choose
$a=f(\mu)F_T(w)$, $b=g(\mu)F_V(z)$, $c=h(\la) E_U(z)$, $d=k(\la)E_S(w)$, where $f,g,h,k\in\mg$.
It is easy to check that the 
functions $f$, $g$, $h$, $k$ cancel from both sides of \eqref{mi}, and we
are reduced to proving 
\begin{multline*}\lx \vec L_{+T}(w)\vec L_{+V}(z),\vec L_{U+}(z)\vec L_{S+}(w)\rx
=
\sum_{X\subseteq [M],Y\subseteq[N]}\lx\vec  L_{SX}(w),\vec L_{UY}(z)\rx
T_{|X|+|Y|}\\
\times\lx\vec  L_{+T}(w),\vec L_{X+}(w)\rx T_M \lx\vec  L_{+V}(z),\vec L_{Y+}(z)\rx T_{-M}.
 \end{multline*}
By Proposition \ref{lll}, the right-hand side equals
$\lx\vec L_{ST}(w),\vec L_{UV}(z)\rx T_{-M-N}$.
Thus, the result follows from Corollary \ref{rl}.
\end{proof}

The following result is a transformed version of  Corollary \ref{rl}, where  basis vectors of the form $E_S$ and $F_S$ have been replaced by  $e_S$ and $f_S$. Just as for Corollary \ref{rl}, the right-hand side is a partition function for a square with two domain walls.

\begin{corollary}\label{mmc}
In the notation above,
\begin{multline*} \mathcal R_{SU}^{TV}(\la;w;z)
=\frac{1}{A_{T,w}(\la)A_{V,z}(\la+M-2|T|)}\\
\times\lx\vec  \al(w_T)\vec  \be(w_{T^c})\vec  \al(z_V)\vec  \be(z_{V^c})
,\vec  \ga(z_{U^c})\vec  \al(z_U)\vec  \ga(w_{S^c})\vec  \al(w_S)\rx 1
.
\end{multline*}
\end{corollary}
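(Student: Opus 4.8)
The plan is to derive Corollary \ref{mmc} directly from Proposition \ref{mil}, using the definitions \eqref{md}, \eqref{mrd} of the generalized $6j$-symbols together with the duality relations of Proposition \ref{dbl} and the expansion in Corollary \ref{vec}. First I would apply Proposition \ref{mil} with the choices $a=F_T(w)$, $b=F_V(z)$, $c=E_U(z)$, $d=E_S(w)$, recalling that $F_T(w)=\vec L_{+T}(w)\in W(w)$, $F_V(z)=\vec L_{+V}(z)\in W(z)$, $E_U(z)=\vec L_{U+}(z)\in V(z)$, $E_S(w)=\vec L_{S+}(w)\in V(w)$. The right-hand side of \eqref{mi} then involves the iterated coproducts of $c$ and $d$; but instead of expanding in the $E$-basis, I would use the fact that $\De$ on $V(z)$ is the corepresentation, so $\De(E_U(z))=\sum_X \vec L_{UX}(z)\ot E_X(z)$ by \eqref{bme}, and similarly for $E_S(w)$. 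Substituting and collecting the pairings against $F_T(w)$ and $F_V(z)$, Proposition \ref{dbl} forces the surviving terms to have matrix indices equal to $T$ and $V$ respectively, and one lands back on $\lx \vec L_{ST}(w),\vec L_{UV}(z)\rx T_{-M-N}$ — this is essentially the content already extracted in the proof of Proposition \ref{mil} via Corollary \ref{rl}.

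The genuinely new ingredient in Corollary \ref{mmc} is the transition from the $E_S$, $F_S$ bases to the $e_S$, $f_S$ bases. For this I would reverse the roles: take $a\mapsto f_T(w)=\vec\al(w_T)\vec\be(w_{T^c})$, $b\mapsto f_V(z)=\vec\al(z_V)\vec\be(z_{V^c})$, $c\mapsto e_U(z)=\vec\ga(z_{U^c})\vec\al(z_U)$, $d\mapsto e_S(w)=\vec\ga(w_{S^c})\vec\al(w_S)$ in Proposition \ref{mil}. These all lie in the correct spaces $W(w)$, $W(z)$, $V(z)$, $V(w)$ by construction. The right-hand side of \eqref{mi} now becomes $\sum_{(c)(d)}\lx d',c'\rx T_{\om_{12}(c)+\om_{12}(d)}\lx f_T(w),d''\rx T_M\lx f_V(z),c''\rx T_{-M}$. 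Here I would invoke \eqref{md}, which gives $\De(e_S(w))=\sum_{S'} M_{SS'}(w)\ot e_{S'}(w)$ and $\De(e_U(z))=\sum_{U'} M_{UU'}(z)\ot e_{U'}(z)$, so that the pairings $\lx f_T(w),d''\rx$ and $\lx f_V(z),c''\rx$ become $\lx f_T(w),e_{S'}(w)\rx$ and $\lx f_V(z),e_{U'}(z)\rx$. By \eqref{feo} of Proposition \ref{dbl}, these vanish unless $S'=T$ and $U'=V$, contributing factors $A_{T,w}\,T_{-2|T|}$ and $A_{V,z}\,T_{-2|V|}$ respectively — but one must keep careful track of the dynamical shift in $A_{V,z}$, since the grading shift operators $T$ act on the arguments of the moment-map functions; this is the origin of the shift $\la+M-2|T|$ appearing in the statement. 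The left-over factor $\lx d',c'\rx = \lx M_{ST}(w),M_{UV}(z)\rx$ is, by the definition \eqref{mrd}, exactly $\mathcal R_{SU}^{TV}(\la;w;z)\,T_{M+N-2|S|-2|U|}$.

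Putting the two computations together: the left-hand side of \eqref{mi} under the second substitution is precisely the pairing $\lx \vec\al(w_T)\vec\be(w_{T^c})\vec\al(z_V)\vec\be(z_{V^c}),\vec\ga(z_{U^c})\vec\al(z_U)\vec\ga(w_{S^c})\vec\al(w_S)\rx$ that appears on the right of the corollary, and the right-hand side of \eqref{mi} has just been shown to equal $\mathcal R_{SU}^{TV}(\la;w;z)\,A_{T,w}(?)\,A_{V,z}(?)$ times a product of shift operators, where the dynamical arguments of $A_{T,w}$ and $A_{V,z}$ are pinned down by bookkeeping the $\om_{12}$-gradings and the $T_M$, $T_{-M}$ factors. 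Solving for $\mathcal R_{SU}^{TV}$ gives the claimed formula. The main obstacle I anticipate is not conceptual but purely in the shift-operator bookkeeping: one must verify that all the $T_\al$ factors on the two sides of \eqref{mi} match, that the dynamical shifts hitting $A_{T,w}$ and $A_{V,z}$ work out to $\la$ and $\la+M-2|T|$ as stated, and that no stray grading discrepancy survives. The commutation rules $\mu_l(f)a=a\mu_l(T_\al f)$ for $a\in A_{\al\be}$, together with the fact that $e_S(w)\in V_{2|S|-M}(w)$ and $E_S$-type matrix elements lie in known bigraded components, are the tools to push these through; I would organize this by first recording the bigrading of every factor ($\vec L_{+T}(w)$, $M_{ST}(w)$, etc.) and then checking the total left/right grading on both sides of \eqref{mi} agrees before matching the shift operators term by term.
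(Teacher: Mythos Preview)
Your proposal is correct and follows essentially the same approach as the paper: apply Proposition~\ref{mil} with $a=f_T(w)$, $b=f_V(z)$, $c=e_U(z)$, $d=e_S(w)$, expand the coproducts via \eqref{md}, and use \eqref{feo} to collapse the double sum to the single term $X=T$, $Y=V$. Your first paragraph (the warm-up with $F_T,F_V,E_U,E_S$) is unnecessary, as it simply reproduces the computation already done inside the proof of Proposition~\ref{mil}; the paper goes straight to the $f,e$ substitution.
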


\begin{proof}
Choose $a=f_T(w)$, $b=f_V(z)$, $c=e_U(z)$ and $d=e_S(w)$ in Lemma \ref{mil}. 
Using \eqref{feo}, we obtain
\begin{multline*}
\lx f_T(w)f_V(z),e_U(z)e_S(w)\rx\\
\begin{split}&=\sum_{X\subseteq[M],Y\subseteq[N]}
\lx M_{SX}(w),M_{UY}(z)\rx T_{2|X|+2|Y|-M-N}\\
&\quad\times\lx f_T(w),e_X(w)\rx T_M
\lx f_V(z),e_Y(z)\rx T_{-M}\\
&=\lx M_{ST}(w),M_{UV}(z)\rx T_{2|T|+2|V|-M-N}A_{T,w}T_{M-2|T|}
A_{V,z}T_{-M-2|V|},
\end{split}\end{multline*}
which gives the desired result after simplification.
\end{proof}

\begin{proof}[Proof of \emph{Theorem \ref{mcmt}}] In Corollary \ref{mmc},  apply \eqref{bac} to the factor $\vec \be(w_{T^c})\vec  \al(z_V)$
and \eqref{agc} to the factor $\vec  \al(z_U)\vec  \ga(w_{S^c})$.
Each resulting  term is of the form \eqref{feo}, with $z$ replaced by $(w,z)$.
The only non-vanishing terms are those where
$\vec \be(w_{T^c})\vec  \al(z_V)$  is replaced (up to a multiplier) by $\vec\al(w_{T^c\setminus X},z_Y)\vec\be(w_X,z_{V\setminus Y})$
 and simultaneously   $\vec  \al(z_U)\vec  \ga(w_{S^c})$ by 
$\vec\ga(w_X,z_{U\setminus Y})\vec\al(w_{S^c\setminus X},z_{Y})$, for some 
$X\subseteq S^c\cap T^c$, $Y\subseteq U\cap V$. 
 All in all, this gives
\begin{multline*}
\mathcal R_{SU}^{TV}(\la;w;z)=\sum_{\substack{X\subseteq S^c\cap T^c, Y\subseteq U\cap V\\ |Y|-|X|=L-M}}\frac{A_{(X^c,Y),(w,z)}(\la)}{A_{T,w}(\la)A_{V,z}(\la+M-2|T|)}\\
\times C_{(\emptyset,U),(S^c\setminus X,Y),(w_{S^c},z_U)}(\la+N-|U|)D_{(\emptyset,V),(T^c\setminus X,Y),(w_{T^c},z_V)}(\la-|T|),
\end{multline*}
which yields \eqref{mti} after simplification. 
\end{proof}

\subsection{Asymmetric identities}

We have seen that \eqref{mti} displays all  symmetries of generalized $6j$-symbols  given in Corollary \ref{cbsl}.  From the viewpoint of special functions, it is interesting to obtain \emph{less} symmetric expressions, since symmetries then correspond to non-trivial transformation formulas. 

Recall the symmetry  $\phi$ defined in 
 Proposition \ref{ip}. Clearly, $\phi$ restricts to an $\gh$-space isomorphism $V(z^{-1})\rightarrow V(z)$. In particular,
$$\phi(e_S(z^{-1}))=\vec\al(z_S)\vec\ga(z_{S^c}),\qquad S\subseteq [N], $$
form a basis for $V(z)$. Similarly, 
$$\phi(f_S(z^{-1}))=\vec\be(z_{S^c})\vec\al(z_{S}),\qquad S\subseteq [N], $$
form a basis of $W(z)$. 

The following lemma  can be proved similarly as \eqref{feo}.

\begin{lemma}
For generic $z\in(\mathbb C^\times)^N$, 
$$\lx\phi(f_T(z^{-1})),\phi(e_S(z^{-1}))\rx=
\delta_{ST}B_{S,z} T_{-2m},$$
where $m=|S|$ and where
\begin{equation}\label{b}B_{S,z}(\la) = q^{m(N-m)}
\frac{(q^{\la+1-m})_{N-m}}{(q^{\la+1})_{N-m}}
\prod_{i\in S^c,j\in S}\frac{\tha(z_i/z_j)}{\tha(qz_i/z_j)}.\end{equation}
\end{lemma}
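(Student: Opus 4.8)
The plan is to mimic closely the proof of \eqref{feo} in Proposition \ref{dbl}, using the symmetry $\phi$ to transfer that computation to the new basis vectors. Since $\phi$ is an algebra antiautomorphism with $\phi\circ\phi=\id$ satisfying \eqref{phun}, namely $\lx x,y\rx=\phi^{\D}(\lx\phi(y),\phi(x)\rx)$, we have
$$\lx\phi(f_T(z^{-1})),\phi(e_S(z^{-1}))\rx=\phi^{\D}\big(\lx e_S(z^{-1}),f_T(z^{-1})\rx\big).$$
So the first step is to compute $\lx e_S(z^{-1}),f_T(z^{-1})\rx$, or what amounts to the same after relabelling, the pairing $\lx e_S(z),f_T(z)\rx=\lx\vec\ga(z_{S^c})\vec\al(z_S),\vec\al(z_T)\vec\be(z_{T^c})\rx$. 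This is the ``transpose'' of the pairing already evaluated in Proposition \ref{dbl}, and it should succumb to exactly the same technique: first invoke \eqref{cbv} to see that the pairing vanishes unless $|S|=|T|$; then use \eqref{pod} (and its mirror image obtained from the first identity in Lemma \ref{pol}, pulling $\vec\al(z_T)$ out from the right of the second slot) to extract all the $\al$-factors, picking up a product $\prod_{i\in T,\,j\in\ldots}\theta(z_i/z_j)/\theta(qz_i/z_j)$ type factor and a Pochhammer ratio; this product forces $T=S$; and finally one is left with $\lx\vec\ga(z_{S^c}),\vec\be(z_{S^c})\rx$, which by Proposition \ref{lll} equals $1$ (up to a shift operator). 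Keeping track of the shift operators $T_{\pm|S|}$ at each stage gives the overall $T_{-2m}$.

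The second step is purely bookkeeping: assemble the elementary factor from step one, substitute $z\mapsto z^{-1}$ throughout (which sends $\theta(z_i/z_j)\mapsto\theta(z_j/z_i)$ and hence swaps the roles of $S$ and $S^c$ in the theta-product), and apply $\phi^{\D}$, which acts on the dynamical variable by $f(\la)\mapsto f(-\la-2)$ and fixes the shift operators $T_\al$. The claim is that after these substitutions the $\mg$-valued coefficient becomes exactly $B_{S,z}(\la)$ as in \eqref{b}. Concretely, I expect the pairing $\lx e_S(z),f_T(z)\rx$ to come out as $\delta_{ST}\,\widetilde B_{S,z}T_{-2m}$ with
$$\widetilde B_{S,z}(\la)=q^{m(N-m)}\frac{(q^{-\la+\ldots})_{N-m}}{(q^{-\la+\ldots})_{N-m}}\prod_{i\in S^c,\,j\in S}\frac{\theta(z_j/z_i)}{\theta(qz_j/z_i)},$$
so that the substitution $z\mapsto z^{-1}$ restores $\theta(z_i/z_j)$ in the numerator as written in \eqref{b}, and $\phi^{\D}$ converts the arguments $q^{-\la+\cdots}$ of the Pochhammer symbols into $q^{\la+1-m}$ and $q^{\la+1}$. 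The alternative, and perhaps cleaner, route is to bypass $\phi$ and compute $\lx\vec\be(z_{S^c})\vec\al(z_S),\vec\al(z_T)\vec\ga(z_{T^c})\rx$ directly by the same string of lemmas \eqref{poa}--\eqref{pod}; either way the structure is identical to Proposition \ref{dbl}.

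The main obstacle will be nothing conceptual but rather the precise determination of the Pochhammer-symbol factor and its transformation under $\phi^{\D}$: one must be careful about which dynamical shift ($\la$ versus $\mu$, and by how much) each generator contributes, since $\al$ shifts the left grading by $+1$ and $\be,\ga$ shift it by $\mp1$, and the pairing lands in $\D$ where left multiplication by $f$ and right composition with $f$ differ by a shift. Getting the exponent $q^{m(N-m)}$ and the exact arguments $q^{\la+1-m}$, $q^{\la+1}$ right requires tracking these shifts through the iterated application of \eqref{pob} and \eqref{pod} and then through $\phi^{\D}$; I would do this carefully for $N=1$, $m=0,1$ as a sanity check against \eqref{b} before trusting the general computation. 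Since the paper explicitly says ``can be proved similarly as \eqref{feo}'', I will present the proof at the same level of detail as Proposition \ref{dbl}: state the vanishing unless $|S|=|T|$, pull out the $\al$-factors citing \eqref{poa}--\eqref{pod} to force $T=S$, reduce to $\lx\vec\ga(z_{S^c}),\vec\be(z_{S^c})\rx=1$ via Proposition \ref{lll}, and then apply the symmetry $\phi$ (or $\psi$) together with the substitution $z\mapsto z^{-1}$ to read off \eqref{b}.
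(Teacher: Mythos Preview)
Your overall strategy is sound and matches the paper's implicit approach (the paper itself gives no proof beyond ``similarly as \eqref{feo}''). However, your citation of the identities \eqref{poa}--\eqref{pod} is imprecise, and as written they do not literally apply to either of the pairings you propose to compute.

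Consider your main route, $\lx e_S(z),f_T(z)\rx=\lx\vec\ga(z_{S^c})\vec\al(z_S),\vec\al(z_T)\vec\be(z_{T^c})\rx$. Here the first slot is of type $\vec L_{a+}$ (second indices all $+$) and the second slot is of type $\vec L_{+b}$ (first indices all $+$). But \eqref{poa} requires the second slot to be $\vec L_{-a}$, and \eqref{pob}, \eqref{pod} require the first slot to be $\vec L_{+a}$; none of these matches. You also write ``pulling $\vec\al(z_T)$ out from the right of the second slot'', but in $f_T(z)=\vec\al(z_T)\vec\be(z_{T^c})$ the factor $\vec\al(z_T)$ sits on the \emph{left}. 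The alternative direct route $\lx\vec\be(z_{T^c})\vec\al(z_T),\vec\al(z_S)\vec\ga(z_{S^c})\rx$ has the same problem: the index patterns are the mirror images of those in Lemma~\ref{pol}.

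The fix is routine: one needs the obvious analogues of Lemma~\ref{pol}, namely $\lx x\vec\al(w),\vec L_{a+}(z)\rx=\lx x,\vec L_{a+}(z)\rx T_{-m}$ (pull $\vec\al$ from the right of the first slot) and the evaluation of $\lx\vec\de(w),\vec\al(z)\rx$ (the transpose of \eqref{acd}). Both follow from the same expansion via \eqref{ppp} together with \eqref{cbv} and \eqref{aca}, or can be obtained from the existing identities by applying $\phi$ or $\psi$. With these in hand, the argument proceeds exactly as you outline: pull out the $\vec\al$'s, pick up the factor $\prod_{i\in T^c,\,j\in S}\theta(z_i/z_j)$ from $\lx\vec\de,\vec\al\rx$ forcing $S\subseteq T$ and hence $S=T$, and reduce to $\lx\vec\be(z_{S^c}),\vec\ga(z_{S^c})\rx=1$ by Proposition~\ref{lll}. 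Your use of \eqref{phun} to relate the two pairings is correct but does not bypass the need for these analogues; it merely relocates it.
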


The following result is then obtained 
similarly as 
Corollary \ref{mmc}.

\begin{lemma}\label{pmmc}
Let $(w,z)\in(\mathbb C^\times)^M\times(\mathbb C^\times)^N$ be generic. Then, for $S,T\subseteq [M]$ and $U,V\subseteq [N]$,
\begin{multline*}\lx \phi(M_{ST}(w^{-1})),M_{UV}(z)\rx\\
=
\frac{1}{B_{T,w}(\la)A_{V,z}(\la+M-2|T|)}\,\lx \phi(f_T(w^{-1}))f_V(z),e_U(z)\phi(e_S(w^{-1}))\rx T_{M+N}.\end{multline*}
\end{lemma}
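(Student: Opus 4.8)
The plan is to mimic the proof of Corollary \ref{mmc}, replacing the vectors $e_S(w)$ and $f_T(w)$ used there by $\phi(e_S(w^{-1}))$ and $\phi(f_T(w^{-1}))$. First I would record the facts about $\phi$ that make this possible. Since $\phi$ is an algebra antiautomorphism restricting to isomorphisms $V(w^{-1})\simeq V(w)$ and $W(w^{-1})\simeq W(w)$, the elements $\phi(e_S(w^{-1}))=\vec\al(w_S)\vec\ga(w_{S^c})$, $S\subseteq[M]$, form a basis of $V(w)$ and the elements $\phi(f_T(w^{-1}))=\vec\be(w_{T^c})\vec\al(w_T)$, $T\subseteq[M]$, a basis of $W(w)$. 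Applying $\phi\ot\phi$ to \eqref{md} and using $(\phi\ot\phi)\circ\De=\De\circ\phi$ from \eqref{dph} gives
\begin{equation*}\De\bigl(\phi(e_S(w^{-1}))\bigr)=\sum_{X\subseteq[M]}\phi\bigl(M_{SX}(w^{-1})\bigr)\ot\phi\bigl(e_X(w^{-1})\bigr),\end{equation*}
so that the $\phi(M_{SX}(w^{-1}))$ are precisely the matrix elements of this corepresentation in the new basis.

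Next I would apply Lemma \ref{mil} with $a=\phi(f_T(w^{-1}))\in W(w)$, $b=f_V(z)\in W(z)$, $c=e_U(z)\in V(z)$, $d=\phi(e_S(w^{-1}))\in V(w)$. Expanding the iterated coproducts of $c$ and $d$ in Sweedler notation by means of \eqref{md} and the display above, the right-hand side of \eqref{mi} becomes
\begin{multline*}\sum_{X\subseteq[M],\,Y\subseteq[N]}\lx\phi\bigl(M_{SX}(w^{-1})\bigr),M_{UY}(z)\rx\,T_{2|X|+2|Y|-M-N}\\
\times\lx\phi\bigl(f_T(w^{-1})\bigr),\phi\bigl(e_X(w^{-1})\bigr)\rx\,T_M\,\lx f_V(z),e_Y(z)\rx\,T_{-M}.\end{multline*}
By the preceding lemma, $\lx\phi(f_T(w^{-1})),\phi(e_X(w^{-1}))\rx=\delta_{TX}B_{T,w}T_{-2|T|}$, and by \eqref{feo}, $\lx f_V(z),e_Y(z)\rx=\delta_{VY}A_{V,z}T_{-2|V|}$; hence only the single term $X=T$, $Y=V$ survives.

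Finally I would collect the resulting string of shift operators, commuting the functions $B_{T,w}$ and $A_{V,z}$ through them, and invoke the fact that the pairing vanishes unless $|S|+|U|=|T|+|V|$ to simplify the exponents; after multiplying through by $\bigl(B_{T,w}(\la)A_{V,z}(\la+M-2|T|)\bigr)^{-1}$ and by $T_{M+N}$ the asserted identity drops out. The only genuinely delicate point is this last bookkeeping of gradings and shifts, which I expect to be the main (though still routine) obstacle; it is entirely parallel to the corresponding step in the proof of Corollary \ref{mmc}, and everything else is formal manipulation with the cobraiding axioms.
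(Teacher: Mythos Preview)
Your proof is correct and follows exactly the approach the paper intends: the paper simply says this result ``is then obtained similarly as Corollary~\ref{mmc},'' and you have spelled out precisely that argument, substituting the $\phi$-transformed bases and using the biorthogonality $\lx\phi(f_T(w^{-1})),\phi(e_X(w^{-1}))\rx=\delta_{TX}B_{T,w}T_{-2|T|}$ in place of \eqref{feo}. The bookkeeping with the shift operators and the vanishing unless $|S|+|U|=|T|+|V|$ is indeed routine and parallels Corollary~\ref{mmc} verbatim.
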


We need to relate the action of $\phi$ and $S$, first on basis vectors and then on matrix elements.

\begin{lemma}\label{sel}
One has
$$S(e_S(z))=(-1)^{N}
\frac{q^{\frac 12N\la+\binom N2}}{(q^{\la+1})_N}\,G_{S,z}(\mu)
\overrightarrow{{\det}}^{-1}(q^{-1}z)\phi(M_{\emptyset S^c}(qz^{-1})), 
$$
where  $G$ is as in \eqref{gsd}.
\end{lemma}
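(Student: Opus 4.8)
The plan is to compute $S(e_S(z))$ directly from the definition $e_S(z)=\vec\ga(z_{S^c})\vec\al(z_S)$, using that $S$ is an algebra antiautomorphism and the explicit formulas for $S(\al(z))$, $S(\ga(z))$ from \S\ref{ess}. First I would note that, since $S(ab)=S(b)S(a)$ and $\vec\al(z_S)=\prod_{i\in S}\al(z_i)$, $\vec\ga(z_{S^c})=\prod_{j\in S^c}\ga(z_j)$, one has
\begin{equation*}S(e_S(z))=S(\vec\al(z_S))\,S(\vec\ga(z_{S^c})),\end{equation*}
and each of these two factors can be written out using the antipode formulas. Since $S(\al(z))$ and $S(\ga(z))$ both carry a factor $\tfrac{F(\mu)}{F(\la)}\,\overrightarrow{\det}{}^{-1}(q^{-1}z)$ and the $\det^{-1}$'s are central, I can collect all the determinant factors as $\overrightarrow{\det}{}^{-1}(q^{-1}z)$ (with appropriate arguments $q^{-1}z_i$ for all $i\in[N]$), and similarly collect the $F(\la),F(\mu)$ ratios, being careful about the grading shifts: each generator $L_{ab}(z_i)$ passes a shift operator through $F(\la)$ or $F(\mu)$, so the ratios $\tfrac{F(\mu)}{F(\la)}$ do not simply multiply but pick up shifts of $\la$ and $\mu$ by the weights $\pm 1$ of the intervening generators. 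Tracking these shifts is exactly what produces the elliptic Pochhammer products in $G_{S,z}$ and the prefactor $q^{\frac12 N\la+\binom N2}/(q^{\la+1})_N$ via \eqref{f}.

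Next I would identify the remaining operator part. After extracting the scalars, $S(e_S(z))$ is (up to the collected scalar and $\overrightarrow{\det}{}^{-1}(q^{-1}z)$) a product $\vec\de(q^{-1}z_S)$ times $\vec\ga(q^{-1}z_{S^c})$ in some order — more precisely $S(\vec\al(z_S))$ contributes $\de$'s at the indices of $S$ and $S(\vec\ga(z_{S^c}))$ contributes $\ga$'s at the indices of $S^c$, each reversed in order (the ``$\op$'' reversal familiar from \eqref{sv}). I then need to recognize this as $\phi(M_{\emptyset S^c}(qz^{-1}))$. Recall $\phi(L_{ab}(z))=L_{ab}(z^{-1})$, so $\phi$ sends $\de(qz_i^{-1})\mapsto\de(q^{-1}\cdot(qz_i^{-1})^{-1})$... actually $\phi(M_{\emptyset S^c}(qz^{-1}))$ unpacks via the definition \eqref{md} of $M_{ST}$ and the expansion of $e_\emptyset$; but a cleaner route is to use Lemma \ref{dac}: the remark in \S\ref{dmps} (the ``asymmetric identities'' subsection) already records $e_S(z)^\ast=M_{S^c,\emptyset}(z^{-1})$, i.e.\ $\vec\de(z_S^{-1})\vec\be(z_{S^c}^{-1})=M_{S^c,\emptyset}(z^{-1})$, which follows from \eqref{da}. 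Applying $\psi$ (or reading \eqref{dg}) gives a companion identity $\vec\de(z_S^{-1})\vec\ga(z_{S^c}^{-1})=M_{\emptyset,S^c}(z^{-1})$-type statement; composing with $\phi$ converts the arguments $z^{-1}\mapsto z$ and $q\mapsto q^{-1}$ appropriately, landing on $\phi(M_{\emptyset S^c}(qz^{-1}))$ as the operator part of $S(e_S(z))$ modulo the determinant and scalar prefactors. So the strategy is: (i) peel off scalars and $\det^{-1}$ via the antipode formulas, (ii) match the operator remainder to $\phi(M_{\emptyset S^c}(qz^{-1}))$ using the $M$-expansion identities derived from Lemma \ref{dac}.

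The bookkeeping of the $F$-factors is where I expect the real work. Concretely, when one moves $\prod_{i\in S}\bigl(\tfrac{F(\mu)}{F(\la)}\det^{-1}(q^{-1}z_i)\de(q^{-1}z_i)\bigr)$ past each other and past the $\ga$-block, each $\de(q^{-1}z_i)\in\E_{-1,-1}$ and each $\ga(q^{-1}z_j)\in\E_{-1,1}$, so $F(\la)$ accumulates shifts $\la\mapsto\la-(\text{number of generators to its right weighted by left grading})$ and likewise for $F(\mu)$ with the right grading. Summing the telescoping products $\prod_k F(\la+c_k)^{\pm1}$ and using $F(\la)=q^{-\la/2}\theta(q^{\la+1})$ together with $(x)_k=\theta(x)\cdots\theta(q^{k-1}x)$ should collapse to exactly the ratio $G_{S,z}(\mu)/(q^{\la+1})_N$ times the stated power of $q$; the $\prod_{i\in S,j\in S^c}\theta(z_i/z_j)/\theta(qz_i/z_j)$ piece of $G_{S,z}$ must come out separately, and I suspect it does not — it presumably arises from commuting the $\de$-block past the $\ga$-block using the algebra relations, or it is absorbed into the definition of $M_{\emptyset S^c}$. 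This is the step I would double-check most carefully; everything else is a matter of unwinding the antipode formulas and invoking Lemma \ref{dac}.
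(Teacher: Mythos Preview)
Your approach is essentially the paper's own, and the steps you outline are correct. Two remarks sharpen it. First, step (i) is already packaged as \eqref{sv}: applying it directly to $e_S(z)=\vec L_{ab}(z)$ (with the appropriate ordering of $z_{S^c},z_S$) gives
\[
S(e_S(z))=(-1)^{N-m}\frac{\prod_{j=1}^mF(\mu+j-1)\prod_{j=1}^{N-m}F(\mu+2m-N+j)}{\prod_{j=1}^NF(\la+j-1)}\,\overrightarrow{\det}{}^{-1}(q^{-1}z)\,\vec\de(q^{-1}z_S)\vec\ga(q^{-1}z_{S^c}),
\]
so no commutation of $\de$'s past $\ga$'s is needed and the $F$-bookkeeping is immediate. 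Second, your uncertainty about the factor $\prod_{i\in S,j\in S^c}\tha(z_i/z_j)/\tha(qz_i/z_j)$ is resolved by your second guess: from \eqref{dg} one has $M_{\emptyset T}(w)=A_{T,w}(\mu)^{-1}\vec\ga(w_T)\vec\de(w_{T^c})$, so
\[
\phi(M_{\emptyset S^c}(qz^{-1}))=\frac{1}{A_{S^c,qz^{-1}}(-\mu-2+N-2m)}\,\vec\de(q^{-1}z_S)\vec\ga(q^{-1}z_{S^c}),
\]
and the $z$-dependent theta ratio sits inside $A_{S^c,qz^{-1}}$, not in any commutation. Comparing the two displayed expressions and simplifying the $F$-products via \eqref{f} yields the lemma.
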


\begin{proof}
Using \eqref{sv}, we can write
\begin{equation*}\begin{split}S(e_S(z))&= (-1)^{N-m}\frac{\prod_{j=1}^mF(\mu+j-1)\prod_{j=1}^{N-m}F(\mu+2m-N+j)}{\prod_{j=1}^NF(\la+j-1)}\\
&\quad\times\overrightarrow{{\det}}^{-1}(q^{-1}z)\vec\de(q^{-1}z_S)\vec\ga(q^{-1}z_{S^c})\\
&= (-1)^{N-m}q^{\frac 12N(\la-\mu)+\binom {N-m}2}
\frac{(q^{\mu+1})_m(q^{\mu+2+2m-N})_{N-m}}{(q^{\la+1})_N}\\
&\quad\times\overrightarrow{{\det}}^{-1}(q^{-1}z)\vec\de(q^{-1}z_S)\vec\ga(q^{-1}z_{S^c}),
\end{split}\end{equation*}
where $m=|S|$.
On the other hand, it follows from \eqref{dg} that
$$M_{\emptyset S}(z)=\frac{1}{A_{S,z}(\mu)}\,\vec\ga(z_S)\vec\de(z_{S^c}), $$
which leads to
\begin{equation*}\begin{split}\phi(M_{\emptyset S^c}(qz^{-1}))&=\frac{1}{A_{S^c,qz^{-1}}(-\mu-2+N-2m)}\,\vec\de(q^{-1}z_{S})\vec\ga(q^{-1}z_{S^c})\\
&=q^{m(m-N)}\frac{(q^{\mu+1})_m}{(q^{\mu+1+m-N})_m}\prod_{i\in S,j\in S^c}\frac{\tha(qz_i/z_j)}{\tha(z_i/z_j)}\,\vec\de(q^{-1}z_S)\vec\ga(q^{-1}z_{S^c}).
\end{split}\end{equation*}
Combining these facts yields  the desired result.
\end{proof}

\begin{corollary}\label{spp}
For  $z\in(\mathbb C^\times)^N$,
\begin{equation}\label{sm}S(M_{ST}(z))=\frac{G_{S,z}(\mu)}{G_{T,z}(\la)}\,\overrightarrow{{\det}}^{-1}(q^{-1}z)\phi(M_{T^c S^c}(qz^{-1})).\end{equation}
\end{corollary}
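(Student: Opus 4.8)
The plan is to derive \eqref{sm} from the preceding Lemma~\ref{sel} by applying the coproduct. First I would take the formula from Lemma~\ref{sel},
$$S(e_S(z))=(-1)^{N}
\frac{q^{\frac 12N\la+\binom N2}}{(q^{\la+1})_N}\,G_{S,z}(\mu)
\overrightarrow{{\det}}^{-1}(q^{-1}z)\phi(M_{\emptyset S^c}(qz^{-1})),$$
and apply $\De$ to both sides. On the left, using the antipode axiom \eqref{aprop}, $\De\circ S=\si\circ(S\ot S)\circ\De$, together with the definition \eqref{md} of the matrix elements $M_{ST}(z)$, gives
$$\De(S(e_S(z)))=\si\big((S\ot S)\De(e_S(z))\big)=\sum_{T\subseteq[N]}S(e_T(z))\ot S(M_{ST}(z)).$$
On the right, I would use that $\det^{-1}(q^{-1}z)$ is central and grouplike, that $\phi$ is an algebra antiautomorphism satisfying $(\phi\ot\phi)\circ\De=\De\circ\phi$ by \eqref{dph}, and the formula \eqref{crm} for $\De(M_{\emptyset S^c}(qz^{-1}))$ read through $\phi$. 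The upshot is that both sides become sums over $T$ of terms $S(e_T(z))\ot(\text{something}_T)$, and since the $S(e_T(z))$ are linearly independent over $\mg$ (being $\phi$ applied to a scalar multiple of a basis, cf.\ Corollary~\ref{vec} and Lemma~\ref{sel}), one may match coefficients.

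The bookkeeping is then a matter of collecting the moment-map and prefactor contributions. Applying $\De$ to $\phi(M_{\emptyset S^c}(qz^{-1}))$ and using \eqref{dph}, \eqref{crm} yields a sum over $T$ involving $\phi(M_{T^cS^c}(qz^{-1}))\ot\phi(\vec L\cdots)$, which after re-expressing $\phi(\vec L\cdots)$ back in terms of $S(e_T(z))$ via Lemma~\ref{sel} produces the factor $G_{T,z}$ in a denominator and the combinatorial prefactor $(-1)^N q^{\frac12 N\la+\binom N2}/(q^{\la+1})_N$ that precisely cancels the one carried along from the left-hand side. What survives in the second tensor factor is exactly $\dfrac{G_{S,z}(\mu)}{G_{T,z}(\la)}\,\overrightarrow{\det}^{-1}(q^{-1}z)\phi(M_{T^cS^c}(qz^{-1}))$, which is the claimed expression \eqref{sm}. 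One has to be a little careful with where the moment maps $\mu_l,\mu_r$ and the shift operators act — in particular the argument of $G$ switching between $\la$ and $\mu$ reflects the left/right grading of $M_{ST}(z)\in\E_{2|S|-N,\,2|T|-N}$ — but this is the standard Sweedler-notation computation.

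I expect the main obstacle to be purely organizational: tracking the precise powers of $q$, the elliptic Pochhammer prefactors, and the $\la$ versus $\mu$ dependence of $G$ through the coproduct, and confirming that the global scalar from Lemma~\ref{sel} cancels cleanly on both sides rather than leaving a residual factor. An alternative, essentially equivalent route would be to verify \eqref{sm} directly on the generators using the explicit antipode formula \eqref{sv} for $S(\vec L_{ab}(z))$ together with Lemma~\ref{dac}, but routing through Lemma~\ref{sel} and the coproduct is cleaner since the hard structural input has already been isolated there.
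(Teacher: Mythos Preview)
Your proposal is correct and follows essentially the same route as the paper: apply $\De$ to the identity of Lemma~\ref{sel}, expand the left side via \eqref{aprop} and \eqref{md} and the right side via \eqref{dph} and the grouplike property of $\det^{-1}$, then match coefficients of $S(e_T(z))$. The only minor difference is in the justification of linear independence: the paper argues directly that since the $e_T(z)$ are independent over $\mu_l(\mg)$ (Corollary~\ref{vec}) and $S$ is invertible, the $S(e_T(z))$ are independent over $\mu_r(\mg)$, whereas you route through $\phi$ and Lemma~\ref{sel}; both are valid.
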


\begin{proof}
By \eqref{aprop}, Lemma \ref{sel} and \eqref{dph},
\begin{multline*}\sum_{T\subseteq[N]}S(e_T(z))
\ot S(M_{ST}(z))=\Delta(S(e_S(z)))\\
=\sum_{T\subseteq[N]}
S(e_T(z))
\ot \frac{G_{S,z}(\mu)}{G_{T,z}(\la)}\,\overrightarrow{{\det}}^{-1}(q^{-1}z)\phi(M_{T^c S^c}(qz^{-1})).
 \end{multline*}
Since, by Corollary \ref{vec}, the elements $e_T(z)$ are linearly independent over $\mu_l(\mg)$ and $S$ is invertible, $S(e_T(z))$ are  linearly independent over $\mu_r(\mg)$. It follows that the identity above holds termwise. 
\end{proof}

We can now obtain the following variation of Corollary \ref{mmc}.

\begin{corollary}\label{mma}
Let $(w,z)\in(\mathbb C^\times)^M\times(\mathbb C^\times)^N$ be generic. Then,
\begin{multline*}\mathcal R_{SU}^{TV}(\la;w;z)=(-1)^{|U|+|V|}q^{\binom{|V|}2+N(M-L)+\frac 12|U|(|U|+1)}\\
\begin{split}&\times
\prod_{i\in T,j\in T^c}\frac{\tha(qw_i/w_j)}{\tha(w_i/w_j)}
\prod_{i\in V,j\in V^c}\frac{\tha(qz_i/z_j)}{\tha(z_i/z_j)}
\prod_{i\in[M],j\in[N]}\frac{\tha(w_i/z_j)}{\tha(qw_i/z_j)}\\
&\times \frac{(q^{\la+2-|T|})_{N+|T|-|U|}(q^{\la+1+N-2|U|})_{|U|}}{(q^{\la+1+M-2|T|-|V|})_{|V|}(q^{\la+2+M-2|T|})_{|T|}(q^{\la+2+M-2|T|})_{N-|V|}}\, T_{N-2|U|}\\
&\times\lx \vec\be(q^{-1}z_{U})\vec\al(q^{-1}z_{U^c},w_T)\vec\be(w_{T^c}),
\vec\ga(w_{S^c})\vec\al(w_S,q^{-1}z_{V^c})\vec\ga(q^{-1}z_V)\rx 1,\end{split}\end{multline*}
with $L$ as in \eqref{l}.
\end{corollary}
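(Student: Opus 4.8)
The plan is to take Corollary \ref{mmc} as the starting point and transform the inner pairing using the $\phi$-twisted constructions of this subsection, then reconcile the prefactors. Concretely, Corollary \ref{mmc} expresses $\mathcal R_{SU}^{TV}(\la;w;z)$ via a pairing $\lx f_T(w)f_V(z),e_U(z)e_S(w)\rx$ divided by $A_{T,w}(\la)A_{V,z}(\la+M-2|T|)$. The idea is instead to write the generalized $6j$-symbol as a pairing of the form $\lx \phi(M_{ST}(w^{-1})),M_{UV}(z)\rx$ via Lemma \ref{pmmc}, and then relate $\phi(M_{ST}(w^{-1}))$ back to $\mathcal R_{SU}^{TV}$ itself. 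The cleanest route is to use the symmetry formula \eqref{sm} from Corollary \ref{spp}: it says $S(M_{ST}(z)) = \frac{G_{S,z}(\mu)}{G_{T,z}(\la)}\,\overrightarrow{\det}^{-1}(q^{-1}z)\phi(M_{T^cS^c}(qz^{-1}))$, which one can invert to express $\phi(M_{ST}(w^{-1}))$ in terms of $S(M_{T^cS^c}(qw))$ (after relabelling $S\leftrightarrow T^c$, $T\leftrightarrow S^c$ and $z\mapsto qw^{-1}$, then using $\phi^2=\id$), up to an explicit ratio of $G$-functions and a determinant factor.

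Having done that, I would feed this into the pairing $\lx \phi(M_{ST}(w^{-1})),M_{UV}(z)\rx$, use the antipode property of a unitary cobraiding (Proposition \ref{acl}, $\lx S(x),y\rx = T_\al\lx y,x\rx T_\be$) to move $S$ off the first slot, and then use Lemma \ref{dcgl} to pull the $\overrightarrow{\det}^{-1}$ factors out of the pairing as an explicit product of theta-quotients and powers of $q$. What remains inside is a pairing of a product $\vec L$-type matrix element against $M_{UV}(z)$, which after expanding $M_{UV}(z)$ in the $E$-basis and applying Corollary \ref{rl} (or directly Proposition \ref{mil}/Corollary \ref{mmc} again) collapses to the pairing displayed on the right-hand side of the statement, namely $\lx \vec\be(q^{-1}z_U)\vec\al(q^{-1}z_{U^c},w_T)\vec\be(w_{T^c}),\vec\ga(w_{S^c})\vec\al(w_S,q^{-1}z_{V^c})\vec\ga(q^{-1}z_V)\rx$. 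The appearance of $q^{-1}z$ in the arguments is exactly what one expects from the $qz^{-1}$ in \eqref{sm} combined with $\phi$, which sends $z\mapsto z^{-1}$; and the reordering of factors $\vec\al(z)\vec\ga(z)\leftrightarrow\vec\ga(z)\vec\al(z)$ inside each block is harmless by \eqref{lc} and the fact that $\al(z_i),\ga(z_i)$ with distinct $z_i$ commute in the relevant configurations (or can be reordered at the cost of the already-present theta-quotients via Lemma \ref{gag}, but one should check the intended ordering carefully).

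The bookkeeping of scalar prefactors is where essentially all the work lies. One must assemble: the ratio $G_{S,w}/G_{T,w}$ (or its variant) coming from \eqref{sm}; the $\overrightarrow{\det}^{-1}$ contribution evaluated through Lemma \ref{dcgl}, which produces $\prod_{i\in[M],j\in[N]}\theta(w_i/z_j)/\theta(qw_i/z_j)$ together with a power of $q$; the factors $A_{T,w}^{-1}$, $A_{V,z}^{-1}$, $B_{T,w}^{-1}$ from Corollary \ref{mmc}, Lemma \ref{pmmc} and the definitions \eqref{ph}, \eqref{b}; and the shift operators $T_\al$ accumulated from Proposition \ref{acl} and the various $T_{\pm|b|}$ in the cobraiding identities, which must be collected on the right as the single factor $T_{N-2|U|}$. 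Substituting the explicit forms \eqref{ph}, \eqref{b}, \eqref{gsd} and the elementary identity $(x)_k=(-1)^kq^{\binom k2}x^k(q^{1-k}/x)_k$, the elliptic Pochhammer symbols should telescope into the displayed quotient $\frac{(q^{\la+2-|T|})_{N+|T|-|U|}(q^{\la+1+N-2|U|})_{|U|}}{(q^{\la+1+M-2|T|-|V|})_{|V|}(q^{\la+2+M-2|T|})_{|T|}(q^{\la+2+M-2|T|})_{N-|V|}}$, and the scattered theta-quotients over $S,S^c$ and $V,V^c$ should consolidate into $\prod_{i\in T,j\in T^c}\frac{\theta(qw_i/w_j)}{\theta(w_i/w_j)}\prod_{i\in V,j\in V^c}\frac{\theta(qz_i/z_j)}{\theta(z_i/z_j)}$. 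The main obstacle is precisely this: matching the sign $(-1)^{|U|+|V|}$, the power of $q$ ($\binom{|V|}2+N(M-L)+\frac12|U|(|U|+1)$), and every $q^\la$-shift exactly, since these arise from combining roughly half a dozen intermediate formulas each carrying its own normalization, and a single off-by-one in a shift operator propagates everywhere. I would isolate the argument-shift tracking (all the $T_\al$'s) as a separate lemma-free computation done first, then handle the scalar $q$- and $\theta$-bookkeeping, and only at the end verify the two $\Phi$-independent Pochhammer telescopings against \eqref{gsd}.
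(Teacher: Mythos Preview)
Your list of ingredients is exactly the paper's list---Proposition \ref{acl}, Corollary \ref{spp} (equation \eqref{sm}), Lemma \ref{dcgl}, and Lemma \ref{pmmc}---but you have applied them to the wrong factor, and this is not a cosmetic issue. You propose to rewrite $\phi(M_{ST}(w^{-1}))$ via \eqref{sm} and then pair with $M_{UV}(z)$. But Lemma \ref{pmmc} applied with $w$ in the first slot produces the pairing
\[
\lx \vec\be(w_{T^c})\vec\al(w_T)\vec\al(z_V)\vec\be(z_{V^c}),\ \vec\ga(z_{U^c})\vec\al(z_U)\vec\al(w_S)\vec\ga(w_{S^c})\rx,
\]
which contains no $q^{-1}z$ at all. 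The $q$-shift in \eqref{sm} attaches to whichever variable carries the antipode; if that is $w$, you will see $qw$ (or $q^{-1}w$) inside the pairing, never $q^{-1}z$. So your explanation ``the appearance of $q^{-1}z$ \ldots\ is exactly what one expects from the $qz^{-1}$ in \eqref{sm}'' does not match the route you describe. Moreover, your chain through $S(M_{T^cS^c}(qw))$ and Proposition \ref{acl} lands on $\lx M_{UV}(z),M_{T^cS^c}(qw)\rx$, i.e.\ on $\mathcal R_{U\,T^c}^{V\,S^c}(\la;z;qw)$, not on $\mathcal R_{SU}^{TV}(\la;w;z)$; closing that gap would require a further appeal to the symmetries of Corollary \ref{cbsl} and a relabelling of all four index sets.

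The paper's proof does the mirror of your plan: it applies Proposition \ref{acl} to move the antipode onto $M_{UV}(z)$, writing $\lx M_{ST}(w),M_{UV}(z)\rx=T_{N-2|U|}\lx S(M_{UV}(z)),M_{ST}(w)\rx T_{N-2|V|}$, then uses \eqref{sm} on $S(M_{UV}(z))$ to obtain $\phi(M_{V^cU^c}(qz^{-1}))$, pulls $\overrightarrow{\det}^{-1}(q^{-1}z)$ out via Lemma \ref{dcgl}, and finally invokes Lemma \ref{pmmc} with $q^{-1}z$ playing the role of ``$w$'' and $w$ the role of ``$z$''. That instance of Lemma \ref{pmmc} yields precisely
\[
\lx \phi(f_{U^c}(qz^{-1}))f_T(w),\ e_S(w)\phi(e_{V^c}(qz^{-1}))\rx
=\lx \vec\be(q^{-1}z_{U})\vec\al(q^{-1}z_{U^c},w_T)\vec\be(w_{T^c}),\ \vec\ga(w_{S^c})\vec\al(w_S,q^{-1}z_{V^c})\vec\ga(q^{-1}z_V)\rx,
\]
which is the pairing in the statement. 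The prefactor then simplifies exactly as you anticipate, using the explicit forms of $G_{\bullet,z}$, $A_{T,w}$, and $B_{U^c,q^{-1}z}$. The only correction you need is to put the antipode on the $z$-side.
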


\begin{proof}
Using, respectively, Proposition \ref{acl}, Corollary \ref{spp}, Lemma \ref{dcgl} and  Lemma~\ref{pmmc} gives
\begin{multline*} \lx M_{ST}(w),M_{UV}(z)\rx=T_{N-2|U|} \lx S(M_{UV}(z)),M_{ST}(w)\rx T_{N-2|V|}\\
\begin{split}&=T_{N-2|U|}\lx\frac{G_{U,z}(\mu)}{G_{V,z}(\la)}\,\overrightarrow{{\det}}^{-1}(q^{-1}z)\phi(M_{V^c U^c}(qz^{-1})),M_{ST}(w)\rx T_{N-2|V|}\\
&=\frac{G_{U,z}(\la+N-2|U|)}{G_{V,z}(\la+M+N-2|T|-2|V|)}q^{\frac 12MN}\prod_{i\in[M],j\in[N]}\frac{\tha(w_i/z_j)}{\tha(qw_i/z_j)}\\
&\quad\times T_{N-2|U|}\lx\phi(M_{V^c U^c}(qz^{-1})),M_{ST}(w)\rx T_{N-2|V|}\\
&=q^{\frac 12 MN}\prod_{i\in[M],j\in[N]}\frac{\tha(w_i/z_j)}{\tha(qw_i/z_j)}\\
&\quad\times
\frac{G_{U,z}(\la+N-2|U|)}{G_{V,z}(\la+M+N-2|T|-2|V|)A_{T,w}(\la)B_{U^c,q^{-1}z}(\la+N-2|U|)}\\
&\quad\times T_{N-2|U|}
\lx \phi(f_{U^c}(qz^{-1}))f_T(w) ,e_S(w)\phi(e_{V^c}(qz^{-1}))\rx
 T_{M+2N-2|V|},
\end{split}\end{multline*}
which simplifies to the given expression.
\end{proof}

\begin{theorem}\label{rat} In the notation above,
 $\mathcal R_{SU}^{TV}(\la;w;z)$  can be expressed as
\begin{multline*}(-1)^{|U|+|V|}q^{\binom{|V|}2+N(M-L)+\frac 12|U|(|U|+1)}\prod_{i\in V,j\in V^c}\frac{\tha(qz_i/z_j)}{\tha(z_i/z_j)}\prod_{i\in[M],j\in[N]}\frac{\tha(w_i/z_j)}{\tha(qw_i/z_j)}\\
\begin{split}&\times \frac{(q^{\la+1+N-2|U|})_{|U|}}{(q^{\la+1+M-2|T|-|V|})_{|V|}(q^{\la+2+M-2|T|})_{|T|}(q^{\la+2+M-2|T|})_{N-|V|}}\\
&\times \sum_{\substack{X\subseteq S\cap T,\,Y\subseteq[N]\\|X|+|Y|=|S|+N-|V|}}
\tha(q)^{|U\cap Y|+|V\cap Y|}\prod_{i\in X,j\in S\setminus X}\frac{\tha(qw_i/w_j)}{\tha(w_i/w_j)}
\prod_{i\in T\setminus X,j\in T^c}\frac{\tha(qw_i/w_j)}{\tha(w_i/w_j)}
\\
&\times
\prod_{i\in X,j\in Y^c}\frac{\tha(q^2w_i/z_j)}{\tha(qw_i/z_j)}\prod_{i\in S^c\cap T^c,j\in Y}\frac{\tha(z_j/qw_i)}{\tha(z_j/w_i)}
\prod_{i\in S\cap T\cap X^c,j\in Y}\frac{\tha(z_j/w_i)}{\tha(z_j/qw_i)}
\\
&\times\prod_{i\in Y,j\in Y^c}\frac {\tha(qz_i/z_j)}{\tha(z_i/z_j)
}
\frac{(q^{\la+2+M-2|T|-|V|+|V\cap Y|})_{N+|S|-|V|-|V\cap Y|}}{(q^{-\la-N+|U|})_{|U\cap Y|}}\\
&\times\Phi(q^{-1}z_{ U\cap Y};(w_{T\setminus X},q^{-1}z_{U^c\cap Y^c});q^{-\la-N+|U|+|U\cap Y|})\\
&\times\Phi(q^{-1}z_{ V\cap Y};(w_{S\setminus X},q^{-1}z_{V^c\cap Y^c});q^{\la+2+M-2|T|-|V|+|V\cap Y|})\end{split}\end{multline*}
or alternatively as
\begin{multline*}(-1)^{|S|+|T|}q^{\binom{|S|}2+M(|U|-|T|)+\frac 12|T|(|T|+1)}\prod_{i\in T,j\in T^c}\frac{\tha(qw_i/w_j)}{\tha(w_i/w_j)}\prod_{i\in[M],j\in[N]}\frac{\tha(w_i/z_j)}{\tha(qw_i/z_j)}\\
\begin{split}&\times \frac{(q^{\la+1+N-2|U|})_{M-|S|}}{(q^{\la+1-|T|})_{M-|T|}(q^{\la+2+M-2|T|})_{|T|}(q^{\la+2+M-2|T|})_{N-|V|}}\\
&\times \sum_{\substack{X\subseteq U^c\cap V^c,\,Y\subseteq[M]\\|X|+|Y|=N+|S|-|V|}}
\tha(q)^{|S^c\cap Y|+|T^c\cap Y|}\prod_{i\in U^c\setminus X,j\in X}\frac{\tha(qz_i/z_j)}{\tha(z_i/z_j)}
\prod_{i\in V,j\in V^c\setminus X}\frac{\tha(qz_i/z_j)}{\tha(z_i/z_j)}
\\
&\times
\prod_{i\in Y^c,j\in X}\frac{\tha(q^2w_i/z_j)}{\tha(qw_i/z_j)}\prod_{i\in Y,j\in U\cap V}\frac{\tha(z_j/qw_i)}{\tha(z_j/w_i)}
\prod_{i\in Y,j\in U^c\cap V^c\cap X^c}\frac{\tha(z_j/w_i)}{\tha(z_j/qw_i)}
\\
&\times\prod_{i\in Y^c,j\in Y}\frac {\tha(qw_i/w_j)}{\tha(w_i/w_j)
}
\frac{(q^{\la+2-|T|+|T^c\cap Y|})_{N+|S|-|V|-|T^c\cap Y|}}{(q^{-\la-M-N+|S|+2|U|})_{|S^c\cap Y|}}\\
&\times\Phi((z_{V^c\setminus X},qw_{S\cap Y^c});qw_{S^c\cap Y};q^{-\la-M-N+|S|+2|U|+|S^c \cap Y|})\\
&\times\Phi((z_{U^c\setminus X},qw_{T\cap Y^c});qw_{T^c\cap Y};q^{\la+2-|T|+|T^c\cap Y|})
.\end{split}\end{multline*}
\end{theorem}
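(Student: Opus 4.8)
The plan is to mimic the derivation of Theorem \ref{mcmt}, but starting from the asymmetric representation of the generalized $6j$-symbol given in Corollary \ref{mma} rather than from the symmetric one in Corollary \ref{mmc}. The starting pairing there is
$$\lx \vec\be(q^{-1}z_{U})\vec\al(q^{-1}z_{U^c},w_T)\vec\be(w_{T^c}),\ \vec\ga(w_{S^c})\vec\al(w_S,q^{-1}z_{V^c})\vec\ga(q^{-1}z_V)\rx,$$
up to an explicit elementary prefactor. First I would reduce this to a sum of ``diagonal'' pairings of the type \eqref{feo}, exactly as in the proof of Theorem \ref{mcmt}: apply \eqref{bac} (from Lemma \ref{gag}) to the middle factor $\vec\al(q^{-1}z_{U^c},w_T)\vec\be(w_{T^c})$ on the left entry, pushing all $\vec\be$'s to the right, and apply \eqref{agc} to the factor $\vec\al(w_S,q^{-1}z_{V^c})\vec\ga(q^{-1}z_V)$ on the right entry, pushing all $\vec\ga$'s to the left. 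One then recognises each resulting term as a pairing $\lx\phi(f_{(\cdot)}(\cdot))\phi(f_{(\cdot)}(\cdot)),\ \phi(e_{(\cdot)}(\cdot))\phi(e_{(\cdot)}(\cdot))\rx$ of the mixed-variable $\gh$-spaces built over the $M+N$ spectral parameters $(w,q^{-1}z)$, and evaluates it by the $B$-analogue of Proposition \ref{dbl} together with Lemma \ref{pool} and Corollary \ref{rl} (i.e.\ the same bookkeeping that gives the $\phi$-twisted diagonal pairing $\lx\phi(f_T),\phi(e_S)\rx=\delta_{ST}B_{S,z}T_{-2m}$).

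The non-vanishing index constraint produced by \eqref{bac} forces $\vec\be(w_{T^c})$ together with part of $\vec\al$ to be rewritten as $\vec\al(w_{T\setminus X},q^{-1}z_{U^c\cap Y^c})\vec\be(w_X,q^{-1}z_{U\cap Y})$ for some $X\subseteq S\cap T$, $Y\subseteq[N]$ (here the $w$-part of the new set sits inside $T$ because $\be$'s have to land on $T^c$-labelled slots and the only surviving case is the one where they are exchanged with $\al$'s on $S\cap T$-slots; the $z$-part is free). Simultaneously \eqref{agc} forces the mirror replacement on the right entry, with the \emph{same} $X$ and $Y$ — this matching of the two sums into a single sum is where Corollary \ref{rl} (via Proposition \ref{mil}/Corollary \ref{mmc}-style reasoning) does the work. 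Collecting the $C$- and $D$-coefficients from Lemma \ref{gag} — each of which already contains one factor of $\Phi$ — and the $B$- and $A$-normalisations from Corollary \ref{mma}, and simplifying all the elliptic Pochhammer ratios and theta-products using the elementary identities quoted in \S\ref{ess}, yields the first displayed expression, with the constraint $|X|+|Y|=|S|+N-|V|$ emerging from grading. The second displayed expression is obtained in exactly the same way but interchanging the roles of the two tensor factors: one applies \eqref{agc}/\eqref{bac} the other way round (or, more economically, one applies the automorphism $S\circ\psi$, or the symmetry $(w,S,T,M)\leftrightarrow$ ``dual'' data, of Proposition \ref{ip} to the first formula), using Corollary \ref{ewc} to rewrite the resulting $\Phi$'s in the stated form.

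The main obstacle is purely computational bookkeeping: tracking the shift operators $T_\al$ correctly through \eqref{bac} and \eqref{agc} (whose coefficients depend on $\mu$ resp.\ $\la$, and which act by different shifts on the two entries of $\lx\cdot,\cdot\rx$), and then reassembling the dozen-or-so theta-product factors — several of which are genuinely \emph{different} products from those in \eqref{mti}, because the substitution $z\mapsto q^{-1}z$ coming from the antipode shifts arguments by $q$ — into the precise elementary prefactor and the precise two arguments of the two $\Phi$-factors. A secondary subtlety is verifying that the summation range $X\subseteq S\cap T$ (as opposed to $X\subseteq S^c\cap T^c$ in \eqref{mti}) is the correct one: this has to be read off from which slot-labels the $\be$'s can be moved onto in \eqref{bac}, given that in Corollary \ref{mma} the left entry already carries $\al$ on $T$ and $\be$ on $T^c$ but the overall pairing is with a $\phi$-twisted (hence $z\mapsto z^{-1}$, role-reversed) right entry. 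Once the index combinatorics and one representative theta-factor are checked by hand, the remaining factors follow by the same pattern and by the symmetry of the final answer.
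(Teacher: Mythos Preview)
Your overall strategy---start from Corollary \ref{mma}, commute the middle blocks using Lemma \ref{gag}, and evaluate the resulting diagonal pairings---is exactly what the paper does. A few details, however, are garbled in ways you would notice as soon as you tried to execute:

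\begin{itemize}
\item You say to apply \eqref{bac} to $\vec\al(q^{-1}z_{U^c},w_T)\vec\be(w_{T^c})$, but \eqref{bac} has the form $\vec\be\,\vec\al=\sum D\,\vec\al\,\vec\be$ and cannot be applied to a product that is already $\vec\al\,\vec\be$. The factor to which \eqref{bac} applies is $\vec\be(q^{-1}z_U)\,\vec\al(q^{-1}z_{U^c},w_T)$; commuting this pushes the leading $\vec\be$ to the right, after which the trailing $\vec\be(w_{T^c})$ is already in place. The paper records this as the coefficient $D_{(T,U^c),(X,Y),(w_T,q^{-1}z)}$. Your identification of the $\vec\al\,\vec\ga$ factor for \eqref{agc} on the right entry is correct.
\item After these commutations both entries are in the untwisted form $\vec\al\,\vec\be$ and $\vec\ga\,\vec\al$, i.e.\ $f_{(X,Y)}$ and $e_{(X,Y)}$ in the full variable set $(w,q^{-1}z)$. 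The diagonal evaluation is therefore \eqref{feo} with the factor $A_{(X,Y),(w,q^{-1}z)}$, not the $B$-version; and the collapse of the double sum to a single sum over $(X,Y)$ comes directly from the Kronecker delta in \eqref{feo}, not from a further use of Corollary \ref{rl} or Lemma \ref{pool}. (Those results were already absorbed in the proof of Corollary \ref{mma} and are not needed again.)
\item For the second expression the paper does not redo the computation or invoke $S\circ\psi$: it applies the first symmetry of Corollary \ref{cbsl}, namely $\mathcal R_{SU}^{TV}(\la;w;z)=\mathcal R_{U^cS^c}^{V^cT^c}(\la+M+N-2L;z^{-1};w^{-1})$, to the first expression. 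This is considerably shorter than the alternatives you suggest.
\end{itemize}

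With these corrections, your plan coincides with the paper's proof.
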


\begin{proof}
Consider the factor 
$$\lx \vec\be(q^{-1}z_{U})\vec\al(q^{-1}z_{U^c},w_T)\vec\be(w_{T^c}),
\vec\ga(w_{S^c})\vec\al(w_S,q^{-1}z_{V^c})\vec\ga(q^{-1}z_V)\rx$$
from Corollary \ref{mma}.
Similarly as in the proof of Theorem \ref{mcmt}, commuting
$\beta$ to the right and $\gamma$ to the left, 
it can be written
\begin{multline*}\sum_{\substack{X\subseteq S\cap T,\,Y\subseteq[N]\\|X|+|Y|=|S|+N-|V|}}
A_{(X,Y),(w,q^{-1}z)}(\la)\\
\times C_{(S,V^c),(X,Y),(w_S,q^{-1}z)}(\la+M-|S|)D_{(T,U^c),(X,Y),(w_T,q^{-1}z)}(\la)T_{2|V|-2|S|-2N}.
\end{multline*}
Inserting this into Lemma \ref{mma} and simplifying, we obtain the first expression. The second expression then follows using the first symmetry of Proposition \ref{cbsl}.
\end{proof}

An interesting summation formula follows by choosing $V=\emptyset$ in the second expression of Theorem \ref{rat}, the value of the sum being known from \eqref{rese}. From the results of \S  \ref{hss}, it will be clear that this identity generalizes the elliptic $A_n$ Jackson summation of \cite{rr}; see \eqref{rjs} below.

\begin{corollary} Suppose that $S,T\subseteq [M]$ and $U\subseteq [N]$ with 
$|S|+|U|=|T|$. Then, if $S\subseteq T$, 
\begin{multline*} \sum_{\substack{X\subseteq U^c,\,Y\subseteq[M]\\|X|+|Y|=N+|S|}}
\tha(q)^{|S^c\cap Y|+|T^c\cap Y|}\prod_{i\in U^c\setminus X,j\in X}\frac{\tha(qz_i/z_j)}{\tha(z_i/z_j)}\prod_{i\in Y^c,j\in Y}\frac {\tha(qw_i/w_j)}{\tha(w_i/w_j)
}
\\
\begin{split}&\quad\times\prod_{i\in Y^c,j\in X}\frac{\tha(q^2w_i/z_j)}{\tha(qw_i/z_j)}\prod_{i\in Y,j\in U^c\cap X^c}\frac{\tha(z_j/w_i)}{\tha(z_j/qw_i)}
\frac{(q^{\la+2-|T|+|T^c\cap Y|})_{N+|S|-|T^c\cap Y|}}{(q^{-\la-M-N+|T|+|U|})_{|S^c\cap Y|}}\\
&\quad\times\Phi((z_{X^c},qw_{S\cap Y^c});qw_{S^c\cap Y};q^{-\la-M-N+|T|+|U|+|S^c \cap Y|})\\
&\quad\times\Phi((z_{U^c\setminus X},qw_{T\cap Y^c});qw_{T^c\cap Y};q^{\la+2-|T|+|T^c\cap Y|})\\
&=(-1)^{|U|}q^{(M-|T|)|S|-\frac 12|U|(|U|+1)}\theta(q)^{|U|}\frac{(q^{\la+1-|T|})_{M-|T|}(q^{\la+2+M-2|T|})_{N+|S|}}{(q^{\la+1+N-2|U|})_{M-|S|}}\\
&\quad\times
\prod_{i\in S,j\in T^c}\frac{\tha(w_i/w_j)}{\tha(qw_i/w_j)}
\prod_{i\in T^c,j\in [N]}\frac{\tha(qw_i/z_j)}{\tha(w_i/z_j)}
\prod_{i\in T\setminus S,j\in U}\frac{\tha(qw_i/z_j)}{\tha(w_i/z_j)}\\
&\quad\times\Phi(w_{T\setminus S};z_U;q^{\la+2+N-|U|});
\end{split}\end{multline*}
otherwise the left-hand side vanishes.
\end{corollary}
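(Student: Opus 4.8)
The plan is to evaluate the second expression of Theorem~\ref{rat} in the degenerate case $V=\emptyset$ and to compare the outcome with the known value \eqref{rese}.

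First I would put $V=\emptyset$ throughout the second formula of Theorem~\ref{rat}. Then $|V|=0$ and $V^c=[N]$, so the standing hypothesis $|S|+|U|=|T|+|V|$ becomes $|S|+|U|=|T|$, and by \eqref{rese} the symbol $\mathcal R_{SU}^{T\emptyset}$ vanishes unless $S\subseteq T$ (in which case $|T\setminus S|=|U|$ is forced). Under this specialization the summation variable $X$ runs over subsets of $U^c\cap V^c=U^c$ with $|X|+|Y|=N+|S|$; the products $\prod_{i\in V,j\in V^c\setminus X}$ and $\prod_{i\in Y,j\in U\cap V}$ become empty, $U^c\cap V^c\cap X^c$ reduces to $U^c\cap X^c$, $V^c\setminus X$ reduces to $X^c$, and the Pochhammer $(q^{\la+2+M-2|T|})_{N-|V|}$ becomes $(q^{\la+2+M-2|T|})_N$. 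Using also $|S|+|U|=|T|$ to rewrite $q^{-\la-M-N+|S|+2|U|}$ as $q^{-\la-M-N+|T|+|U|}$, one sees that the specialized sum is precisely the left-hand side of the corollary, multiplied by the elementary prefactor displayed at the top of the second formula of Theorem~\ref{rat} with $V=\emptyset$.

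Next I would equate this with \eqref{rese}. If $S\not\subseteq T$ then $\mathcal R_{SU}^{T\emptyset}=0$ while the prefactor is nonzero for generic parameters, so the sum vanishes, which is the last assertion. If $S\subseteq T$, dividing \eqref{rese} by the prefactor solves for the sum. After cancelling the common factor $(q^{\la+2+M-2|T|})_{|T|}$, combining $(q^{\la+2+M+N-2|T|})_{|S|}(q^{\la+2+M-2|T|})_N=(q^{\la+2+M-2|T|})_{N+|S|}$, and simplifying the theta quotients (for instance $\prod_{i\in T,j\in T^c}$ divided by $\prod_{i\in T\setminus S,j\in T^c}$ equals $\prod_{i\in S,j\in T^c}$ since $S\subseteq T$, and similarly for the remaining two products coming from the factor $\prod_{i\in[M],j\in[N]}$ in the prefactor), one recovers the right-hand side of the corollary, including the single weight function $\Phi(w_{T\setminus S};z_U;q^{\la+2+N-|U|})$ inherited from \eqref{rese}. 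The sign and the power of $q$ also match once $|T|=|S|+|U|$ is used, since $(-1)^{|S|+|T|}=(-1)^{|U|}$ and $-\binom{|S|}2-M(|U|-|T|)-\frac12|T|(|T|+1)=(M-|T|)|S|-\frac12|U|(|U|+1)$.

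No idea beyond Theorem~\ref{rat} and \eqref{rese} enters; the only real work is the bookkeeping of theta products, signs, powers of $q$ and elliptic Pochhammer symbols in this division, which is the main — though entirely routine — obstacle. As the identity so obtained reduces, under the geometric specialization of $w$, to the elliptic $A_n$ Jackson summation of \cite{rr}, the latter provides a convenient consistency check.
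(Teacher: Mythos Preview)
Your proposal is correct and follows precisely the paper's own approach: specialize $V=\emptyset$ in the second expression of Theorem~\ref{rat} and compare with the explicit value \eqref{rese}. The bookkeeping you outline (sign, power of $q$, merging of Pochhammer symbols, and cancellation of theta products using $S\subseteq T$) is accurate and is exactly what the paper leaves to the reader.
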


\section{Hypergeometric series}
\label{hss}

\subsection{Preliminaries on elliptic hypergeometric series}\label{mhs}

Elliptic $6j$-symbols can  be expressed in terms
of the elliptic hypergeometric series ${}_{12}V_{11}$, where, in general,
$${}_{m+5}V_{m+4}(a;b_1,\dots,b_m)=\sum_{y=0}^\infty\frac{\tha(aq^{2y})}{\tha(a)}\frac{(a,b_1,\dots,b_m)_y}{(q,aq/b_1,\dots,aq/b_m)_y}\,q^y. $$
For an introduction to  elliptic hypergeometric functions, the reader is referred to \cite[Chapter 11]{gr} or \cite{ss}. 
The   series arising from elliptic $6j$-symbols are terminating and balanced. Terminating means that $b_1=q^{-N}$, with  $N$ a non-negative integer, so that the summation is restricted to $0\leq y\leq N$. When $m=2k+1$, which is the only case of interest to us, balanced means that
$$b_1\dotsm b_{2k+1}=a^{k}q^{k-1}. $$ 
Two  important results for such series are the elliptic Bailey transformation
\begin{equation}\label{ebt}\begin{split}{}_{12}V_{11}(a;q^{-N},b,c,d,e,f,g)
&=\frac{(aq,aq/ef,\la q/e,\la q/f)_N}{(\la q,\la q/ef,aq/e,aq/f)_N}\\
&\quad\times{}_{12}V_{11}(\la;q^{-N},\la b/a,\la c/a,\la d/a,e,f,g),
 \end{split}\end{equation}
where $\la=qa^2/bcd$ and $a^3q^{N+2}=bcdefg$,
and the elliptic Jackson summation
$${}_{10}V_{9}(a;q^{-N},b,c,d,e)
=\frac{(aq,aq/bc,aq/bd,aq/cd)_N}{(aq/b,aq/c,aq/d,aq/bcd)_N},$$
where $a^2q^{N+1}=bcde$. These identities were first obtained by 
Frenkel and Turaev \cite{ft}, though  with some restriction on the parameters they are implicit in \cite{d}. 

We now turn to the multiple series defined by
\begin{multline}\label{ahs}V_{n}^m(a;b_1,\dots,b_{m+2};c_1,\dots,c_{m+n+2};z_1,\dots,z_n)\\
\begin{split}&= 
\sum_{y_1,\dots,y_n\geq 0}\frac{\Delta(zq^y)}{\Delta(z)}
q^{|y|}
\prod_{i=1}^n\frac{\tha(az_iq^{y_i+|y|})}{\tha(az_i)}\frac{\prod_{i=1}^n(az_i)_{|y|}\prod_{i=1}^{m+2}(b_i)_{|y|}}{\prod_{i=1}^{m+n+2}(aq/c_i)_{|y|}}\\
& \quad\times\prod_{i=1}^n\frac{\prod_{j=1}^{m+n+2}(c_jz_i)_{y_i}}{\prod_{j=1}^{n}(qz_i/z_j)_{y_i}\prod_{j=1}^{m+2}(aqz_i/b_j)_{y_i}},
\end{split}\end{multline}
where
\begin{equation}\label{evd}\Delta(z_1,\dots,z_n)=\prod_{1\leq j<k\leq n}z_j\theta(z_k/z_j),\end{equation}
$$\frac{\Delta(zq^y)}{\Delta(z)}=\prod_{1\leq j< k\leq n}\frac{q^{y_j}\theta(q^{y_k-y_j}z_k/z_j)}{\theta(z_k/z_j)}. $$
This type of series appears in \cite{rr,s1,s2} for $m=0$ and $m=1$ and in
 \cite{kan,rt} in general. 
Since
\eqref{evd} is an elliptic extension of the  $A_{n-1}$ Weyl denominator $\prod_{j<k}(z_j-z_k)$ (in fact, it is essentially the Macdonald denominator for the corresponding affine root system \cite{m}), these series are associated to root systems of type $A$. In the rational limit case, series of this type first appeared in the representation theory of unitary groups \cite{ajj,ccb,hbl}.

Note that $V_{n}^m$  does not change under the scaling $a\mapsto ta$, $c_j\mapsto tc_j$, $z_j\mapsto z_j/t$. 
This redundancy of notation is convenient but must be kept in mind. 
Note also that
\begin{multline*}V_{1}^m(a;b_1,\dots,b_{m+2};c_1,\dots,c_{m+3};z)\\
={}_{2m+10}V_{2m+9}(az;b_1,\dots,b_{m+2},c_1z,\dots,c_{m+3}z).
 \end{multline*}
In particular, $V_{1}^{1}={}_{12}V_{11}$ is the series related to
 elliptic $6j$-symbols. 

We call the series $V_n^m$ balanced when the parameters satisfy 
$$b_1\dotsm b_{m+2}c_1\dotsm c_{m+n+2}z_1\dotsm z_n=q^{m+1}a^{m+2}. $$
The series can terminate in different ways. For instance, when $b_1=q^{-N}$, the  summation is  restricted to  $|y|\leq N$. Alternatively, when $c_j=q^{-N_j}/z_j$ for $1\leq j\leq n$, the summation is restricted to $y_j\leq N_j$.

Kajihara and Noumi \cite{kan} and the present author \cite{rt} independently proved the transformation formula 
\begin{multline}\label{nkt}
\sum_{\substack{y_1,\dots,y_n\geq 0\\y_1+\dots+y_n=N}}
\frac{\De(zq^y)}{\De(z)}\prod_{k=1}^n\frac{\prod_{j=1}^{m+n}(a_jz_k)_{y_k}}
{\prod_{j=1}^m(w_jz_k)_{y_k}\prod_{j=1}^n(qz_k/z_j)_{y_k}}\\
=\sum_{\substack{y_1,\dots,y_m\geq 0\\y_1+\dots+y_m=N}}
\frac{\De(wq^y)}{\De(w)}
\prod_{k=1}^m\frac{\prod_{j=1}^{m+n}(w_k/a_j)_{y_k}}
{\prod_{j=1}^n(w_kz_j)_{y_k}\prod_{j=1}^m(qw_k/w_j)_{y_k}},
\end{multline}
where $w_1\dotsm w_m=z_1\dotsm z_na_1\dotsm a_{m+n}$.
This is a discrete analogue of an integral transformation of Rains \cite{ra}; the latter was recently given  a quantum field theory interpretation by Dolan and Osborn \cite{do}.

Eliminating one of the summation variables on each side of 
\eqref{nkt} yields a transformation between series of type
$V_{n-1}^{m-1}$ and $V_{m-1}^{n-1}$. Replacing $m$, $n$ by $m+1$ and $n+1$, and applying a standard argument of analytic continuation, the resulting identity takes the form 
\begin{multline}\label{rkt}
V_n^m \left(a;b,c,\frac{aq}{w_1},\dots,\frac{aq}{w_m};\frac{q^{-N_1}}{z_1},\dots,\frac{q^{-N_n}}{z_n},q^{M_1}w_1,\dots,q^{M_m}w_m,d,e;z_1,\dots,z_n\right)\\
=c^{|N|-|M|}\frac{(\la qd,\la qe)_{|M|}(aq/cd,aq/ce)_{|N|}}{(\la qd/c,\la qe/c)_{|M|}(aq/d,aq/e)_{|N|}}\\
\times\prod_{j=1}^m\frac{(\la qw_j/b,\la qw_j/c)_{M_j}}{(\la qw_j/bc,\la qw_j)_{M_j}}\prod_{j=1}^n\frac{(aqz_j/bc,a qz_j)_{N_j}}{(aqz_j/b,a qz_j/c)_{N_j}}
\, V_m^n\left(\la;b,c,\frac{\la q}{z_1},\dots,\frac{\la q}{z_n};\right.\\
\left.\frac{q^{-M_1}}{w_1},\dots,\frac{q^{-M_m}}{w_m},q^{N_1}z_1,\dots,q^{N_n}z_n,\frac 1d,\frac 1e;w_1,\dots,w_m\right),
\end{multline}
where $\la=bc/aq=q^{|N|-|M|}a/de$.

The case  $m=n=1$ of \eqref{rkt} is a ${}_{12}V_{11}$ transformation that
is different from \eqref{ebt}, but can be obtained as a consequence of that result.  When $m=0$,  the 
 function $V_{0}^n$ on the
right-hand side  of \eqref{rkt} should be interpreted as $1$, and we obtain 
 the multivariable elliptic Jackson summation
\cite[Cor.\ 5.3]{rr}
\begin{multline}\label{rjs}
V_n^0(a;b,c;{q^{-N_1}}/{z_1},\dots,{q^{-N_n}}/{z_n},d,e;z_1,\dots,z_n)\\
=c^{|N|}\frac{(aq/cd,aq/ce)_{|N|}}{(aq/d,aq/e)_{|N|}}\prod_{j=1}^n\frac{(aqz_j,a qz_j/bc)_{N_j}}{(aqz_j/b,a qz_j/c)_{N_j}},
\end{multline}
where $a^2q^{|N|+1}=bcde$.

Another multivariable elliptic Jackson summation is obtained in  \cite{rsm}; see \cite[Thm.\ 4.1]{sc} for the case $p=0$:
\begin{multline}\label{sjs}
\sum_{y_1,\dots,y_n=0}^{N_1,\dots, N_n}
\frac{\Delta(xq^y)}{\Delta(x)}\,q^{|y|}\frac{\tha(aq^{2|y|})}{\tha(a)}
\prod_{i=1}^n\frac{(aq^{1+|N|}/ex_i)_{|y|-y_i}(d/x_i)_{|y|}(ex_i)_{y_i}}{(d/x_i)_{|y|-y_i}(aq^{1+|N|-N_i}/ex_i)_{|y|}(aqx_i/d)_{y_i}}\\
\begin{split}&\quad\times\frac{(a,b,c)_{|y|}}{(aq^{1+|N|},aq/b,aq/c)_{|y|}}
\prod_{i,j=1}^n\frac{(q^{-N_j}x_i/x_j)_{y_i}}{(qx_i/x_j)_{y_i}}\\
&=\frac{(aq,aq/bc)_{|N|}}{(aq/b,aq/c)_{|N|}
}\prod_{i=1}^n\frac{(aqx_i/bd,aqx_i/cd)_{N_i}}{(aqx_i/d,aqx_i/bcd)_{N_i}},
\end{split}\end{multline}
where
$a^2q^{|N|+1}=bcde$.

\subsection{Hypergeometric series from generalized $6j$-symbols}

The  expressions  in  Theorems \ref{mcmt} and \ref{rat} are generalizations of  elliptic hypergeometric representations for  elliptic $6j$-symbols. 
In view of the discussion in \S \ref{dmps}, 
to recover the latter one should choose $S=[M-s+1,M]$, $T=[M-t+1,M]$,
$U=[N-u+1,N]$, $V=[N-v+1,N]$ and specialize $z_j=q^{j-1}\zeta$, $w_j=q^{j-1}\omega$. In Theorem \ref{mcmt}, the factor $\prod_{i\in S^c\setminus X,j\in X}\theta(qw_i/w_j)$ then vanishes unless $X=[1,x]$, while the factor $\prod_{i\in Y,j\in U\setminus Y}\theta(qz_i/z_j)$
vanishes unless $Y=[N-y+1,N]$. Since $x$ and $y$ are related by $y-x=s+u-M$, the expression reduces to a single sum.
Similar reductions  occur for the two expressions of Theorem  \ref{rat}. 
One can check that all three sums are of type $V_1^1={}_{12}V_{11}$, and that the equality of the three expressions follows from known transformation formulas for such series.  
This is the  case considered in \cite{kn}. We will now 
explain how to generalize these results to include series of type 
$V_n^m$.

First, we let
\begin{equation}\label{ws}w_j=q^{j-1}\om,\quad S=[M-s+1,M],\quad T=[M-t+1,M]\end{equation}
in  Theorem \ref{mcmt}. 
As above, we may write $X=[1,x]$. 
By Lemma \ref{pgl}, the elliptic weight functions factor, and we find that
$\mathcal R_{SU}^{TV}(\la;w;z)$ equals
\begin{multline*}\frac{(q)_{M-s}(q^{\la+2+M+N-2L})_{s}}{(q)_{t}(q^{\la+2+M-2t})_{t}(q^{\la+2+M+N-2L})_{|V|}}
\prod_{(i,j)\in(V\times V^c)\setminus(U\times U^c)}\frac{\tha(qz_i/z_j)}{\tha(z_i/z_j)}
\\
\begin{split}&\times\prod_{i\in U\cap V^c}\frac{\tha(q^{\la+1+M+N-L-|U|}\om/z_i)}{\tha(q^{M}\om/z_i)}\prod_{i\in U^c\cap V}\frac{\tha(q^{-\la-1+t}\om/z_i)}{\tha(q^{M}\om/z_i)}\\
&\times\sum_{\substack{Y\subseteq U\cap V,\\|Y|\geq L-M}}\frac{(q)_{L-|Y|}(q^{\la+2+N-|U|-|Y|})_{|Y|}}{(q)_{|Y|+M-L}(q^{-\la+2t-M})_{|V|-|Y|}}\prod_{i\in U\cap V\cap Y^c,j\in U^c\cap V^c}\frac{\tha(qz_i/z_j)}{\tha(z_i/z_j)}\\
&\times\prod_{i\in Y,j\in U\cap V\cap Y^c}\frac{\tha(qz_i/z_j)}{\tha(z_i/z_j)}
\prod_{i\in U^c\cap V^c}\frac{\tha(q^{|Y|+M-L}\om/z_i)}{\tha(q^{M}\om/z_i)}
\\
&\times\prod_{i\in Y}\frac{\tha(qz_i/\om)}{\tha(q^{1+L-M-|Y|}z_i/\om)}\prod_{i\in U\cap V\cap Y^c}\frac{\tha(q^{\la+1+M+N-L-|U|}\om/z_i,q^{-\la-1+t}\om/z_i)}{\tha(q^{|Y|+M-L}\om/z_i,q^M\om/z_i)}.
\end{split}
\end{multline*}

 Next, we specialize
\begin{subequations}\label{zs}
\begin{equation}z_{U\cap V}=(\eta_1,\dots,\eta_1q^{k_1-1},\dots,\eta_m,\dots,\eta_mq^{k_m-1}), \end{equation}
\begin{equation}z_{U^c\cap V^c}=(q^{1-l_1}\xi_1^{-1},\dots,\xi_1^{-1},\dots,
q^{1-l_n}\xi_n^{-1},\dots,\xi_n^{-1}
), \end{equation}
\end{subequations}
so that
\begin{equation}\label{pbc}N+|k|=|U|+|V|+|l|. \end{equation}
We stress that this is \emph{not} a restriction on the variables $z_i$, since the general case is included as $k_i\equiv l_i\equiv 1$.
Then, the only non-vanishing terms are those where
$$Y=[y_1+1,k_1]\cup\dots\cup[y_m+1,k_m], $$
with $0\leq y_i\leq k_i$, so that $|Y|=|k|-|y|$.
 One may check that
$$\prod_{i\in Y,j\in U\cap V\cap Y^c}\frac{\tha(qz_i/z_j)}{\tha(z_i/z_j)}
=(-1)^{|y|}q^{|y||k|-\binom{|y|}2}\frac{\Delta(\eta q^y)}{\Delta(\eta)}\prod_{i,j=1}^m\frac{(q^{-k_j}\eta_i/\eta_j)_{y_i}}{(q\eta_i/\eta_j)_{y_i}}, $$
cf.\  \cite{kan} or \cite[\S 7]{rr}.
After  simplification,  the  result takes the following form.

\begin{corollary}\label{ahc}
Assuming \eqref{ws} and \eqref{zs}, $\mathcal R_{SU}^{TV}(\la;w;z)$ equals
\begin{multline*}
\frac{(q)_{M-s}(q)_{L-|k|}}{(q)_{t}(q)_{M+|k|-L}}
\frac{(q^{\la+2+M+N-2L})_{s}(q^{\la+2+N-|U|-|k|})_{|k|}}{(q^{\la+2+M-2t})_{t}(q^{\la+2+M+N-2L})_{|V|}(q^{-\la+2t-M})_{|V|-|k|}}\\
\begin{split}&\times
\prod_{i\in U^c\cap V,\,j\in U\cap V^c}\frac{\tha(qz_i/z_j)}{\tha(z_i/z_j)}
\prod_{i\in U\cap V^c}\left(\frac{\tha(q^{\la+1+M+N-L-|U|}\om/z_i)}{\tha(q^M\om/z_i)}
\prod_{j=1}^m\frac{\tha(\eta_jq^{k_j}/z_i)}{\tha(\eta_j/z_i)}
\right)\\
&\times\prod_{i\in U^c\cap V}\left(\frac{\tha(q^{-\la+t-1}\om/z_i)}{\tha(q^M\om/z_i)}
\prod_{j=1}^n\frac{\tha(q^{l_j}\xi_jz_i)}{\tha(\xi_j z_i)}
\right)\prod_{i=1}^m\frac{(q\eta_i/\om)_{k_i}}{(q^{1+L-M-|k|}\eta_i/\om)_{k_i}}\\
&\times
\prod_{i=1}^n\frac{(q^{M+|k|-L}\om\xi_i)_{l_i}}{(q^{M}\om\xi_i)_{l_i}}\sum_{\substack{y_1,\dots,y_m\\0\leq y_i\leq k_i,\, |y|\leq |k|+M-L}}
\frac{\Delta(\eta q^y)}{\Delta(\eta)}q^{|y|}\\
&\times\prod_{i=1}^m\left(\frac{\tha(q^{L-M-|k|+|y|+y_i}\eta_i/\om)}{\tha(q^{L-M-|k|}\eta_i/\om)}\frac{(q^{L-M-|k|}\eta_i/\om)_{|y|}}{(q^{1+L-M-|k|+k_i}\eta_i/\om)_{|y|}}\right)\\
&\times\frac{(q^{L-M-|k|},q^{1+L-|k|})_{|y|}}{(q^{\la+2+N-|U|-|k|},q^{-\la+t+L-M-|k|})_{|y|}}\prod_{i=1}^n\frac{(q^{1+L-M-|k|}/\xi_i\om)_{|y|}}
{(q^{1+L-M-|k|-l_i}/\xi_i\om)_{|y|}}\\
&\times\prod_{i=1}^m\left(\frac{(q^{-\la-1+L+|U|-M-N}\eta_i/\om,q^{\la+1-t}\eta_i/\om)_{y_i}}{(q\eta_i/\om,q^{-M}\eta_i/\om)_{y_i}}\prod_{j=1}^m\frac{(q^{-k_j}\eta_i/\eta_j)_{y_i}}{(q\eta_i/\eta_j)_{y_i}}\prod_{j=1}^n\frac{(q^{l_j}\xi_j\eta_i)_{y_i}}{(\xi_j\eta_i)_{y_i}}\right).
\end{split}
\end{multline*}
\end{corollary}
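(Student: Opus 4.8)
The plan is to specialize the explicit formula \eqref{mti} of Theorem \ref{mcmt} in two stages, corresponding to the two families of substitutions \eqref{ws} and \eqref{zs}, in each stage using the factorization of the elliptic weight functions on geometric progressions provided by Lemma \ref{pgl}, together with a direct but lengthy simplification of the resulting products of theta functions and elliptic Pochhammer symbols.

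First I would impose \eqref{ws}. Since $w$ is then a geometric progression, $w_i/w_j=q^{i-j}$ and the factor $\prod_{i\in S^c\setminus X,\,j\in X}\theta(qw_i/w_j)$ in \eqref{mti} vanishes for generic $q$ unless $X=[1,x]$ is an initial segment (with $X=\emptyset$ the case $x=0$); the constraint $|Y|-|X|=L-M$ then fixes $|X|=|Y|+M-L$, collapsing the sum over $X$ and forcing $|Y|\ge L-M$. The remaining first arguments of the two weight functions, $w_{S^c\setminus X}=w_{[x+1,M-s]}$ and $w_{T^c\setminus X}=w_{[x+1,M-t]}$, are again geometric progressions, so by Lemma \ref{pgl} both $\Phi$'s become explicit products of theta functions. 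Collecting the various products over the pairs of index sets $T\times(T^c\setminus X)$, $(S^c\setminus X)\times X$, $(V\setminus Y)\times V^c$, $Y\times(U\setminus Y)$, $Y\times X$, $X^c\times Y^c$, $(S^c\setminus X)\times(U\setminus Y)$, $(T^c\setminus X)\times(V\setminus Y)$ and rewriting the Pochhammer symbols with the identity $(x)_k=(-1)^kq^{\binom k2}x^k(q^{1-k}/x)_k$, one obtains the intermediate single-sum expression displayed just before \eqref{zs}, namely a sum over $Y\subseteq U\cap V$ with $|Y|\ge L-M$.

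Next I would impose \eqref{zs}. Since $z_{U\cap V}$ and $z_{U^c\cap V^c}$ are now concatenations of $m$, respectively $n$, geometric progressions, the factor $\prod_{i\in Y,\,j\in U\cap V\cap Y^c}\theta(qz_i/z_j)$ forces $Y$ to be a union of top segments of the blocks, $Y=[y_1+1,k_1]\cup\dots\cup[y_m+1,k_m]$ with $0\le y_i\le k_i$, so $|Y|=|k|-|y|$ and the condition $|Y|\ge L-M$ becomes $|y|\le|k|+M-L$. Substituting the identity
$$\prod_{i\in Y,\,j\in U\cap V\cap Y^c}\frac{\theta(qz_i/z_j)}{\theta(z_i/z_j)}=(-1)^{|y|}q^{|y||k|-\binom{|y|}2}\frac{\Delta(\eta q^y)}{\Delta(\eta)}\prod_{i,j=1}^m\frac{(q^{-k_j}\eta_i/\eta_j)_{y_i}}{(q\eta_i/\eta_j)_{y_i}}$$
from \cite{kan,rr}, and once more rewriting all theta products and Pochhammer symbols under the geometric specialization (splitting each product over $i$ or $j$ in $U\cap V$ or $U^c\cap V^c$ into products over the $m$ or $n$ blocks and telescoping where possible), yields the claimed expression after collecting the overall prefactor.

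The main obstacle is entirely computational: the argument requires careful bookkeeping of how every product over pairs of index sets splits under the two geometric specializations, and of the resulting large collection of elliptic Pochhammer quotients and powers of $q$, so that the sign $(-1)$ exponents, the $q$-exponents, and the theta and Pochhammer prefactors come out exactly as stated. No new structural input is needed beyond Theorem \ref{mcmt}, Lemma \ref{pgl}, and the determinant-type evaluation above; the difficulty lies solely in the precision of the simplifications.
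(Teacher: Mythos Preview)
Your proposal is correct and follows essentially the same route as the paper: specialize \eqref{mti} first under \eqref{ws}, using the vanishing of $\prod_{i\in S^c\setminus X,\,j\in X}\theta(qw_i/w_j)$ to force $X=[1,x]$ and Lemma~\ref{pgl} to factor both $\Phi$'s, obtaining the intermediate single sum over $Y\subseteq U\cap V$; then specialize under \eqref{zs}, using the vanishing of $\prod_{i\in Y,\,j\in U\cap V\cap Y^c}\theta(qz_i/z_j)$ to force $Y$ to be a union of top segments and the cited identity from \cite{kan,rr} for that product, followed by straightforward but tedious bookkeeping. The paper gives exactly this argument, including the same intermediate displayed formula and the same references for the $\Delta(\eta q^y)/\Delta(\eta)$ identity.
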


In the notation \eqref{ahs}, the sum in Corollary \ref{ahc} can be written
\begin{multline*}V_{m}^n\left(\frac{q^{L-M-|k|}}\om;q^{L-M-|k|},q^{1+L-|k|},
\frac{q^{1+L-M-|k|}}{\om\xi_1},\dots,\frac{q^{1+L-M-|k|}}{\om\xi_n};\right.\\
\left.\frac{q^{\la+1-t}}{\om},\frac{q^{-\la-1+L+|U|-M-N}}{\om},\frac{q^{-k_1}}{\eta_1},\dots,\frac{q^{-k_m}}{\eta_m},q^{l_1}\xi_1,\dots,q^{l_n}\xi_n;
\eta_1,\dots,\eta_m\right). \end{multline*}
Note that, since $L=s+|U|=t+|V|$ and  \eqref{pbc} holds, the series is balanced.
Making the same specialization in the 
second expression of Theorem~\ref{rat}, one finds that 
$\mathcal R_{SU}^{TV}(\la;w;z)$ is an elementary factor times
\begin{multline*}V_{n}^m\left(q^{L-|k|}\om;q^{L-M-|k|},q^{1+L-|k|},
\frac{q^{1+L-|k|}\om}{\eta_1},\dots,\frac{q^{1+L-|k|}\om}{\eta_n};\right.\\
\left.q^{-\la-1+t}\om,q^{\la+1+M+N-L-|U|}\om,q^{k_1}\eta_1,\dots,q^{k_m}\eta_m,\frac{q^{-l_1}}{\xi_1},\dots,\frac{q^{-l_n}}{\xi_n};\xi_1,\dots,\xi_n\right). \end{multline*}
The fact that these two expressions agree is an instance of \eqref{rkt}. Thus, we have obtained an algebraic proof of this transformation.
(Although we only obtain \eqref{rkt} under an additional
discreteness condition on the parameters, that condition can be removed by analytic continuation similarly as in the proof of \cite[Cor.\ 5.3]{rr}.)

In \S \ref{bos}, it will be convenient to use the  expression obtained by replacing $y_i$ by $k_i-y_i$ in  Corollary \ref{ahc}. When $L\leq M$, the condition $|y|\leq |k|+M-L$
is trivially satisfied, and we find that $\mathcal R_{SU}^{TV}(\la;w;z)$ equals
\begin{multline}\label{iri}
q^{(s+N-M-|V|)|k|}\frac{(q)_{M-s}(q)_{L}}{(q)_{t}(q)_{M-L}}
\frac{(q^{\la+2+M+N-2L})_{s}}{(q^{\la+2+M-2t})_{t}(q^{\la+2+M+N-2L})_{|V|}(q^{-\la+2t-M})_{|V|}}\\
\begin{split}&\times
\prod_{i\in U^c\cap V,\,j\in U\cap V^c}\frac{\tha(qz_i/z_j)}{\tha(z_i/z_j)}
\prod_{i\in U\cap V^c}\left(\frac{\tha(q^{\la+1+M+N-L-|U|}\om/z_i)}{\tha(q^M\om/z_i)}
\prod_{j=1}^m\frac{\tha(\eta_jq^{k_j}/z_i)}{\tha(\eta_j/z_i)}
\right)\\
&\times\prod_{i\in U^c\cap V}\left(\frac{\tha(q^{-\la-1+t}\om/z_i)}{\tha(q^M\om/z_i)}
\prod_{j=1}^n\frac{\tha(q^{l_j}\xi_jz_i)}{\tha(\xi_j z_i)}
\right)\prod_{\substack{1\leq i\leq m,\\1\leq j\leq n}}\frac{(\eta_i\xi_j)_{k_i+l_j}}{(\eta_i\xi_j)_{k_i}(\eta_i\xi_j)_{l_j}}\\
&\times\prod_{i=1}^m\frac{(q^{-\la-1+L+|U|-M-N}\eta_i/\om,q^{\la+1-t}\eta_i/\om)_{k_i}}{(q^{L-M}\eta_i/\om,q^{-M}\eta_i/\om)_{k_i}}\prod_{i=1}^n\frac{(q^{M-L}\om\xi_i)_{l_i}}{(q^{M}\om\xi_i)_{l_i}}
\\
&\times
V_m^n(q^{M-L}\om;q^{\la+1+M-L-t},q^{-\la-1-N+|U|},q^{M-L+l_1}\xi_1\om,\dots,q^{M-L+l_n}\xi_n\om;\\
&\qquad\om,q^{M+1}\om,q/\xi_1,\dots,q/\xi_n,\eta_1,\dots,\eta_m;q^{-k_1}/\eta_1,\dots,q^{-k_m}/\eta_m).
\end{split}
\end{multline}
This expression remains valid for $L>M$, if interpreted as 
$$\frac{1}{(q)_{M-L}}\sum_{0\leq y_i\leq k_i}\frac{(\cdots)}{(q^{1+M-L})_{|y|}}=\sum_{0\leq y_i\leq k_i,\,|y|\geq L-M}\frac{(\cdots)}{(q)_{M-L+|y|}}. $$

\subsection{Biorthogonal functions}
\label{bos}

We have seen that, under appropriate specialization of the  parameters,  $\mathcal R_{SU}^{TV}$ can be written in terms
of the multiple  elliptic hypergeometric series $V_n^m$.
Making similar specializations in Proposition \ref{yop},
one obtains new results for such series.
We will only consider the unitarity relation \eqref{gbo}, and show that it leads to a system of biorthogonal functions of type $V_n^n$, generalizing the  functions of type $V_1^1={}_{12}V_{11}$  studied by Spiridonov and Zhedanov \cite{sz}.

The  functions that we will describe depend, apart from  $p$ and $q$, 
on $2n+3$ parameters $a,b,c,x_1,\dots,x_n,N_1,\dots,N_n$, with $N_i$  non-negative integers.  Fixing all these parameters, let
\begin{multline*}f_{u_1,\dots,u_n}(y_1,\dots,y_n)
=V_n^n\left(\frac ab;aq^{|y|},cq^{|u|},\frac{aq^{1+N_1}x_1}b,\dots,\frac {aq^{1+N_n}x_n}b;\right.\\
\left.1,\frac{aq^{1-|N|}}{b^2c},\frac{q^{-y_1}}{x_1},\dots,\frac{q^{-y_n}}{x_n},\frac{q^{-u_1}}{x_1},\dots,\frac{q^{-u_n}}{x_n};x_1,\dots,x_n\right),
 \end{multline*}
\begin{multline}\label{g}g_{u_1,\dots,u_n}(y_1,\dots,y_n)
=V_n^n\left(\frac{q^{-|N|}b}a;\frac{q^{-|y|-|N|}}a,\frac{q^{-|u|-|N|}}c,\frac{q^{1-|N|}b}{ax_1},\dots,\frac{q^{1-|N|}b}{ax_n};\right.\\
\left.q,\frac{q^{|N|}b^2c}{a},q^{y_1}x_1,\dots,q^{y_n}x_n,q^{u_1}x_1,\dots,q^{u_n}x_n;\frac{q^{-N_1}}{x_1},\dots,\frac{q^{-N_n}}{x_n}\right),\end{multline}
where it is assumed that $u_i$  are integers with $0\leq u_i\leq N_i$.

Note that, by \eqref{rkt}, $g_{u_1,\dots,u_n}(y_1,\dots,y_n)$ can alternatively be expressed as an
elementary factor times
\begin{multline*}
V_n^n\left(\frac{q^{-|u|-|y|-|N|-1}}{bc};\frac{q^{-|y|-N}}a,\frac{q^{-|u|-N}}c,\frac{q^{-|u|-|y|-|N|+N_1}x_1}{bc},\dots,\frac{q^{-|u|-|y|-|N|+N_n}x_n}{bc};\right.\\
\left.\frac{1}q,\frac{aq^{-|N|}}{b^2c},\frac{q^{-y_1}}{x_1},\dots,\frac{q^{-y_n}}{x_n},\frac{q^{-u_1}}{x_1},\dots,\frac{q^{-u_n}}{x_n};x_1,\dots,x_n\right).
\end{multline*}
Although it may seem more natural to normalize $g$ to be the latter $V_n^n$-series (without any prefactor), we prefer the definition \eqref{g} 
since it exhibits a simpler dependence on the variables $y_i$.

\begin{theorem}\label{bp}
The functions defined above satisfy the biorthogonality relations
\begin{equation}\label{bor}\sum_{y_1,\dots,y_n=0}^{N_1,\dots,N_n}w(y)f_u(y)g_v(y)=\delta_{u,v}\Gamma_u, \end{equation}
where
\begin{equation*}\begin{split}
w(y)&=\frac{\Delta(xq^y)}{\Delta(x)}\,q^{|y|}\frac{\tha (aq^{2|y|})}{\tha(a)}\frac{(a)_{|y|}}{(aq^{1+|N|})_{|y|}}\prod_{i,j=1}^n\frac{(q^{-N_j}x_i/x_j)_{y_i}}{(qx_i/x_j)_{y_i}}\\
&\quad\times\prod_{i=1}^n\frac{\tha(bq^{|y|-y_i}/x_i)(b/x_i)_{|y|}(q^{|N|}ax_i/b)_{y_i}}{\tha(b/x_i)(bq^{1-N_i}/x_i)_{|y|}(aqx_i/b)_{y_i}},
\end{split}\end{equation*}
\begin{equation*}\begin {split}
\Gamma_u&=c^{|N|}q^{|N|^2-|u|}\frac{\Delta(x)}{\Delta(xq^u)}\prod_{i,j=1}^n\frac{(qx_i/x_j)_{u_i}}{(q^{-N_j}x_i/x_j)_{u_i}}\frac{(aq)_{|N|}(q^{-|u|-|N|}/c)_{ |N|-|u|}(q^{1-2|u|}/c)_{|u|}}{(aq/b,bcq^{|N|})_{|N|}}\\
&\quad\times\prod_{i=1}^n\left(\frac{\tha(q^{-|u|}ax_i/bc)}{\tha(q^{-|u|+u_i}ax_i/bc)}\frac{(x_i,aq^{1-|N|}x_i/b^2c)_{N_i}}{(x_i/b,q^{-|u|}ax_i/bc)_{N_i}}\frac{(q^{u_i+|N|}ax_i/b)_{N_i-u_i}}
{(q^{1+u_i}ax_i/b)_{N_i-u_i}}\right).
\end{split}\end{equation*}
\end{theorem}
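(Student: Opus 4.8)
The plan is to extract Theorem~\ref{bp} from the unitarity relation \eqref{gbo} of Proposition~\ref{yop} by making the specializations of \S\ref{bos} on \emph{both} factors. Concretely, I would fix $M$, $N$, the set $S$, and the geometric progression \eqref{ws} in the first argument, together with the specialization \eqref{zs} of the spectral parameters $z$, so that — by Corollary~\ref{ahc} and the rewriting \eqref{iri} — each generalized $6j$-symbol $\mathcal R_{SU}^{TV}(\la;w;z)$ appearing in \eqref{gbo} becomes an elementary prefactor times a balanced series of type $V_n^n$. The first step is therefore bookkeeping: choose $S,T,U,V,X,Y$ so that one factor in \eqref{gbo} produces the series $f_u(y)$ and the other produces $g_v(y)$, identify the summation index in \eqref{gbo} (a pair of subsets $X\subseteq[M]$, $Y\subseteq[N]$) with the vector $(y_1,\dots,y_n)$ via the block structure of \eqref{zs}, and check that the constraint $|X|+|Y|=|S|+|U|$ in \eqref{gbo} matches the balancing condition \eqref{pbc} after the reparametrization $a = a/b$, etc. The parameters $a,b,c,x_i,N_i$ of Theorem~\ref{bp} are then read off from $\la$, $\om$, the $\eta_i$ and $\xi_i$, the $k_i$ and $l_i$ of \S\ref{bos}.

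Next I would handle the Kronecker delta and the normalization. The right-hand side of \eqref{gbo} is $\delta_{ST}\delta_{UV}$; under our dictionary $S$, $T$ are fixed, so the only surviving case corresponds to the diagonal $u=v$, which accounts for the factor $\delta_{u,v}$ in \eqref{bor}. For the value $\Gamma_u$, note that at $u=v$ the two generalized $6j$-symbols in \eqref{gbo} are related by the shift symmetry in Corollary~\ref{cbsl} (or by the reflection $y_i\mapsto k_i - y_i$ used to pass from Corollary~\ref{ahc} to \eqref{iri}); combining the two elementary prefactors and using the elliptic Jackson summation \eqref{rjs} — or, more directly, the closed evaluation \eqref{rese} for $\mathcal R_{SU}^{T\emptyset}$ — should collapse the residual single sum and yield the explicit $\Gamma_u$. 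The weight $w(y)$ is simply the product of the two elementary prefactors from \eqref{iri} with all $y$-independent constants absorbed into $\Gamma_u$; verifying that this product equals the stated $w(y)$ is a finite theta-function manipulation using $(x)_k=(-1)^kq^{\binom k2}x^k(q^{1-k}/x)_k$ and $\Delta(xq^y)/\Delta(x)$ as in \eqref{evd}.

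Finally, the series $g_v$ is initially produced in the ``transformed'' form (the second $V_n^n$ in \S\ref{bos}, or the alternative expression displayed just after \eqref{g}); rewriting it as \eqref{g} is exactly an application of the Kajihara--Noumi--Rains transformation \eqref{rkt}, which is legitimate here since both sides terminate. I expect the main obstacle to be the prefactor arithmetic: tracking the powers of $q$, the many elliptic Pochhammer symbols coming from $A$, $B$, $C$, $D$, $G$ in Corollaries~\ref{mmc}, \ref{mma}, \ref{ahc} and equation~\eqref{iri}, and checking that after specialization and after the symmetry identifications everything telescopes into precisely $w(y)$ and $\Gamma_u$ with no leftover factors. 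The conceptual content — that \eqref{gbo} \emph{is} the biorthogonality — is immediate once the dictionary is set up; the work is entirely in confirming that the normalizations match, for which analytic continuation in the remaining free parameters (to drop the discreteness hypotheses, exactly as in the proof of \cite[Cor.~5.3]{rr}) is the last routine step.
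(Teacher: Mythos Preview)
Your overall strategy --- specialize \eqref{gbo} as in \S\ref{bos}, express each generalized $6j$-symbol via \eqref{iri}, and read off \eqref{bor} --- is exactly the route the paper takes for its algebraic proof. However, there are two genuine gaps and one misunderstanding.

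First, you skip the step that makes the sum over $X,Y$ in \eqref{gbo} collapse to a sum over the multi-index $y$. A priori, $X$ ranges over all subsets of $[M]$ and $Y$ over all subsets of $[N]$. What forces $X=[M-x+1,M]$ and $Y$ to respect the block structure of \eqref{zs} is the vanishing result Corollary~\ref{qvs}: after the specialization of $w$ and $z$, all other terms contribute zero. You gesture at ``the block structure'' but never invoke the mechanism that kills the extra terms.

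Second, Corollary~\ref{ahc} and \eqref{iri} apply only when the \emph{first} vector argument is a geometric progression. For the first factor $\mathcal R^{XY}_{SU}(\la;w,z)$ this is fine, but for the second factor $\mathcal R^{VT}_{YX}(\la;z,w)$ the roles of $w$ and $z$ are swapped, so \eqref{iri} does not apply directly. The paper fixes this by first using the symmetry of Corollary~\ref{cbsl} to rewrite $\mathcal R^{VT}_{YX}(\la;z,w)=\mathcal R^{T^cV^c}_{X^cY^c}(\la+M+N-2L;w^{-1},z^{-1})$, which puts the geometric progression back in the first slot. You mention Corollary~\ref{cbsl} only in connection with $\Gamma_u$, not here where it is actually needed.

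Finally, your account of $\Gamma_u$ is off. There is no ``residual single sum'' to collapse, and neither \eqref{rjs} nor \eqref{rese} enters. Once both factors are written as (prefactor)$\times V_n^n$, the product of the $y$-independent prefactors is $1/\Gamma_u$ (since the right-hand side of \eqref{gbo} is $\delta_{ST}\delta_{UV}=\delta_{u,v}$), and the $y$-dependent part of the prefactors is $w(y)$. The work is pure bookkeeping, as you say, but no summation identity is required at that stage.
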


When $n=1$, the biorthogonal functions in Theorem \ref{bp} reduce to the one-variable functions of  Spiridonov and Zhedanov \cite{sz}.
We will briefly discuss   two  features that distinguish the one- and multivariable case. First of all, in self-explaining notation,
repeated use of \eqref{ebt} yields that
$g_u(y;a,b,c,x,N)$ equals an elementary prefactor times
$f_u(y;a,b,c,x/q,N)$. Thus, the system is ``almost'' orthogonal in the sense that  $f_u$ and $g_u$ are related by a parameter shift.
In the multivariable case, no analogous relation seems to exist.

Another peculiar property of the one-variable case is that 
$f_u$ and $g_u$ can be viewed as  \emph{rational} functions. To see this, note that  
$$f_u(y)=\sum_{k=0}^uC_k\frac{(aq^y,q^{-y})_k}{(q^{1-y}/b,aq^{y+1}/b)_k}, $$
with $C_k$ independent of $y$. It  follows from classical facts on elliptic functions 
 that $f_u$ is rational in the variable
$$\frac{\tha(s q^y,sq^{-y}/a)}{\tha(tq^y,tq^{-y}/a)}, $$
 with $s$ and $t$ arbitrary generic parameters. (Geometrically, identifying antipodal points on a complex torus gives the Riemann sphere.)
In the multivariable case, there seems to be no analogue of this rational parametrization.

Before explaining how Theorem \ref{bp} can be obtained from our findings above, we indicate a  direct proof. We will use an explicit matrix inversion found in \cite{rsm}; see \cite{sc} for the case $p=0$.
Namely, for $k,l,m$  multi-indices with $l_i\leq k_i\leq m_i$, $i=1,\dots, n$, let
$$A_{mk}(a,b)=\frac{(abq^{2|k|})_{|m|-|k|}\prod_{i=1}^n(aq^{|k|-k_i}/x_i)_{|m|-|k|}}{\prod_{i=1}^n(bx_iq^{1+k_i+|k|})_{m_i-k_i}\prod_{i,j=1}^n(q^{1+k_i-k_j}x_i/x_j)_{m_i-k_i}},$$
\begin{multline*}B_{kl}(a,b)=(-1)^{|k|-|l|}q^{\binom{|k|-|l|}2}\frac{\tha(abq^{2|l|})}{\tha(abq^{2|k|})}\prod_{i=1}^n\frac{\tha(aq^{|l|-l_i}/x_i)}{\tha(aq^{|k|-k_i}/x_i)}\\
\times\frac{(abq^{1+|l|+|k|})_{|k|-|l|}\prod_{i=1}^n(aq^{1+|l|-k_i}/x_i)_{|k|-|l|}}{\prod_{i=1}^n(bx_iq^{l_i+|k|})_{k_i-l_i}\prod_{i,j=1}^n(q^{1+l_i-l_j}x_i/x_j)_{k_i-l_i}}. \end{multline*}
Then, $B=A^{-1}$, that is, the equivalent identities 
$$\sum_{k}A_{mk}(a,b)B_{kl}(a,b)=\delta_{lm}=\sum_{k}B_{mk}(a,b)A_{kl}(a,b) $$
hold. In fact, the first relation is equivalent to the case $aq=ce$ of  \eqref{rjs}, while the second one is the  case $aq=bc$ of \eqref{sjs}.

Let $C_s$ be an arbitrary sequence, labelled by multi-indices $s$ such that $0\leq s_i\leq N_i$, $i=1,\dots, n$. Then,
\begin{multline*}\sum_{y}\sum_sC_sA_{us}(a,b)A_{ys}(c,d)\sum_t
\frac{1}{C_{N-t}}\,B_{N-t,v}(a,b)B_{N-t,y}(c,d)\\
=\sum_{st}\frac{C_s}{C_{N-t}}\,A_{us}(a,b)B_{N-t,v}(a,b)\delta_{s,N-t}
=\sum_{s}A_{us}(a,b)B_{sv}(a,b)=\delta_{uv}. 
\end{multline*}
A straight-forward computation reveals that
 Theorem \ref{bp} corresponds to the special case when
$(a,b,c,d)\mapsto(cb/a,a/b,b,a/b)$
and 
\begin{multline*}C_s=\frac{q^{2\sum_{i<j}s_is_j}}{±\prod_{i=1}^n x_i^{2s_i}}\left(\frac{b^2c}{qa}\right)^{|s|}\frac{\Delta(x)}{\Delta(xq^s)}\frac{(a,c)_{2|s|}}{(aq/b,q^{|N|}bc)_{|s|}}\\
\times\prod_{i=1}^n
\frac{(ax_i/b,aq^{1+N_i}x_i/b)_{|s|}(b/x_i,bc/ax_i)_{|s|-s_i}(x_i,aq^{1-|N|}x_i/b^2c)_{s_i}}{(ax_i/b,aqx_i/b)_{|s|+s_i} \prod_{j=1}^n(qx_i/x_j,q^{-N_j}x_i/x_j)_{s_i}}.
\end{multline*}
Clearly,  the same proof can be used to obtain more general, or different, biorthogonal systems. 

Finally, we explain how  Theorem \ref{bp} can be obtained from \eqref{gbo}.
Since the details of the computations are of little interest, we will be quite 
brief. First, we specialize $w$, $S$ and $T$ as in \eqref{ws}. We also assume that
\begin{align*}z&=(\zeta_1,\dots,\zeta_1q^{N_1-1},\dots,\zeta_n,\dots,\zeta_nq^{N_n-1}), \\
z_U&=(\zeta_1q^{N_1-u_1},\dots,\zeta_1q^{N_1-1},\dots,\zeta_nq^{N_n-u_n},\dots,\zeta_nq^{N_n-1}), \\
z_V&=(\zeta_1q^{N_1-v_1},\dots,\zeta_1q^{N_1-1},\dots,\zeta_nq^{N_n-v_n},\dots,\zeta_nq^{N_n-1}). \end{align*} 
Consider the symbol $\mathcal R^{XY}_{SU}(\la;w;z)$ in \eqref{gbo}. By Corollary \ref{qvs}, it vanishes identically unless 
$X=[M-x+1,M]$ and
$$z_Y=(\zeta_1q^{N_1-y_1},\dots,\zeta_1q^{N_1-1},\dots,\zeta_nq^{N_n-y_n},\dots,\zeta_nq^{N_n-1}). $$
This means that $\mathcal R^{XY}_{SU}$ can be expressed as in \eqref{iri}, where $M$, $N$, $L$, $s$, $\lambda$ and $\omega$ are unchanged, while the remaining parameters are replaced by $t\mapsto L-|y|$, $|U|\mapsto L-s$, $|V|\mapsto |y|$, $k_j\mapsto \min(u_j,y_j)$, $l_j\mapsto N_j-\max(u_j,y_j)$, $\eta_j\mapsto q^{N_j-\min(u_j,y_j)}\zeta_j$, $\xi_j\mapsto q^{1-N_j+\max(u_j,y_j)}\zeta_j^{-1}$.
As for the other generalized $6j$-symbol in  \eqref{gbo}, we first apply
Corollary \ref{cbsl} to write
$$\mathcal R^{VT}_{YX}(\la;z;w)=R^{T^cV^c}_{X^cY^c}(\la+M+N-2L;w^{-1};z^{-1}). $$
We can then express it as in \eqref{iri}, where $M$ and $N$ are unchanged, while $L\mapsto M+N-L$,
$s\mapsto M+|y|-L$, $t\mapsto M-t$, $|U|\mapsto N-|y|$, $|V|\mapsto N+t-L$,
$\la\mapsto\la+M+N-2L$, $\om\mapsto q^{1-M}\om$, $k_j\mapsto N_j-\max(v_j,y_j)$, $l_j\mapsto\min(v_j,y_j)$, $\eta_j\mapsto q^{1+\max(v_j,y_j)-N_j}$,
$\xi_j\mapsto q^{N_j-\min(v_j,y_j)}\zeta_j$. 
Inserting these explicit formulas in 
\eqref{gbo}, it reduces to \eqref{bor}, where 
$a=q^{\la+1+M-2L}$, $b=q^{\la+1-L}$, $c=q^{-\la-1-N}$ and $x_j=q^{-N_j}\om/\zeta_j$.
Since $M$ and $L$  are non-negative integers with $L\leq M+N$, we only obtain \eqref{bor} under additional discreteness conditions on the parameters.
However,  these conditions can be removed by analytic continuation, similarly as in the proof of \cite[Cor.\ 5.3]{rr}. In that sense, we have obtained an algebraic proof of Theorem \ref{bp}.

\section*{Appendix. Algebra symmetries.}
 
 \renewcommand{\thesection}{\Alph{section}}
 \setcounter{equation}{0}
\setcounter{theo+}{0}
\setcounter{section}{1}

The equations  \eqref{sun}, \eqref{phun}, \eqref{psu} and \eqref{ic}  reflect different forms of unitarity for symmetries of cobraided $\gh$-bialgebroids. 
We will describe how to encompass these in a general framework, which in particular simplifies the proof of Proposition \ref{ip}. It turns out that there are four types of unitarity, corresponding to a choice of direct or opposite product and coproduct. Moreover, one can incorporate twists by affine automorphisms of $\gha$ (e.g.\ the map $\la\mapsto -\la-2$ in Proposition \ref{ip}).

For  $A$  an  $\gh$-bialgebroid,  two opposite 
 $\gh$-bialgebroid structures $A^{\op}$ and $A^{\cop}$ 
on the complex vector space underlying $A$ were
introduced in \cite{kn}. We will  write  $A^{\coop}=(A^{\op})^{\cop}=(A^{\cop})^{\op}$. The bigradings on the opposite $\gh$-bialgebroids are given by 
$$A^{\op}_{\al\be}=A_{-\al,-\be},\qquad 
A^{\cop}_{\al\be}=A_{\be\al},\qquad 
A^{\coop}_{\al\be}=A_{-\be,-\al}.
 $$
The moment maps are given by
$$\mu_l^{A^{\op}}(f)x=x\mu_l^{A}(f),\qquad
\mu_l^{A^{\cop}}(f)x=\mu_r^{A}(f)x,\qquad
\mu_l^{A^{\coop}}(f)x=x\mu_r^{A}(f),
 $$
and the same equations with $l$ and $r$ interchanged.
The product on $A^{\cop} $ is the same as that on $A$, while $A^{\op}$ and
 $A^{\coop}$ are equipped with the opposite product $m^A\circ\sigma$.  
The coproduct on $A^{\op} $ is the same as that on $A$, while $A^{\cop}$ and
 $A^{\coop}$ have the opposite product $\sigma\circ \Delta^A$.  
The counit on  $A^{\cop} $ is the same as that on $A$, while $A^{\op}$ and
 $A^{\coop}$ have  counit $S^{\D}\circ \ep^A$.
Finally, if  $A$ is an $\gh$-Hopf algebroid with invertible antipode, then so are the opposite structures, with  antipode 
$S^{A^{\cop}}=S^A$, $S^{A^{\op}}=S^{A^{\coop}}=(S^A)^{-1}$.

Let $\chi^{\gha}$ be a linear automorphism of $\gha$ and $\chi^{\mg}$ a field automorphism of $\mg$ satisfying
\begin{equation}\label{cc}\chi^{\mg}\circ T_\al=T_{\chi^{\gha}(\al)}\circ\chi^{\mg}. \end{equation}
For instance, given an  invertible affine map  $\la\mapsto A\la+\la_0$ on $\gha$ one may define
\begin{equation}\label{caf}\chi^{\gha}(\la)= A^{-1}\la,\qquad \chi^{\mg}(f)(\la)=f(A\la+\la_0). \end{equation}
It follows from \eqref{cc} that
$$\chi^{\D}(fT_\al)=\chi^{\mg}(f)T_{\chi^{\gha}(\al)} $$
defines an algebra automorphism $\chi^{\D}$ of $\D$. From now on, we suppress the upper indices, denoting all three automorphisms by $\chi$. We will also write $$\chi^{\op}=S^{\D}\circ\chi^{\D}=\chi^{\D}\circ S^{\D}. $$

Next, we recall some rudiments of the duality theory for $\gh$-bialgebroids \cite[\S 3.1]{r}. 
It will be convenient to write 
$\{x,\xi\}=\xi(x)$,
where $\xi$ is a $\C$-linear map from an  $\gh$-bialgebroid  $A$ to $\D$. Let $A'$  be the space of such maps $\xi$ such that
$$\{\mu_l(f)x,\xi\}=f\circ\{x,\xi\},\qquad \{x\mu_r(f),\xi\}=\{x,\xi\}\circ f.$$
It is an associative algebra with product
$$\{x,\xi\eta\}=\sum_{(x)}\{x',\xi\}T_{\om_{12}(x)}\{x'',\eta\} $$
and unit element $\ep$.

\begin{lemma}\label{dl}
Fix $\chi$ as above, and let $A$ and $B$ be two $\gh$-bialgebroids. 
Let $\phi:A\rightarrow B$ be a $\mathbb C$-linear map such that
$\phi(A_{\al\be})\subseteq B_{\chi^{-1}(\al),\chi^{-1}(\beta)}$,
$$\phi(\mu_l^A(f)x)=\mu_l^B(\chi^{-1}(f))\phi(x),\qquad
\phi(\mu_r^A(f)x)=\mu_r^B(\chi^{-1}(f))\phi(x).
 $$
Then,
$\{x,\phi'(\xi)\}=\chi(\{\phi(x),\xi\}) $
defines a map $\phi':\,B'\rightarrow A'$. Moreover, if 
$$(\phi\ot\phi)\circ\Delta^A=\Delta^B\circ\phi,\qquad
\chi\circ\ep^B\circ\phi=\ep^A,
 $$
then $\phi'$ is an algebra homomorphism.
\end{lemma}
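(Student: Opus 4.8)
The plan is to check, step by step, that the formula $\{x,\phi'(\xi)\}=\chi(\{\phi(x),\xi\})$ indeed produces an element of $A'$, and then that $\phi'$ respects products and units. First I would verify that $\phi'(\xi)$ lies in $A'$: we need $\{\mu_l^A(f)x,\phi'(\xi)\}=f\circ\{x,\phi'(\xi)\}$ and $\{x\mu_r^A(f),\phi'(\xi)\}=\{x,\phi'(\xi)\}\circ f$. Unwinding the definition, $\{\mu_l^A(f)x,\phi'(\xi)\}=\chi(\{\phi(\mu_l^A(f)x),\xi\})=\chi(\{\mu_l^B(\chi^{-1}(f))\phi(x),\xi\})$, where I used the hypothesis on $\phi$ and moment maps. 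Since $\xi\in B'$, this equals $\chi\big(\chi^{-1}(f)\circ\{\phi(x),\xi\}\big)$, and because $\chi$ is an algebra automorphism of $D_{\gh}$ with $\chi\circ T_\al=T_{\chi(\al)}\circ\chi$, conjugating by $\chi$ turns left multiplication by $\chi^{-1}(f)$ back into left multiplication by $f$; this gives $f\circ\chi(\{\phi(x),\xi\})=f\circ\{x,\phi'(\xi)\}$. The right-moment-map condition is handled identically. One also has to note that $\phi'(\xi)$, being a composite of $\C$-linear maps, is $\C$-linear, and that the grading condition $\phi(A_{\al\be})\subseteq B_{\chi^{-1}(\al),\chi^{-1}(\beta)}$ together with $\{B_{\al\be},\xi\}\subseteq(D_{\gh})_{\al\be}$ and $\chi((D_{\gh})_{\chi^{-1}(\al),\chi^{-1}(\beta)})\subseteq(D_{\gh})_{\al\be}$ guarantees $\{A_{\al\be},\phi'(\xi)\}\subseteq(D_{\gh})_{\al\be}$, so the pairing output has the correct shape.

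Next I would check that $\phi'$ is an algebra homomorphism under the stronger hypotheses $(\phi\ot\phi)\circ\Delta^A=\Delta^B\circ\phi$ and $\chi\circ\ep^B\circ\phi=\ep^A$. For the unit, $\{x,\phi'(\ep^B)\}=\chi(\{\phi(x),\ep^B\})=\chi(\ep^B(\phi(x)))=\ep^A(x)$ by the counit compatibility, so $\phi'(\ep^B)=\ep^A$, the unit of $A'$. For multiplicativity, take $\xi,\eta\in B'$ and expand, using Sweedler notation for $\Delta^A$,
\begin{equation*}
\{x,\phi'(\xi)\phi'(\eta)\}=\sum_{(x)}\{x',\phi'(\xi)\}\,T_{\om_{12}(x)}\,\{x'',\phi'(\eta)\}
=\sum_{(x)}\chi(\{\phi(x'),\xi\})\,T_{\om_{12}(x)}\,\chi(\{\phi(x''),\eta\}).
\end{equation*}
The coproduct hypothesis says $\sum_{(x)}\phi(x')\ot\phi(x'')=\sum_{(\phi(x))}\phi(x)'\ot\phi(x)''$, so after replacing $\phi(x'),\phi(x'')$ by the Sweedler components of $\phi(x)$ in $B$, the right-hand side becomes $\sum_{(\phi(x))}\chi(\{\phi(x)',\xi\})\,T_{\om_{12}(x)}\,\chi(\{\phi(x)'',\eta\})$. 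Now I must reconcile the shift $T_{\om_{12}(x)}$ sitting in the middle: by the grading hypothesis, if $x'\in A_{\al,\om_{12}(x)}$ then $\phi(x)'\in B_{\chi^{-1}(\al),\chi^{-1}(\om_{12}(x))}$, so $\om_{12}(\phi(x))=\chi^{-1}(\om_{12}(x))$, i.e.\ $\om_{12}(x)=\chi(\om_{12}(\phi(x)))$. Using $T_{\chi(\beta)}=\chi\circ T_\beta\circ\chi^{-1}$ and factoring $\chi$ out of the whole expression, the sum equals
\begin{equation*}
\chi\!\left(\sum_{(\phi(x))}\{\phi(x)',\xi\}\,T_{\om_{12}(\phi(x))}\,\{\phi(x)'',\eta\}\right)
=\chi(\{\phi(x),\xi\eta\})=\{x,\phi'(\xi\eta)\},
\end{equation*}
where the middle equality is the definition of the product in $B'$. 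Hence $\phi'(\xi\eta)=\phi'(\xi)\phi'(\eta)$.

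The step I expect to require the most care is the bookkeeping in the middle of the last display: correctly tracking how $\chi$ intertwines the shift operator $T_{\om_{12}(x)}$ appearing between the two pairings, and confirming that the grading hypothesis on $\phi$ forces $\om_{12}(x)=\chi(\om_{12}(\phi(x)))$ so that the conjugation identity $\chi\circ T_{\om_{12}(\phi(x))}\circ\chi^{-1}=T_{\om_{12}(x)}$ is exactly what is needed. Everything else is a direct substitution using the compatibility hypotheses on $\phi$ with moment maps, coproduct, and counit, combined with the algebra-automorphism property of $\chi$ on $D_{\gh}$ from \eqref{cc}. I would also remark, as the paper does, that the same scheme with $A^{\op}$, $A^{\cop}$, $A^{\coop}$ in place of $B$ (and $\chi^{\op}$ in place of $\chi$ where a counit-twist by $S^{D_{\gh}}$ enters) covers the four flavours of unitarity \eqref{sun}, \eqref{phun}, \eqref{psu}, \eqref{ic}, which is the intended application to Proposition \ref{ip}.
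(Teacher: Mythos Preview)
Your proof is correct and is precisely the direct verification the paper has in mind; the paper itself omits the argument entirely, saying only ``The proof is straight-forward.'' One small point you glossed over: for the right moment map you need $\phi(x\mu_r^A(f))=\phi(x)\mu_r^B(\chi^{-1}(f))$, which is not literally the stated hypothesis $\phi(\mu_r^A(f)x)=\mu_r^B(\chi^{-1}(f))\phi(x)$, but it follows at once from that hypothesis together with the grading condition on $\phi$ and the commutation rule $\mu_r(f)a=a\mu_r(T_\be f)$ (and its compatibility with $\chi$ via \eqref{cc}).
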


The proof  is straight-forward.

\begin{lemma}
Let $A$ be an $\gh$-bialgebroid equipped with a cobraiding. For  
$s\in\{\emptyset,\op,\cop,\coop\}$, there is an algebra homomorphism $i^s:\,A^s\rightarrow (A^s)'$ given by
\begin{align*}\{y,i(x)\}&=\lx x,y\rx, \\
\{y,i^{\op}(x)\}&=S^{\D}(\lx y,x\rx), \\
\{y,i^{\cop}(x)\}&=\lx y,x\rx, \\
\{y,i^{\coop}(x)\}&=S^{\D}(\lx x,y\rx). \end{align*}
\end{lemma}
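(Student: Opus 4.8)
The plan is to verify, for each of the four choices $s\in\{\emptyset,\op,\cop,\coop\}$, that the prescribed formula defines an element of $(A^s)'$ and that the resulting map $i^s$ is multiplicative, by matching the cobraiding axioms \eqref{mrp}, \eqref{ppp} against the defining relations of $(A^s)'$ and the opposite $\gh$-bialgebroid structures recalled above. The key observation is that all four cases are instances of a single computation, differing only by bookkeeping of $S^{\D}$-twists and of the substitution of opposite product/coproduct; once the case $s=\emptyset$ is checked, the remaining three follow by feeding the opposite structures into the same argument.

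First I would treat $s=\emptyset$. That $i(x)$ lies in $A'$ is exactly the content of \eqref{mrpb}: the two conditions defining $A'$ are $\{\mu_l(f)y,\xi\}=f\circ\{y,\xi\}$ and $\{y\mu_r(f),\xi\}=\{y,\xi\}\circ f$, and \eqref{mrpb} (with the roles of the two arguments as in $\lx x,\mu_l(f)y\rx$ and $\lx x,y\mu_r(f)\rx$) gives precisely $\lx x,\mu_l(f)y\rx=f\circ\lx x,y\rx$ and $\lx x,y\mu_r(f)\rx=\lx x,y\rx\circ f$. For multiplicativity, one must show $i(x_1x_2)=i(x_1)i(x_2)$ in $A'$; pairing against $y$, the right-hand side is $\sum_{(y)}\{y',i(x_1)\}T_{\om_{12}(y)}\{y'',i(x_2)\}=\sum_{(y)}\lx x_1,y'\rx T_{\om_{12}(y)}\lx x_2,y''\rx$, which is exactly $\lx x_1x_2,y\rx$ by \eqref{pppa}. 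One also checks $i(1)=\ep$, which is \eqref{cbe}. This disposes of $s=\emptyset$.

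Next I would run the same three checks for $s=\cop$. Here $A^{\cop}$ has the same product as $A$ but opposite coproduct $\sigma\circ\Delta$, moment maps $\mu_l^{A^{\cop}}=\mu_r^A$, $\mu_r^{A^{\cop}}=\mu_l^A$, and the bigrading $A^{\cop}_{\al\be}=A_{\be\al}$; the pairing is $\{y,i^{\cop}(x)\}=\lx y,x\rx$, i.e.\ with the two arguments of $\lx\cdot,\cdot\rx$ swapped relative to $i$. The $(A^{\cop})'$-membership conditions become $\lx\mu_r^A(f)y,x\rx=f\circ\lx y,x\rx$ and $\lx y\mu_l^A(f),x\rx=\lx y,x\rx\circ f$, which are \eqref{mrpa} and \eqref{mrpb} read in the first slot. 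Multiplicativity in $(A^{\cop})'$ uses the $(A^{\cop})'$-product, which is built from the $A^{\cop}$-coproduct $\sigma\circ\Delta$; expanding $\{y,i^{\cop}(x_1)i^{\cop}(x_2)\}$ and using \eqref{pppb} (the "$\lx a,bc\rx$" rule, which is the one governing products in the \emph{second} argument) gives $\lx y,x_1x_2\rx=\{y,i^{\cop}(x_1x_2)\}$; one must be careful that the grading shift $T_{\om_{12}}$ produced by $(A^{\cop})'$ matches the $T_{\om_{12}}$ appearing in \eqref{pppb}, which it does because $\om_{12}$ for $\sigma\circ\Delta$ equals $\om_{12}$ for $\Delta$ read from the right. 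Finally $i^{\cop}(1)=\ep^{A^{\cop}}=\ep$ is again \eqref{cbe}.

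For $s=\op$ and $s=\coop$ the pairings carry an extra $S^{\D}$, and $A^{\op},A^{\coop}$ have opposite product $m\circ\sigma$ and counit $S^{\D}\circ\ep$. The main subtlety — and the step I expect to be the real obstacle — is tracking how $S^{\D}$ interacts with the products in $(A^s)'$: since $S^{\D}$ is an \emph{anti}automorphism of $\D$ with $S^{\D}(T_\al)=T_{-\al}$, applying it to the defining product of $(A^s)'$ reverses the order of the two factors and flips the sign of the interposed shift, and this reversal is precisely compensated by passing from $\Delta$ to $\sigma\circ\Delta$ (for $\coop$) and/or from $\lx\cdot,\cdot\rx$ to $\lx\cdot,\cdot\rx$ with arguments exchanged. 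Concretely, for $s=\op$ one pairs $\{y,i^{\op}(x_1x_2)\}=S^{\D}(\lx y,x_1x_2\rx)$, expands the inner pairing by \eqref{pppb}, and then applies $S^{\D}$, which reverses the two resulting factors and turns $T_{\om_{12}(y)}$ into $T_{-\om_{12}(y)}$; one checks this equals $\{y,i^{\op}(x_1)i^{\op}(x_2)\}$ computed with the $(A^{\op})'$-product, where the grading data $\om_{12}$ is that of $A^{\op}$, namely the negative of that of $A$. The membership of $i^{\op}(x)$ in $(A^{\op})'$ follows from \eqref{mrp} together with $S^{\D}\circ f = f\circ S^{\D}$ (valid since $S^{\D}$ fixes $\mg\subseteq\D$), and $i^{\op}(1)=\ep^{A^{\op}}=S^{\D}\circ\ep$ follows from \eqref{cbe}. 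The case $s=\coop$ combines the $\cop$ and $\op$ bookkeeping and requires no new idea. Throughout, I would only need the cobraiding axioms \eqref{mrp}, \eqref{ppp}, \eqref{cbe} and the elementary facts about $S^{\D}$ and the opposite structures recalled just above; no use of \eqref{cba} or unitarity is needed for this lemma.
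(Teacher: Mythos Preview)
Your proposal is correct and is precisely the ``straight-forward'' verification the paper alludes to without writing out: check membership in $(A^s)'$ using \eqref{mrp}, multiplicativity using \eqref{ppp}, and unitality using \eqref{cbe}, with the opposite structures and $S^{\D}$ supplying the requisite sign/order flips in the $\op$ and $\coop$ cases. One small slip: for $s=\emptyset$ you cite only \eqref{mrpb}, but the condition $\lx x,\mu_l(f)y\rx=f\circ\lx x,y\rx$ actually comes from \eqref{mrpa}; you need both halves of \eqref{mrp} (and similarly in the other cases).
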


Again, the proof is straight-forward.

\begin{definition}\label{udef}
Let $A$ be an $\gh$-bialgebroid equipped with a cobraiding. Fix $\chi$ as above, and let $s\in\{\emptyset,\op,\cop,\coop\}$. Then, a map $\phi:\,A\rightarrow A$ is called  $(\chi,s)$-\emph{unitary} if, when viewed as 
a map $A\rightarrow A^s$, it is
 an algebra homomorphism, satisfies all conditions of \emph{Lemma \ref{dl}}, and
\begin{equation}\label{uc}\phi'\circ i^s\circ\phi=i.\end{equation}
\end{definition}

\begin{subequations}\label{uce}
More explicitly,  $\phi$ is 
$(\chi,\id)$-unitary if
it is an algebra homomorphism and
$$\phi(\mu_l(f))=\mu_l(\chi^{-1}(f)),\qquad \phi(\mu_r(f))=\mu_r(\chi^{-1}(f)),  $$
$$\phi(A_{\al\be})\subseteq A_{\chi^{-1}(\al),\chi^{-1}(\be)}, $$
$$(\phi\ot \phi)\circ\De=\De\circ\phi,\qquad \chi\circ\ep\circ\phi=\ep, $$
\begin{equation}\lx x,y\rx=\chi(\lx\phi(x),\phi(y)\rx); \end{equation}
it is $(\chi,\op)$-unitary if
it is an algebra antihomomorphism and
$$\phi(\mu_l(f))=\mu_l(\chi^{-1}(f)),\qquad \phi(\mu_r(f))=\mu_r(\chi^{-1}(f)),  $$
$$\phi(A_{\al\be})\subseteq A_{-\chi^{-1}(\al),-\chi^{-1}(\be)}, $$
$$(\phi\ot \phi)\circ\De=\De\circ\phi,\qquad  \chi^{\op}\circ\ep\circ\phi=\ep, $$
\begin{equation}\lx x,y\rx=\chi^{\op}(\lx\phi(y),\phi(x)\rx); \end{equation}
it is  $(\chi,\cop)$-unitary if
it is an algebra homomorphism and
$$\phi(\mu_l(f))=\mu_r(\chi^{-1}(f)),\qquad \phi(\mu_r(f))=\mu_l(\chi^{-1}(f)),  $$
$$\phi(A_{\al\be})\subseteq A_{\chi^{-1}(\be),\chi^{-1}(\al)}, $$
$$\sigma\circ(\phi\ot \phi)\circ\De=\De\circ\phi,\qquad  \chi\circ\ep\circ\phi=\ep, $$
\begin{equation}\lx x,y\rx=\chi(\lx\phi(y),\phi(x)\rx); \end{equation}
and, finally,  $\phi$ is $(\chi,\coop)$-unitary if
it is an algebra antihomomorphism and
$$\phi(\mu_l(f))=\mu_r(\chi^{-1}(f)),\qquad \phi(\mu_r(f))=\mu_l(\chi^{-1}(f)),  $$
$$\phi(A_{\al\be})\subseteq A_{-\chi^{-1}(\be),-\chi^{-1}(\al)}, $$
$$\sigma\circ(\phi\ot \phi)\circ\De=\De\circ\phi,\qquad  \chi^{\op}\circ\ep\circ\phi=\ep, $$
\begin{equation}\lx x,y\rx=\chi^{\op}(\lx\phi(x),\phi(y)\rx). \end{equation}
\end{subequations}

If $A$ is equipped with an invertible antipode, then it is natural to require
\begin{equation}\label{hua}\phi\circ S=S^{A^s}\circ\phi, \end{equation}
that is, $\phi\circ S=S\circ\phi$ for $s\in\{\emptyset,\cop\}$
and $\phi\circ S=S^{-1}\circ\phi$ for $s\in\{\op,\coop\}$.

Since  \eqref{uc} is an equality between compositions of algebra homomorphisms, it is natural in the sense that if the two sides agree on two elements $x$ and $y$, they agree on $xy$. We also need a dual version of this naturality. To this end, we observe that if $\phi$ satisfies all conditions of Lemma \ref{dl}, then this is also true when  $\phi$ is viewed as a map from 
$A^{\cop}$ to $B^{\cop}$. It is then easy to check that \eqref{uc} is equivalent to
$$\phi'\circ (i^{s})^{\cop}\circ\phi=i^{\cop},$$
when  
$\phi$ is viewed as a map  $A^{\cop}\rightarrow (A^{s})^{\cop}$.
Together, the naturality properties of these two versions of \eqref{uc}
mean that, assuming the other conditions of $\phi$, if one of the equalities \eqref{uce} hold with $(x,y)$ replaced by $(x_1,y)$ and $(x_2,y)$ it holds also for $(x_1x_2,y)$, and if it holds for  $(x,y_1)$ and $(x,y_2)$, it holds also for $(x,y_1y_2)$. Thus, it is enough to check unitarity on a set of generators. 
Having made this observation, the proof of  Proposition \ref{ip}
is reduced to straight-forward verification.

If we let
 $\chi_0=\psi^{\D}$ denote the automorphism constructed as in \eqref{caf} from the affine map $\lambda\mapsto -\la-2$ of $\gha=\mathbb C$, then we 
can give examples of  eight types of  $(\chi,s)$-unitary maps according to the following table (in each case, the additional axiom \eqref{hua} is valid):
\begin{figure}[h]
\begin{center}
\begin{tabular}{|c|c|c|c|c|c|c|c|c|}\hline
&$\id$&$\ast$&$\ast\circ S$&$S$&$\psi$&$\phi$&$\phi\circ S$&$\psi\circ S$\\
\hline
$\chi$&$\id$&$\id$&$\id$&$\id$&$\chi_0$&$\chi_0$&$\chi_0$&$\chi_0$\\
\hline
$s$&$\id$&$\op$&$\cop$&$\coop$&$\id$&$\op$&$\cop$&$\coop$\\
\hline
\end{tabular}\ .
\end{center}
\end{figure}


\begin{thebibliography}{99}

\bibitem[AJJ]{ajj} S.\ J.\ Ali\v sauskas, A.-A.\ A.\ Jucys and A.\ P.\  Jucys, 
\emph{On the symmetric tensor operators of the unitary groups},
J. Math.\ Phys.\ 13 (1972), 1329--1333. 

\bibitem[B]{b}  R.\ J.\ Baxter,
\emph{Eight-vertex model in lattice statistics and one-dimensional
anisotropic Heisenberg chain II. Equivalence to a generalized ice-type
model}, Ann.\ Phys.\ 76 (1973), 25--47.

\bibitem[CCB]{ccb} E.\ Chac\'on, M.\ Ciftan and L.\ C.\ Biedenharn, 
\emph{On the evaluation of the multiplicity-free Wigner coefficients of $U(n)$},
J.\ Math.\ Phys.\ 13 (1972), 577--590. 

\bibitem[D]{d} E.\ Date, M.\ Jimbo, A.\ Kuniba, T.\ Miwa and M.\
Okado, \emph{Exactly solvable SOS models II. Proof of the
star-triangle  relation and combinatorial identities}, in 
M.\ Jimbo et al.\ (eds.), Conformal
Field  Theory and Solvable
  Lattice Models,  17--122,  Academic
Press,  Boston,  1988. 

\bibitem[DS1]{ds} J.\ F.\ van Diejen and V.\ P.\  Spiridonov, \emph{Modular hypergeometric residue sums of elliptic Selberg integrals},   Lett.\ Math.\ Phys.\  58  (2001),   223--238. 

\bibitem[DS2]{ds2}  J.\ F.\ van Diejen and V.\ P.\  Spiridonov, \emph{Elliptic Selberg integrals},  Internat.\ Math.\ Res.\ Notices  2001,   1083--1110. 

\bibitem[DO]{do} F.\ A.\ Dolan and H.\ Osborn, \emph{Applications of the superconformal index for protected operators and $q$-hypergeometric identities to $N=1$ dual theories},  Nucl.\ Phys.\ B 818 (2009), 137--178.

\bibitem[ES]{es} P.\ Etingof and O.\ Schiffmann, \emph{Lectures on the dynamical Yang--Baxter equations}, in  A.\ Pressley (ed.), Quantum Groups and Lie Theory,  89--129,  Cambridge University Press,  2001.


\bibitem[EV1]{ev} P.\ Etingof and A.\ Varchenko, {\it Solutions of the quantum 
dynamical Yang--Baxter equation and dynamical quantum groups}, 
Comm.\ Math.\ Phys.\ 196 (1998), 591--640.  

\bibitem[F]{f}  G.\ Felder, \emph{Conformal field theory and integrable systems associated to elliptic curves}, in  Proceedings of the International Congress of Mathematicians (Z\"urich, 1994),  1247--1255, Birkh\"auser, Basel, 1995.

\bibitem[FTV1]{ftv1} G.\ Felder, V.\ Tarasov and A.\ Varchenko, \emph{Solutions of the elliptic $q$KZB equations and Bethe ansatz.\ I.}, in  Topics in Singularity Theory,  45--75, Amer.\ Math.\ Soc., Providence,  1997.

\bibitem[FTV2]{ftv2} G.\ Felder, V.\ Tarasov and A.\ Varchenko, \emph{
Monodromy of solutions of the elliptic quantum Knizhnik--Zamolodchikov--Bernard difference equations},  Internat.\ J.\ Math.\  10  (1999),  943--975.
 
\bibitem[FV1]{fv} G.\ Felder and A.\ Varchenko, {\it On representations of the
elliptic quantum group $E_{\tau,\eta}(\mathrm{sl}_2)$}, 
Comm.\ Math.\ Phys.\ 181
(1996), 741--761.

\bibitem[FV2]{fv2} G.\ Felder and A.\ Varchenko, {\it
Resonance relations for solutions of the elliptic QKZB equations, fusion rules, and eigenvectors of transfer matrices of restricted interaction-round-a-face models},  Commun.\ Contemp.\ Math.\  1  (1999),   335--403.

\bibitem[FT]{ft}  I.~B.~Frenkel and V.~G.~Turaev,
 \emph{Elliptic solutions of the Yang--Baxter equation and modular
hypergeometric functions}, in  V.~I.~Arnold et al.\ (eds.),
The Arnold--Gelfand Mathematical Seminars, 
171--204, Birkh\"auser, Boston, 1997.


\bibitem[GR]{gr}  G.\ Gasper and M.\ Rahman,  Basic Hypergeometric Series,
$2^{\text{nd}}$ ed., Cambridge University Press, 2004.




\bibitem[H]{h} J.\ T.\ Hartwig, \emph{The elliptic $GL(n)$ dynamical quantum group as an $\gh$-Hopf algebroid}, Int.\ J.\  Math.\ Math.\ Sci.\  2009 (2009), 545892.

\bibitem[HBL]{hbl} W.\ J.\ Holman, L.\ C.\ Biedenharn and J.\ D.\ Louck, 
\emph{On hypergeometric series well-poised in ${\rm SU}(n)$},
SIAM J.\ Math.\ Anal.\ 7 (1976),  529--541. 

\bibitem[J]{jkmo} M.\ Jimbo, A.\ Kuniba, T.\ Miwa and M.\ Okado, \emph{The $A_n^{(1)}$ face models}, Comm.\ Math.\ Phys.\ 119 (1988), 543--565.

\bibitem[JMO]{jmo} M.\ Jimbo, T.\ Miwa and M.\ Okado, \emph{Solvable lattice models related to the vector representation of classical simple Lie algebras}, 
Comm.\ Math.\ Phys.\ 116 (1988), 505--525.

\bibitem[KN]{kan} Y.\ Kajihara and M.\ Noumi, 
\emph{Multiple elliptic hypergeometric series. An approach from the Cauchy determinant}, Indag.\ Math.\ 14 (2003), 395--421. 

\bibitem[KoN]{kn} E.\ Koelink and Y.\ van Norden, \emph{Pairings and actions for dynamical quantum groups},  Adv.\ Math.\ 208 (2007), 1--39.

\bibitem[KNR]{knr} E.\ Koelink, Y.\ van Norden and H.\ Rosengren,
\emph{Elliptic $\mathrm{U}(2)$ quantum group and elliptic hypergeometric series}, Comm.\ Math.\ Phys.\ 245 (2004), 519--537.


\bibitem[KR]{kr}E.\ Koelink and H.\ Rosengren, {\it Harmonic analysis
on the $\mathrm{SU}(2)$ dynamical quantum group}, Acta Appl.\ Math.\
69 (2001), 163--220.

\bibitem[M]{m} I.\ G.\ Macdonald, 
\emph{Affine root systems and Dedekind's $\eta $-function},
Invent.\ Math.\ 15 (1972), 91--143.


\bibitem[MV]{mv} E.\ Mukhin and A.\ Varchenko, \emph{ Solutions of the $q$KZB equation in tensor products of finite dimensional modules over the elliptic quantum group $E\sb {\tau,\eta}{\rm sl}\sb 2$}, in  E.\ Bierstone et al.\ (eds.), The Arnoldfest, 385--396, Amer.\ Math.\ Soc., Providence, 1999. 

\bibitem[N]{n} Y.\ van Norden, Dynamical Quantum Groups --- Duality and Special Functions, PhD Thesis, Technische Universiteit Delft, 2005.


\bibitem[PRS]{pr} S.\ Pakuliak, V.\ Rubtsov and A.\ Silantyev, \emph{SOS model partition function and the elliptic weight functions},  J.\ Phys.\ A  41  (2008),   295204.

\bibitem[Ra]{ra} E.\ M.\ Rains, \emph{Transformations of elliptic hypergeometric integrals}, Ann.\ Math., to appear.

\bibitem[RS]{ras} E.\ M.\ Rains and V.\ P.\ Spiridonov, \emph{Determinants of elliptic hypergeometric integrals}, Funct.\ Anal.\ Appl.\ 43 (2009), 297--311.

\bibitem[RTF]{frt} N.\ Yu.\  Reshetikhin, L.\ A.\ Takhtadzhyan and L.\ D.\ Faddeev, 
\emph{Quantization of Lie groups and Lie algebras},
Leningrad Math.\ J.\ 1 (1990),  193--225. 

\bibitem[R1]{r} H.\ Rosengren, \emph{Duality and self-duality for dynamical quantum groups}, Algebr.\ Represent.\ Theory 7 (2004), 363--393.

\bibitem[R2]{rr} H.\ Rosengren, \emph{Elliptic hypergeometric series on root systems}, Adv.\ Math.\ 181 (2004), 417--447.

\bibitem[R3]{rt}  H.\ Rosengren, \emph{New transformations for elliptic hypergeometric series on the root system $A_n$}, Ramanujan J.\ 12 (2006), 155--166.

\bibitem[R4]{rs} H.\ Rosengren, \emph{An Izergin--Korepin-type identity for the 8VSOS model, with applications to alternating sign matrices}, Adv.\ Appl.\ Math.\ 43 (2009), 137--155. 

\bibitem[RoS]{rsm} H.\ Rosengren and M.\ Schlosser, in preparation.

\bibitem[Sc]{sc} M.\ Schlosser, \emph{A new multivariable ${}_6\psi_6$ summation formula}, Ramanujan J.\ 17  (2008), 305--319.

\bibitem[S1]{s1}  V.\ P.\ Spiridonov, \emph{Modularity and complete ellipticity of some multiple series of hypergeometric type}, Theoret.\ and  Math.\ Phys.\  135  (2003),   836--848. 

\bibitem[S2]{s2}  V.\ P.\ Spiridonov, \emph{Theta hypergeometric integrals}, St.\ Petersburg Math.\ J.\  15  (2004),   929--967. 

\bibitem[S3]{s3} V.\ P.\ Spiridonov, 
\emph{Short proofs of the elliptic beta integrals},
Ramanujan J.\ 13 (2007),  265--283. 

\bibitem[S4]{ss} V.\ P.\ Spiridonov, \emph{Essays on the theory of elliptic hypergeometric functions},
Russian Math.\ Surveys 63 (2008),  405--472. 

\bibitem[SV]{sv} V.\ P.\ Spiridonov and G.\ S.\ Vartanov, \emph{Elliptic hypergeometry of supersymmetric dualities},  	\verb+arXiv:0910.5944+.

\bibitem[SW]{sw} V.\ P.\ Spiridonov and S.\ O.\ Warnaar, \emph{Inversions of integral operators and elliptic beta integrals on root systems},  Adv.\ Math.\  207  (2006),   91--132. 

\bibitem[SZ]{sz} V.\ P.\ Spiridonov and A.\ Zhedanov, \emph{Spectral transformation chains and some new biorthogonal rational functions},  Comm.\ Math.\ Phys.\  210  (2000),   49--83.

\bibitem[TV]{tv} V.\ Tarasov and A.\ Varchenko, \emph{Geometry of $q$-hypergeometric functions, quantum affine algebras and elliptic quantum groups},  Ast\'erisque  246  (1997).

\bibitem[W]{wa} S.\ O.\ Warnaar, \emph{Summation and transformation formulas for elliptic hypergeometric series}, Constr.\ Approx.\  18  (2002),   479--502. 

\bibitem[Wi]{w}  E.\ P.\ Wigner, {\it On the matrices which reduce the Kronecker
products of representations of S.\ R.\ groups}, manuscript (1940),
published in L.~C.~Biedenharn and H.~Van Dam (eds.), Quantum
Theory of Angular Momentum, 87--133, Academic Press, New York, 1965.


\vskip 3mm
\end{thebibliography}
 \end{document}